\DeclareMathOperator{\Hom}{Hom}
\DeclareMathOperator{\End}{End}
\DeclareMathOperator{\Ext}{Ext}
\DeclareMathOperator{\sgn}{sgn}
\DeclareMathOperator{\mod*}{mod}
\DeclareMathOperator{\Coker}{Coker}
\DeclareMathOperator{\dimvect}{\mathbf{dim}}
\DeclareMathOperator{\Frac}{Frac}
\newcommand{\dynA}{A}
\newcommand{\dynB}{B}
\newcommand{\dynC}{C}
\newcommand{\dynD}{D}
\newcommand{\dynE}{E}
\newcommand{\dynF}{F}
\newcommand{\dynG}{G}
\newcommand{\coxA}{\dynA}
\newcommand{\coxB}{\dynB}
\newcommand{\coxD}{\dynD}
\newcommand{\coxE}{\dynE}
\newcommand{\coxG}{\dynG}
\newcommand{\coxI}{I}
\newcommand{\coxH}{H}
\newcommand{\gend}{\Delta}	
\newcommand{\cv}{c}
\newcommand{\gv}{g}
\newcommand{\rootsys}{\Phi}
\newcommand{\Isroots}[1]{\rootsys^{s}_{#1}}
\newcommand{\Ilroots}[1]{\rootsys^{l}_{#1}}
\newcommand{\Iproots}[1]{\rootsys^+_{#1}}
\newcommand{\Isproots}[1]{\rootsys^{s,+}_{#1}}
\newcommand{\Ilproots}[1]{\rootsys^{l,+}_{#1}}
\newcommand{\acat}{\mathcal{A}}
\newcommand{\der}{\mathcal{D}}
\newcommand{\bder}{\der^b}
\newcommand{\clus}{\mathcal{C}}
\newcommand{\gen}{\mathcal{G}}
\newcommand{\calM}{\mathcal{M}}
\newcommand{\sus}{\Sigma}
\newcommand{\dart}{\tau_{\bder_\gend}}
\newcommand{\vw}{\kappa} 
\newcommand{\aw}{\xi} 
\newcommand{\mw}{\varsigma} 
\newcommand{\dimproj}{\delta}
\newcommand{\op}{^{\text{op}}}
\newcommand{\Iiin}{{\coxI_2(2n)}}
\newcommand{\gratio}{\varphi}
\newcommand{\croot}{\psi}
\newcommand{\imunit}{\mathrm{i}}
\newcommand{\field}{K}
\newcommand{\real}{\mathbb{R}}
\newcommand{\integer}{\mathbb{Z}}
\newcommand{\nnint}{\mathbb{Z}_{\geq 0}}
\newcommand{\rhom}[1]{\sigma^{#1}}
\newcommand{\hrhom}[1]{\widehat{\sigma}^{#1}}
\newcommand{\realhom}[1]{\sigma^{(#1)}}
\newcommand{\srhom}[1]{\rhom{#1}_+}
\newcommand{\chebr}[1]{\chi^{#1}}
\newcommand{\hchebr}[1]{\wh\chi^{#1}}
\newcommand{\achebr}[1]{\chebr{\coxA_{#1}}}
\newcommand{\dchebr}[1]{\chebr{\coxD_{#1}}}
\newcommand{\echebr}[1]{\chebr{\coxE_{#1}}}
\newcommand{\hachebr}[1]{\hchebr{\coxA_{#1}}}
\newcommand{\hdchebr}[1]{\hchebr{\coxD_{#1}}}
\newcommand{\hechebr}[1]{\hchebr{\coxE_{#1}}}
\newcommand{\chebsr}[1]{\chebr{#1}_+}
\newcommand{\hchebsr}[1]{\hchebr{#1}_+}
\newcommand{\achebsr}[1]{\chebsr{\coxA_{#1}}}
\newcommand{\dchebsr}[1]{\chebsr{\coxD_{#1}}}
\newcommand{\echebsr}[1]{\chebsr{\coxE_{#1}}}
\newcommand{\hachebsr}[1]{\hchebsr{\coxA_{#1}}}
\newcommand{\hdchebsr}[1]{\hchebsr{\coxD_{#1}}}
\newcommand{\hechebsr}[1]{\hchebsr{\coxE_{#1}}}
\newcommand{\ZUi}[1]{\chebr{(#1)}}
\newcommand{\regrep}[1]{\rho^{#1}}
\newcommand{\Dregrep}{\regrep{\coxD_{n+1}}}
\newcommand{\inv}{^{-1}}
\newcommand{\dbar}[1]{\bar{\bar{#1}}}
\newcommand{\wh}{\widehat}
\newcommand{\wt}{\widetilde}
\newcommand{\ps}[1]{^{(#1)}}
\newcommand{\tree}{\mathbb{T}}
\newcommand{\tikznode}[2]{
	\ifmmode
		\tikz[remember picture,baseline=(#1.base),inner sep=0pt] \node (#1) {$#2$};
	\else
		\tikz[remember picture,baseline=(#1.base),inner sep=0pt] \node (#1) {#2};
	\fi
}
\theoremstyle{plain}
\newtheorem{thm}{Theorem}[section]
\newtheorem*{thm*}{Theorem}
\newtheorem{lem}[thm]{Lemma}
\newtheorem{cor}[thm]{Corollary}
\newtheorem{prop}[thm]{Proposition}
\newtheorem*{prop*}{Proposition}
\theoremstyle{definition}
\newtheorem{defn}[thm]{Definition}
\newtheorem*{exam*}{Example}
\theoremstyle{remark}
\newtheorem{rem}[thm]{Remark}
\numberwithin{equation}{section}
\renewcommand{\fnum@figure}{\textbf{Fig. \thefigure}}
\begin{document}
	\title[Categorifications of Non-Integer Quivers: Type $\Iiin$]{Categorifications of Non-Integer Quivers: \\ Type $\Iiin$}
	\author{Drew Damien Duffield and Pavel Tumarkin}
	\address{\noindent Durham University, Department of Mathematical and Computing Sciences, Durham, UK}
	\address{\noindent ddduffield2@gmail.com, pavel.tumarkin@durham.ac.uk}
	\address{\noindent ORCID: 0000-0002-1476-8147 and 0000-0002-9579-6077}
	\maketitle
	\begin{abstract}
		We use weighted unfoldings of quivers to provide a categorification of mutations of quivers of types $I_2(2n)$, thus extending the construction of categorifications of mutations of quivers to all finite types. 
	\end{abstract}
	
	{
	\noindent{\textbf{Corresponding author:} Drew Damien Duffield}
	
	~
	
	\noindent{\textbf{Keywords:} Cluster algebras, quivers, non-crystallographic root systems, cluster-tilting theory, $c$-vectors, $g$-vectors}
	
	~
	
	\noindent{\textbf{MSC:} 13F60 (Primary), 16G70, 16G20 (Secondary)}
	
	\setcounter{tocdepth}{1}
	\tableofcontents
	
	\section*{Acknowledgements}
 We would like to thank Edmund Heng for helpful discussions. We would also like to thank the referee for conducting a thorough review of the paper, which included important corrections, suggesting numerous improvements to the exposition, and providing a detailed insight into the connections between fusion rings and our work. A part of the paper was written at the Isaac Newton Institute for Mathematical Sciences, Cambridge; we are grateful to the organisers of the programme “Cluster algebras and representation theory”, and to the Institute for support and hospitality during the programme; this work was supported by EPSRC grant no EP/R014604/1. In addition, research was supported by the Leverhulme Trust research grant RPG-2019-153.
 
 	\section*{Statements and Declarations}
	\subsection*{Ethical Approval} 
	Ethical approval is not applicable to this article as no human or animal studies were conducted during this work.
	
	\subsection*{Funding}
	Research was supported by funding from the Leverhulme Trust research grant RPG-2019-153. A part of this paper was written at the Isaac Newton Institute for Mathematical Sciences, Cambridge during the programme “Cluster algebras and representation theory”, which was supported by EPSRC grant no EP/R014604/1.
	
	\subsection*{Competing Interests} The author declares that there are no other financial or non-financial interests that are directly or indirectly related to the work submitted.
	}
	
	\subsection*{Data Availability} Data sharing not applicable to this article as no datasets were generated or analysed during the current study.

\section{Introduction and main results}\label{sec:Intro}
This is the second paper in a series started in~\cite{DTOdd}. In~\cite{DTOdd}, we constructed a categorification of mutations of non-integer quivers of finite types $H_4$, $H_3$ and $I_2(2n+1)$.  The main tool in the construction is a {\em weighted (un)folding} of quivers of types $E_8$, $D_6$ and $A_{2n}$, the application of which follows the projection of root systems developed in~\cite{Lusztig,WaveFrontsReflGroups,MoodyPatera}. The (un)foldings induce projections of dimension vectors of objects in module categories of integer quivers to the roots associated to folded quivers; they also induce semiring actions on categories associated to integer quivers. We also define the tropical seed patterns of folded quivers and show that our definition is consistent with both the folding and the categorical definition of $g$-vectors.

In this paper, we extend the results of~\cite{DTOdd} to the last remaining finite type of quivers, $I_2(2n)$, thus completing the construction of categorifications of mutations for quivers of all finite types. Unlike the settings of~\cite{DTOdd}, there is more than one possible folding that could be considered for every quiver of type $I_2(2n)$. More precisely, a quiver of type $I_2(2n)$ admits an unfolding to a quiver of type $A_{2n-1}$ and to a quiver of type $D_{n+1}$. Moreover, there are three exceptional foldings $E_6\to I_2({12})$, $E_7\to I_2({18})$, and $E_8\to I_2(30)$. We consider all of these different possible foldings to construct categorifications via semiring actions on the categories associated to different integer quivers, and we see that aside from the semiring action itself, the theory between these different foldings is very much the same. The key upshot of this is that the module, bounded derived, and cluster categories associated to any bipartite Dynkin quiver has a non-crystallographic interpretation via a semiring action and the associated non-integer quiver. This includes the ability to associate non-integer $g$-vectors and mutation to the categories of any bipartite Dynkin quiver.

Another difference to the setting of~\cite{DTOdd} is that for the foldings considered in this paper, it is more natural to consider the root system of $I_2(2n)$ in such a way that it contains roots of two different lengths. We therefore use non-trivial rescaling throughout the paper. This approach to the root system generalises how one classically considers the crystallographic root systems $\coxI_2(4)=\coxB_2$ and $\coxI_2(6)=\coxG_2$ to higher $n$. It is entirely possible to categorify the root system of $I_2(2n)$ via an unfolding with a trivial rescaling (and thus with all roots of $I_2(2n)$ of the same length) and a projection map from the categories associated to the unfolding. However, when it comes to considering the semiring action on the cluster category of an unfolded Dynkin quiver, the $g$-vectors one obtains are precisely those that are associated to the non-trivial rescaling, and thus we consider this approach to be the most fitting.

We recall all necessary definitions and details of the construction from~\cite{DTOdd} in Section~\ref{sec:Prelim}.

Our first main result is Theorem~\ref{thm:FoldingProjection} (see also Corollaries~\ref{cor:RowBijection} and~\ref{cor:RowWeights}), in which we prove that, given a weighted folding   $F\colon Q^{\gend} \rightarrow Q^{I_2(2n)}$, there is a weighting on the rows of the Auslander-Reiten quiver of $\mod* KQ^{\gend}$ and $\bder(\mod*KQ^\gend)$ consistent with $F$, with the projection of the dimension vectors of objects, and with the projection of the corresponding root systems. Objects in the rows of weight 1 in the Auslander-Reiten quiver map precisely to the roots of $\Iiin$, thus generalising Gabriel's Theorem for quivers of type $\Iiin$. In fact, Theorem~\ref{thm:FoldingProjection} says much more than this. The folding $F$ has a strong relationship with the structure of the Auslander-Reiten quiver. For example, objects that reside in the same column of the Auslander-Reiten quiver project onto a multiple of the same root, and these multiples are determined by the weights of the rows. Moreover, Auslander-Reiten translation acts by rotation with respect to the projection.

The projection map induced by the folding gives rise to a semiring action on the module and bounded derived categories of the integer quivers, where the semiring (we denote it by $R_+$) is defined separately for each folding (Section~\ref{sec:Prelim-Cheb}) using Chebyshev polynomials. The main results here (Theorem~\ref{thm:RPGenerators} and Corollary~\ref{cor:GenCorrespondence}) state that $\mod*KQ^\gend$ has an action of the corresponding semiring, and there exists a collection of indecomposable objects of $\mod*KQ^\gend$ that are in bijection with the positive roots of $I_2(2n)$, and these objects generate the whole category under the action of the semiring. These results naturally extend to the bounded derived category $\bder(\mod*KQ^\gend)$.

   The semiring action also applies to the cluster category of $Q^\gend$, and this allows us to extend the results of~\cite{BMRRT} to categorify mutations of folded quivers by considering distinguished objects with respect to the action, which we call $R_+$-{\em tilting objects}  (Section~\ref{sec:ClusterTilting}). We prove (Theorem~\ref{thm:RPTilting} and Corollary~\ref{cor:Complements}) that for a folding $F\colon Q^{\gend} \rightarrow Q^{I_2(2n)}$, the following hold for the cluster category $\clus_\gend$ of $Q^\gend$: every basic $R_+$-tilting object $T \in \clus_\gend$ (injectively) corresponds to a basic tilting object $\wh{T} \in \clus_\gend$; every basic $R_+$-tilting object has precisely $2$ indecomposable direct summands; every almost complete $R_+$-tilting object has exactly two complements, and changing the complement corresponds to a single mutation; if $T$ is a basic $R_+$-tilting object, and $A_T$ is the cluster-tilted algebra corresponding to $\wh{T}$, then there exists an $R_+$-action on $\mod*A_T$.  

     Finally, we define the tropical seed patterns of folded quivers. We define $\cv$-vectors and $C$-matrices as in the integer case, and we show that $\cv$-vectors of $Q^{\Iiin}$ are roots of $\Iiin$ and are thus sign-coherent (Corollary~\ref{cor:MiddleLeftSquare}). Going along the results of~\cite{NZ}, this allows us to define $G$-matrices as the inverse of transposed $C$-matrices, and we define $\gv$-vectors as the column vectors of $G$-matrices. We then prove (see Section~\ref{sec:Tropical} and Theorem~\ref{thm:Tesseract} for details) that these definitions are compatible with both rescaling and the projection of $C$-matrices and $G$-matrices with respect to the folding. We note that these results provide a categorical interpretation of $\gv$-vectors and $G$-matrices of $Q^{I_2(2n)}$ (Corollary~\ref{cor:GVectors}).

\medskip

      The paper is organised as follows. In Section~\ref{sec:Prelim} we recall all essential definitions and details from~\cite{DTOdd} about mutations, unfoldings, and semirings. In Section~\ref{sec:Projection}, we describe the projections of module and bounded derived categories induced by the (un)foldings of quivers. Section~\ref{sec:Action} is devoted to the description of the semiring action on the module and bounded derived categories. In Section~\ref{sec:ClusterTilting}, we extend the semiring action to the cluster categories of the unfolded quivers, thus providing a categorification of mutations of the folded quivers. Section~\ref{sec:Tropical} is devoted to the construction of the tropical seed pattern, and to the compatibility of the projections and mutations. Finally, Appendix~\ref{sec:Examples} provides two detailed worked examples (one for unfoldings of type $\coxA_{2n-1}$ and one for unfoldings of type $\coxD_{n+1}$) that showcases the entire theory of the paper for the benefit of the reader.

	\section{Preliminaries} \label{sec:Prelim}
	For the benefit of the reader, we will briefly recall some notation and definitions from \cite{DTOdd} that we will use throughout the paper. Further details can be found in the aforementioned reference.
	
	\subsection{General setup and notation} \label{sec:Prelim-Notation}
	Throughout the paper, $\field$ is an algebraically closed field and $Q^\gend$ is a quiver of Dynkin type $\gend$. The vertex set of $Q^\gend$ is denoted by $Q^\gend_0$ and the arrow set is denoted by $Q^\gend_1$. We denote by $\field Q^\gend$ the path algebra of $Q^\gend$ over the field $\field$. All $\field Q^\gend$-modules in this paper are right modules, and thus we read paths in the quiver from left to right. We denote by $\mod*\field Q^\gend$ the category of finitely generated right $\field Q^\gend$ modules and by $\bder(\field Q^\gend)$ the bounded derived category.
	
	Each vertex $i \in Q^\gend_0$ simultaneously corresponds to a simple, indecomposable projective, and indecomposable injective $\field Q^\gend$-module, which we will denote by $S(i)$, $P(i)$ and $I(i)$ respectively. The shift functor of $\bder(\field Q^\gend)$ shall be written as $\sus\colon\bder(\field Q^\gend) \rightarrow \bder(\field Q^\gend)$, and where appropriate, we will adopt the abuse of notation where for each object $M \in \mod*\field Q^\gend$, the object $M \in\bder(\field Q^\gend)$ is the corresponding object concentrated in degree 0. Additionally, we denote by $\tau_{\acat}$ the Auslander-Reiten translate in the category $\acat$, where $\acat$ is either a module, bounded derived, or cluster category of the appropriate type. Whenever the context is clear, we will omit this subscript and simply write $\tau$.

	\subsection{Exchange matrices over a ring, $R$-quivers, and the $\Iiin$ root system} \label{sec:Prelim-ExMats}
	Let $R$ be a totally ordered ring throughout. Eventually, $R$ will be an integral domain and torsion-free, though one need not make this assumption for the initial setup. By an \emph{exchange matrix} over $R$, we mean a square skew-symmetrisable matrix with entries in $R$ whose rows and columns are indexed by a set $Q^B_0$. The \emph{$R$-quiver} associated to a \emph{skew-symmetric} exchange matrix $B=(b_{ij})$ over $R$ is the $R$-arrow-weighted quiver $Q^B$ with vertex set $Q^B_0$, arrow set $Q^B_1$, and arrow weighting function $\aw\colon Q^B_1 \rightarrow R_{>0}$ determined such that $b_{ij}>0$ if and only if there exists an arrow $a\colon i \rightarrow j$ in $Q^B_1$ with weight $\aw(a)=b_{ij}$.
	
	This paper will be working with a particular family of $R$-quivers and exchange matrices over $R$. Namely, we will be working with exchange matrices of type $\Iiin$ over the ring $R=\integer[2 \cos \frac{\pi}{2n}]$ whose ordering is induced by $\mathbb{R}$. We will denote the corresponding $R$-quiver of $\Iiin$ by
	\begin{equation*}
		Q^{\Iiin}\colon \xymatrix@1{[0] \ar[rr]^-{2 \cos \frac{\pi}{2n}} && [1]}
	\end{equation*}
	where the weight of the unique arrow is given by its label.
	
	The quiver $Q^\Iiin$ is associated to a \emph{non-crystallographic root system} that is related to the Coxeter group $\Iiin$ in the sense of \cite{Deodhar}. The non-crystallographic root system of type $\Iiin$ has $2n$ positive roots, two of which are simple. Being non-crystallographic, the root system can appear in various forms which depends on the relative lengths of the simple roots. This paper will work with two versions of the root system.
	
	\textit{Version 1: The standard form.} In this version, the positive roots may be partitioned into two equally sized classes, called \emph{short} and \emph{long} positive roots. In particular, one of the simple positive roots will be longer than the other. The short roots of $\Iiin$ will be of length $\lambda$ and the long roots of $\Iiin$ will be of length $2 \lambda \cos \frac{\pi}{2n}$. For the quiver $Q^{\Iiin}$ above, we will choose the convention that the short positive roots correspond to the points $\lambda \mathrm{exp}(\frac{k\pi\imunit}{n})$ and the long positive roots correspond to the points $(2\lambda \cos \frac{\pi}{2n})\mathrm{exp}(\frac{(2k+1)\pi\imunit}{2n})$ for $0 \leq k \leq n-1$. For the opposite quiver $(Q^{\Iiin})\op$, this convention is reversed. See Figure~\ref{fig:I8RootSys} for an example. Throughout the paper, we will denote the set of short roots (resp. short positive roots) of $\Iiin$ by $\Isroots{2n}$ (resp. $\Isproots{2n}$) and we will denote the set of long roots (resp. long positive roots) of $\Iiin$ by $\Ilroots{2n}$ (resp. $\Ilproots{2n}$).
	
	We call this version of the root system of $\Iiin$ standard, as it agrees with the root systems $\coxB_2 = \coxI_2(4)$ and $\coxG_2 = \coxI_2(6)$, and is the one that is most compatible with the theory of semiring actions on module categories that we will develop.
	
	\begin{figure}
		\begin{tikzpicture}
	\foreach \i in {0,...,7} {
		\draw[->] (0,0) -- ($({cos(45*\i)},{sin(45*\i)})$);
	}
	\foreach \i in {0,...,7} {
		\draw[->] (0,0) -- ($({2*cos(22.5)*cos(2*22.5*\i+22.5)},{2*cos(22.5)*sin(2*22.5*\i+22.5)})$);
	}
	\draw[anchor=west] (1,0) node {$\alpha_0$};
	\draw[anchor=east] ($({2*cos(22.5)*cos(2*22.5*3+22.5)},{2*cos(22.5)*sin(2*22.5*3+22.5)})$) node {$\alpha_1$};
	
	\foreach \i in {0,...,15} {
		\draw[->] (4,0) -- ($({4+cos(22.5*\i)},{sin(22.5*\i)})$);
	}
	\draw[anchor=west] (5,0) node {$\alpha_0$};
	\draw[anchor=east] ($({4.1+cos(2*22.5*3+22.5)},{sin(2*22.5*3+22.5)})$) node {$\alpha_1$};
\end{tikzpicture}
		\caption{The root systems of type $\coxI_2(8)$ corresponding to the quiver $Q^{\Iiin}$, with simple positive roots labelled by $\alpha_0$ and $\alpha_1$. Left: The standard root system. Right: The rescaled root system.} \label{fig:I8RootSys}
	\end{figure}
	
	\textit{Version 2: The rescaled form.}
	In this version the positive roots all have the same length, which by convention, we choose to be $1$. The positive roots thus correspond to the points $\lambda \mathrm{exp}(\frac{k\pi\imunit}{2n})$ with $0 \leq k \leq 2n-1$. We call this the \emph{rescaled root system of $\Iiin$}. We only use the rescaled root system in Section~\ref{sec:Tropical}.
	
	To make precise which version of the root system we are using when considering the $R$-quiver $Q^{\Iiin}$, we will define a \emph{valuation} on the quiver.
	
	\begin{defn}
		Let $S$ be a totally ordered ring. An \emph{$S$-valuation} of an $R$-quiver is a positive $S$-vertex-weighting $\mw\colon Q^B_0 \rightarrow S_{>0}$.
	\end{defn}
	
	The standard form of the root system of type $\Iiin$ is associated to the $R$-valuation of $Q^\Iiin$ given by $\mw([0]) = \lambda$ and $\mw([1]) = 2\lambda \cos \frac{\pi}{2n}$. The rescaled version of $\Iiin$ is instead given by the $R$-valuation $\mw([0])=\mw([1]) = \lambda$. In all but one case in this paper, $\lambda = 1$ (where $\lambda =2$ in the other case).
	
	Henceforth, whenever we refer to the root system of $\Iiin$ and its roots, we mean the standard form (version 1) and its elements. We will always refer to version 2 of $\Iiin$ as the \emph{rescaled root system} and its elements as \emph{rescaled roots}.
	
	\subsection{Weighted (un)foldings} \label{sec:Prelim-Folding}
	Here we will recall the definition of a weighted (un)folding from \cite{DTOdd}. The definition is a generalisation of the classical definition of unfolding due to Zelevinsky (see \cite{FST3,FST2} for details) and makes use of rescaling introduced by Reading in \cite{Reading}. 
	
	\begin{defn}
	Let $B=(b_{ij})$ be an exchange matrix over $\integer$ and $B'=(b'_{[i][j]})$ be an exchange matrix over $R$. Suppose there exists a disjoint collection of index sets $\{E_{[i]} : [i] \in Q^{B'}_0\}$ such that $\bigcup_{[i] \in Q^{B'}_0} E_{[i]} = Q_0^B$. Then $B$ has the structure of a block matrix $(B_{[i][j]})$ with blocks indexed by $Q^{B'}_0$. In this case, we call a pair $(B,B')$ of exchange matrices an \emph{origami pair} if the following hold:
	\begin{enumerate}[label=(O\arabic*)]
		\item For each $[i],[j] \in Q_0^{B'}$, the sum of entries in each column of $B_{[i][j]}$ is $b'_{[i][j]}$.
		\item If $b'_{[i][j]}>0$ then the $B_{[i][j]}$ has all entries non-negative.
	\end{enumerate}
	
	Suppose $R$ is an integral domain and let $\Frac(R)$ be the field of fractions of $R$. An exchange matrix $B$ over $\integer$ is said to be a \emph{weighted unfolding} of an exchange matrix $B'$ over $R$ if the following hold.
	\begin{enumerate}[label=(U\arabic*)]
		\item There exist diagonal matrices $W=(w_i)$ and $P=(p_{[i]})$ with positive entries in $R \subseteq \Frac(R)$ such that $(W B W\inv, P B' P\inv)$ is an origami pair with blocks indexed by the collection $\{E_{[i]} : [i] \in Q^{B'}_0\}$. Here, multiplication occurs over $\Frac(R)$.
		\item For any sequence of iterated mutations $\mu_{[k_1]}\ldots\mu_{[k_l]}$ of $B'$, the pair
		\begin{equation*}
			(W \wh\mu_{[k_1]}\ldots\wh\mu_{[k_l]}(B) W\inv, P\mu_{[k_1]}\ldots\mu_{[k_l]}(B') P\inv)
		\end{equation*}
		is origami, where each $\wh\mu_{[j]}$ is the \emph{composite mutation} of $B$ given by
		\begin{equation*}
		\wh\mu_{[j]} = \prod_{i \in E_{[j]}} \mu_i.
		\end{equation*}
	\end{enumerate}
	\end{defn}
	
	The composite mutations of $B$ in the above definition are well-defined because the mutations indexed by each block are pairwise commutative. We call the matrix $W$ the \emph{weight matrix} of the unfolding (whose entries $w_i$ we call \emph{weights}), and we call $P$ the \emph{rescaling matrix} of the unfolding. We also call the exchange matrix $B'$ the \emph{folded exchange matrix} of $B$, and we call $B$ the \emph{unfolded exchange matrix} of $B'$.
	
	A weighted unfolding associated to an origami pair $(W B W\inv, P B' P\inv)$ equips both the associated $\integer$-quiver $Q^B$ and the $R$-quiver $Q^{B'}$ with an $R$-valuation. For the $\integer$-quiver, we denote the valuation with the function $\vw\colon Q^B_0 \rightarrow R_{>0}$ defined by $\vw(i)=w_i$ for each $i \in Q^B_0$. For the $R$-quiver $Q^{B'}$, we denote the valuation with the function $\mw\colon Q^{B'}_0 \rightarrow R_{>0}$ defined by $\mw([i])=p_{[i]}$. To each weighted unfolding $B$ of $B'$, we define a \emph{weighted folding} of quivers $F\colon Q^B \rightarrow Q^{B'}$. That is, $F$ is a surjective morphism of quivers such that $F(i)=[j]$ whenever $i \in E_{[j]}$. Given such a folding $F$, we call $Q^B$ the \emph{unfolded quiver} and $Q^{B'}$ the \emph{folded quiver}. In any given folding, the quivers $Q^B$ and $Q^{B'}$ are implicitly assumed to have the aforementioned structure of $R$-valued quivers (with weight function $\vw$ for $Q^B$ and $\mw$ for $Q^{B'}$).
	
	This paper is concerned with three families of foldings onto $R$-quivers of type $\Iiin$. We call these families foldings of type $\coxA$, $\coxD$ and $\coxE$, respectively. The foldings of type $\coxA$ in particular are consistent with similar constructions in \cite{WaveFrontsReflGroups,Muhlherr}. 
	
	\begin{rem} \label{rem:CoxeterPlane}
		Each of the foldings $F^\gend$ (with $\gend \in \{\coxA_{2n-1},\coxD_{n+1},\coxE_6,\coxE_7,\coxE_8\}$) described in the remainder of this section induces a projection of the roots of $\gend$ onto the roots of $\Iiin$. In fact, this projection is precisely the projection onto the Coxeter plane (studied in detail in \cite{SteinbergRefl}). As such, the Coxeter number of $\gend$ agrees with the Coxeter number of $\Iiin$ for each folding $F^\gend$.
	\end{rem}
	
	\subsubsection{Foldings of type $\coxA$} The first family to consider is the folding from a bipartite quiver of type $\coxA_{2n-1}$, which we denote by $Q^{\coxA_{2n-1}}$. Specifically, we have $F^{\coxA_{2n-1}}\colon Q^{\coxA_{2n-1}} \rightarrow Q^{\Iiin}$, where $Q^{\coxA_{2n-1}}$ is given by
	\begin{equation*}
		\xymatrix{0 \ar[r] & 1 & \ar[l] 2 \ar[r] & \cdots & \ar[l] 2n-4 \ar[r] & 2n-3 & \ar[l] 2n-2}		
	\end{equation*}
	with vertex weights such that $\vw(i) = U_i(\cos\frac{\pi}{2n})$, where $U_i$ is the $i$-th Chebyshev polynomial of the second kind. Furthermore, each vertex $i$ is such that $F^{\coxA_{2n-1}}(i)=[0]$ if $i$ is even and $F^{\coxA_{2n-1}}(i)=[1]$ if $i$ is odd. The valuation of $Q^{\Iiin}$ is such that $\mw([0])=1$ and $\mw([1])=2\cos\frac{\pi}{2n}$. It is not difficult to verify that on the level of unfoldings, this class of unfoldings factors through an unfolding of an exchange matrix of Dynkin type $\dynC_{n}$. The above quiver has $\integer_2$-symmetry via the action that maps $i \in Q^{\coxA_{2n-1}}_0$ to $2n-2-i \in Q^{\coxA_{2n-1}}_0$. This action also respects the vertex-weights in the sense that $U_i(\cos\frac{\pi}{2n})=U_{2n-2-i}(\cos\frac{\pi}{2n})$. In light of this group action, it is sometimes convenient to work with the alternative labelling
	\begin{equation*}
		\xymatrix@C=3.5ex{0^+ \ar[r] & 1^+ & \ar[l] \cdots \ar@{-}[r] & (n-2)^+ & \ar@{-}[l] n-1 \ar@{-}[r] & (n-2)^- & \ar@{-}[l] \cdots \ar[r] & 1^- & \ar[l] 0^-}	.	
	\end{equation*}
	
	\subsubsection{Foldings of type $\coxD$} The second family of foldings we consider are from bipartite quivers of type $\coxD_{n+1}$, which we denote by $Q^{\coxD_{n+1}}$. Specifically, we have $F^{\coxD_{n+1}}\colon Q^{\coxD_{n+1}} \rightarrow Q^{\Iiin}$, where $Q^{\coxD_{n+1}}$ is given by
	\begin{equation*}
		\xymatrix{
			&&&&&(n-1)^+ \\
			0 \ar[r] & 1 & \ar[l] 2 \ar[r] & \cdots & \ar@{-}[l] n-2 \ar@{-}[ur] \ar@{-}[dr] \\
			&&&&&(n-1)^-
		}
	\end{equation*}
	with vertex weights such that $\vw(i) = 2 U_i(\cos\frac{\pi}{2n})$ for $0 \leq i \leq n-2$ and $\vw((n-1)^\pm) =U_{n-1}(\cos\frac{\pi}{2n})$. Furthermore, each vertex $i$ is such that $F^{\coxD_{n+1}}(i)=[0]$ if $i$ is even and $F^{\coxD_{n+1}}(i)=[1]$ if $i$ is odd (the same rule applies to the vertices $(n-1)^\pm$ with the integer $n-1$). The valuation of $Q^{\Iiin}$ is such that $\mw([0])=2$ and $\mw([1])=4\cos\frac{\pi}{2n}$. It is not difficult to verify that on the level of unfoldings, this class of unfoldings factors through an unfolding of an exchange matrix of Dynkin type $\dynB_{n}$. This quiver also has $\integer_2$-symmetry via the action that fixes each $i \in Q^{\coxD_{n+1}}_0$ and maps $(n-1)^\pm \in Q^{\coxD_{n+1}}_0$ to $(n-1)^\mp \in Q^{\coxD_{n+1}}_0$.
	
	\subsubsection{Foldings of type $\coxE$} There are three exceptional foldings of type $E$. Namely we have foldings 
	\begin{align*}
		F^{\coxE_6} \colon Q^{\coxE_6} &\rightarrow Q^{\coxI_2(12)}, \\
		F^{\coxE_7} \colon Q^{\coxE_7} &\rightarrow Q^{\coxI_2(18)}, \\
		F^{\coxE_8} \colon Q^{\coxE_8} &\rightarrow Q^{\coxI_2(30)}, 
	\end{align*}
	where $Q^{\coxE_6}$, $Q^{\coxE_7}$ and $Q^{\coxE_8}$ are bipartite $R$-valued quivers of type $\coxE_6$, $\coxE_7$ and $\coxE_8$, respectively. Throughout, the valuation of $Q^{\Iiin}$ is such that $\mw([0])=1$ and $\mw([1])=2\cos\frac{\pi}{2n}$.
	
	The quiver $Q^{\coxE_6}$ is given by
	\begin{equation*}
		\xymatrix{
			&&v_6 \\
			0^+ \ar[r] & 1^+ & \ar[l] 2 \ar[u] \ar[r] & 1^- & \ar[l] 0^-
		}
	\end{equation*}
	with weighting such that $\vw(i^\pm)=U_i(\cos\frac{\pi}{12})$, $\vw(2)=U_2(\cos\frac{\pi}{12})$ and
	\begin{equation*}
		\vw(v_6)=U_3(\cos\tfrac{\pi}{12}) - U_1(\cos\tfrac{\pi}{12})=U_3(\cos\tfrac{\pi}{12}) - U_5(\cos\tfrac{\pi}{12})+U_1(\cos\tfrac{\pi}{12})=\sqrt{2}. 
	\end{equation*}
	Moreover, we have $F^{\coxE_6}(0^\pm)=F^{\coxE_6}(2)=[0]$ and $F^{\coxE_6}(1^\pm)=F^{\coxE_6}(v_6)=[1]$. This unfolding can be shown to factor through an unfolding of an exchange matrix of Dynkin type $\dynF_{4}$.
	
	The quiver $Q^{\coxE_7}$ is given by
	\begin{equation*}
		\xymatrix{
			&&&\wt{1}\ar[d] \\
			0 \ar[r] & 1 & \ar[l] 2 \ar[r] & 3  & \ar[l] \wt{2} \ar[r] & v_7
		}
	\end{equation*}
	with weighting such that $\vw(i)=U_i(\cos\frac{\pi}{18})$, $\vw(\wt{i})=U_i(\cos\frac{\pi}{9})$ and
	\begin{equation*}
		\vw(v_7)= U_5(\cos\tfrac{\pi}{18}) - U_3(\cos\tfrac{\pi}{18}) = 2 \cos \tfrac{5\pi}{18}. 
	\end{equation*}
	It worth noting that $U_0(\cos\frac{\pi}{9}) = U_0(\cos\frac{\pi}{18})$, $U_1(\cos\frac{\pi}{9}) = U_4(\frac{\pi}{18})-U_6(\frac{\pi}{18})+U_2(\frac{\pi}{18})$, $U_2(\cos\frac{\pi}{9}) = U_6(\cos\frac{\pi}{18})-U_2(\frac{\pi}{18})$, and that $U_3(\cos\frac{\pi}{9}) = U_2(\cos\frac{\pi}{18})$. It is therefore convenient to occasionally adopt the labelling $\wt{0}=0$ and $\wt{3}=2$. Moreover, $F^{\coxE_7}$ maps the source vertices to $[0]$ and sink vertices to $[1]$.
	
	The quiver $Q^{\coxE_8}$ is given by
	\begin{equation*}
		\xymatrix{
			&&&&v_8 \\
			0 \ar[r] & 1 & \ar[l] 2 \ar[r] & 3 & \ar[l] 4 \ar[u] \ar[r] & \phi_1 & \ar[l] \phi_0
		}
	\end{equation*}
	with weighting such that $\vw(i)=U_i(\cos\frac{\pi}{30})$, $\vw(\phi_{i})=\gratio U_i(\cos\frac{\pi}{30})$ and $\vw(v_8)=\gratio\inv U_3(\cos\frac{\pi}{30})$, where $\gratio=2\cos\frac{\pi}{5}$ is the golden ratio and $\gratio\inv= \gratio - 1=2\cos\frac{2 \pi}{5}$ is its inverse. It is also worth noting that $\gratio = U_6(\cos\frac{\pi}{30}) - U_4(\cos\frac{\pi}{30})$. Similar to above, $F^{\coxE_8}$ maps the source vertices to $[0]$ and sink vertices to $[1]$.
	
	\subsection{Chebyshev polynomials and associated (semi)rings} \label{sec:Prelim-Cheb}
	As previously mentioned, we denote throughout the paper the $i$-th Chebyshev polynomial of the second kind by $U_i$. That is, $U_i$ satisfies the identity
	\begin{equation*}
		U_i(\cos \theta) \sin \theta = \sin ((i+1)\theta).
	\end{equation*}
	Chebyshev polynomials of the second kind satisfy many identities that are useful and which appear within the theory of this paper. Many of these useful identities are highlighted in \cite[Lemma 4.1]{DTOdd}, which we restate (and adapt to our specific context) here for the benefit of the reader.
	\begin{lem}[\cite{DTOdd}, Lemma 4.1] \label{lem:ChebPolyProps}
		Define a sequence $(\theta_i)_{i \in \nnint}$ by $\theta_i = U_i(\cos\frac{\pi}{2n})$. Then
		\begin{enumerate}[label=(\alph*)]
			\item $\theta_1 = 2 \cos\frac{\pi}{2n}$,
			\item $\theta_{2n-1} = 0$,
			\item $\theta_{n-1-i} = \theta_{n-1+i}$
			\item $\theta_{2n-1+i} = -\theta_{2n-1-i}$ for $i \leq 2n-1$
			\item $\theta_i\theta_j = \sum_{k=0}^j \theta_{i-j+2k}$ for $j \leq i$,
			\item $\theta_i > 1$ for $0 < i < 2n-2$.
		\end{enumerate}
	\end{lem}
	
	In \cite{DTOdd}, we defined a family of rings related to Chebyshev polynomials of the second kind that would later be used to define a semiring action on the module, derived and cluster categories of unfolded quivers. We will do the same here, except the (semi)rings we consider in this paper will be slightly different.

	Throughout the paper, denote $\ZUi{2n}=\integer[2 \cos \tfrac{\pi}{2n}]$ for each $n \geq 2$. Now consider the ring
	\begin{equation*}
		Z_{2n} = \integer[x] / (U_n(\tfrac{x}{2}) - U_{n-2}(\tfrac{x}{2}))
	\end{equation*}
	Since Chebyshev polynomials of the second kind satisfy the product rule given in Lemma~\ref{lem:ChebPolyProps}(e), it is not hard to verify that $U_{2n-1}(\tfrac{x}{2}) = 0$, $U_{n-1-i}(\tfrac{x}{2}) = U_{n-1+i}(\tfrac{x}{2})$ and $U_{2n-1+i}(\tfrac{x}{2}) = -U_{2n-1-i}(\tfrac{x}{2})$ in the ring $Z_{2n}$. In particular, this can be obtained inductively from the relation $U_n(\tfrac{x}{2}) - U_{n-2}(\tfrac{x}{2})$ by multiplying by $x=U_1(\frac{x}{2})$ (c.f. \cite[Remark 4.3]{DTOdd}). It therefore follows from Lemma~\ref{lem:ChebPolyProps} that there exists a ring epimorphism $\zeta_{2n}\colon Z_{2n} \rightarrow \ZUi{2n}$ defined by $\zeta_{2n}(x) = 2 \cos \tfrac{\pi}{2n}$. In fact, it follows from the results of \cite{WatkinsZeitlin} that $\zeta_{2n}$ is actually an isomorphism whenever $n$ is even. On the other hand, when $n$ is odd, we have $\ZUi{2n} \cong \integer[x] / (p_{2n} - q_{2n})$, where $x p_{2n} = U_n(\frac{x}{2})$ and $xq_{2n} = U_{n-2}(\frac{x}{2})$. Most of the (semi)rings that we consider are given by modifying the ring $Z_{2n}$ in some way.

	\subsubsection{(Semi)rings of type $\coxA$}
	The following (semi)rings will be used in foldings of type $\coxA$, which are known to be Verlinde fusion rings (c.f. \cite[Example 4.10.6]{EGNO}).
	
	\begin{defn} \label{defn:ASemirings}
		For each $n\geq 2$, define the families of commutative rings
		\begin{align*}
			\achebr{2n-1} &= \integer[\croot_2,\croot_4,\ldots,\croot_{2n-2}] & &\subset & \hachebr{2n-1}&=\integer[\croot_1,\croot_2,\croot_3,\ldots,\croot_{2n-2}],
		\end{align*}
		subject to the following product rule for any $1 \leq j \leq i \leq 2n-2$:
		\begin{equation*}
			\croot_i\croot_j = \croot_j \croot_i = \sum_{k=0}^j \croot_{i-j+2k},
		\end{equation*}
		where any element $\croot_k$ with $k \not\in\{1,\ldots,2n-2\}$ resulting from the above product is such that $\croot_0 = 1$, $\croot_{2n-1} = 0$ and $\croot_{2n+k}=-\croot_{2n-2-k}$. It is easy to check that, after cancellation, the above product will always produce a sum of elements $\croot_k$ with positive coefficients. Thus, it makes sense to define the corresponding semirings:
		\begin{align*}
			\achebsr{2n-1} &= \integer_{\geq 0}[\croot_2,\croot_4,\ldots,\croot_{2n-2}] & &\subset & \hachebsr{2n-1}&=\integer_{\geq 0}[\croot_1,\croot_2,\croot_3,\ldots,\croot_{2n-2}],
		\end{align*}
		which are subsemirings of $\achebr{2n-1}$ and $\hachebr{2n-1}$ respectively.
	\end{defn}
	
	The ring $\hachebr{2n-1}$ is related to the ring $Z_{2n}$ by forgetting the relations $U_{n-1+i}(\frac{x}{2}) - U_{n-1-i}(\frac{x}{2})$ for $i \leq n-1$ and preserving the relation $U_{2n-1}(\frac{x}{2})$. In particular, we have
	\begin{equation*}
		\hachebr{2n-1} \cong \integer[x] / (U_{2n-1}(\tfrac{x}{2})),
	\end{equation*}
	where each element $\croot_i$ corresponds to $U_{i}(\tfrac{x}{2})$ (and $1=\croot_0$ corresponds to $1 = U_0(\frac{x}{2})$). From this isomorphism, we can deduce the following.
	
	\begin{lem} \label{lem:ABasis}
		The set $\mathcal{B}^{\coxA_{2n-1}} =\{\croot_0,\croot_1,\ldots, \croot_{2n-2}\}$ is a $\integer$-basis of $\hachebr{2n-1}$.
	\end{lem}
	\begin{proof}
		Since $\hachebr{2n-1} \cong \integer[x] / (U_{2n-1}(\tfrac{x}{2}))$, one can deduce by iterative products by $x$ that the elements of the set $\{U_{i}(\tfrac{x}{2}) : i\geq 2n-1\}$ are $\integer$-linearly dependent to the elements in the set $\{U_{i}(\tfrac{x}{2}) : 0 \leq i \leq 2n-2\}$. Specifically, $U_{(2n-1)i +j} = -U_{(2n-1)i -j}$ for any $i,j \geq 0$. Since Chebyshev polynomials of the second kind are $\integer$-linearly independent within the ring $\integer[x]$, it is clear that the elements of $\{U_{i}(\tfrac{x}{2}) : 0 \leq i \leq 2n-2\}$ are $\integer$-linearly independent within the ring $\integer[x] / (U_{2n-1}(\tfrac{x}{2}))$. The result then follows from the isomorphism.
	\end{proof}
	
	The element $\croot_{2n-2} \in \hachebr{2n-1}$ (or equivalently, $U_{2n-2}(\tfrac{x}{2}) \in \integer[x] / (U_{2n-1}(\tfrac{x}{2}))$) is representative of the group symmetry of $\coxA_{2n-1}$. In particular, $\croot_{2n-2}\croot_i = \croot_{2n-2-i}$. We can thus also write
	\begin{equation*}
		\hachebr{2n-1} \cong \integer[x,y] / (U_n(\tfrac{x}{2}) - yU_{n-2}(\tfrac{x}{2}), yU_{n-1}(\tfrac{x}{2})-U_{n-1}(\tfrac{x}{2}), y^2 -1),
	\end{equation*}
	where each $\croot_i$ corresponds to $U_{i}(\tfrac{x}{2})$ and $y$ corresponds to $\croot_{2n-2}$. This second isomorphism is useful for the next family of (semi)rings.
	
	\subsubsection{(Semi)rings of type $\coxD$}
	In the next family of (semi)rings, we use a signed notation on the elements. Here, elements $\croot_i^+$ and $\croot_i^-$ are distinct, but in some statements we collectively refer to them as $\croot_i^\pm$. We also have statements dependent on elements which either have the same sign or opposite signs. For example, by $\croot_i^\pm \croot_j^\pm$, we mean either $\croot_i^+ \croot_j^+$ or $\croot_i^- \croot_j^-$. On the other hand, by $\croot_i^\pm \croot_j^\mp$, we mean $\croot_i^+ \croot_j^-$ or $\croot_i^- \croot_j^+$.
	
	\begin{defn} \label{defn:DSemirings}
		Define commutative (semi)rings of type $\coxD_4$ by
		\begin{align*}
			\dchebr{4} &= \integer[g] & &\subset & \hdchebr{4} &= \integer[g,\croot_1], \\
			\dchebsr{4} &= \nnint[g] & &\subset & \hdchebsr{4} &= \nnint[g,\croot_1],
		\end{align*}
		satisfying the relations $g^3 = 1$, $g\croot_1=\croot_1$ and $\croot_1^2 = 1 + g + g^2$. For $n> 3$, define further families of commutative (semi)rings of type $\coxD_{n+1}$ by
		\begin{align*}
			\dchebr{n+1}&=\integer\left[\croot_0^-,\croot^\pm_2,\ldots,\croot^\pm_{2\left\lfloor\frac{n-1}{2}\right\rfloor}\right] & &\subset & \hdchebr{n+1}&=\integer[\croot_0^-,\croot^\pm_1,\croot^\pm_2,\croot^\pm_3,\ldots,\croot^\pm_{n-1}], \\
			\dchebsr{n+1}&=\integer_{\geq 0}\left[\croot^-_0,\croot^\pm_2,\ldots,\croot^\pm_{2\left\lfloor\frac{n-1}{2}\right\rfloor}\right] & &\subset & \hdchebsr{n+1}&=\integer_{\geq 0}[\croot_0^-,\croot^\pm_1,\croot^\pm_2,\croot^\pm_3,\ldots,\croot^\pm_{n-1}],
		\end{align*}
		subject to the following product rules for any $1 \leq j \leq i \leq n-1$:
		\begin{align*}
			\croot^\pm_i\croot^\pm_j &= \sum_{k=0}^j \omega_{i-j+2k}, & 
			\croot_0^- \croot_j^\pm &= \croot_j^\mp,  &
			(\croot_0^-)^2 &= \croot_0^+ = 1,
		\end{align*}
		where each element $\omega_k$ resulting from the above product is such that
		\begin{equation*}
			\omega_k =
			\begin{cases}
				\croot_k^+			& \text{if }k <n-1, \\
				\croot_{n-1}^+		& \text{if }k =n-1\text{ and }i+j-n+1 \equiv 0 \text{ (mod 4)}, \\
				\croot_{n-1}^-		& \text{if }k =n-1\text{ and }i+j-n+1 \equiv 2 \text{ (mod 4)}, \\
				\croot_{2n-2-k}^-	& \text{if }k >n-1.
			\end{cases}
		\end{equation*}
	\end{defn}
	
	The ring $\hdchebr{4}$ is an exceptional case that exploits the $\integer_3$ group symmetry of $\coxD_4$, along with the relation $U_2(\cos\frac{\pi}{6}) = 2$. Here the element $\croot_1$ is represents $U_1(\cos\frac{\pi}{6}) = \sqrt{3}$, and $1$, $g$ and $g^2$ represent $U_0(\cos\frac{\pi}{6}) = 1$. Consequently, $\croot_1^2$ represents $U_0(\cos\frac{\pi}{6}) + U_2(\cos\frac{\pi}{6}) = 3$, which is given by the element $1 + g + g^2$. One can show that this is actually the fusion ring of Tambara-Yamagami type associated to the cyclic group of order 3 (c.f. \cite[Example 4.10.5]{EGNO}). It is not difficult to see from this that we have the following.
	
	\begin{rem} \label{lem:D4Basis}
		The set $\mathcal{B}^{\coxD_4} = \{\croot_0,\croot_1,g,g^2\}$ is a $\integer$-basis of $\hdchebr{4}$, where for later convenience, we have adopted the notation $\croot_0 = 1$.
	\end{rem}
	
	For $n>3$, the ring $\hdchebr{n+1}$ is related to the ring $Z_{2n}$ by adding a generator that represents the $\integer_2$ group symmetry of $\coxD_{n+1}$, and by modifying the relations with respect to this group action. Specifically, we have
	\begin{equation*}
		\hdchebr{n+1} \cong \integer[x,y]/(U_{n}(\tfrac{x}{2}) - yU_{n-2}(\tfrac{x}{2}), y^2 -1),
	\end{equation*}
	where $\croot^+_i$ corresponds to $U_{i}(\tfrac{x}{2})$ and $\croot^-_i$ corresponds to $yU_{i}(\tfrac{x}{2})$. One can also see that $\hdchebr{n+1}$ is related to the second isomorphism of the ring $\hachebr{2n-1}$ by forgetting the relation $yU_{n-1}(\tfrac{x}{2})-U_{n-1}(\tfrac{x}{2})$. An interesting consequence of this is that $\dchebr{n+1} \cong \achebr{2n-1}$ whenever $n$ is even. We also have the following.
	
	\begin{lem} \label{lem:DBasis}
		For $n>3$, the set $\mathcal{B}^{\coxD_{n+1}}=\{\croot_0^\pm,\croot_1^\pm,\ldots,\croot_{n-1}^\pm\}$ is a $\integer$-basis for $\hdchebr{n+1}$.
	\end{lem}
	\begin{proof}
		Let $n >3$ be fixed. We will consider the ring $\integer[x,y]/(U_{n}(\tfrac{x}{2}) - yU_{n-2}(\tfrac{x}{2}), y^2 -1)$. It is not difficult to show that, for each $0 \leq l<\frac{1}{2}(n-1)$, we can inductively obtain relations 
		\begin{equation*}
			U_{n+2l}(\tfrac{x}{2}) = yU_{n-2-2l}(\tfrac{x}{2})
		\end{equation*}
		from the relation $U_{n}(\tfrac{x}{2}) = yU_{n-2}(\tfrac{x}{2})$ by iteratively multiplying by $U_2(\frac{x}{2})$. By multiplying each of these relations by $U_1(\tfrac{x}{2})$, we obtain further relations
		\begin{equation} \tag{$\dagger$}\label{eq:Drel}
			U_{n+2l-1}(\tfrac{x}{2})+U_{n+2l+1}(\tfrac{x}{2}) = yU_{n-3-2l}(\tfrac{x}{2})+yU_{n-1-2l}(\tfrac{x}{2}).
		\end{equation}
		If $n$ is even, then one can further deduce that
		\begin{equation*}
			yU_{2l+1}(\tfrac{x}{2}) = U_{2n-2l-3}(\tfrac{x}{2}) + (-1)^l U_{2n-1}(\tfrac{x}{2}).
		\end{equation*}
		On the other hand, if $n$ is odd, then one can deduce that $U_{2n-1}(\tfrac{x}{2})=0$, which implies that the set $\{U_i(\tfrac{x}{2}):i\geq 2n-1\}$ is $\integer$-linearly dependent to the set $\{U_i(\frac{x}{2}) : 0 \leq i \leq 2n-2\}$. One can also deduce that $y$ is $\integer$-linearly independent to the set $\{U_i(\frac{x}{2}) : 0 \leq i \leq 2n-2\}$ in this case. In addition, for each $l>0$, one can deduce the following relations when $n$ is odd.
		\begin{equation*}
			yU_{2l}(\tfrac{x}{2}) = U_{2(n-l-1)}(\tfrac{x}{2}) + (-1)^l (U_{2(n-l)}(\tfrac{x}{2})-y).
		\end{equation*}
		Consequently, in both the odd and even cases, the elements of the set $\{U_i(\frac{x}{2}),yU_i(\frac{x}{2}) : 0 \leq i \leq n-1\}$ are $\integer$-linearly independent. These correspond to the elements of the set $\mathcal{B}^{\coxD_{n+1}}$ under the previously stated isomorphism, and thus the elements of $\mathcal{B}^{\coxD_{n+1}}$ are $\integer$-linearly independent in $\hdchebr{n+1}$. Moreover, the product of any two elements in $\hdchebr{n+1}$ is a $\integer$-linear combination of elements in $\mathcal{B}^{\coxD_{n+1}}$. Thus, $\mathcal{B}^{\coxD_{n+1}}$ is a $\integer$-basis of $\hdchebr{n+1}$, as required.
	\end{proof}
	
	It is also worth noting how the alternating behaviour (of the sign superscript) of the term $\omega_{n-1}$ in the product rule for $\hdchebr{n+1}$ arises from the relations. If $i+j - n + 1 \equiv 0$ (mod 4) in a product $\croot_i^\pm\croot_j^\pm$ which contains a term $\omega_{n-1}$, then there are an even number of terms $\omega_k$ with $k>n-1$. In this case, we can use the relations (\ref{eq:Drel}) with $l>0$ to obtain $\omega_{n-1} = \croot_{n-1}^+$ in the product. On the other hand, if $i+j - n + 1 \equiv 2$ (mod 4) in a product $\croot_i^\pm\croot_j^\pm$ which contains a term $\omega_{n-1}$, then there are an odd number of terms $\omega_k$ with $k>n-1$. In this case, we must use the relation (\ref{eq:Drel}) with $l=0$ along with the other relations (\ref{eq:Drel}) with $l>0$ to obtain $\omega_{n-1} = \croot_{n-1}^-$ in the product.
	 
	\subsubsection{(Semi)rings of type $\coxE$}
	The exceptional (semi)rings of types $\coxE_6$, $\coxE_7$ and $\coxE_8$ largely result from exploiting special relations that arise from Chebyshev polynomials of the second kind when they are evaluated at $\cos \frac{\pi}{12}$, $\cos \frac{\pi}{18}$ and $\cos \frac{\pi}{30}$ respectively. We will begin by defining these (semi)rings, and then provide some additional explanation.

	\begin{defn} \label{defn:ESemirings}
		Define the following exceptional rings of type $\coxE$.
		\begin{align*}
			\echebr{6} &=\integer[\croot_0^-, \croot_2] / (S_6) & &\subseteq & \hechebr{6} &= \integer[\croot_0^-, \croot^+_1,\croot^-_1, \croot_2, \croot_{v_6}] / (\wh S_6),\\
			\echebr{7} &=\integer[\croot_2, \wt\croot_2] / (S_7) & &\subseteq & \hechebr{7} &= \integer[\croot_1, \wt\croot_1, \croot_2, \wt\croot_2, \croot_3, \croot_{v_7}] / (\wh S_7), \\
			\echebr{8} &=\integer[\gratio, \croot_2] / (S_8) & &\subseteq & \hechebr{8} &= \integer[\croot_1, \croot_2, \croot_{v_8},\gratio] / (\wh S_8),
		\end{align*}
		where each $(\wh S_i)$ is the ideal generated by a set of relations $\wh{S}_i = S_i \cup S'_i$ with
		\begin{align*}
			S_6 &= \{(\croot_0^-)^2-1, (\croot_2)^2 - 1 - 2\croot_2 - \croot_0^-, \croot_0^- \croot_2 - \croot_2\}, \\
			S'_6 &= \{\croot_0^-\croot_1^+ - \croot_1^-, (\croot_1^+)^2 - 1 - \croot_2, \croot^+_1\croot_2 - \croot^+_1 - \croot^-_1 - \croot_{v_6}, \croot^+_1 \croot_{v_6} - \croot_2\}, \\
			S_7 &= \{\croot_2^2 - 1 - \croot_2\wt\croot_2, \wt\croot_2^2 - 1 - \wt\croot_2 - \croot_2\}, \\
			S'_7 &= \{\croot_1^2 - 1 - \croot_2, \wt\croot_1 - \croot_2\wt\croot_2 +\croot_2 +\wt\croot_2, \croot_1\croot_2 - \croot_3 -\croot_1, \croot_1\wt\croot_1-\croot_3, \croot_1\wt\croot_2 - \croot_3 - \croot_{v_7}\}, \\
			S_8 &= \{\gratio^2 - \gratio - 1, \croot_2^2 - \gratio\croot_2 - \croot_2 - 1\}, \\
			S'_8 &= \{\croot^2_1 - 1 - \croot_2, \croot_1\croot_2 - \croot_1 - \gratio\croot_{v_8}, \croot_1\croot_{v_8}-\gratio\croot_2\}.
		\end{align*}
		Since the product of any two elements with only positive coefficients in the rings $\echebr{6}$, $\echebr{7}$ and $\echebr{8}$ will produce an element which again has only positive coefficients, it is natural to define semirings $\echebsr{6}$, $\echebsr{7}$ and $\echebsr{8}$ by restricting the coefficient ring from $\integer$ to $\nnint$. Later in the paper, it will be convenient to denote the multiplicative identity in these (semi)rings by $\croot_0^+ =1 \in \hechebr{6}$ and $\croot_0 =1 \in \hechebr{7},\hechebr{8}$.
	\end{defn}
	
	\begin{rem}
		Note that we could also define semirings $\hechebr{6}_+$ and $\hechebr{8}_+$ similarly, but we do not need this in our construction. Moreover, the ring $\hechebr{7}$ is somewhat unusual in that the product of any two elements with only positive coefficients in this ring will not necessarily produce an element which has only positive coefficients (see Table~\ref{tab:E7} for details). It is thus not possible to define a semiring $\hechebr{7}_+$, and hence it is preferable to avoid working with the notion of semirings of the form $\hechebr{i}_+$ in this paper. This may be related to the lack of a fusion ring of type $\coxE_7$, whereas $\hechebr{6}$ and $\hechebr{8}$ can be shown to be the $\coxE_6$ and $\coxE_8$ fusion rings respectively (see \cite{Kirillov} for details).
	\end{rem}

	\begin{table}[h] 
		\begin{tabular}{l | c c c c}
			& $1$ & $\croot_1$ & $\croot_2$ & $\croot_3$ \\ \hline
			$1$ & $1$ & $\croot_1$ & $\croot_2$ & $\croot_3$ \\
			$\croot_1$ & $\croot_1$ & $1 + \croot_2$ & $\croot_1 + \croot_3$ & $\croot_2+\wt\croot_1+\wt\croot_2$ \\
			$\croot_2$ & $\croot_2$ & $\croot_1 + \croot_3$ & $1 + \croot_2 + \wt\croot_1 + \wt\croot_2$ & $\croot_1 + 2 \croot_3 + \croot_{v_7}$ \\
			$\croot_3$ & $\croot_3$ & $\croot_2+\wt\croot_1+\wt\croot_2$ & $\croot_1 + 2 \croot_3 + \croot_{v_7}$ & $1 + 2\croot_2 + \wt\croot_1 + 2\wt\croot_2$ \\
			$\wt\croot_1$ & $\wt\croot_1$ & $\croot_3$ & $\croot_2+\wt\croot_2$ & $\croot_1 + \croot_3 + \croot_{v_7}$ \\
			$\wt\croot_2$ & $\wt\croot_2$ & $\croot_3 + \croot_{v_7}$ & $\croot_2 + \wt\croot_1 +\wt\croot_2$ & $\croot_1 + 2\croot_3$ \\
			$\croot_{v_7}$ & $\croot_{v_7}$ & $\wt\croot_2$ & $\croot_3$ & $\croot_2 + \wt\croot_1$
		\end{tabular}
		
		~
		
		\begin{tabular}{l | c c c}
			& $\wt\croot_1$ & $\wt\croot_2$ & $\croot_{v_7}$ \\ \hline \\[-2ex]
			$\wt\croot_1$ & $1 + \wt\croot_2$ & $\wt\croot_1 + \croot_2$ & $\croot_3 - \croot_{v_7}$ \\
			$\wt\croot_2$ & $\wt\croot_1+\croot_2$ & $1 + \croot_2 + \wt\croot_2$ & $\croot_1 + \croot_{v_7}$ \\
			$\croot_{v_7}$ & $\croot_3 - \croot_{v_7}$ & $\croot_1 + \croot_{v_7}$ & $1 - \wt\croot_1 + \wt\croot_2$
		\end{tabular}
		\caption{The products between the generators of the ring $\hechebr{7}$. All products follow from the relations in the set $S_7 \cup S'_7$.} \label{tab:E7}
	\end{table}
	
	We can equivalently write the rings $\hechebr{6}$, $\hechebr{7}$ and $\hechebr{8}$ as the following quotients of polynomial rings, which makes the connection with Chebyshev polynomials of the second kind clearer. We leave the reader to check these isomorphisms.
	\begin{align*}
		\hechebr{6} &\cong \integer[x,y,z] / (S''_6), \\
		\hechebr{7} &\cong \integer[x,y,z] / (S''_7), \\
		\hechebr{8} &\cong \integer[x,y,z] / (S''_8),
	\end{align*}
	where
	\begin{align*}	
		S''_6 &= \{xz - U_2(\tfrac{x}{2}), yz - z, y^2 - 1, z^2 - 1 - y\} \\
		S''_7 &= \{U_4(\tfrac{x}{2}) - y-U_2(\tfrac{y}{2}), U_5(\tfrac{x}{2}) - U_3(\tfrac{x}{2}) - z, U_6(\tfrac{x}{2}) - U_2(\tfrac{x}{2}) - U_2(\tfrac{y}{2}), \\ & \qquad U_7(\tfrac{x}{2})-x-U_3(\tfrac{x}{2}),U_8(\tfrac{x}{2})-1-y-U_2(\tfrac{x}{2})\} \\
		S''_8 &= \{U_3(\tfrac{x}{2})-yz,U_4(\tfrac{x}{2})-yU_2(\tfrac{x}{2}),xz-yU_2(\tfrac{x}{2}),y^2-y-1\}.
	\end{align*}
	Specifically, each $U_i(\frac{x}{2})$ corresponds to $\croot_i$, and each $z$ corresponds to $\croot_{v_i}$ (where $i\in\{6,7,8\}$ as appropriate). For $\hechebr{6}$, $y$ corresponds to $\croot_0^-$. For $\hechebr{7}$, each $U_i(\frac{y}{2})$ corresponds to $\wt\croot_i$. For $\hechebr{8}$, $y$ corresponds to $\gratio$.
	
	One can also check that we have relations $U_6(\frac{x}{2}) = y U_4(\frac{x}{2})$ and $U_5(\frac{x}{2}) = y U_5(\frac{x}{2})$ in the ring $\integer[x,y,z] / (S''_6)$. We also have the relations $U_9(\frac{x}{2}) = U_7(\frac{x}{2})$ and $U_3(\frac{y}{2}) = U_4(\frac{y}{2})$ in the ring $\integer[x,y,z] / (S''_7)$, and the relation $U_{15}(\frac{x}{2}) = U_{13}(\frac{x}{2})$ in the ring $\integer[x,y,z] / (S''_8)$. The rings $\ZUi{12}$, $\ZUi{18}$ and $\ZUi{30}$ can therefore be considered as quotients of the rings $\hechebr{6}$, $\hechebr{7}$ and $\hechebr{8}$ via the map which takes $x$ to $2\cos\frac{\pi}{2n}$ (with $n=6,9,15$ for $\coxE_6,\coxE_7,\coxE_8$ respectively), $y$ to $1$ (for $\coxE_6$) or $2\cos\frac{\pi}{9}$ (for $\coxE_7$) or $\gratio = 2\cos\frac{\pi}{5}$ (for $\coxE_8$), and $z$ to $\vw(v_i)$.
	
	\begin{lem} \label{lem:EBasis}
		The sets
		\begin{align*}
			\mathcal{B}^{\coxE_6}&=\{\croot_0^+,\croot_0^-, \croot_1^+, \croot_1^-, \croot_2,\croot_{v_6}\}, \\
			\mathcal{B}^{\coxE_7}&=\{\croot_0,\croot_1,\wt\croot_1,\croot_2,\wt\croot_2,\croot_3, \croot_{v_7}\}, \\
			\mathcal{B}^{\coxE_8}&=\{\croot_0,\gratio,\croot_1,\gratio\croot_1,\croot_2,\gratio\croot_2,\croot_{v_8},\gratio\croot_{v_8}\}
		\end{align*}
		are $\integer$-bases of the rings $\hechebr{6}$, $\hechebr{7}$ and $\hechebr{8}$ respectively.
	\end{lem}
	\begin{proof}
		It is not difficult to verify that the product of any two elements in $\hechebr{i}$ is a $\integer$-linear combination of elements in $\mathcal{B}^{\coxE_i}$ --- we leave this for the reader to check. It thus remains to show that the elements in $\mathcal{B}^{\coxE_i}$ are $\integer$-linearly independent. This largely follows from the linear independence Chebyshev polynomials of the second kind. Specifically for $\hechebr{6}$, this follows from the fact that
		\begin{align*}
			y &= U_4(\tfrac{x}{2}) - U_2(\tfrac{x}{2}), \\
			xy &= U_5(\tfrac{x}{2}) - U_1(\tfrac{x}{2}), \\
			z &= U_3(\tfrac{x}{2}) - U_5(\tfrac{x}{2})+U_1(\tfrac{x}{2})
		\end{align*}
		in the ring $\integer[x,y,z] / (S''_6)$. For $\hechebr{7}$, this follows from the fact that 
		\begin{align*}
			y &= U_4(\tfrac{x}{2})-U_6(\tfrac{x}{2})+U_2(\tfrac{x}{2}), \\
			U_2(\tfrac{y}{2}) &= U_6(\tfrac{x}{2})-U_2(\tfrac{x}{2}), \\
			z &= U_5(\tfrac{x}{2})-U_3(\tfrac{x}{2})
		\end{align*}
		in the ring $\integer[x,y,z] / (S''_7)$. For $\hechebr{8}$, this follows from the fact that
		\begin{align*}
			y&=U_6(\tfrac{x}{2}) - U_4(\tfrac{x}{2}), \\
			z&=U_5(\tfrac{x}{2})-U_7(\tfrac{x}{2})+U_3(\tfrac{x}{2}), \\
			xy &= U_7(\tfrac{x}{2})-U_3(\tfrac{x}{2}), \\
			U_2(\tfrac{x}{2})y &= U_4(\tfrac{x}{2}),  \\
			yz &= U_3(\tfrac{x}{2}).
		\end{align*}
		in the ring $\integer[x,y,z] / (S''_8)$. All of these relations can be obtained by computing Chebyshev polynomials of the second kind via the identity $U_{i+1}(\frac{x}{2}) = x U_i(\frac{x}{2}) - U_{i-1}(\frac{x}{2})$ and then by considering the relations in $S''_6$, $S''_7$ or $S''_8$.
	\end{proof}
	
	\subsubsection{Partial orderings on the (semi)rings} The relationship between the above (semi)rings and Chebyshev polynomials of the second kind is further highlighted by the existence of ring homomorphisms
	\begin{align*}
		\rhom{\gend}\colon \chebr{\gend} &\rightarrow \ZUi{2n} \\
		\hrhom{\gend}\colon \hchebr{\gend} &\rightarrow \ZUi{2n} \\
		\croot^\bullet_i &\mapsto U_i\left(\cos\frac{\pi}{2n}\right) 	&	
		(\gend &\in\{\coxA_{2n-1},\coxD_{n+1},\coxE_{6},\coxE_{7},\coxE_{8}\}), \\
		g &\mapsto 1 & (\gend &= \coxD_4), \\
		\wt\croot_i &\mapsto U_i\left(\cos\frac{\pi}{9}\right)	& (\gend &=\coxE_{7}), \\	
		\gratio &\mapsto 2\cos\frac{\pi}{5} & (\gend &=\coxE_{8}), \\
		\croot_{v_i} &\mapsto \vw(v_i) & (\gend &\in\{\coxE_{6},\coxE_{7},\coxE_{8}\}),
	\end{align*}
	where $n=6,9,15$ for $\gend=\coxE_6,\coxE_7,\coxE_8$ respectively. In particular, the homomorphisms $\hrhom{\gend}$ are epimorphisms (since, in each case, $\ZUi{2n}$ is a quotient of $\chebr{\gend}$), and $\rhom{\gend}$ is given by pre-composing $\hrhom{\gend}$ by the inclusion $\chebr{\gend}\rightarrow \hchebr{\gend}$. These homomorphisms along with the following will be used at various points in the paper.
	
	\begin{defn} \label{defn:SemiringPO}
		Let $\gend \in \{\coxA_{2n-1},\coxD_{n+1},\coxE_{6},\coxE_{7},\coxE_{8}\}$ and let $\realhom{2n}\colon \ZUi{2n} \rightarrow \real$ be the canonical embedding. Then $\chebr{\gend}$ has a partial ordering given by
		\begin{equation*}
			r \leq s \Leftrightarrow r=s \text{ or } \realhom{2n}\rhom{\gend}(r) < \realhom{2n}\rhom{\gend}(s).
		\end{equation*}
		This endows $\chebsr{\gend}$ with a partial ordering in the natural way via the embedding $\srhom{\gend}\colon \chebsr{\gend} \rightarrow \chebr{\gend}$.
	\end{defn}

	\section{The folding projections of the module and bounded derived categories}\label{sec:Projection}
	In \cite{DTOdd}, it is shown that a folding onto a quiver of type $\coxH_3$, $\coxH_4$ or $\coxI_2(2n+1)$ induces a map that projects the dimension vectors of objects in the module and derived categories of the unfolded quiver to certain multiples of the roots of the folded quiver. Additionally, the folding induces a weighting on the rows of the Auslander-Reiten quiver of the module and derived categories associated to the unfolded quiver such that the dimension vectors of the objects in the rows with weight 1 are projected to the roots of the folded quiver --- providing an analogue of Gabriel's Theorem (c.f. \cite{Gabriel,DlabRingelFinite}) for $R$-quivers. On the other hand, the dimension vectors of the objects in the rows with weight $w$ are mapped to the $w$-multiple of the projected dimension vector of an object in a corresponding row with weight 1. We will show that the same is true for all possible foldings onto quivers of type $\Iiin$. Throughout this section, we denote by $\Frac(R)$ to be the field of fractions of an integral domain $R$. First let us make precise the notion of the weighting of the rows of the Auslander-Reiten quiver.
	
	\begin{defn}\label{defn:RowWeights}
		Throughout, let $F\colon Q^{\gend} \rightarrow Q^{\Iiin}$ be a folding of quivers from Section~\ref{sec:Prelim-Folding} (so $\gend \in \{\coxA_{2n-1},\coxD_{n+1},\coxE_6,\coxE_7,\coxE_8\}$) and let $\acat$ be either the module or bounded derived category of the path algebra $KQ^\gend$. For each $i \in Q^\gend$, we denote by $\mathcal{P}^\acat_i$ the row of the Auslander-Reiten quiver of $\acat$ containing the module $P(i)$. That is,
	\begin{equation*}
		\mathcal{P}^\acat_i = \{[\tau^m P(i)] : m \in \integer\}
	\end{equation*}
	where $[M]$ denotes the iso-class of an object $M \in \acat$. We define the \emph{weight of the row $\mathcal{P}^\acat_i$} to be the value $\varepsilon_i=\frac{\vw(i)}{\mw(F(i))} \in \Frac(\ZUi{2n})$.
	\end{defn}
	
	\begin{rem}\label{defn:InjRow}
		One may note that since $KQ^\gend$ is representation-finite, we could equivalently define rows of the Auslander-Reiten quiver by sets
		\begin{equation*}
			\mathcal{I}^\acat_i = \{[\tau^m I(i)] : m \in \integer\}.
		\end{equation*}
		For the foldings with $\gend \in \{\coxD_{2m},\coxE_7,\coxE_8\}$, we have $\mathcal{P}^\acat_i = \mathcal{I}^\acat_i$. For foldings with $\gend =  \coxA_{2n-1}$, we note that $\mathcal{P}^\acat_i = \mathcal{I}^\acat_{2n-2-i}$, which mirrors the relation $U_{i}(\cos\frac{\pi}{2n}) = U_{2n-2-i}(\cos\frac{\pi}{2n})$. We also note in this case that $F(i)=F(2n-2-i)$. For the foldings with $\gend= \coxD_{n+1}$ with $n$ even, we have $\mathcal{P}^\acat_i = \mathcal{I}^\acat_{i}$ for all $i \neq (n-1)^\pm$ and $\mathcal{P}^\acat_{(n-1)^\pm} = \mathcal{I}^\acat_{(n-1)^\mp}$. For foldings with $\gend =  \coxE_6$, we note that $\mathcal{P}^\acat_{i^\pm} = \mathcal{I}^\acat_{i^\mp}$ for $i=0,1$, that $\mathcal{P}^\acat_{i} = \mathcal{I}^\acat_{i}$ for $i=2,v_6$, and that $F(i^\pm)=F(i^\mp)$. It is then easy to see that in all cases, we could equivalently define the weight of a row containing both $P(i)$ and $I(j)$ as $\varepsilon_i=\varepsilon_j$.
	\end{rem}
	
	Now we will recall the definition from \cite{DTOdd} of the projection map that will be central to this paper, which we modify slightly to account for the non-trivial $R$-valuation of $Q^{\Iiin}$.
	\begin{defn} \label{defn:ProjMap}
		Given an arbitrary weighted folding $F\colon Q \rightarrow Q^{\Iiin}$, define a map $d_F\colon \integer^{|Q_0|} \rightarrow (\Frac(\ZUi{2n}))^2$ by the weighted sum
		\begin{equation*}
			(u_i)_{i \in Q_0} \mapsto \left(\sum_{F(i)=[0]} \frac{\vw(i)}{\mw([0])} u_i, \sum_{F(i)=[1]} \frac{\vw(i)}{\mw([1])} u_i\right).
		\end{equation*}
		For each object $M \in \mod*KQ$, we define
		\begin{equation*}
			\dimproj_{F}^{\mod*KQ}(M) = d_F\dimvect(M).
		\end{equation*}
		In addition, for each object $X=\bigoplus_{j \in \integer} \sus^j M_j \in \bder(\mod*KQ)$, we define
		\begin{equation*}
			\dimproj_{F}^{\bder(\mod*KQ)}(X) = \sum_{j \text{ even}} \dimproj_{F}^{\mod*KQ}(M_j) - \sum_{j \text{ odd}} \dimproj_{F}^{\mod*KQ}(M_j).
		\end{equation*}
		Given any object $X \in \acat$, where $\acat$ is either $\mod*KQ$ or $\bder(\mod*KQ)$, we call the vector $\dimproj_{F}^{\acat}(X)$ the \emph{$F$-projected (dimension) vector of $X$}.
	\end{defn}
	
	\begin{rem} \label{rem:dimprojSums}
		It is easy to see that $\dimproj_F^\acat(X \oplus X') = \dimproj_F^\acat(X) + \dimproj_F^\acat(X')$ for any object $X,X' \in \acat$. This fact will be useful in several proofs.
	\end{rem}
	
	We will now state the main theorem of this section.
	\begin{thm} \label{thm:FoldingProjection}
		Let $F\colon Q^\gend \rightarrow Q^{\Iiin}$ be a weighted folding and let $\acat$ be either $\mod*KQ^\gend$ or $\bder(\mod*KQ^\gend)$. Then the following hold:
		\begin{enumerate}[label=(\alph*)]
			\item For any $i \in Q^\gend_0$ such that $F(i)=[0]$, we have
			\begin{equation*}
				\dimproj^\acat_F(\tau^m I(i)) = \varepsilon_i \alpha_m,
			\end{equation*}
			where $\alpha_m$ is the short root corresponding to the point $\mw([0])\mathrm{exp}(\frac{m \pi\imunit}{n})$.
			\item For any $i \in Q^\gend_0$ such that $F(i)=[1]$, we have
			\begin{equation*}
				\dimproj^\acat_F(\tau^m I(i)) = \varepsilon_i \beta_m,
			\end{equation*}
			where $\beta_m$ is the long root corresponding to the point $\mw([1])\mathrm{exp}(\frac{(2m+1) \pi\imunit}{2n})$.
		\end{enumerate}
	\end{thm}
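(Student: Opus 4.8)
The plan is to move to the bounded derived category, where $\tau$ is an autoequivalence and the claim becomes an identity in the Grothendieck group $K_0 := K_0(\bder(\mod*KQ^\gend)) \cong \integer^{|Q^\gend_0|}$ (basis the classes of the simples). The first point is that the derived projection of Definition~\ref{defn:ProjMap} factors through $K_0$: since $[\sus^j M] = (-1)^j[M]$, the alternating sum defining $\dimproj_F^{\bder(\mod*KQ^\gend)}$ is exactly $d_F$ applied to the class, i.e.\ $\dimproj_F^{\bder(\mod*KQ^\gend)}(X) = d_F([X])$. As $KQ^\gend$ is hereditary, the derived AR translate acts on $K_0$ as the Coxeter transformation $\Phi$ (a standard fact), so $[\tau^m I(i)] = \Phi^m\,\dimvect(I(i))$ for every $m \in \integer$. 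The theorem thus reduces to the single identity
\begin{equation*}
	d_F\bigl(\Phi^m\,\dimvect(I(i))\bigr) = \varepsilon_i\alpha_m \qquad \bigl(\text{resp. } \varepsilon_i\beta_m\bigr),
\end{equation*}
and the module-category version follows by restriction, since for those $m$ with $\tau^m I(i)$ a genuine module its class still equals $\Phi^m\,\dimvect(I(i))$ and $\dimproj_F^{\mod*KQ^\gend} = d_F\circ\dimvect$ on degree-$0$ objects.

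I would then isolate two ingredients. The first is a \emph{rotation identity}. Identify $(\Frac(\ZUi{2n}))^2$ with the plane of the $\Iiin$ root system by sending the two coordinate directions to the simple roots $\alpha_0$ and $\beta_{n-1}$; I claim that under this identification $d_F\circ\Phi = \rho\circ d_F$, where $\rho$ denotes rotation by $\tfrac{\pi}{n}$ (so that $\rho\alpha_m=\alpha_{m+1}$ and $\rho\beta_m=\beta_{m+1}$). Writing the two coordinate functionals of $d_F$ as a single $\mathbb{C}$-valued functional $\ell$, this is equivalent to $\ell\circ\Phi = \e^{\pi\imunit/n}\,\ell$, that is, to the statement that the Chebyshev-weighted functional assembled from the vertex weights $\vw(i)$ is a left eigenvector of $\Phi$ for the eigenvalue $\e^{\pi\imunit/n}$. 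Conceptually $d_F$ is a rescaling of the projection onto the Coxeter plane of $\gend$, on which the Coxeter element acts as rotation by $2\pi/h = \pi/n$ (the Coxeter number being $h=2n$ in each of the cases $\coxA_{2n-1}$, $\coxD_{n+1}$, $\coxE_6$, $\coxE_7$, $\coxE_8$); the content is that the weights $\vw(i)=U_i(\cos\tfrac{\pi}{2n})$ are precisely the coordinates of the relevant eigenvector. I would verify this by realising $\Phi$ as the product of the simple reflections (via BGP reflection functors) and checking the eigenvector equation vertex by vertex, the defining product rules of the semirings of Section~\ref{sec:Prelim-Cheb} being exactly what makes it hold.

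The second ingredient is the \emph{base case} $m=0$. Because in each folding the vertices with $F(i)=[0]$ are precisely the sources of $Q^\gend$ and those with $F(i)=[1]$ the sinks (matching the arrow $[0]\to[1]$ of $Q^\Iiin$), part~(a) at $m=0$ is immediate: for a source $I(i)=S(i)$, whence $d_F(\dimvect(I(i)))=\varepsilon_i\alpha_0$. For a sink $i$ (part~(b)), the injective $I(i)$ is the star supported on $i$ together with its (source) neighbours, all with multiplicity one, and the required equality $d_F(\dimvect(I(i)))=\varepsilon_i\beta_0$ unwinds in type $\coxA$ to exactly the Chebyshev recurrence $U_{j-1}+U_{j+1}=U_1U_j$ (the geometric content being that $\beta_0-\beta_{n-1}$ is a positive multiple of $\alpha_0$), with the $\coxD$- and $\coxE$-analogues following from their respective relations. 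Granting both ingredients, the theorem follows by applying $\rho^m$:
\begin{equation*}
	d_F\bigl(\Phi^m\,\dimvect(I(i))\bigr) = \rho^m\, d_F\bigl(\dimvect(I(i))\bigr) = \varepsilon_i\,\rho^m\alpha_0 = \varepsilon_i\alpha_m,
\end{equation*}
and symmetrically with $\beta$.

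The main obstacle is the rotation identity for the three exceptional foldings. In types $\coxA$ and $\coxD$ the eigenvector equation collapses to the uniform three-term Chebyshev recurrence and can be treated in families, but for $\coxE_6$, $\coxE_7$, $\coxE_8$ there is no such uniform recurrence: one must instead feed in the ad hoc relations $S_6$, $S_7$, $S_8$ (together with the special evaluations such as $U_4(\cos\tfrac{\pi}{30})=\gratio\,U_2(\cos\tfrac{\pi}{30})$) and check each branch vertex of the $\coxE$-diagram directly. A secondary point requiring care is the module-category bookkeeping: one must confirm that the range of $m$ for which $\tau^m I(i)$ is a genuine module is exactly the range producing the positive roots $\alpha_m$ (resp.\ $\beta_m$), so that parts~(a) and~(b) indeed recover Gabriel's theorem for $\Iiin$.
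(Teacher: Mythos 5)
Your proposal is correct and follows essentially the same route as the paper: both arguments combine a base case at $m=0$ (simple injectives at sources, star-shaped injectives at sinks, verified via the vertex-local Chebyshev relation $U_{j-1}+U_{j+1}=2\cos(\tfrac{\pi}{2n})U_j$ and its $\coxD$/$\coxE$ analogues) with a lemma that $\tau$ rotates $F$-projected vectors anticlockwise by $\tfrac{\pi}{n}$, and then iterates. The only difference is one of packaging: you phrase the rotation lemma as a left-eigenvector equation for the Coxeter transformation on $K_0$, whereas the paper (Lemma~\ref{lem:tauRotation}) derives it from minimal projective resolutions and the Nakayama functor, which reduces the same weight computation to the explicit projections of the projectives and injectives in Lemma~\ref{lem:ProjWeights}.
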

	
	Essentially, the theorem above says that the map $\dimproj^\acat_F$ maps an indecomposable object in the category $\acat$ to a specific multiple of a root of the standard root system of $\Iiin$. In fact, it says much more than this: objects in the same row map to vectors of the same length/multiple and that the Auslander-Reiten translation of an object corresponds to a rotation of the corresponding vector.
	
	\begin{rem}
		As previously mentioned in Remark~\ref{rem:CoxeterPlane}, the projection of roots induced by the folding $F\colon Q^\gend \rightarrow Q^{\Iiin}$ is precisely the projection onto the Coxeter plane considered by Steinberg in \cite{SteinbergRefl}. It follows from the work of Steinberg, that a bipartite Coxeter element is a rotation on the Coxeter plane of order given by the Coxeter number. In our setting, the Coxeter number is $2n$ and the Auslander-Reiten translation is a bipartite Coxeter functor (\cite{Bernstein,BrennerButler}). This offers an explanation as to why the Auslander-Reiten translation of an object $X \in \acat$ corresponds to a rotation of the vector $\dimproj_F^{\acat}(X)$. An example of this is illustrated in Figure~\ref{fig:A7-I8}. 
	\end{rem}
	
	Before we begin the proof of Theorem~\ref{thm:FoldingProjection}, we will highlight some immediate corollaries for the benefit of the reader. The first Corollary may be viewed as a generalisation of Gabriel's Theorem to quivers of type $\Iiin$.
	
	\begin{cor} \label{cor:RowBijection}
		The map $\dimproj^\acat_F$ induces a bijection from each row of the Auslander-Reiten quiver of $\acat$ to a subset of roots of $\Iiin$. In particular,
		\begin{equation*}
			\dimproj^\acat_F(\mathcal{P}_i^\acat) = 
			\begin{cases}
				\Isproots{2n} & \text{if } \varepsilon_i = 1 \text{, } F(i)=[0] \text{ and } \acat=\mod*KQ^\gend, \\
				\Isroots{2n} & \text{if }\varepsilon_i = 1 \text{, } F(i)=[0]  \text{ and } \acat=\bder(\mod*KQ^\gend), \\
				\Ilproots{2n} & \text{if } \varepsilon_i = 1 \text{, } F(i)=[1]  \text{ and } \acat=\mod*KQ^\gend, \\
				\Ilroots{2n} & \text{if } \varepsilon_i = 1 \text{, } F(i)=[1]  \text{ and } \acat=\bder(\mod*KQ^\gend).
			\end{cases}
		\end{equation*}
	\end{cor}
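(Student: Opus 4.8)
The plan is to derive the corollary directly from Theorem~\ref{thm:FoldingProjection}, supplemented by two elementary facts: that $d_F$ is non-negative on dimension vectors, and a global count of the indecomposables of $KQ^\gend$. First I would pass from projective rows to injective rows: by Remark~\ref{defn:InjRow} every row $\mathcal{P}_i^\acat$ equals some injective row $\mathcal{I}_j^\acat$ with $F(i)=F(j)$ and $\varepsilon_i=\varepsilon_j$, so it is enough to describe $\dimproj_F^\acat(\mathcal{I}_i^\acat)$. By Theorem~\ref{thm:FoldingProjection} this row is sent to $\{\varepsilon_i\alpha_m\}$ when $F(i)=[0]$ and to $\{\varepsilon_i\beta_m\}$ when $F(i)=[1]$, where $\alpha_m$ (resp.\ $\beta_m$) is the short (resp.\ long) root at $\mw([0])\mathrm{exp}(\tfrac{m\pi\imunit}{n})$ (resp.\ $\mw([1])\mathrm{exp}(\tfrac{(2m+1)\pi\imunit}{2n})$). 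Since $m\mapsto\alpha_m$ and $m\mapsto\beta_m$ are $2n$-periodic, take $2n$ distinct values, and satisfy $\alpha_{m+n}=-\alpha_m$ and $\beta_{m+n}=-\beta_m$, the positive short (resp.\ long) roots are exactly $\{\alpha_0,\dots,\alpha_{n-1}\}=\Isproots{2n}$ (resp.\ $\{\beta_0,\dots,\beta_{n-1}\}=\Ilproots{2n}$), the remaining $n$ values being their negatives.

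In the derived case $\acat=\bder(\mod*KQ^\gend)$ the translate $\tau$ is an autoequivalence, so $\tau^mI(i)$ is defined and nonzero for every $m\in\integer$. For $\varepsilon_i=1$ the $2n$ objects $\tau^0I(i),\dots,\tau^{2n-1}I(i)$ then have the $2n$ pairwise distinct images $\alpha_0,\dots,\alpha_{2n-1}$, hence are pairwise non-isomorphic, and $\dimproj_F^\acat$ restricts to a bijection from them onto the full set $\Isroots{2n}$ (resp.\ $\Ilroots{2n}$). By the $2n$-periodicity of $m\mapsto\alpha_m$ the remaining translates contribute no new values, so $\dimproj_F^\acat(\mathcal{I}_i^\acat)=\Isroots{2n}$ (resp.\ $\Ilroots{2n}$), and this fundamental domain realises the asserted bijection.

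In the module case $\acat=\mod*KQ^\gend$ the crucial extra input is that each row has exactly $n$ elements. The row of $I(i)$ is the finite $\tau$-orbit $\{I(i),\tau I(i),\dots,\tau^{\ell}I(i)=P(j)\}$ terminating at a projective (since $\tau^{-1}I(i)=0$ and $\tau^{\ell+1}I(i)=0$). As dimension vectors are non-negative and $\vw,\mw$ are positive, $d_F$ maps every dimension vector into the closed cone of non-negative combinations of the simple roots $\alpha_0,\alpha_1$; hence $\tau^mI(i)$ can be a nonzero module only when $\varepsilon_i\alpha_m$ (resp.\ $\varepsilon_i\beta_m$) lies in that cone, i.e.\ only for $0\le m\le n-1$. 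Thus every row has at most $n$ elements. I would then count globally: there are $|Q^\gend_0|$ rows (one indecomposable projective per $\tau$-orbit), while the total number of indecomposables equals the number of positive roots of $\gend$, namely $\tfrac12|Q^\gend_0|h_\gend$; since the Coxeter number $h_\gend$ equals $2n$ in each of the five types $\coxA_{2n-1},\coxD_{n+1},\coxE_6,\coxE_7,\coxE_8$, this total is $|Q^\gend_0|\cdot n$. As $|Q^\gend_0|$ rows of size $\le n$ summing to $|Q^\gend_0|\cdot n$ must each have size exactly $n$, we get $\ell=n-1$, so $\dimproj_F^\acat$ maps $\mathcal{I}_i^\acat$ bijectively onto $\{\alpha_0,\dots,\alpha_{n-1}\}$; for $\varepsilon_i=1$ this is precisely $\Isproots{2n}$ (resp.\ $\Ilproots{2n}$), while for $\varepsilon_i\neq1$ the same argument gives a bijection onto the $\varepsilon_i$-multiples of the positive roots, proving the general injectivity-per-row claim.

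The step I expect to be the main obstacle is upgrading ``at most $n$'' to ``exactly $n$'' for the module-category rows: positivity of $d_F$ only yields the upper bound, and it is the uniform identity $h_\gend=2n$ that, via the global count of indecomposables, forces every row to attain full length $n$ simultaneously across all five families. The only other point requiring care is the interpretation of ``bijection'' in the derived case, where the genuine row is infinite and the bijection is onto a fundamental domain for the $2n$-periodicity of $\dimproj_F^\acat$ along the row.
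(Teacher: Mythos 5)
Your argument is correct and follows the same route the paper intends: the corollary is stated there as an immediate consequence of Theorem~\ref{thm:FoldingProjection} (the paper gives no separate proof), and you derive it from exactly that theorem, reading off the images $\varepsilon_i\alpha_m$, $\varepsilon_i\beta_m$ and their $2n$-periodicity. The only substantive detail you add is the justification that each module-category row has exactly $n$ objects, which you obtain correctly via positivity of $d_F$ on dimension vectors plus the count of indecomposables using $h_\gend=2n$; this is a clean way to supply the standard fact about $\tau$-orbit lengths that the paper takes for granted.
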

	
	\begin{cor} \label{cor:RowWeights}
		For any $i \in Q_0^\gend$ and $m \in \integer$, the length of the vector $\dimproj^\acat_F(\tau^m P(i))$ with respect to the standard basis of $\real^2$ is
		\begin{equation*}
			\ell(\dimproj^\acat_F(\tau^m P(i))) = \vw(i).
		\end{equation*}
	\end{cor}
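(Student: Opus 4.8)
The plan is to read off the corollary directly from Theorem~\ref{thm:FoldingProjection} after passing from projective to injective objects and then computing Euclidean lengths. First I would use Remark~\ref{defn:InjRow}: in each of the five families the row $\mathcal{P}^\acat_i$ containing $P(i)$ equals a row $\mathcal{I}^\acat_j$ containing an injective $I(j)$, where the relabelling $i \mapsto j$ always satisfies $F(i)=F(j)$ and $\vw(i)=\vw(j)$, and hence $\varepsilon_i = \varepsilon_j$. Consequently, for each $m \in \integer$ there is an $m' \in \integer$ with $[\tau^m P(i)] = [\tau^{m'} I(j)]$, so it is enough to compute $\ell(\dimproj^\acat_F(\tau^{m'} I(j)))$.

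Next I would apply Theorem~\ref{thm:FoldingProjection} to $\tau^{m'}I(j)$. If $F(j)=[0]$ then $\dimproj^\acat_F(\tau^{m'} I(j)) = \varepsilon_j \alpha_{m'}$, where $\alpha_{m'}$ is the short root at the point $\mw([0])\mathrm{exp}(\tfrac{m'\pi\imunit}{n})$; if $F(j)=[1]$ then $\dimproj^\acat_F(\tau^{m'} I(j)) = \varepsilon_j \beta_{m'}$, where $\beta_{m'}$ is the long root at $\mw([1])\mathrm{exp}(\tfrac{(2m'+1)\pi\imunit}{2n})$. The key elementary observation is that, under the coordinatewise embedding $\Frac(\ZUi{2n}) \hookrightarrow \real$ identifying the target of $\dimproj^\acat_F$ with $\real^2$, the Euclidean length of the vector attached to a point $r\,\mathrm{exp}(\theta\imunit)$ is exactly its modulus $r$. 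Hence $\ell(\alpha_{m'}) = \mw([0])$ and $\ell(\beta_{m'}) = \mw([1])$, both independent of $m'$.

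Finally I would combine these. Since $\ell$ is positively homogeneous, $\ell(\varepsilon_j \alpha_{m'}) = \varepsilon_j\,\ell(\alpha_{m'})$ (and likewise for $\beta_{m'}$), so substituting $\varepsilon_j = \vw(j)/\mw(F(j))$ from Definition~\ref{defn:RowWeights} together with $\ell(\alpha_{m'}) = \mw([0]) = \mw(F(j))$ (resp.\ $\ell(\beta_{m'}) = \mw([1]) = \mw(F(j))$) yields
\[
	\ell\big(\dimproj^\acat_F(\tau^m P(i))\big) = \frac{\vw(j)}{\mw(F(j))}\cdot \mw(F(j)) = \vw(j) = \vw(i).
\]
I do not expect any genuine obstacle here, since all the substance is carried by Theorem~\ref{thm:FoldingProjection}; the only points needing care are the case-by-case bookkeeping of the projective-to-injective relabelling across the families $\coxA_{2n-1}$, $\coxD_{n+1}$, $\coxE_6$, $\coxE_7$, $\coxE_8$ (already tabulated in Remark~\ref{defn:InjRow}), and the elementary verification that every root in a single $\tau$-orbit shares the common length $\mw([0])$ or $\mw([1])$ determined by whether its row maps to $[0]$ or $[1]$.
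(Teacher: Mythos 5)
Your argument is correct and follows essentially the same route as the paper: the corollary is presented there as an immediate consequence of Theorem~\ref{thm:FoldingProjection}, resting on exactly the facts you invoke (the projective--injective row identification of Remark~\ref{defn:InjRow}, the length computation $\ell(\dimproj^\acat_F(I(i)))=\vw(i)$ of Remark~\ref{rem:ProjLengths}, and the rotation/length-preservation of Lemma~\ref{lem:tauRotation}). The only cosmetic difference is that the paper can skip your detour through $I(j)$, since Remark~\ref{rem:ProjLengths} already gives $\ell(\dimproj^\acat_F(P(i)))=\vw(i)$ directly and Lemma~\ref{lem:tauRotation} then finishes by rotation-invariance of $\ell$.
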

	
	The proof of these results follows the same reasoning as that used for the foldings onto $\coxI_2(2n+1)$ featuring in \cite{DTOdd}. The proof works along these lines: We first show that the the relative length of the $F$-projected vectors of any two indecomposable projective (resp. injective) modules that correspond to the same folded vertex is determined by the $R$-weights of the unfolded vertices. Next, we show that the Auslander-Reiten translate of an object corresponds to an anticlockwise rotation of its $F$-projected vector by an angle of $\frac{\pi}{n}$ about the origin. The proof is then finalised with the calculation of the $F$-projected vectors of two objects (one corresponding to a short root and the other a long root).
	
	\begin{lem}\label{lem:ProjWeights}
		For any $i,j\in Q^\gend_0$ such that $F(i)=F(j)$, we have
		\begin{align*}
			\vw(i) \dimproj^{\mod*KQ^\gend}_F(P(j)) &= \vw(j) \dimproj^{\mod*KQ^\gend}_F(P(i)) \\
		\vw(i) \dimproj^{\mod*KQ^\gend}_F(I(j)) &= \vw(j) \dimproj^{\mod*KQ^\gend}_F(I(i)).
		\end{align*}
	\end{lem}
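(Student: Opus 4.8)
The plan is to reduce both displayed identities to a single \emph{balance identity} satisfied by the vertex weights, namely that $\vw$ is a Perron--Frobenius eigenvector of the adjacency matrix of the underlying Dynkin diagram $\gend$. Indeed, rearranging $\vw(i)\dimproj_F(P(j)) = \vw(j)\dimproj_F(P(i))$ shows the claim is equivalent to the assertion that the normalised vector $\vw(i)\inv\dimproj^{\mod*KQ^\gend}_F(P(i))$ (and likewise for $I(i)$) depends only on the block $F(i)$ and not on $i$. Since each $Q^\gend$ from Section~\ref{sec:Prelim-Folding} is a bipartite orientation of a Dynkin diagram in which $F$ sends every source to $[0]$ and every sink to $[1]$, I would split the argument according to whether $i$ is a source or a sink.

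I would first dispose of the cases needing no computation. If $i$ is a sink, the only path starting at $i$ is trivial, so $P(i)=S(i)$ and $\dimvect P(i)=e_i$; applying $d_F$ gives $\dimproj_F(P(i)) = \tfrac{\vw(i)}{\mw([1])}(0,1)$, manifestly $\vw(i)$ times a vector depending only on $F(i)=[1]$. Dually, if $i$ is a source then $I(i)=S(i)$ and $\dimproj_F(I(i)) = \tfrac{\vw(i)}{\mw([0])}(1,0)$. This settles the sink case of the projective identity and the source case of the injective identity.

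For the remaining cases I use the structure of indecomposable projectives over a bipartite tree: if $i$ is a source, then $\rad P(i)=\bigoplus_{j\sim i}S(j)$ with every neighbour $j$ a sink, so $\dimvect P(i) = e_i + \sum_{j\sim i}e_j$. Applying $d_F$ and separating the two blocks yields
\begin{equation*}
    \dimproj_F(P(i)) = \left(\frac{\vw(i)}{\mw([0])},\ \frac{1}{\mw([1])}\sum_{j\sim i}\vw(j)\right).
\end{equation*}
The crux is therefore the balance identity $\sum_{j\sim i}\vw(j) = 2\cos\tfrac{\pi}{2n}\,\vw(i)$, the eigenvector equation for the eigenvalue $2\cos\tfrac{\pi}{h}$ with $h=2n$ the Coxeter number of $\gend$. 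Granting it, the right-hand side becomes $\vw(i)\bigl(\tfrac{1}{\mw([0])}, \tfrac{2\cos(\pi/2n)}{\mw([1])}\bigr)$, again $\vw(i)$ times a vector depending only on $F(i)=[0]$; and the injective identity for sinks follows by the dual computation using $\dimvect I(i) = e_i + \sum_{j\sim i}e_j$ and the balance identity at the sink $i$.

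It remains to establish the balance identity, which is where the weights were engineered and which I expect to be the main obstacle. On the linear stretches of each diagram, where $\vw$ is a constant multiple of $U_k(\cos\tfrac{\pi}{2n})$, it is exactly the three-term recurrence $U_{k-1}(x)+U_{k+1}(x)=2x\,U_k(x)$ at $x=\cos\tfrac{\pi}{2n}$, with the convention $U_{-1}=0$ absorbing the endpoints. At the branch node of $\coxD_{n+1}$ the two half-weighted spokes $\vw((n-1)^\pm)=U_{n-1}(\cos\tfrac{\pi}{2n})$ sum to reproduce the same recurrence, while at the outer spoke vertices one needs in addition the reflection identity $U_{n-2}(\cos\tfrac{\pi}{2n})=\cos\tfrac{\pi}{2n}\,U_{n-1}(\cos\tfrac{\pi}{2n})$ (equivalently $U_n=U_{n-2}$ at this argument). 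The genuinely delicate points are the exceptional weights $\sqrt2$ on $Q^{\coxE_6}$, the weight $2\cos\tfrac{5\pi}{18}$ on $Q^{\coxE_7}$, and the golden-ratio weights on $Q^{\coxE_8}$: checking balance at the trivalent vertex and its exceptional neighbour in each of the three $E$-quivers reduces to a specific trigonometric identity at $\cos\tfrac{\pi}{12}$, $\cos\tfrac{\pi}{18}$, $\cos\tfrac{\pi}{30}$ respectively (for instance $4\cos\tfrac{\pi}{12}+\sqrt2 = 2\cos\tfrac{\pi}{12}\,U_2(\cos\tfrac{\pi}{12})$ for $\coxE_6$). Once these finitely many identities are verified the balance identity holds at every vertex, and the lemma follows uniformly.
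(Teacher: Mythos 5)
Your proposal is correct and follows essentially the same route as the paper: the paper's one-line appeal to ``the multiplication rule for Chebyshev polynomials of the second kind'' is precisely your balance identity $\sum_{j\sim i}\vw(j)=2\cos\tfrac{\pi}{2n}\,\vw(i)$, applied to the explicit dimension vectors $\dimvect P(i)=e_i+\sum_{j\sim i}e_j$ (resp.\ $e_i$) to show that $\vw(i)\inv\dimproj_F(P(i))$ depends only on $F(i)$. You simply make explicit the vertex-by-vertex verification (linear stretches, the $\coxD$ branch node, and the exceptional $\coxE$ weights) that the paper leaves implicit.
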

	\begin{proof}
		One can see from the multiplication rule for Chebyshev polynomials of the second kind that
		\begin{align*}
			\dimproj^{\mod*KQ^\gend}_F(P(i)) &=
			\begin{cases}
				(0, \varepsilon_i)			& \text{if } P(i) \text{ is simple,} \\
				(\varepsilon_i,\varepsilon_i)		& \text{if }P(i) \text{ is non-simple.}
			\end{cases} \\
			\dimproj^{\mod*KQ^\gend}_F(I(i)) &=
			\begin{cases}
				(\varepsilon_i, 0)									& \text{if } I(i) \text{ is simple,} \\
				(4\varepsilon_i\cos^2\tfrac{\pi}{2n},\varepsilon_i)	& \text{if } I(i) \text{ is non-simple.}
			\end{cases}
		\end{align*}
		Thus, the result follows from the commutativity of $\ZUi{2n}$.
	\end{proof}
	
	\begin{rem} \label{rem:ProjLengths}
		Recall that $\ell((1,0)) = \mw([0]) = \lambda$ and $\ell((0,1)) = \mw([1]) = 2 \lambda \cos\tfrac{\pi}{2n}$, where $\lambda = 1$ if $\gend\neq\coxD_{n+1}$ and $\lambda = 2$ if $\gend=\coxD_{n+1}$. Thus by the law of cosines, the length of the vector $(1,1)$ is $\lambda$. From this, the proof of the above lemma, and the definition of the foldings, we can conclude that $\ell(\dimproj^\acat_F(P(i)))=\vw(i)$. Likewise, we have $\ell(\dimproj^\acat_F(I(i)))=\vw(i)$.
	\end{rem}
	
	\begin{lem}\label{lem:tauRotation}
		Let $M \in \acat$. Then $\dimproj^\acat_F(\tau M)$ is obtained from $\dimproj^\acat_F(M)$ by an anticlockwise rotation of $\frac{\pi}{n}$ about the origin.
	\end{lem}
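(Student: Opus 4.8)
The plan is to reduce the statement to a single linear-algebraic identity relating the projection $d_F$ to the Coxeter transformation, and then to prove that identity using the bipartite structure of $Q^\gend$ together with the Chebyshev recurrence governing the weights $\vw$. First, since $\dimproj^\acat_F$ is additive on direct sums (Remark~\ref{rem:dimprojSums}), it suffices to treat an indecomposable $M$. Recall the standard fact that for the representation-finite hereditary algebra $KQ^\gend$ the Auslander--Reiten translate induces the Coxeter transformation $\Theta = -C^{-\mathsf{T}}C$ on the Grothendieck group, where $C$ is the Cartan matrix: for an indecomposable non-projective module $M$ one has $\dimvect(\tau M) = \Theta\,\dimvect(M)$, and in $\bder(\mod*KQ^\gend)$ the identity $[\tau X] = \Theta\,[X]$ holds in $K_0$, where $[\sus X]=-[X]$. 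The alternating signs in Definition~\ref{defn:ProjMap} are chosen precisely so that $\dimproj^{\bder(\mod*KQ^\gend)}_F$ factors through $K_0(\bder(\mod*KQ^\gend))$ and agrees with $d_F$ applied to the class $[X]$. Consequently, in both categories the assertion reduces to the single identity
\begin{equation*}
	d_F \circ \Theta = R \circ d_F,
\end{equation*}
where $R$ denotes the anticlockwise rotation by $\tfrac{\pi}{n}$; here we identify the target $(\Frac(\ZUi{2n}))^2$ with the Euclidean plane carrying the standard root system of $\Iiin$, using the lengths recorded in Remark~\ref{rem:ProjLengths}, so that the $[0]$- and $[1]$-axes point along the two simple roots and ``rotation by $\tfrac{\pi}{n}$'' has its literal meaning.

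To prove the displayed identity I would exploit that every $Q^\gend$ in the three families is bipartite, the two parts being exactly the fibres $F\inv([0])$ and $F\inv([1])$. Writing the Coxeter transformation as the product $\Theta = c_- c_+$ of the partial Coxeter elements $c_+ = \prod_{F(j)=[0]} s_j$ and $c_- = \prod_{F(i)=[1]} s_i$ (each well defined since the reflections within one part commute, and the order chosen to match the anticlockwise convention, replacing $\tau$ by $\tau\inv$ reversing it), the claim becomes that $d_F$ is equivariant for these partial elements: $d_F \circ c_+ = \sigma_{[0]} \circ d_F$ and $d_F \circ c_- = \sigma_{[1]} \circ d_F$, where $\sigma_{[0]},\sigma_{[1]}$ are the two simple reflections of the dihedral group $\Iiin$ acting on the plane. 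Granting this, the mirrors of $\sigma_{[0]}$ and $\sigma_{[1]}$ meet at angle $\tfrac{\pi}{2n}$, so $\sigma_{[1]}\sigma_{[0]}$ is the rotation by $\tfrac{\pi}{n}$, and
\begin{equation*}
	d_F \circ \Theta = d_F \circ c_- \circ c_+ = \sigma_{[1]} \circ d_F \circ c_+ = \sigma_{[1]} \circ \sigma_{[0]} \circ d_F = R \circ d_F.
\end{equation*}
The underlying reason the angle works out is that the Coxeter number of each unfolded type coincides with that of $\Iiin$, namely $h = 2n$ (indeed $\coxA_{2n-1}$ and $\coxD_{n+1}$ have Coxeter number $2n$, while $\coxE_6,\coxE_7,\coxE_8$ have Coxeter numbers $12,18,30$, matching $\Iiin$ with $n=6,9,15$), so the Coxeter element rotates the Coxeter plane by $\tfrac{2\pi}{h}=\tfrac{\pi}{n}$, and $d_F$ is nothing but a coordinatisation of the projection onto that plane.

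The main obstacle is the equivariance $d_F \circ c_\pm = \sigma_{\bullet}\circ d_F$. Each simple reflection $s_j$ alters a class only in its own coordinate $e_j$, so applying a partial Coxeter element and then projecting produces a sum of contributions indexed by one part of the bipartition; the claim is that, after weighting by $\vw$, this sum collapses to a single reflection of the plane. This is exactly where the choice $\vw(i)=U_i(\cos\tfrac{\pi}{2n})$ (up to the type-dependent rescaling of Section~\ref{sec:Prelim-Folding}) is forced: the weight vector records the projection onto the Coxeter plane --- its entries are proportional to $\sin((i+1)\tfrac{\pi}{2n})$ via the defining identity $U_i(\cos\theta)\sin\theta=\sin((i+1)\theta)$ --- and the three-term recurrence $U_{i+1}(x)=2x\,U_i(x)-U_{i-1}(x)$ at $x=\cos\tfrac{\pi}{2n}$ is precisely what makes the projected contributions telescope correctly. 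I would carry out this verification by a direct computation on the basis $\{e_i\}$: the interior vertices of the $\coxA$-chain are handled uniformly by the recurrence, while the branch and high-valency vertices of types $\coxD$ and $\coxE$, together with the endpoint conventions $\croot_{2n-1}=0$ and $\croot_{2n}=-\croot_{2n-2}$ of Definition~\ref{defn:ASemirings} (which encode the behaviour of $\tau$ at the boundary of the Auslander--Reiten quiver), are checked separately. This computation is routine but genuinely case-dependent, and it is the only place where the individual foldings must be inspected. Finally, the module-category statement holds verbatim for $M$ without projective summands, and the remaining projective case is recovered by regarding $M$ as an object of $\bder(\mod*KQ^\gend)$, where $\tau$ is an autoequivalence and the identity above already applies.
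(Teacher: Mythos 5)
Your argument is correct, but it takes a genuinely different route from the paper. The paper's proof stays inside the module category: it takes the minimal projective resolution $0 \to P_1 \to P_0 \to M \to 0$ and the injective resolution of $\tau M$ obtained by applying the Nakayama functor, notes that $P_1$ and $I_0$ are semisimple because $KQ^\gend$ is radical square zero, computes the four projected vectors explicitly, observes that $-\dimproj^\acat_F(I_0)$ and $\dimproj^\acat_F(I_1)$ are the $\frac{\pi}{n}$-rotations of $\dimproj^\acat_F(P_0)$ and $-\dimproj^\acat_F(P_1)$ respectively, and concludes by linearity of rotation and dimension counting. You instead pass to $K_0$, factor the Coxeter transformation into the two partial Coxeter elements attached to the bipartition, and show that $d_F$ intertwines these with the two simple reflections of the dihedral group, so that their composite is the rotation by $\frac{\pi}{n}$. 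Both proofs ultimately rest on the same computation: that the weight vector $(\vw(i))_{i}$ is an eigenvector of the adjacency matrix of $Q^\gend$ with eigenvalue $2\cos\frac{\pi}{2n}$ (equivalently, $\sum_{j\sim i}\vw(j)=\frac{\mw([1])}{\mw([0])}\vw(i)$ at sources and symmetrically at sinks), which is exactly what the paper extracts from the Chebyshev multiplication rule in Lemma~\ref{lem:ProjWeights}; so the case-by-case verification you defer is no harder than what the paper already does, and your sketch of it via the three-term recurrence is the right one. Your route buys a conceptual explanation of the angle (the Coxeter numbers of $\gend$ and $\Iiin$ coincide) and a uniform treatment of projectives, since in $\bder(\mod*KQ^\gend)$ the identity $[\tau X]=\Theta[X]$ holds for all objects; the paper's route avoids the Coxeter-transformation formalism altogether. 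One small correction: the relations $\croot_{2n-1}=0$ and $\croot_{2n+k}=-\croot_{2n-2-k}$ of Definition~\ref{defn:ASemirings} are semiring data used in Section~\ref{sec:Action}, not an input to this lemma; the boundary fact you actually need at the end vertices of the $\coxA_{2n-1}$ chain is just $U_{-1}=0$ and $U_{2n-1}(\cos\frac{\pi}{2n})=0$, i.e.\ the eigenvector identity degenerating correctly at vertices of valency one.
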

	\begin{proof}
		Suppose $M \in \mod*KQ^\gend$ is a non-projective indecomposable. Let
		\begin{align*}
			0 \rightarrow P_1 \rightarrow &P_0 \rightarrow M \rightarrow 0, \\
			0 \rightarrow \tau M \rightarrow I_1 \rightarrow &I_0 \rightarrow 0
		\end{align*}
		be the minimal projective resolution of $M$ and the corresponding injective resolution of $\tau M$ given by applying the Nakayama functor $\nu$. Since $KQ^\gend$ is radical square zero, $P_1$ and $I_0$ are semisimple modules. Consequently, we have
		\begin{align*}
			\dimproj_F^\acat(P_1) &= (0,r'), & \dimproj_F^\acat(P_0) &= (r,r), \\
			\dimproj_F^\acat(I_1) &= (4s'\cos^2\tfrac{\pi}{2n},s'), & \dimproj_F^\acat(I_0) &= (s,0).
		\end{align*}
		for some $r,r',s,s' \in \Frac(\ZUi{2n})$. But since $\nu P(i)=I(i)$ for each $i \in Q^\gend_0$, we in fact have $r=s$ and $r'=s'$. This, along with Remark~\ref{rem:ProjLengths}, implies that
		\begin{equation*}
		\ell(\dimproj_F^\acat(P_0))=\ell(\dimproj_F^\acat(I_0)) \qquad \text{and} \qquad \ell(\dimproj_F^\acat(P_1))=\ell(\dimproj_F^\acat(I_1)).
		\end{equation*}
		Now note that the vector $-\dimproj_F^\acat(I_0)$ is obtained from $\dimproj_F^\acat(P_0)$ by an anticlockwise rotation of $\frac{\pi}{n}$ about the origin. Likewise,  $\dimproj_F^\acat(I_1)$ is obtained from $-\dimproj_F^\acat(P_1)$ by an anticlockwise rotation of $\frac{\pi}{n}$ about the origin. But such a rotation is a linear transformation, and we have
		\begin{align*}
			\dimproj_F^\acat(M) &= \dimproj_F^\acat(P_0) - \dimproj_F^\acat(P_1), \\
			\dimproj_F^\acat(\tau M) &= \dimproj_F^\acat(I_1) - \dimproj_F^\acat(I_0)
		\end{align*}
		by dimension counting the above projective/injective resolutions. Thus, $\dimproj_F^\acat(\tau M)$ is an anticlockwise rotation of the vector $\dimproj_F^\acat(M)$ by an angle of $\frac{\pi}{n}$ about the origin.
		
		Now suppose instead that $M \in \bder(\mod*KQ^\gend)$ corresponds to an indecomposable projective module concentrated in some degree $k$. Then $\tau M \cong \sus^{k+1} I$ for some indecomposable injective module $I$ whose corresponding vertex has the same weight as with $M$. So $\dimproj_F^\acat(\tau M)=-\dimproj_F^\acat(\sus^k I)$, which we have already shown to be the appropriate rotated vector. By Remark~\ref{rem:dimprojSums}, we therefore conclude that for any object $M \in \acat$, the vector $\dimproj^\acat_F(\tau M)$ is obtained from $\dimproj^\acat_F(M)$ by an anticlockwise rotation of $\frac{\pi}{n}$ about the origin.
	\end{proof}
	
	We can now prove Theorem~\ref{thm:FoldingProjection}.
	\begin{proof}[Proof of Theorem~\ref{thm:FoldingProjection}]
		(a) If $F(i) = [0]$ then $I(i)$ is simple. So $\dimproj_F^\acat(I(i))=(\varepsilon_i,0)$. The result then follows from Lemma~\ref{lem:tauRotation}.
		
		(b) If $F(i)=[1]$ then $I(i)$ is non-simple. As shown in the proof of Lemma~\ref{lem:tauRotation}, $\dimproj^\acat_F(I(i))$ is given by an anticlockwise rotation of the vector $-\dimproj^\acat_F(P(i))=(0,-\varepsilon_i)$ by $\frac{\pi}{n}$ about the origin. Thus, $\dimproj^\acat_F(I(i))=\varepsilon_i\beta_0$, where $\beta_0$ is the long root corresponding to the point $\mw([1])\mathrm{exp}(\frac{\pi\imunit}{2n})$. The result then follows from Lemma~\ref{lem:tauRotation}.
	\end{proof}
	
	\begin{figure} 
		\centering
		\input{Diagrams/A7-I8.tex}
		\caption{The folding $F^{\coxA_7}\colon Q^{\coxA_7} \rightarrow Q^{\coxI_2(8)}$. Top: The Auslander-Reiten quiver of $\mod*KQ^{\coxA_7}$. The category has $\integer_2$-symmetry determined by reflection in the dashed line. Bottom: The non-crystallographic projection of the Auslander-Reiten quiver of $\acat=\bder(\mod*KQ^{\coxA_7})$ under the map $\dimproj_F^\acat$, with irreducible morphisms superimposed. Objects in the same $\integer_2$-orbit map to the same point. Objects concentrated in odd (resp. even) degree map to the points labelled with (resp. without) $\Sigma$. One of the sectors of morphisms is dashed to indicate that they are not morphisms between objects of degrees $k$ and $k-1$, but rather between objects of degrees $k$ and $k+1$. Rays in the Auslander-Reiten quiver map to octagonal arcs in the projection (eg. the blue arrows).} \label{fig:A7-I8}
	\end{figure}
	
	\begin{rem}~\label{rem:HengProjection}
		The results of this section can be viewed as an `unfolded categorification' of the root system of type $\Iiin$. Equivalently, this can be approached from the perspective of a `folded categorification', as presented in \cite{Heng}. In particular, Theorem 4.11 of \cite{Heng} in the $\Iiin$ case is related to our Theorem~\ref{thm:FoldingProjection} with $\gend= \coxA_{2n-1}$ and $\acat = \mod*KQ^{\coxA_{2n-1}}$. Note, however, that Heng uses a two-copy version of the quiver $Q^{\coxA_{2n-1}}$. Essentially, this means that Heng's construction considers the rescaled root system of $\Iiin$ rather than the standard root system that we consider in our theorem.
	\end{rem}
	
	\section{The semiring actions on the module and bounded derived categories} \label{sec:Action}
	Each folding $F\colon Q^\gend \rightarrow Q^\Iiin$ and the non-crystallographic projection it induces gives rise to a semiring action of $R_+=\chebsr{\gend}$ on $\mod*KQ^\gend$ and $\bder(\mod*KQ^\gend)$ in the sense of \cite[Definition 6.1]{DTOdd}, where $\chebsr{\gend}$ is as defined in Section~\ref{sec:Prelim-Cheb}. As in \cite[Definition 6.1]{DTOdd}, we call the appropriate category an \emph{$R_+$-coefficient category} when it is equipped with a semiring action of $R_+$. Whilst the semiring actions are slightly different for each folding, the general principle underlying each action is the same. We will first define the semiring actions for $\acat=\mod* KQ^\gend$, as the semiring action on $\bder(\mod*KQ^\gend)$ is a straightforward extension of this action.
	
	\subsection{The action on iso-classes of objects} \label{sec:Action-isoclasses}
	For each positive root $\alpha$ of $\Iiin$, define a set of iso-classes
	\begin{equation*}
		\mathbf{M}_\alpha=\{[M] : M \in \acat\text{ indecomposable and } r' \dimproj_F^\acat(M)=r\alpha \text{ for some } r,r'\in\ZUi{2n}\}.
	\end{equation*}
	Since $Q^\gend$ is bipartite, it follows from Theorem~\ref{thm:FoldingProjection} that the iso-classes in $\mathbf{M}_\alpha$ bijectively correspond to objects in the same column of the Auslander-Reiten quiver. That is, $\mathbf{M}_\alpha=\{[\tau^m I(i)]: F(i)=j\}$, where $j=[0]$ if $\alpha\in\Isproots{2n}$ and corresponds to the point $\mw([0])\mathrm{exp}(\frac{m \pi\imunit}{n})$, and $j=[1]$ if $\alpha\in\Ilproots{2n}$ and corresponds to the point $\mw([1])\mathrm{exp}(\frac{(2m+1) \pi\imunit}{2n})$. In both cases, define $M_{i,\alpha}=\tau^m I(i)$. Hence $[M_{i,\alpha}] \in \mathcal{I}_i^\acat$ and $\mw(F(i))\dimproj_F^\acat(M_{i,\alpha})=\vw(i)\alpha$. Consequently, we have
	\begin{equation*}
		\mathbf{M}_\alpha=
		\begin{cases}
			\{[M_{i,\alpha}] : F(i)=[0]\} & \text{if }\alpha\in\Isproots{2n} \\
			\{[M_{i,\alpha}] : F(i)=[1]\} & \text{if }\alpha\in\Ilproots{2n}.
		\end{cases}
	\end{equation*}
	
	The semiring $R_+$ will act on $\mod*KQ^\gend$ in such a way that for any positive root $\alpha$, for any $[M] \in \mathbf{M}_\alpha$ and for any $r \in R_+$, the object $rM$ will be isomorphic to an object whose indecomposable direct summands belong to iso-classes in $\mathbf{M}_\alpha$. We will describe this precisely for each folding.
	
	\subsubsection{Isomorphism conditions of type $\coxA_{2n-1}$}
	We will adopt the unsigned labelling of the vertices for $Q^{\coxA_{2n-1}}$ (the first quiver in Section~\ref{sec:Prelim-Folding}). For any positive root $\alpha$, we have a function $\omega_\alpha\colon \mathbf{M}_\alpha \rightarrow \hachebsr{2n-1}$ defined by $\omega_\alpha([M_{i,\alpha}])=\croot_i$ for each $[M_{i,\alpha}] \in \mathbf{M}_\alpha$. Now recall the product rule for the semiring $\hachebsr{2n-1}$, which gives
	\begin{equation*}
		\croot_j \omega_\alpha([M_{i,\alpha}]) =\sum_{k \in V_{ji}} \croot_k
	\end{equation*}
	for each $\croot_j \in \achebsr{2n-1}$ and for some index set $V_{ji} \subset \{0,1,\ldots,2n-2\}$. Since the index $j$ is even, note that the set $V_{ji}$ consists of indices that are either all odd if $i$ is odd, or all even if $i$ is even. In particular, this implies that there exists $[M_{k,\alpha}] \in \mathbf{M}_\alpha$ for each $k\in V_{ji}$. We will thus define the semiring action of $\achebsr{2n-1}$ on $\mod*KQ^{\coxA_{2n-1}}$ such that the following isomorphism holds for each positive root $\alpha$, each $[M_{i,\alpha}] \in\mathbf{M}_\alpha$ and each $\croot_j \in \achebsr{2n-1}$.
	\begin{equation}\tag{A1} \label{eq:AIso}
		 \croot_j M_{i,\alpha} \cong \bigoplus_{k \in V_{ji}} M_{k,\alpha}.
	\end{equation}
	
	An example demonstrating the action of $\achebsr{2n-1}$ on $\mod*KQ^{\coxA_{2n-1}}$ is given in Section~\ref{sec:AExample}.
	
	\subsubsection{Isomorphism conditions of type $\coxD_4$}
	The semiring action of $\dchebsr{4}$ on $\mod*KQ^{\coxD_4}$ is a minor extension of the classical $\integer_3$-group action on $\mod*KQ^{\coxD_4}$. That is we define for any positive root $\alpha$
	\begin{align*}
		g M_{0,\alpha} &\cong M_{2^+,\alpha}, & g M_{2^+,\alpha} &\cong M_{2^-,\alpha}, & g M_{2-,\alpha} &\cong M_{0,\alpha}, \\
		g M_{1,\alpha} &\cong M_{1,\alpha}.
	\end{align*}
	The action of $\dchebsr{4}$ is only different from the action of $\integer_3$ in that we have $(r+r') M \cong rM \oplus r'M$ for any $r,r' \in \dchebsr{4}$ and $M \in \mod*KQ^{\coxD_4}$.
	
	\subsubsection{Isomorphism conditions of type $\coxD_{n+1}$}
	We assume $n \geq 4$, since $\coxD_3 = \coxA_3$ and $\coxD_4$ is covered separately. For any positive root $\alpha$, we define a map $\omega_\alpha\colon\mathbf{M}_\alpha \rightarrow \hdchebsr{n+1}$ by
	\begin{equation*}
		\omega_\alpha([M_{i,\alpha}])=
		\begin{cases}
			\croot_i^+ + \croot_i^- 	& \text{if } 0 \leq i < n-1 \\
			\croot_{n-1}^\pm		& \text{if } i = (n-1)^\pm.
		\end{cases}
	\end{equation*}
	
	For each $[M_{i,\alpha}] \in \mathbf{M}_\alpha$ with $i \neq (n-1)^\pm$, for each $0 \leq j \leq n-1$, and for each $0 \leq k \leq n-1$, there exist constants $q_{ijk} \in \{0,1,2\}$ such that
	\begin{align*}
		\croot^\pm_j \omega_\alpha([M_{i,\alpha}]) &=\sum_{k=0}^{n-1} q_{ijk}(\croot^+_k + \croot_k^-) \\
		&=\sum_{k=0}^{n-2} q_{ijk}\omega_\alpha([M_{k,\alpha}])+ q_{ij(n-1)}(\omega_\alpha([M_{(n-1)^+,\alpha}])+\omega_\alpha([M_{(n-1)^-,\alpha}])).
	\end{align*}
	In particular, $q_{ijk}=0$ for all even $k$ if $i$ is odd and $q_{ijk}=0$ for all odd $k$ if $i$ is even. We will thus define the semiring action of $\dchebsr{n+1}$ on $\mod*KQ^{\coxD_{n+1}}$ to be such that the following isomorphism holds for each positive root $\alpha$, each $[M_{i,\alpha}] \in\mathbf{M}_\alpha$ with $i \neq (n-1)^\pm$ and each $\croot^\pm_j \in \dchebsr{n+1}$.
	\begin{equation} \tag{D1} \label{eq:DIso1}
		\croot^\pm_j M_{i,\alpha} \cong \bigoplus_{k=0}^{n-2} M_{k,\alpha}^{\oplus q_{ijk}} \oplus (M_{(n-1)^+,\alpha} \oplus M_{(n-1)^-,\alpha})^{\oplus q_{ij(n-1)}},
	\end{equation}
	where $X^{\oplus m} = \bigoplus_{l=1}^m X$ and $X^{\oplus 0} = 0$ for any $X \in \acat$. This is a slight abuse of notation, as $M_{k,\alpha}$ might not exist if $q_{ijk}=0$. However, this is not a problem, as we define $M_{k,\alpha}^{\oplus 0} = 0$.
	
	On the other hand, if $i=(n-1)^\pm$ then 
	\begin{align*}
		\croot^\pm_j \omega_\alpha([M_{(n-1)^\pm,\alpha}]) &= 
		\begin{cases}
			\sum_{k \in V_{ji}} (\croot^+_k + \croot_k^-) + \croot_{n-1}^+
			& \text{if } \frac{j}{2} \text{ even}, \\
			\sum_{k \in V_{ji}} (\croot^+_k + \croot_k^-) + \croot_{n-1}^-
			& \text{if } \frac{j}{2} \text{ odd}, \\
		\end{cases} \\
		\croot^\mp_j \omega_\alpha([M_{(n-1)^\pm,\alpha}]) &= 
		\begin{cases}
			\sum_{k \in V_{ji}} (\croot^+_k + \croot_k^-) + \croot_{n-1}^-
			& \text{if } \frac{j}{2} \text{ even}, \\
			\sum_{k \in V_{ji}} (\croot^+_k + \croot_k^-) + \croot_{n-1}^+
			& \text{if } \frac{j}{2} \text{ odd}. \\
		\end{cases} \\
	\end{align*}
	for some index set $V_{ji} \subset \{0,\ldots,n-1\}$ whose elements are odd if $n-1$ is odd, or even if $n-1$ is even. Thus, we will define the action of $\dchebsr{n+1}$ on $\mod*KQ^{\coxD_{n+1}}$ to be such that
	\begin{equation} \tag{D2} \label{eq:DIso2}
		\croot^\pm_j M_{i,\alpha} \cong
		\begin{cases}
			\bigoplus_{k \in V_{ji}} M_{k,\alpha} \oplus M_{(n-1)^+,\alpha} 
			& \text{if } i=(n-1)^\pm \text{ and } \frac{j}{2} \text{ even} \\
			& \text{or if } i=(n-1)^\mp \text{ and } \frac{j}{2} \text{ odd,}\\
			\bigoplus_{k \in V_{ji}} M_{k,\alpha} \oplus M_{(n-1)^-,\alpha}
			& \text{if } i=(n-1)^\mp \text{ and } \frac{j}{2} \text{ even} \\
			& \text{or if } i=(n-1)^\pm \text{ and } \frac{j}{2} \text{ odd.}
		\end{cases}
	\end{equation}
	
	An example demonstrating the action of $\dchebsr{n+1}$ on $\mod*KQ^{\coxD_{n+1}}$ is given in Section~\ref{sec:DExample}.
	
	\subsubsection{Isomorphism conditions of type $\coxE_6$}
	Let $\alpha$ be a positive root of $\coxI_2(12)$ and let $\mathbf{M}_\alpha$ be as before. Define a map $\omega_\alpha\colon\mathbf{M}_\alpha \rightarrow \hechebsr{6}$ by
	\begin{equation*}
		\omega_\alpha([M_{i,\alpha}])=
		\begin{cases}
			\croot_i^\pm			& \text{if } i \in \{0^\pm,1^\pm\}, \\
			\croot_i				& \text{if } i \in \{2,v_6\}.
		\end{cases}	
	\end{equation*}
	Then for each $i \in \{0^\pm, 2\}$, the products $\croot_0^-\omega_\alpha([M_{i,\alpha}])$ and $\croot_2\omega_\alpha([M_{i,\alpha}])$ will each be linear combinations of elements from the set 
	\begin{equation*}
		\{\croot_0^+, \croot_0^-, \croot_2\} = \{\omega_\alpha([M_{i,\alpha}]): F(i)=[0]\}.
	\end{equation*}
	On the other hand, for each $i \in \{1^\pm, v_6\}$, the products $\croot_0^-\omega_\alpha([M_{i,\alpha}])$ and $\croot_2\omega_\alpha([M_{i,\alpha}])$ will each be linear combinations of elements from the set 
	\begin{equation*}
		\{\croot_1^+, \croot_1^-, \croot_{v_6}\} = \{\omega_\alpha([M_{i,\alpha}]): F(i)=[1]\}.
	\end{equation*}
	We will thus define the action of $R_+$ on $\mod* KQ^{\coxE_6}$ to be such that the following isomorphisms hold.
	\begin{align}
		\croot_0^- M_{i,\alpha} &\cong 
		\begin{cases}
			M_{j^\mp,\alpha} & \text{if } i=j^\pm \text{ with } j \in \{0,1\}, \\
			M_{i,\alpha} & \text{otherwise,}
		\end{cases} \tag{E6.1}\label{eq:E6Iso1}\\
		\croot_2 M_{i,\alpha} &\cong 
		\begin{cases}
			M_{2,\alpha} & \text{if } i=0^\pm, \\
			M_{1^+,\alpha} \oplus M_{v_6,\alpha} \oplus M_{1^-,\alpha} & \text{if } i=1^\pm, \\
			M_{0^+,\alpha} \oplus M_{2,\alpha} \oplus M_{2,\alpha} \oplus M_{0^-,\alpha} & \text{if } i=2,\\
			M_{1^+,\alpha} \oplus M_{1^-,\alpha} & \text{if } i=v_6,
		\end{cases} \tag{E6.2}\label{eq:E6Iso2}
	\end{align}
	In particular, this corresponds to the following products in $\hechebr{6}$.
	\begin{align*}
		\croot_0^-\croot_0^\pm &= \croot_0^\mp, &  
		\croot_0^-\croot_1^\pm &= \croot_1^\mp, & 
		\croot_0^-\croot_2 &= \croot_2, & 
		\croot_0^-\croot_{v_6} &= \croot_{v_6}, \\
		\croot_2\croot_0^\pm &= \croot_2, &
		\croot_2\croot_1^\pm &= \croot_1^+ + \croot_{v_6} + \croot_1^-, &
		\croot_2\croot_2 &= \croot_0^+ +\croot_2 + \croot_2 + \croot_0^-, &
		\croot_2\croot_{v_6} &= \croot_1^+ + \croot_1^-.
	\end{align*}
	
	\subsubsection{Isomorphism conditions of type $\coxE_7$}
	For each positive root $\alpha$ of $\coxI_2(18)$, define $\omega_\alpha\colon\mathbf{M}_\alpha \rightarrow \hechebsr{7}$ by
	\begin{equation*}
		\omega_\alpha([M_{i,\alpha}])=
		\begin{cases}
			\croot_i				& \text{if } i \in \{0,1,2,3,v_7\}, \\
			\wt\croot_i			& \text{if } i \in \{\wt{1},\wt{2}\}.
		\end{cases}	
	\end{equation*}
	We can see from Table~\ref{tab:E7} that we again have that, for each $i$ such that $F(i)=[j]$, the products $\croot_2\omega_\alpha([M_{i,\alpha}])$ and $\wt\croot_2\omega_\alpha([M_{i,\alpha}])$ are both given by linear combinations of elements from the set $\{\omega_\alpha([M_{i,\alpha}]): F(i)=[j]\}$. Thus, the isomorphism conditions for the folding $F^{\coxE_7}$ are such that for any positive root $\alpha$, we have the following.
	\begin{align}
		\croot_2 M_{i,\alpha} &\cong 
		\begin{cases}
			M_{2,\alpha} & \text{if } i=0, \\
			M_{1,\alpha} \oplus M_{3,\alpha} & \text{if } i=1, \\
			M_{0,\alpha} \oplus M_{\widetilde{1},\alpha} \oplus M_{\widetilde{2},\alpha} \oplus M_{2,\alpha} & \text{if } i=2, \\
			M_{1,\alpha} \oplus M_{3,\alpha} \oplus M_{3,\alpha} \oplus M_{v_7,\alpha} & \text{if } i=3, \\
			M_{\widetilde{2},\alpha} \oplus M_{2,\alpha} & \text{if } i=\widetilde{1}, \\
			M_{\widetilde{1},\alpha} \oplus M_{\widetilde{2},\alpha} \oplus M_{2,\alpha} & \text{if } i=\widetilde{2}, \\
			M_{3,\alpha} & \text{if } i=v_7,
		\end{cases} \tag{E7.1}\label{eq:E7Iso1}\\
		\widetilde{\croot}_2 M_{i,\alpha} &\cong 
		\begin{cases}
			M_{\widetilde{2},\alpha} & \text{if } i=0, \\
			M_{3,\alpha} \oplus M_{v_7,\alpha} & \text{if } i=1, \\
			M_{\widetilde{1},\alpha} \oplus M_{\widetilde{2},\alpha} \oplus M_{2,\alpha} & \text{if } i=2, \\
			M_{1,\alpha} \oplus M_{3,\alpha} \oplus M_{3,\alpha} & \text{if } i=3, \\
			M_{\widetilde{1},\alpha} \oplus M_{2,\alpha} & \text{if } i=\widetilde{1}, \\
			M_{0,\alpha} \oplus M_{\widetilde{2},\alpha} \oplus M_{2,\alpha} & \text{if } i=\widetilde{2}, \\
			M_{1,\alpha} \oplus M_{v_7,\alpha} & \text{if } i=v_7,
		\end{cases} \tag{E7.2}\label{eq:E7Iso2}
	\end{align}
	It is easy to verify that the above is compatible with the relations $\wt\croot_2^2 = 1 + \wt\croot_2 + \croot_2$ and $\croot_2^2 = 1 + \croot_2\wt\croot_2$. It is also straightforward to check that these actions correspond to the products $\croot_2\omega_\alpha([M_{i,\alpha}])$ and $\wt\croot_2\omega_\alpha([M_{i,\alpha}])$ in Table~\ref{tab:E7}.
	
	\subsubsection{Isomorphism conditions of type $\coxE_8$}
	For any positive root $\alpha$ of $\coxI_2(30)$, we define a map $\omega_\alpha\colon\mathbf{M}_\alpha \rightarrow \hechebsr{8}$ by
	\begin{equation*}
		\omega_\alpha([M_{i,\alpha}])=
		\begin{cases}
			\croot_i 				& \text{if } i \in \{0,1,2,v_8\}, \\
			\gratio\croot_{v_8}	& \text{if } i =3, \\
			\gratio\croot_2		& \text{if } i =4, \\
			\gratio\croot_j		& \text{if } i =\phi_j.
		\end{cases}	
	\end{equation*}
	Once again, for each $i$ such that $F(i)=[j]$, the products $\croot_2\omega_\alpha([M_{i,\alpha}])$ and $\gratio\omega_\alpha([M_{i,\alpha}])$ are given by linear combinations of elements in the set $\{\omega_\alpha([M_{i,\alpha}]) : F(i)=[j]\}$. Thus, the isomorphism conditions for this folding are such that for any positive root $\alpha$, we have the following.
	\begin{align}
		\croot_2 M_{i,\alpha} &\cong 
		\begin{cases}
			M_{2,\alpha} & \text{if } i=0, \\
			M_{1,\alpha} \oplus M_{3,\alpha} & \text{if } i=1, \\
			M_{0,\alpha} \oplus M_{2,\alpha} \oplus M_{4,\alpha} & \text{if } i=2, \\
			M_{1,\alpha} \oplus M_{3,\alpha} \oplus M_{v_8,\alpha} \oplus M_{\phi_1,\alpha} & \text{if } i=3, \\
			M_{2,\alpha} \oplus M_{4,\alpha} \oplus M_{4,\alpha} \oplus M_{\phi_0,\alpha} & \text{if } i=4, \\
			M_{3,\alpha} \oplus M_{\phi_1,\alpha} & \text{if } i=v_8, \\
			M_{3,\alpha} \oplus M_{v_8,\alpha} \oplus M_{\phi_1,\alpha} & \text{if } i=\phi_1, \\
			M_{4,\alpha} & \text{if } i=\phi_0,
		\end{cases} \tag{E8.1}\label{eq:E8Iso1}\\
		\varphi M_{i,\alpha} &\cong 
		\begin{cases}
			M_{\phi_0,\alpha} & \text{if } i=0, \\
			M_{\phi_1,\alpha} & \text{if } i=1, \\
			M_{4,\alpha} & \text{if } i=2, \\
			M_{3,\alpha} \oplus M_{v_8,\alpha} & \text{if } i=3, \\
			M_{2,\alpha} \oplus M_{4,\alpha} & \text{if } i=4, \\
			M_{3,\alpha} & \text{if } i=v_8, \\
			M_{1,\alpha} \oplus M_{\phi_1,\alpha} & \text{if } i=\phi_1, \\
			M_{0,\alpha} \oplus M_{\phi_0,\alpha} & \text{if } i=\phi_0,
		\end{cases} \tag{E8.2}\label{eq:E8Iso2}
	\end{align}
	We leave it as an exercise to the reader to check that these actions are consistent with the products in $\hechebr{8}$.

	\subsection{The action on morphisms} \label{sec:Action-Morphisms}
	The action of $R_+$ on morphisms in $\mod*KQ^\gend$ is best described by adopting a particular basis for the modules. Under this basis, the semiring can be seen to act diagonally on morphisms. This requires us to establish additional notation for an explicit explanation. As in \cite[Section 6]{DTOdd}, let $\{\eta_i : i \in Q^\gend_0\}$ be a complete set of primitive orthogonal idempotents of $KQ^\gend$ and let $N$ be a $KQ^\gend$-module. Let $N_{i,j}\colon N_i \rightarrow N_j$ be the restriction of the linear action of the unique arrow $a_{i,j}\colon i \rightarrow j \in Q^\gend_1$ on $N$ to the vector subspace $N_i=N\eta_i$, composed with the canonical surjection onto $N_j=N\eta_j$. The data given by all vector subspaces $N_i$ and all linear maps $N_{{i,j}}$ entirely determine the structure of $N$ --- this is equivalently the module $N$ expressed as a $K$-representation of $Q^\gend$, which in this setting, is more convenient to work with. For any $L\in {\calM_F}$ and morphism $f \in \Hom_{\calM_F}(N,L)$, we then recall by Schur's lemma that $f$ can be written diagonally as $(f_i)_{i \in Q^\gend_0}$ with $f_i=f|_{N_i} \colon N_i \rightarrow L_i$.
	
	To define the appropriate basis of $\croot_j N$, first let $[M_{i,\alpha}] \in \mathbf{M}_\alpha$ for some positive root $\alpha$ of $\Iiin$ and write
	\begin{equation*}
		\croot_j M_{i,\alpha} \cong \bigoplus_{k \in Q_0^\gend} \bigoplus_{l=1}^{q_{ijk}} M_{k,\alpha}
	\end{equation*}
	for some non-negative integers $q_{ijk}$ determined by the relevant isomorphism conditions. Then we define the vector subspace $(\croot_j N)_i$ of $\croot_j N$ to be the vector space
	\begin{equation*}
		(\croot_j N)_i = \bigoplus_{k \in Q_0^\gend} \bigoplus_{l=1}^{q_{ijk}} N\ps{l}_{k},
	\end{equation*}
	where each $N\ps{l}_{k}$ is a copy of the vector subspace $N_{k} \subseteq N$.
	For each arrow $a\colon i \rightarrow i'$ in $Q^\gend$, the linear map $(\croot_j N)_{i,i'}\colon (\croot_j N)_{i}\rightarrow (\croot_j N)_{i'}$ is defined blockwise by linear maps
	\begin{equation*}
		N\ps{l}_k \rightarrow N\ps{l'}_{k'}\colon v \mapsto \zeta_{a,k,k'}\ps{l,l'} N_{k,k'}(v)
	\end{equation*}
	for some constants $\zeta_{a,k,k'}\ps{l,l'} \in K$ determined such that the isomorphism conditions hold. Under such a basis, one can define the action of $\croot_j$ on a morphism $f=(f_i)_{i \in Q_0^\gend}\colon N\rightarrow L$ by the morphism $\croot_j f \colon \croot_j N\rightarrow \croot_j L$ given by 
	\begin{equation*}
		(\croot_j f)_i = \bigoplus_{k \in Q_0^\gend} \bigoplus_{l=1}^{q_{ijk}} f_l.
	\end{equation*}
	Finally, we define
	\begin{equation*}
		(r+r')f\colon (r+r') N \rightarrow (r+r') L = rf \oplus r'f\colon rN \oplus r'N \rightarrow rL \oplus r'L
	\end{equation*}
	for any $r,r' \in R_+$, and we define $\croot_0 = 1 \in R_+$ to act by the identity functor.
	
	\begin{rem} \label{rem:HengAction}
		For the case where $\gend = \coxA_{2n-1}$, the semiring action on the category $\mod*KQ^\gend$ can also be described via the equivalence given in Theorem 3.1 (and Section 6.3) of \cite{Heng}.
	\end{rem}
	
	\subsection{Basic properties and actions on the bounded derived category} \label{sec:Action-Properties}
	Let $F\colon Q^{\gend} \rightarrow Q^{\coxI_2(2n)}$ be a weighted folding. Since the action of $R_+$ on $\mod* KQ^{\gend}$ has been defined such that certain isomorphism conditions are met and that the actions on morphisms are diagonalisable with respect to a given basis, it is easy to see that $\calM_\gend = \mod* KQ^{\gend}$ has the structure of an $R_+$-coefficient category. In particular, the action of any $r \in R_+$ is exact, faithful, and induces an injective map $\Ext^1_{\calM_\gend}(M,N) \rightarrow \Ext^1_{\calM_\gend}(rM,rN)$. As is the case in \cite{DTOdd}, it is straightforward to extend this action to the bounded derived category $\bder_\gend = \bder(\calM_\gend)$. Since $r$ acts by an exact functor on $\calM_\gend$, we can define $r \sus M =\sus rM$ for each object $M$. One then notes that the faithful action of $R_+$ on $\Ext^1_{\calM_\gend}$-spaces ensures that the $R_+$-action on $\bder_\gend$ is faithful. One can then see that $\bder_\gend$ is also an $R_+$-coefficient category.
	
	Throughout, we let $\mathcal{A}$ be either of the $R_+$-coefficient categories $\calM_\gend$ or $\bder_\gend$.  We have the following basic properties.
	
	\begin{rem} \label{rem:ActionOnARQ}
		Let $M \in \mathcal{A}$ be indecomposable and $r \in R_+$. Write $rM \cong M_1 \oplus \ldots \oplus M_m$, where each $M_i$ is indecomposable.
		\begin{enumerate}[label=(\alph*)]
			\item $M_1, \ldots, M_m$ reside in the same column of the Auslander-Reiten quiver. In particular, $r$ commutes with the Auslander-Reiten translate $\tau$.
			\item The Auslander-Reiten quiver starting (resp. ending) in $M$ is mapped under $r$ to the direct sum of each Auslander-Reiten sequence starting (resp. ending) in $M_i$.
		\end{enumerate}
	\end{rem}
	
	In \cite{DTOdd}, the notion of basic and minimal $R_+$-generators was defined. We will briefly recall those here.
	\begin{defn} \label{defn:RPGeneration}
		Suppose $\acat$ is an abelian $R_+$-coefficient category and let $R$ be the ring obtained from $R_+$ by applying the Grothendieck group construction on the additive commutative monoid structure of $R_+$. We say that an object $N \in \mathcal{A}$ is \emph{$R_+$-generated by $M \in \acat$} if for some $r,r' \in R_+$, there exists a split exact sequence
		\begin{equation*}
			0\rightarrow r'M \rightarrow rM \rightarrow N \rightarrow 0.
		\end{equation*} 
		Similarly for a triangulated $R_+$-coefficient category $\acat$, we say an object $N \in \acat$ is \emph{$R_+$-generated by $M \in \acat$} if for some $r,r' \in R_+$, there exists a split triangle
		\begin{equation*}
			r'M \rightarrow rM \rightarrow N \rightarrow \sus r'M
		\end{equation*}
		with monic first morphism. In both the abelian and triangulated settings, we can equivalently say that $N$ is $R_+$-generated by $M \in \acat$ if $rM \cong r'M \oplus N$. We call the pair $(r,r')$ the \emph{$R_+$-generating pair of $N$ by $M$} and call the value $r-r' \in R$ the \emph{$R$-index of $N$ with respect to $M$}. We denote the class of all objects $R_+$-generated by $M$ in a category $\acat$ by $\gen_M$.
		
		Given a set $\Gamma$ of objects of $\acat$, we denote by $\acat(\Gamma)$ the full subcategory of $\acat$ whose objects are $R_+$-generated by the objects in $\Gamma$. The objects of $\acat(\Gamma)$ may be endowed with a partial ordering by defining $N_1 \leq N_2$ if and only if $N_1 \cong N_2$ or there exists $M \in \Gamma$ such that $N_1$ is $R_+$-generated by $M$ with the pair $(r_1,r'_1)$ and $N_2$ is $R_+$-generated by $M$ with the pair $(r_2,r'_2)$ and $r_1+r'_2 < r_2 + r'_1$.
		
		We say $\Gamma$ is a set of \emph{$R_+$-generators for $\acat$} if $\acat(\Gamma) \simeq \acat$. We say $\Gamma$ is \emph{basic} if the elements of $\Gamma$ are pairwise non-isomorphic indecomposable objects. We say a basic set of $R_+$-generators $\Gamma$ is \emph{minimal} if for any other basic set of $R_+$-generators $\Gamma'$, there exists an injective map of sets $\theta\colon \Gamma \rightarrow \Gamma'$ and inclusions $\iota\colon \Gamma \rightarrow \Gamma \cup \Gamma'$ and $\iota'\colon \Gamma' \rightarrow \Gamma \cup \Gamma'$ such that $\iota(M) \leq \iota' \theta(M)$ for each $M \in \Gamma$. We say $\Gamma$ is \emph{$\tau$-closed} if for any (non-projective if $\acat$ is abelian) $M \in \Gamma$, we have $\tau M \in \Gamma$.
	\end{defn}
	
	For the foldings outlined in this paper, there is a close relationship between the projection map $\dimproj^\acat_F$ and $R_+$-generated objects. The proof of the following remarks is identical to \cite[Section 6]{DTOdd}.
	
	\begin{rem}\label{rem:ActionProps}
		Recall that for each folding $F^\gend$ we have $R_+ = \chebsr{\gend}$. Let $\srhom{\gend}\colon \chebsr{\gend} \rightarrow \chebr{\gend}$ be the canonical embedding and let $\rhom{\gend}\colon \chebr{\gend} \rightarrow \ZUi{2n}$ be the homomorphism from Section~\ref{sec:Prelim-Cheb}. Let $\ell(v)$ denote the length of the vector $v \in \real^2$ with respect to the standard basis of $\real^2$
		\begin{enumerate}[label=(\alph*)]
			\item For any $M \in \acat$ and any $r \in \chebsr{\gend}$, we have $\ell(\dimproj^\acat_F(rM)) = \rhom{\gend}\srhom{\gend}(r) \ell(\dimproj^\acat_F(M))$ and $\dimproj^\acat_F(rM)$ is collinear to $\dimproj^\acat_F(M)$.
			\item Suppose $\acat$ has a set of $R_+$-generators and that $N_1,N_2 \in \acat$ are indecomposable. Suppose further that $\acat = \calM_\gend$. Then $N_1$ and $N_2$ have a common $\chebsr{\gend}$-generator if and only if $\dimproj^\acat_F(N_1)$ and $\dimproj^\acat_F(N_2)$ are collinear. Furthermore, $N_1 < N_2$ if and only if $\dimproj^\acat_F(N_1)$ and $\dimproj^\acat_F(N_2)$ are collinear, and $\ell(\dimproj^\acat_F(N_1)) < \ell(\dimproj^\acat_F(N_2))$. 
			
			On the other hand, if $\acat = \bder_\gend$, then $N_1$ and $N_2$ have a common $\chebsr{\gend}$-generator if and only if $N_1$ and $N_2$ are concentrated in the same degree and $\dimproj^\acat_F(N_1)$ and $\dimproj^\acat_F(N_2)$ are collinear. Furthermore, $N_1 < N_2$ if and only if $N_1$ and $N_2$ are concentrated in the same degree, $\dimproj^\acat_F(N_1)$ and $\dimproj^\acat_F(N_2)$ are collinear, and $\ell(\dimproj^\acat_F(N_1)) < \ell(\dimproj^\acat_F(N_2))$.
		\end{enumerate}
	\end{rem}
	
	One of the main results of \cite{DTOdd} is that the $R_+$-coefficient categories that arise from foldings onto quivers of type $\coxH_4$, $\coxH_3$ and $\coxI_2(2n+1)$ all have basic, minimal, $\tau$-closed sets of $R_+$-generators that are unique up to isomorphic elements. The same is not always true for $R_+$-coefficient categories that arise from foldings onto quivers of type $\coxI_2(2n)$ (except for the foldings $F^{\coxE_7}$ and $F^{\coxE_8}$). One always has sets of basic, $\tau$-closed $R_+$-generators, but the condition that is lost is the minimality condition. The problem in this setting is, in effect, the category is too big. One has non-isomorphic indecomposable objects that map to the same (multiple of a) root, which leads to some objects being incomparable under the partial ordering. This happens precisely when there is a $\integer_2$-action on the category and when an object is not stable under this action. So to fix this, we need a weaker notion of minimality that takes into account the group action.
	
	\begin{lem}\label{lem:RPGenGroupAction}
		Let $G \subseteq U(R_+)$ be a subgroup of the group $U(R_+)$ of multiplicative units of $R_+$. Suppose that there exists a set $\Gamma$ of $R_+$-generators for $\acat$. Then for each $g \in G$ and $M \in \Gamma$, the set
		\begin{equation*}
			\Gamma^{gM} =(\Gamma \setminus \{M\}) \cup \{gM\}
		\end{equation*}
		is a set of $R_+$-generators for $\acat$
	\end{lem}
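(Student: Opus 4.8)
The plan is to reduce the whole question to the single object $M$: every element of $\Gamma^{gM}$ other than $gM$ already lies in $\Gamma$, so the only way $\Gamma^{gM}$ could fail to generate is if some object formerly $R_+$-generated by $M$ were $R_+$-generated by no element of $\Gamma^{gM}$. It therefore suffices to prove that every object $R_+$-generated by $M$ is also $R_+$-generated by $gM$; the generating property of $\Gamma^{gM}$ will then be inherited directly from that of $\Gamma$.

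The engine of the argument is that $g$, lying in the group $G \subseteq R_+$, has a two-sided inverse $g\inv \in G \subseteq R_+$ with $g\inv g = 1$, where $1 = \croot_0$ acts as the identity functor (Section~\ref{sec:Action-Morphisms}). Suppose $N$ is $R_+$-generated by $M$ with generating pair $(r,r')$, witnessed by a split exact sequence (or, in the triangulated case, a split triangle with monic first morphism)
\begin{equation*}
	0 \rightarrow r'M \rightarrow rM \rightarrow N' \rightarrow 0, \qquad N' \cong N.
\end{equation*}
I would put $s = rg\inv$ and $s' = r'g\inv$, which lie in $R_+$ since $R_+$ is closed under multiplication. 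Using associativity of the semiring action together with $g\inv g = 1$, one computes $s(gM) = (rg\inv)(gM) \cong (rg\inv g)M = rM$, and likewise $s'(gM) \cong r'M$. Transporting the displayed sequence along these isomorphisms yields a split exact sequence (resp. split triangle) $0 \rightarrow s'(gM) \rightarrow s(gM) \rightarrow N' \rightarrow 0$, which exhibits $N$ as $R_+$-generated by $gM$ with pair $(s,s')$.

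With this established the conclusion is immediate. Given an arbitrary $X \in \acat$, the hypothesis that $\Gamma$ is a set of $R_+$-generators supplies some $M' \in \Gamma$ with $X$ being $R_+$-generated by $M'$. If $M' \neq M$ then $M' \in \Gamma^{gM}$ and there is nothing to do; if $M' = M$ then the previous paragraph shows $X$ is $R_+$-generated by $gM \in \Gamma^{gM}$. In either case $X \in \acat(\Gamma^{gM})$, so $\acat(\Gamma^{gM}) \simeq \acat$ and $\Gamma^{gM}$ is a set of $R_+$-generators.

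I do not expect a genuine obstacle, since the argument is purely formal and rests only on the invertibility of $g$ inside $R_+$ and the associativity of the action (no feature specific to any individual folding is used). The single point calling for care is to confirm that transporting the sequence along the isomorphisms $s(gM) \cong rM$ and $s'(gM) \cong r'M$ preserves the required structure --- the monic, split first morphism in the abelian case, and in addition the degree-concentration condition of Definition~\ref{defn:RPGeneration} in the triangulated case (which holds because $r\sus(\blank) \cong \sus r(\blank)$ keeps $gM$ in the same degree as $M$) --- but both are routine.
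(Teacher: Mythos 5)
Your proposal is correct and follows essentially the same route as the paper: both hinge on the fact that $g\inv \in G \subseteq R_+$ gives $g\inv(gM) \cong M$, so that $gM$ recovers everything $M$ generates. The paper states this more tersely (exhibiting $M$ as $R_+$-generated by $gM$ via the pair $(g\inv,0)$ and implicitly appealing to transitivity of generation), whereas you make the transitivity explicit by composing the generating pairs to $(rg\inv, r'g\inv)$ --- a harmless elaboration of the same argument.
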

	\begin{proof}
		Since $G$ is a multiplicative group embedded in the semiring $R_+$, it follows that $M$ is $R_+$-generated by $gM$ by the pair $(g\inv,0)$. This recovers the original set $\Gamma$. Hence, $\acat(\Gamma^{gM}) \simeq \acat(\Gamma) \simeq \acat$, as required.
	\end{proof}
	
	The above lemma implies that we have an equivalence relation $\sim_G$ on sets of $R_+$-generators whenever there is a group embedded in $R_+$. That is, we say that $\Gamma \sim_G \Gamma'$ if and only if for each $M \in \Gamma$, there exists $g_M \in G$ such that $\Gamma' = \{g_M M : M \in \Gamma\}$.
	
	\begin{defn}
		Let $G \subseteq U(R_+)$ be a subgroup of the group $U(R_+)$ of multiplicative units of $R_+$. We say that a basic set of $R_+$-generators $\Gamma$ is \emph{$G$-minimal} if for any basic set of $R_+$-generators $\Gamma' \not\sim_G \Gamma$, there exists an injective map of sets $\theta\colon \Gamma \rightarrow \Gamma'$ and inclusions $\iota\colon \Gamma \rightarrow \Gamma \cup \Gamma'$ and $\iota'\colon \Gamma' \rightarrow \Gamma \cup \Gamma'$ such that $\iota(M) \leq \iota' \theta(M)$ for each $M \in \Gamma$. 
	\end{defn}
	
	From the above definition, we could equivalently define a minimal set of $R_+$-generators $\Gamma$ to be one that is $\integer_1$-minimal, where $\integer_1$ is the trivial subgroup of $U(R_+)$. For each semiring $\chebsr{\gend}$, we will fix a subgroup $G_{\gend} \subseteq U(\chebsr{\gend})$. In particular, we will define
	\begin{equation*}
		G_\gend =
		\begin{cases}
		\{1,\croot_{2n-2}\} \cong \integer_2	&\text{if } \gend = \coxA_{2n-1},\\
		\{1,g, g^2\} \cong \integer_3			&\text{if } \gend = \coxD_{4},\\
		\{1,\croot_0^-\} \cong \integer_2		&\text{if } \gend \in \{\coxD_{n+1},\coxE_6\},\\
		\{1\} \cong \integer_1					&\text{if } \gend \in \{\coxE_6,\coxE_7\}.
		\end{cases}
	\end{equation*}
	It is not difficult to see from Lemma~\ref{lem:ChebPolyProps}(f) and the homomorphism $\rhom{\gend}\srhom{\gend}$ that we actually have $G_\gend = U(\chebsr{\gend})$ in each case. Moreover, there exists no non-trivial proper subgroup $G' \subset G_\gend$ in each case..
	
	\begin{thm} \label{thm:RPGenerators}
		Let $F^\gend\colon Q^\gend \rightarrow Q^{\coxI_2(2n)}$ be the weighted folding of type $\gend$. Then there exists a basic, $\tau$-closed set of $R_+$-generators of $\acat$, where $R_+=\chebsr{\gend}$. In particular, we have the following.
		\begin{enumerate}[label=(\alph*)]
			\item If $\gend=\coxA_{2n-1}$, then there exist four distinct (up to isomorphic elements) such sets of $\chebsr{\coxA_{2n-1}}$-generators that are $\integer_2$-minimal. Namely,
			\begin{equation*}
				\Gamma_{i,j} = \{M \in \acat : [M] \in \mathcal{P}_{i}^\acat \cup \mathcal{P}_{j}^\acat\}
			\end{equation*}
			with $i \in \{0,2n-2\}$ and $j \in \{1,2n-3\}$. These are pairwise equivalent under $\sim_{\integer_2}$.
			\item If $\gend=\coxD_{4}$, then there exist three distinct (up to isomorphic elements) such sets of $\dchebsr{4}$-generators that are $\integer_3$-minimal. Namely,
			\begin{equation*}
				\Gamma_i = \{M \in \acat : [M] \in \mathcal{P}_{i}^\acat \cup \mathcal{P}_{1}^\acat\},
			\end{equation*}
			with $i \neq 1$. In addition, $\Gamma_i \sim_{\integer_3} \Gamma_j$ for any distinct $i,j\neq 1$.
			\item If $\gend=\coxD_{n+1}$, then there exist two distinct (up to isomorphic elements) such sets of $\chebsr{\coxD_{n+1}}$-generators that are $\integer_2$-minimal. Namely,
			\begin{equation*}
				\Gamma^\pm = \{M \in \acat : [M] \in \mathcal{P}_{i}^\acat \cup \mathcal{P}_{(n-1)^\pm}^\acat\},
			\end{equation*}
			where $i = 0$ if $n$ is even and $i=1$ if $n$ is odd. In addition, $\Gamma^+ \sim_{\integer_2} \Gamma^-$.
			\item If $\gend=\coxE_{6}$, then there exist four distinct (up to isomorphic elements) such sets of $\chebsr{\coxE_{6}}$-generators that are $\integer_2$-minimal. Namely,
			\begin{align*}
				\Gamma\ps{\pm,\pm} &= \{M \in \acat : [M] \in \mathcal{P}_{0^\pm}^\acat \cup \mathcal{P}_{1^\pm}^\acat\}, \\
				\Gamma\ps{\pm,\mp} &= \{M \in \acat : [M] \in \mathcal{P}_{0^\pm}^\acat \cup \mathcal{P}_{1^\mp}^\acat\}.
			\end{align*}
			These are pairwise equivalent under $\sim_{\integer_2}$.
			\item If $\gend=\coxE_{7}$, then there exists a unique minimal such set of $\chebsr{\coxE_{7}}$-generators (up to isomorphic elements). Namely,
			\begin{equation*}
				\Gamma = \{M \in \acat : [M] \in \mathcal{P}_{0}^\acat \cup \mathcal{P}_{v_7}^\acat\}.
			\end{equation*}
			\item If $\gend=\coxE_{8}$, then there exists a unique minimal such set of $\chebsr{\coxE_{8}}$-generators (up to isomorphic elements). Namely,
			\begin{equation*}
				\Gamma = \{M \in \acat : [M] \in \mathcal{P}_{0}^\acat \cup \mathcal{P}_{1}^\acat\}.
			\end{equation*}
		\end{enumerate}
	\end{thm}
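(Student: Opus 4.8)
The plan is to reduce everything to a column-by-column analysis of the Auslander--Reiten quiver and then establish, for each listed $\Gamma$, the three features separately: that it $R_+$-generates $\acat$, that it is basic and $\tau$-closed, and that it is $G$-minimal.

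The structural reduction is as follows. By Remark~\ref{rem:ActionOnARQ} every $r\in R_+$ preserves the columns of the Auslander--Reiten quiver, which by Section~\ref{sec:Action-isoclasses} are precisely the sets $\mathbf{M}_\alpha$, and by Remark~\ref{rem:ActionProps}(a) it merely rescales the collinear projected vectors inside such a column. Hence $R_+$-generation takes place entirely within a single $\mathbf{M}_\alpha$, and since the action also preserves the short/long parity of $\alpha$, any set of $R_+$-generators must meet both parity classes; this forces each candidate $\Gamma$ to consist of one short-root row and one long-root row. To verify generation I would argue type by type from the isomorphism conditions \eqref{eq:AIso}--\eqref{eq:E8Iso2}: starting from the chosen generator $M_{i_0,\alpha}$ of a column, the product rules of $\chebsr{\gend}$ express $\croot_j M_{i_0,\alpha}$ as a direct sum of objects $M_{k,\alpha}$ sweeping out the column, and each individual summand is cut out by a split short exact (resp.\ distinguished) sequence $0\to r'M_{i_0,\alpha}\to rM_{i_0,\alpha}\to M_{k,\alpha}\to 0$, so is $R_+$-generated by $M_{i_0,\alpha}$; concretely one solves $\omega_\alpha(M_{k,\alpha})=(r-r')\,\omega_\alpha(M_{i_0,\alpha})$ in the Grothendieck ring with $r,r'\in\chebsr{\gend}$ to produce the pair. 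In types $\coxD$ and $\coxE_6$ the embedded group element $\croot_0^-$ is essential: a symmetry-breaking fork generator $M_{(n-1)^\pm,\alpha}$ (resp.\ $M_{1^\pm,\alpha}$) yields one fork and its $\croot_0^-$-translate the other, whereas a $\croot_0^-$-stable object only ever produces the symmetric sum of the two forks.

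The basic and $\tau$-closed properties are immediate, as each $\Gamma$ is a union of rows $\mathcal{P}^\acat_i$, i.e.\ of full $\tau$-orbits of pairwise non-isomorphic indecomposables. The asserted $\sim_G$-equivalences --- the type-$\coxA$ and type-$\coxE_6$ sets being pairwise $\sim_{\integer_2}$, the type-$\coxD_4$ sets pairwise $\sim_{\integer_3}$, and $\Gamma^+\sim_{\integer_2}\Gamma^-$ in type $\coxD_{n+1}$ --- follow from Lemma~\ref{lem:RPGenGroupAction}, since the relevant group element carries $\mathcal{P}^\acat_0$ to $\mathcal{P}^\acat_{2n-2}$ (resp.\ one fork row to the other, or cyclically permutes the three rows in $\coxD_4$), exhibiting each set as an element-wise group-translate of another. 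For $G$-minimality the main tool is Remark~\ref{rem:ActionProps}(b), which identifies the partial order with the length order on each collinearity class: inside a column $N_1\le N_2$ iff $\ell(\dimproj^\acat_F(N_1))\le \ell(\dimproj^\acat_F(N_2))$, and $\ell(\dimproj^\acat_F(\tau^m P(i)))=\vw(i)$ by Corollary~\ref{cor:RowWeights}. I would first characterize the \emph{valid} generators of a column --- the objects that single-handedly $R_+$-generate all of $\mathbf{M}_\alpha$, namely those not stabilized by the embedded group --- and show that the length-minimal valid generators form a single $G$-orbit per parity class. Granting this, any basic set $\Gamma'$ must contain, in each column and each parity, a valid generator of length at least the minimal valid length, i.e.\ at least that of the corresponding object of $\Gamma$; matching rows then furnishes the injective map $\theta\colon\Gamma\to\Gamma'$ with $\iota(M)\le\iota'\theta(M)$ demanded by $G$-minimality, while the number of $G$-orbits of valid generators gives the counts ($2\times2=4$ in types $\coxA,\coxE_6$; $3\times1=3$ in $\coxD_4$; $2\times1=2$ in $\coxD_{n+1}$; $1\times1=1$ in $\coxE_7,\coxE_8$).

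The main obstacle is exactly the characterization of valid generators and the interplay between length and $G$-stability, since the shortest object of a column need not be a valid generator. In type $\coxE_6$ the object $M_{v_6,\alpha}$ is strictly shorter than $M_{1^\pm,\alpha}$ (one computes $\varepsilon_{v_6}=\sqrt2/(2\cos\tfrac{\pi}{12})<1=\varepsilon_{1^\pm}$), yet being $\croot_0^-$-stable it only produces $M_{1^+,\alpha}\oplus M_{1^-,\alpha}$ and never an individual fork, so the valid long-root generators are $M_{1^\pm,\alpha}$; by contrast in type $\coxD$ the fork $M_{(n-1)^\pm,\alpha}$ is at once the shortest object in its column and a valid generator, while the weight-one row-object of that parity is invalid for the same symmetry reason. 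Separating these behaviours for every folding, confirming that no $G$-inequivalent family of length-minimal valid generators exists, and handling domination against strictly larger generating sets $\Gamma'$, is the crux of the argument and must be carried out by direct inspection of the isomorphism conditions \eqref{eq:AIso}--\eqref{eq:E8Iso2} in each type.
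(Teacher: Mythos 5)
Your proposal follows essentially the same route as the paper's proof: column-wise generation read off from the isomorphism conditions \eqref{eq:AIso}--\eqref{eq:E8Iso2}, basicness and $\tau$-closure from the row structure, the $\sim_G$-equivalences from Lemma~\ref{lem:RPGenGroupAction}, and $G$-minimality from the length order of Remark~\ref{rem:ActionProps} combined with the observation that a $\croot_0^-$-stable object can only ever produce the symmetric sum $M_{(n-1)^+,\gamma}\oplus M_{(n-1)^-,\gamma}$ (resp.\ $M_{1^+,\beta}\oplus M_{1^-,\beta}$) and never an individual fork, which is exactly how the paper resolves the $\coxD_{n+1}$ and $\coxE_6$ subtleties you single out. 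One small caveat: your parenthetical characterization of the valid single generators of a column as ``those not stabilized by the embedded group'' is not correct as stated, since in a fork-free column (e.g.\ the short-root columns of type $\coxD_{n+1}$ with $n$ even) every object is $\croot_0^-$-stable and yet $M_{0,\alpha}$ generates the whole column; stability is only an obstruction in columns containing a nontrivial $G$-orbit, as your subsequent case-by-case discussion correctly reflects.
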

	\begin{proof}
		(a) We will first show that $\Gamma_{0,1}$ is a set of $R_+$-generators. We have
		\begin{align*}
			\mathbf{M}_{\alpha} &= \{[M_{0,\alpha}], [M_{2,\alpha}],\ldots,[M_{2n-2,\alpha}]\}	& (\alpha &\in \Isproots{2n}), \\
\mathbf{M}_{\beta} &= \{[M_{1,\beta}], [M_{3,\beta}],\ldots,[M_{2n-3,\beta}]\}	& (\beta &\in \Ilproots{2n}).
		\end{align*}
		It is easy to see from (\ref{eq:AIso}) that $\croot_{2i} M_{0,\alpha} \cong M_{2i,\alpha}$. Thus the iso-classes of $\mathbf{M}_{\alpha}$ are $R_+$-generated by $M_{0,\alpha}$. To see that the iso-classes of $\mathbf{M}_{\beta}$ are $R_+$-generated by $M_{1,\beta}$, first define $r_1 = \croot_2$ and $r'_1 = 1$. It then follows from (\ref{eq:AIso}) that $M_{2i+1,\beta}$ is $R_+$-generated by $(r_{2i+1},r'_{2i+1})$, where $r_{2i+1} = \croot_{2i} + r'_{2i-1}$ and $r'_{2i+1} = r_{2i-1}$. Thus by Theorem~\ref{thm:FoldingProjection} and Remark~\ref{rem:ActionOnARQ}, $\Gamma_{0,1}$ is a set of $R_+$-generators for $\acat$, which is clearly also basic and $\tau$-closed.
	
		It remains to show that $\Gamma_{0,1}$ is $\integer_2$-minimal. By (\ref{eq:AIso}), we have $\integer_2 \cong \{1, \croot_{2n-2}\} =G_{\coxA_{2n-1}}$ as $\croot_{2n-2} M_{i, \gamma} \cong M_{2n-1-i,\gamma}$ for any $i$ and positive root $\gamma$. From this and Lemma~\ref{lem:RPGenGroupAction}, we can see that $\Gamma_{0,1} \sim_{\integer_2} \Gamma_{2n-2,1} \sim_{\integer_2} \Gamma_{0,2n-3} \sim_{\integer_2} \Gamma_{2n-2,2n-3}$. Moreover, $\ell(\dimproj^\acat_{F^\gend}(M_{0,\alpha})) < \ell(\dimproj^\acat_{F^\gend}(M_{2i,\alpha}))$ for all $i \neq n-1$ and $\ell(\dimproj^\acat_{F^\gend}(M_{1,\beta})) < \ell(\dimproj^\acat_{F^\gend}(M_{2i+1,\beta}))$ for all $i \neq n-2$. So each $\Gamma_{i,j}$ is $\integer_2$-minimal by Remark~\ref{rem:ActionProps}, as required. 
		
		(b) Here, we have $\integer_3 \cong \{1,g,g^2\} = G_{\coxD_4}$. This is thus a consequence of the sole generating element $g \in \dchebsr{4}$ being the $\integer_3$-group action on the category, along with the fact that $\ell(\dimproj_{F^\gend}^\acat(M_{0,\alpha}) ) = \ell(\dimproj_{F^\gend}^\acat(M_{2^+,\alpha}) ) = \ell(\dimproj_{F^\gend}^\acat(M_{2^-,\alpha}) )$ for each short positive root $\alpha$.
		
		(c) We will first show that in both the odd and even cases, the sets $\Gamma^\pm$ are both basic, $\tau$-closed sets of $R^+$-generators that are equivalent under the group action $\integer_2 \cong \{1,\croot^-_0\} = G_{\coxD_{n+1}}$. Suppose $n$ is even. In this case we have
		\begin{align*}
			\mathbf{M}_{\alpha} &= \{[M_{0,\alpha}], [M_{2,\alpha}],\ldots,[M_{n-2,\alpha}]\}	& (\alpha &\in \Isproots{2n}), \\
\mathbf{M}_{\beta} &= \{[M_{1,\beta}], [M_{3,\beta}],\ldots,[M_{(n-1)^+,\beta}],[M_{(n-1)^-,\beta}]\}	& (\beta &\in \Ilproots{2n}).
		\end{align*}
		It is straightforward to see from (\ref{eq:DIso1}) that $M_{0,\alpha}$ $R_+$-generates the iso-classes in $\mathbf{M}_{\alpha}$ -- the argument is identical to that used in type $\coxA$. We can also see that $M_{(n-1)^\pm,\beta}$ both $R_+$-generate the iso-classes in $\mathbf{M}_{\beta}$. This follows from (\ref{eq:DIso2}), where we can see that each $M_{n-1-2i, \beta}$ is $R_+$-generated from $M_{(n-1)^\pm,\beta}$ by the pair $(\croot^+_{2i},\croot^-_{2(i-1)})$. Thus $\Gamma^\pm$ are both sets of $R_+$-generators of $\acat$.
		
		Now suppose $n$ is odd. In this case we have
		\begin{align*}
			\mathbf{M}_{\alpha} &= \{[M_{0,\alpha}], [M_{2,\alpha}],\ldots,[M_{(n-1)^+,\alpha}],[M_{(n-1)^-,\alpha}]\}	& (\alpha &\in \Isproots{2n}), \\
\mathbf{M}_{\beta} &= \{[M_{1,\beta}], [M_{3,\beta}],\ldots,[M_{n-2,\beta}]\}	& (\beta &\in \Ilproots{2n}).
		\end{align*}
		By (\ref{eq:DIso2}), $M_{(n-1)^\pm,\alpha}$ $R_+$-generates the iso-classes in $\mathbf{M}_{\alpha}$ in the same way as $M_{(n-1)^\pm,\beta}$ does for $\mathbf{M}_{\beta}$ in the even case. By (\ref{eq:DIso1}), $M_{1,\beta}$ $R_+$-generates the iso-classes in $\mathbf{M}_{\beta}$ in a similar way to how $M_{1,\beta}$ does for $\mathbf{M}_{\beta}$ in type $\coxA$ --- the pairs are the same except with $\pm$ superscripts. Thus, $\Gamma^\pm$ are again both sets of $R_+$-generators of $\acat$.
		
		It now remains to show that these sets of $R_+$-generators satisfy the required properties. That they are basic and $\tau$-closed follows trivially by construction. The equivalence under $\sim_{\integer_2}$ is also obvious, as $\croot_0^- M_{i,\gamma} = M_{i,\gamma}$ for any $\gamma$ and any $i \neq (n-1)^\pm$, and $\croot_0^- M_{(n-1)^\pm,\gamma} = M_{(n-1)^\mp,\gamma}$. Thus, it remains to show that these sets are $\integer_2$-minimal. Here, it is important to note the following: if $[M_{(n-1)^\pm,\gamma}] \in \mathbf{M}_{\gamma}$ for some $\gamma$, then every $M_{i,\gamma}$ (with $i\neq (n-1)^\pm$) does not $R_+$-generate $M_{(n-1)^\pm,\gamma}$. This follows from (\ref{eq:DIso1}), where we can see that for any object $N$ that is $R_+$-generated by $M_{i,\gamma}$, $M_{(n-1)^+,\gamma}$ is isomorphic to direct summand of $N$ if and only if $M_{(n-1)^-,\gamma}$ is isomorphic to direct summand of $N$. That is, it may be possible to $R_+$-generate $M_{(n-1)^+,\gamma} \oplus M_{(n-1)^-,\gamma}$ from $M_{i,\gamma}$, but not one of these summands individually. So even though we may have $M_{i,\gamma} < M_{(n-1)^\pm,\gamma}$, this is irrelevant, as any set that does not contain one of $M_{(n-1)^\pm,\gamma}$ is not a set of $R_+$-generators. As for the set of iso-classes such that $[M_{(n-1)^\pm,\gamma}] \not\in \mathbf{M}_{\gamma}$, the $\integer_2$-minimality of $M_{0,\gamma}$ or $M_{1,\gamma}$ is obvious from Remark~\ref{rem:ActionProps}. Hence, $\Gamma^\pm$ are both $\integer_2$-minimal.
		
		(d) Let $\alpha$ be a short positive root and $\beta$ be a long positive root of $\coxI_2(12)$. Then we can see $\Gamma^{(i,j)}$ (with $i,j \in \{\pm,\mp\}$) is a set of $R_+$-generators by the following table of $R_+$-generating pairs (where columns are $R_+$-generated by rows with pairs in the given cell).
		\begin{center}
			\begin{tabular}{ c | c | c | c | c | c | c }
				& $M_{0^\pm,\alpha}$ & $M_{0^\mp,\alpha}$ & $M_{2,\alpha}$ & $M_{1^\pm,\beta}$  & $M_{1^\mp,\beta}$ & $M_{v_6,\beta}$ \\ \hline
				$M_{0^\pm,\alpha}$ & $(1,0)$ & $(\croot^-_0,0)$ & $(\croot_2,0)$ & & & \\ \hline
				$M_{1^\pm,\beta}$ & & & & $(1,0)$ & $(\croot^-_0,0)$ & $(\croot_2,1 + \croot^-_0)$
			\end{tabular}
		\end{center}
		By Theorem~\ref{thm:FoldingProjection} and Remark~\ref{rem:ActionOnARQ}, this is basic and $\tau$-closed, as required. The group is $\integer_2\cong \{1, \croot_0^-\} = G_{\coxE_6}$, and thus it is clear from the table that these sets of $R_+$-generators are pairwise equivalent under $\sim_{\integer_2}$. To see that these are $\integer_2$-minimal, we note that
		\begin{align*}
			\ell(\dimproj_{F^\gend}^\acat(M_{0^+,\alpha}) ) &= \ell(\dimproj_{F^\gend}^\acat(M_{0^-,\alpha}) ) < \ell(\dimproj_{F^\gend}^\acat(M_{2,\alpha}) ), \\
			\ell(\dimproj_{F^\gend}^\acat(M_{v_6,\beta}) ) < \ell(\dimproj_{F^\gend}^\acat(M_{1^+,\beta}) ) &= \ell(\dimproj_{F^\gend}^\acat(M_{1^-,\beta}) ),
		\end{align*}
		but also that $M_{v_6,\beta}$ does not $R_+$-generate either $M_{1^\pm,\beta}$ (the best we can do is $R_+$-generate $M_{1^+,\beta} \oplus M_{1^-,\beta}$, which is insufficient). Hence, each $\Gamma^{(i,j)}$ is $\integer_2$-minimal by Remark~\ref{rem:ActionProps}, as required.
		
		(e) Let $\alpha$ be a short positive root and $\beta$ be a long positive root of $\coxI_2(18)$. Then we can see $\Gamma$ is a set of $R_+$-generators by the following table of $R_+$-generating pairs.
		\begin{center}
			\renewcommand{\arraystretch}{1.5}
			\begin{tabular}{ c | c | c | c | c | c | c | c }
				& $M_{0,\alpha}$ & $M_{\wt{1},\alpha}$ & $M_{\wt{2},\alpha}$ & $M_{2,\alpha}$ & $M_{v_7,\beta}$  & $M_{1,\beta}$ & $M_{3,\beta}$ \\ \hline
				$M_{0,\alpha}$ & $(1,0)$ & $(\croot_2\wt\croot_2,\croot_2 + \wt\croot_2)$ & $(\wt\croot_2,0)$ & $(\croot_2,0)$ & & \\ \hline
				$M_{v_7,\beta}$ & & & & & $(1,0)$ & $(\wt\croot_2,1)$ & $(\croot_2,0)$
			\end{tabular}
			\renewcommand{\arraystretch}{1}
		\end{center}
		By Theorem~\ref{thm:FoldingProjection} and Remark~\ref{rem:ActionOnARQ}, $\Gamma$ is basic and $\tau$-closed, as required. It also follows from Remark~\ref{rem:ActionProps} that $\Gamma$ is minimal, since
		\begin{align*}
			\ell(\dimproj_{F^\gend}^\acat(M_{0,\alpha}) ) &< \ell(\dimproj_{F^\gend}^\acat(M_{\wt{1},\alpha}) ) < \ell(\dimproj_{F^\gend}^\acat(M_{\wt{2},\alpha}) ) < \ell(\dimproj_{F^\gend}^\acat(M_{2,\alpha}) ),  \\
			\ell(\dimproj_{F^\gend}^\acat(M_{v_7,\beta}) ) &< \ell(\dimproj_{F^\gend}^\acat(M_{1,\beta}) ) < \ell(\dimproj_{F^\gend}^\acat(M_{3,\beta}) ),
		\end{align*}
		as required.
		
		(f) Let $\alpha$ be a short positive root and $\beta$ be a long positive root of $\coxI_2(30)$. Then we can see $\Gamma$ is a set of $R_+$-generators by the following table of $R_+$-generating pairs.
		\begin{center}
			\begin{tabular}{ c | c | c | c | c | c | c | c | c }
				& $M_{0,\alpha}$ & $M_{2,\alpha}$ & $M_{4,\alpha}$ & $M_{\phi_0,\alpha}$ & $M_{1,\beta}$  & $M_{3,\beta}$ & $M_{\phi_1,\beta}$ & $M_{v_8,\beta}$ \\ \hline
				$M_{0,\alpha}$ & $(1,0)$ & $(\croot_2,0)$ & $(\gratio\croot_2,0)$ & $(\gratio,0)$ & & & \\ \hline
				$M_{1,\beta}$ & & & & & $(1,0)$ & $(\croot_2,1)$ & $(\gratio,0)$ & $(\gratio\croot_2+1,\croot_2+\gratio)$
			\end{tabular}
		\end{center}
		By Theorem~\ref{thm:FoldingProjection} and Remark~\ref{rem:ActionOnARQ}, $\Gamma$ is basic and $\tau$-closed, as required. It also follows from Remark~\ref{rem:ActionProps} that $\Gamma$ is minimal, since
		\begin{align*}
			\ell(\dimproj_{F^\gend}^\acat(M_{0,\alpha}) ) &< \ell(\dimproj_{F^\gend}^\acat(M_{\phi_0,\alpha}) ) < \ell(\dimproj_{F^\gend}^\acat(M_{2,\alpha}) ) < \ell(\dimproj_{F^\gend}^\acat(M_{4,\alpha}) ),  \\
			\ell(\dimproj_{F^\gend}^\acat(M_{1,\beta}) ) &< \ell(\dimproj_{F^\gend}^\acat(M_{v_8,\beta}) ) < \ell(\dimproj_{F^\gend}^\acat(M_{\phi_1,\beta}) ) < \ell(\dimproj_{F^\gend}^\acat(M_{3,\beta}) ),
		\end{align*}
		as required.
	\end{proof}
	
	\begin{cor}\label{cor:GenCorrespondence}
		Let $F^\gend$ be a folding onto $\Iiin$ and let $\Gamma$ be a basic, $\tau$-closed set of minimal (or $G_\gend$-minimal) set of $R_+$-generators of $\calM_\gend$. Then the elements of $\Gamma$ are in bijective correspondence with the positive roots of $\Iiin$.
	\end{cor}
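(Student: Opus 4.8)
The plan is to reduce to the explicit description of $\Gamma$ supplied by Theorem~\ref{thm:RPGenerators} and then realise the required bijection through the column structure of Section~\ref{sec:Action-isoclasses}. First I would observe that in every type $\gend$, each minimal (or $G$-minimal) set listed in Theorem~\ref{thm:RPGenerators} is the union $\mathcal{P}^\acat_a \cup \mathcal{P}^\acat_b$ of exactly two rows of the Auslander--Reiten quiver of $\calM_\gend$, with $F(a)=[0]$ and $F(b)=[1]$; this is read off directly from parts (a)--(f) together with the vertex parities recorded in Section~\ref{sec:Prelim-Folding} (in type $\coxD_{n+1}$ with $n$ odd the two roles are interchanged, but this is immaterial). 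Note that one cannot expect to obtain the bijection by applying $\dimproj^\acat_F$ to land on roots directly, because the row-weights $\varepsilon_a,\varepsilon_b$ need not equal $1$ (for example, $\varepsilon_{v_7}\neq 1$), so these rows may project onto nonzero scalar multiples of roots rather than onto roots themselves.

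I would therefore build the correspondence from the sets $\mathbf{M}_\alpha$. By Theorem~\ref{thm:FoldingProjection} the $F$-projected vector of any indecomposable $M\in\calM_\gend$ is a nonzero scalar multiple of a unique positive root, so $M$ belongs to exactly one $\mathbf{M}_\alpha$; this lets me define $\pi\colon\Gamma\to\Iproots{2n}$ by $\pi(M)=\alpha$ whenever $[M]\in\mathbf{M}_\alpha$. To show $\pi$ is bijective I would use the two identities established in Section~\ref{sec:Action-isoclasses}: for a short positive root $\alpha$ one has $\mathbf{M}_\alpha=\{[M_{i,\alpha}]:F(i)=[0]\}$ with $M_{i,\alpha}\in\mathcal{I}^\acat_i$, and symmetrically $\mathbf{M}_\alpha=\{[M_{i,\alpha}]:F(i)=[1]\}$ for long $\alpha$. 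Via the identification of $\mathcal{P}$-rows with $\mathcal{I}$-rows in Remark~\ref{defn:InjRow}, the $[0]$-row of $\Gamma$ equals some $\mathcal{I}^\acat_{a'}$ with $F(a')=[0]$; it meets each short-root column $\mathbf{M}_\alpha$ in the single object $M_{a',\alpha}$ and meets no long-root column (its objects project onto multiples of short roots), so $\pi$ restricts to a bijection $\mathcal{P}^\acat_a \to \Isproots{2n}$. Symmetrically $\pi$ restricts to a bijection $\mathcal{P}^\acat_b \to \Ilproots{2n}$. Since $\Iproots{2n}=\Isproots{2n}\sqcup\Ilproots{2n}$ and the two rows are disjoint, $\pi$ is a bijection from $\Gamma$ onto $\Iproots{2n}$.

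I expect the main obstacle to be verifying cleanly that each of the two rows meets every column of the matching length exactly once; equivalently, that the $[0]$-row (resp. $[1]$-row) has exactly $n$ elements in the module category. Surjectivity onto the $n$ short (resp. long) positive roots is guaranteed by Section~\ref{sec:Action-isoclasses}, which produces an object $M_{a',\alpha}$ for every positive root $\alpha$ of the matching length, while injectivity follows from Lemma~\ref{lem:tauRotation}, since $\tau$ rotates $F$-projected vectors by $\frac{\pi}{n}$ and hence $\dimproj^\acat_F$ separates the objects of a single row. Finally, to cover an arbitrary minimal/$G$-minimal $\Gamma$ rather than only the listed representatives, I would note that the group $G\subset R_+$ sends each $M_{i,\alpha}$ to some $M_{i',\alpha}$ in the same column; hence passing to a $\sim_G$-equivalent generating set merely permutes objects within columns and leaves $\pi$, and therefore its bijectivity, unchanged.
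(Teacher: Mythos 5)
Your proposal is correct and follows essentially the same route as the paper's proof: both arguments rest on the fact that the sets $\mathbf{M}_\alpha$ partition the iso-classes of indecomposables and that, by the explicit description in Theorem~\ref{thm:RPGenerators}, a minimal (or $G$-minimal) $\Gamma$ contains exactly one representative from each $\mathbf{M}_\alpha$. The paper records this in three sentences, whereas you additionally spell out why each of the two rows meets every column of matching type exactly once and why $\sim_G$-equivalence does not disturb the correspondence; these are useful elaborations but not a different argument.
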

	\begin{proof}
		We have by construction (and Theorem~\ref{thm:FoldingProjection}) that the sets $\mathbf{M}_\alpha$ (with $\alpha \in \Iproots{2n}$) partition the iso-classes of indecomposable objects of $\calM_\gend$. By Theorem~\ref{thm:RPGenerators}, $\Gamma = \{N_\alpha : \alpha \in \Iproots{2n}\}$, where each $[N_\alpha] \in \mathbf{M}_\alpha$. Thus, the elements of $\Gamma$ are in bijective correspondence with the roots in $\Iproots{2n}$, as required.
	\end{proof}
	
	An interesting consequence of the above Theorem is the following.
	\begin{rem} \label{rem:GrothendieckModule}
		Let $R_+ \subset R$, where $R$ is the ring obtained by performing the Grothendieck group construction on the additive commutative monoid structure of $R_+$. Then the Grothendieck group $K_0(\calM_\gend)$ of the category $\calM_\gend$ has the structure of an $R$-module. For each iso-class $[M] \in K_0(\calM_\gend)$, mulitplication by $r - r' \in R$ (where $r,r' \in R_+$) is given by $(r-r')[M]=[rM] - [r'M]$. Note, however, that unlike the situation in \cite{DTOdd}, the $R$-module $K_0(\calM_\gend)$ is not free.
	\end{rem}
	
	\section{Cluster-$R_+$-tilting theory for foldings onto $\Iiin$}\label{sec:ClusterTilting}
	Cluster algebras and mutations of Dynkin $\integer$-quivers have previously been categorified via the cluster category (constructed in \cite{BMRRT}, see \cite{ClusterTiltingSurvey} for further details). The semiring action on the category $\bder_\gend$ naturally extends to the cluster category $\clus_\gend = \bder_\gend / \langle\dart \sus\inv\rangle$ of $Q^\gend$. In particular, we have a canonical triangulated quotient functor $E\colon \bder_\gend \rightarrow \clus_\gend$ (due to \cite{KellerOrbitCats}) and we define the action of $R_+$ on $\clus_\gend$ by $rE = Er$ for each $r \in R_+$. Given any set of $R_+$-generators $\Gamma_{\bder_{\gend}}$ of $\bder_{\gend}$, we obtain a set of $R_+$-generators
	\begin{equation*}
		\Gamma_{\clus_{\gend}} = \{E(M) : M \in \Gamma_{\bder_{\gend}}\}
	\end{equation*}
	of $\clus_\gend$. Moreover, $\Gamma_{\clus_{\gend}}$ is basic if $\Gamma_{\bder_{\gend}}$ is basic and $\Gamma_{\clus_{\gend}}$ is $G$-minimal if $\Gamma_{\bder_{\gend}}$ is $G$-minimal. For worked examples of the theory that follows, see Section~\ref{sec:Examples}.
	
	\begin{defn}
		Let $H$ be a finite-dimensional hereditary algebra such that $\mod*H$ is has the structure of an $R_+$-coefficient category. Let $\clus_H$ be the cluster category of $H$, and suppose that $\Gamma$ is a set of $R_+$-generators of $\clus_H$. We say an object $T \in \clus_H$ is \emph{(cluster)-$R_+$-tilting} with respect to $\Gamma$ if the following hold.
		\begin{enumerate}[label=(T\arabic*)]
			\item $T \cong \bigoplus_{i=1}^{|T|} T_i$ with each $T_i \in \Gamma$.
			\item $T$ is $R_+$-rigid: $\Ext_{\clus_\gend}^1(T', T'')=0$ for any $T',T'' \in \gen_T$.
			\item $T$ is maximal: If there exists $X \in \Gamma$ such that $T \oplus X$ is $R_+$-rigid, then $X$ is isomorphic to a direct summand of $T$.
		\end{enumerate}
		In addition, we say an $R_+$-tilting object $T \in \clus_H$ is \emph{basic} if the direct summands of $T$ are pairwise non-isomorphic and $T$ is with respect to a basic, $G$-minimal set of $R_+$-generators, where $G$ is some (possibly trivial) group embedded in $U(R_+)$.
	\end{defn}
	
	Henceforth, we will assume that $\Gamma$ is a basic, $\tau$-closed, $G_\gend$-minimal set of $\chebsr{\gend}$-generators of $\clus_\gend$ that corresponds to a basic, $\tau$-closed, $G_\gend$-minimal set of $\chebsr{\gend}$-generators $\Gamma'$ of $\bder_\gend$ under the quotient functor $E\colon \bder_\gend \rightarrow \clus_\gend$.
	
	\begin{thm} \label{thm:RPTilting}
		Let $T \cong X_1 \oplus \ldots \oplus X_m \in \clus_\gend$ with each $X_i \in \Gamma$. For each $i$, denote by $\mathbf{I}_{X_i}$ the set of all iso-classes of indecomposable objects in $\clus_\gend$ that are $\chebsr{\gend}$-generated by $X_i$. Then the following are equivalent.
		\begin{enumerate}[label=(\alph*)]
			\item $T$ is basic $\chebsr{\gend}$-tilting with respect to $\Gamma$.
			\item $T \cong X_1 \oplus X_2$, where $X_1,X_2 \in \Gamma$ reside in (distinct) adjacent columns of the Auslander-Reiten quiver of $\clus_\gend$.
			\item The object
			\begin{equation*}
				\wh{T} \cong \bigoplus_{i=1}^m\bigoplus_{[Y_i] \in \mathbf{I}_{X_i}} Y_i \in \clus_\gend
			\end{equation*}
			is basic tilting.
		\end{enumerate}
	\end{thm}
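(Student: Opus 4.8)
The plan is to translate all three conditions into the combinatorics of the columns of the Auslander--Reiten quiver of $\clus_\gend$ and then invoke the tilting theory of \cite{BMRRT}. Since $\Gamma$ is basic and $\tau$-closed, Corollary~\ref{cor:GenCorrespondence} (extended to $\clus_\gend$ via the quotient $G$) shows that $\Gamma$ contains exactly one indecomposable in each column, and that this object minimally $R_+$-generates its column; hence by Remark~\ref{rem:ActionOnARQ} the set $\mathbf{I}_{X_i}$ is the \emph{entire} column containing $X_i$, and $\wh T$ is the sum of all indecomposables supported on the columns of $X_1,\dots,X_m$. The first thing I would record is that condition (T2) is literally the rigidity of $\wh T$: every object of $\gen_T$ is a direct summand of an object supported on these columns (Remark~\ref{rem:ActionOnARQ}), so $\Ext^1_{\clus_\gend}(T',T'')=0$ for all $T',T''\in\gen_T$ if and only if $\Ext^1_{\clus_\gend}(\wh T,\wh T)=0$.

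The combinatorial core is a \emph{Column Lemma}: two distinct columns form a rigid object if and only if they are adjacent, in which case their union has exactly $|Q^\gend_0|$ indecomposable summands. I would prove this inside $\clus_\gend$, where $\tau=\sus$ and the $2$-Calabi--Yau formula $\Ext^1_{\clus_\gend}(X,Y)\cong D\Ext^1_{\clus_\gend}(Y,X)$ holds. For rigidity of an adjacent pair one base case suffices: using Theorem~\ref{thm:FoldingProjection} in $\calM_\gend$ one sees that the short column $\mathbf{M}_{\alpha_0}$ consists of the simple injectives (at the sources) and the neighbouring long column $\mathbf{M}_{\beta_0}$ of the non-simple injectives (at the sinks), so their union is $\bigoplus_i I(i)$, which is rigid because both summands $\Ext^1_{KQ^\gend}(I,I')$ and $\Ext^1_{KQ^\gend}(I',I)$ have injective target, and is therefore basic tilting by \cite{BMRRT}. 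Applying the autoequivalence $\tau$ (Lemma~\ref{lem:tauRotation}) transports this across every short-then-long pair, while a long-then-short pair $\mathbf{M}_{\beta_0}\cup\tau\mathbf{M}_{\alpha_0}$ is handled by reducing its two cross-extensions, via $\tau=\sus$, to $\Hom_{\clus_\gend}(S(i),I(k))$ with $i$ a source and $k$ a sink; this vanishes since $\soc I(k)=S(k)\not\cong S(i)$ and $KQ^\gend$ is hereditary. Conversely, distinct non-adjacent columns always carry an extension: when they are two Auslander--Reiten steps apart one column contains some $X$ and the other $\tau X=\sus X$, so $\Ext^1_{\clus_\gend}(\sus X,X)\cong\End_{\clus_\gend}(X)\neq 0$, and the remaining distances reduce to this (together with a dual head computation) by $\tau$. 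A summand count then forces any rigid column-closed object to occupy at most two columns, and exactly two only when they are adjacent.

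With the Column Lemma in hand I would close the equivalences cyclically. For (b)$\Rightarrow$(c), adjacency makes $\wh T$ a union of two adjacent columns, rigid with $|Q^\gend_0|$ summands, hence basic tilting by \cite{BMRRT}. For (c)$\Rightarrow$(a), a basic tilting $\wh T$ is maximal rigid with $|Q^\gend_0|$ summands, so the Lemma forces it to be two adjacent columns and $m=2$; then (T1) is clear, (T2) is the rigidity just noted, and (T3) holds because adjoining any further element of $\Gamma$ would introduce a third column and destroy rigidity. For (a)$\Rightarrow$(b), condition (T2) makes $\wh T$ rigid and (T3) makes it maximal among column-closed rigid objects, so the Lemma again yields exactly two adjacent columns and $T\cong X_1\oplus X_2$. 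The step I expect to be most delicate is the \emph{uniform} verification of the Column Lemma for the folds carrying a $\integer_2$-symmetric fork (types $\coxD_{n+1}$ and $\coxE_6$): there one must confirm that a single generator still $R_+$-generates the whole column containing \emph{both} twin objects $M_{(n-1)^+,\gamma}$ and $M_{(n-1)^-,\gamma}$, so that $\wh T$ stays genuinely column-closed and the injective/socle computations apply verbatim, and one must check that the two extra shifted columns $\{\sus P(i)\}$ behave under $\tau$ exactly like the module columns, so that every adjacent pair—not merely the injective one and its $\tau$-orbit—is cluster-tilting.
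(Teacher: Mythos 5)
Your overall strategy is the same as the paper's: reduce everything to the combinatorics of columns of the Auslander--Reiten quiver of $\clus_\gend$, show that a pair of columns is mutually rigid precisely when the columns are adjacent, count summands, and invoke \cite{BMRRT}. Your packaging differs in two harmless ways: you compute extensions inside $\clus_\gend$ using the $2$-Calabi--Yau duality and $\tau$-transport from the base case $\bigoplus_i I(i)$, whereas the paper works with representatives in $\bder_\gend$ and a case analysis on degrees; and you obtain maximality (T3) from the summand count of the Column Lemma, whereas the paper exhibits an explicit $r \in R_+$ with $rT \cong \wh{T}$ cluster-tilting. Both of those are fine, as are your closing caveats about the twin objects $M_{(n-1)^\pm,\gamma}$ (the element $\croot_0^-$, with generating pair $(\croot_0^-,0)$, ensures each generator reaches the whole column) and about the shifted-projective columns (in $\clus_\gend$ the autoequivalence $\tau$ acts transitively on same-type columns, shifted ones included).

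There is, however, one genuine gap: the ``only if'' half of your Column Lemma is only verified for columns at cyclic distance $2$ (via $\Ext^1_{\clus_\gend}(\sus X,X)\cong\End(X)\neq 0$) and, implicitly, distance $3$ (the head computation $\Hom(I(k),S(i))$). Your claim that ``the remaining distances reduce to this by $\tau$'' cannot work as stated, because $\tau$ is an autoequivalence permuting the columns cyclically and therefore \emph{preserves} the distance between two columns; it can normalise one column of the pair but never shorten the gap. A pure summand count does not rescue this either: two sink-type columns in type $\coxA_{2n-1}$ carry only $2n-2 < |Q_0^{\coxA_{2n-1}}|$ indecomposables, so nothing forces them to be non-rigid on cardinality grounds. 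You need a genuine extension-finding argument valid at every distance $\geq 2$; the paper does this by normalising so that both representatives are modules (or one is a shifted projective), applying the Auslander--Reiten formula $\dim_K\Ext^1_{\bder_\gend}(M_2',M_1')=\dim_K\overline{\Hom}(M_1',\tau M_2')$, and observing that the inclusion of a simple projective subobject of $\tau M_2'$ must factor through the intermediate column containing $M_1'$ (since every nonzero map between indecomposables of a directed algebra factors through any slice lying between them). Some such argument must be supplied before your Column Lemma, and hence the equivalences (a) $\Leftrightarrow$ (b) $\Leftrightarrow$ (c), can be considered proved.
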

	\begin{proof}
		(c) $\Rightarrow$ (b): Since $\wh{T}$ is basic, the direct summands of $\wh{T}$ are pairwise non-isomorphic. By construction, this is true only if we have $X_i \not\cong X_j$ for all $i \neq j$. By Remark~\ref{rem:ActionOnARQ}, the iso-classes in $\mathbf{I}_{X_i}$ are precisely the iso-classes of indecomposable objects that reside in the same column as $X_i$ in the Auslander-Reiten quiver of $\clus_\gend$. Thus we conclude that $\wh{T}$ is basic only if each $X_i$ resides in a distinct column of the Auslander-Reiten quiver of $Q_0^\gend$. To see that $m = 2$, we note that if $m \neq 2$, then $|\wh{T}| \neq |Q_0^\gend|$, and thus cannot be tilting by the results of \cite{BMRRT}. Thus, $T \cong X_1 \oplus X_2$.
		
		It remains to show that $X_1$ and $X_2$ reside in adjacent columns. Suppose for a contrapositive argument that this is not the case. Then at least one of $X_1$ and $X_2$ is represented by an object $M_1 \in \bder_\gend$ concentrated in degree $0$ --- for convenience, we will say (without loss of generality) that this is a representative of $X_1$. Then by assumption, $X_2$ is represented either by another object $M_2$ concentrated in degree $0$, or by an object $\sus P(i) \in \bder_\gend$ for some $i \in Q_0^\gend$.
		
		In the latter case, if $P(i)$ is simple (or equivalently, $i$ is a sink vertex), then $M_1 \not\cong S(j)$ for any source vertex $j \in Q_0^\gend$ (otherwise, $X_1$ and $X_2$ are adjacent). Thus we must have $S(k) \subseteq M_1$ for some sink vertex $k \in Q_0^\gend$ and hence $\Ext_{\bder_\gend}^1(\sus S(k), M_1) = \Hom_{\bder_\gend}(\sus S(k), \sus M_1) \neq 0$. But each $\sus S(k)$ ($k$ a source vertex) is a representative of some object $Y_2 \in \clus_\gend$ in the same column of the Auslander-Reiten quiver as $X_2$ (that is, $[Y_2] \in \mathbf{I}_{X_2}$). So $\Ext^1_{\bder_\gend}(\wh{T}, \wh{T}) \neq 0$, which implies $\Ext^1_{\clus_\gend}(\wh{T}, \wh{T}) \neq 0$, and hence $\wh{T}$ is not tilting, as required. On the other hand, if $P(i)$ is not simple (or equivalently, $i$ is a source vertex), then $M_1 \not\cong S(j)$ for any sink vertex $j \in Q_0^\gend$ (otherwise, $X_1$ and $X_2$ are again adjacent). In this subcase, we trivially have $\Ext_{\bder_\gend}^1(\sus P, M_1) = \Hom_{\bder_\gend}(\sus P, \sus M_1) \neq 0$ for some indecomposable object $P$ in the same column as $P(i)$, as this occurs as a component of a projective cover of $M_1$ in the category $\calM_\gend$. Thus we again have $\Ext^1_{\clus_\gend}(\wh{T}, \wh{T}) \neq 0$, which implies $\wh{T}$ is not tilting.
		
		Now we consider the former case where we have representatives $M_1$ and $M_2$ both concentrated in degree $0$. Suppose without loss of generality that $M_2$ resides in a column strictly to the right of $M_1$ (that is, in the direction of the irreducible morphisms in the Auslander-Reiten quiver). Then $M_1$ does not correspond to a injective object in $\calM$ and $M_2$ does not correspond to a projective object in $\calM$ (otherwise we are forced to have $X_1$ and $X_2$ in adjacent columns). Thus by the Auslander-Reiten formula,
		\begin{equation*}
			\dim_K\Ext_{\bder_\gend}^1(M'_2,M'_1) = \dim_K\Hom_{\bder_\gend}(M'_2,\sus M'_1) = \dim_K\Hom_{\calM_\gend}^1(M'_1,\tau M'_2),
		\end{equation*}
		where each $M'_i$ is some object in the same column of the Auslander-Reiten quiver as $M_i$. This quantity must be non-zero for some choice of $M'_1$ and $M'_2$, since $\tau M'_2$ is not injective and thus has a simple projective subobject $P$ such that the inclusion of $P$ into $\tau M'_2$ factors through $M'_1$. This implies that we must have $\Ext_{\clus_\gend}^1(\wh{T},\wh{T}) \neq 0$ in this case, which implies $\wh{T}$ is not tilting, as required. This was the final case to consider, and hence we conclude that $X_1$ and $X_2$ must reside in (distinct) adjacent columns of the Aulsander-Reiten quiver.
		
		(b) $\Rightarrow$ (a): Let $Y_1 \in \gen_{X_1}$ and $Y_2 \in \gen_{X_2}$ be indecomposable and let $M_1,M_2 \in \bder_\gend$ be representatives of $Y_1$ and $Y_2$ respectively. Note that by Remark~\ref{rem:ActionOnARQ} that $Y_1$ resides in the same column as $X_1$ and $Y_2$ resides in the same column as $X_2$ in the Auslander-Reiten quiver of $\clus_\gend$.
		
		If $M_1$ and $M_2$ are concentrated in the same degree, then they also reside in adjacent columns of the Auslander-Reiten quiver of $\bder_\gend$, and clearly we have
		\begin{equation*}
			\Ext^1_{\bder_\gend}(M_1,M_2)=\Ext^1_{\bder_\gend}(M_2,M_1)=0
		\end{equation*}
		
		So assume $M_1$ and $M_2$ are concentrated in the degrees $k$ and $k+1$ (where we assume without loss of generality that $M_1$ is concentrated in degree $k$), then clearly we have $\Ext^1_{\bder_\gend}(M_1,M_2) = \Hom_{\bder_\gend}(M_1,\sus M_2) = 0$, since $\bder_\gend$ is hereditary. To compute $\Ext^1_{\bder_\gend}(M_2,M_1) = \Hom_{\bder_\gend}(M_2,\sus M_1)$, we note that $M_2$ and $\sus M_1$ are concentrated in the same degree. By construction, there are two possibilities: either $M_2$ and $\sus M_1$ are non-isomorphic simple objects (which occurs only if $M_1$ and $M_2$ reside in adjacent columns of $\bder_\gend$) or $\sus M_1$ necessarily resides in a column to the left of $M_2$ in the Auslander-Reiten quiver of $\bder_\gend$ (since $Y_1$ and $Y_2$ reside in adjacent columns in the Auslander-Reiten quiver of $\clus_\gend$ and $\sus \simeq \tau$ in $\clus_\gend$). In both cases, we trivially have $\Ext^1_{\bder_\gend}(Y_2,Y_1)=0$.
		
		Now assume $M_1$ and $M_2$ are concentrated in the degrees $k$ and $k+l$ with $l \geq 2$ respectively. Again we have $\Ext^1_{\bder_\gend}(M_1,M_2) = 0$ from the fact that $\bder_\gend$ is hereditary and that $\Ext^1_{\bder_\gend}(M_2,M_1) = 0$ since $\sus M_1$ resides in a column to the left of $M_2$ in the Auslander-Reiten quiver of $\bder_\gend$. 
		
		Since $\Gamma$ is a set of $\chebsr{\gend}$-generators, and every object $\chebsr{\gend}$-generated by $X_i$ resides in the same column as $X_i$, and we have made an argument based on arbitrary indecomposables in each column, we have $\Ext_{\clus_\gend}(X'_1,X'_2)=\Ext_{\clus_\gend}(X'_2,X'_1)=0$ for any $X'_1 \in \gen_{X_1}$ and $X'_2 \in \gen_{X_2}$. By similar arguments to the above, one also has $\Ext_{\clus_\gend}(X'_1,X'_1)=\Ext_{\clus_\gend}(X'_2,X'_2)=0$ for any $X'_1 \in \gen_{X_1}$ and $X'_2 \in \gen_{X_2}$. Thus, $T$ must be $\chebsr{\gend}$-rigid. It is also basic, since $X_1$ and $X_2$ are non-isomorphic by assumption. That $T$ is maximal follows from the fact that there exists an element $r \in \chebsr{\gend}$ such that every indecomposable object residing in the columns of both $X_1$ and $X_2$ belong as subobjects of $rT$. Namely,
		\begin{equation*}
			r =
			\begin{cases}
				\sum_{j = 0}^{n-1} \croot_{2j}				& \text{if } \gend = \coxA_{2n-1}, \\
				\sum_{j = 0}^{2\left\lfloor\frac{n-1}{2}\right\rfloor} \croot_{2j}^+ + \croot_{2j}^-																	& \text{if } \gend = \coxD_{n+1}, \\
				1 + \croot_0^- + \croot_2						& \text{if } \gend = \coxE_{6}, \\
				1 + \croot_2 + \wt\croot_2 + \croot_2\wt\croot_2	& \text{if } \gend = \coxE_{7}, \\
				1 + \croot_2 + \gratio + \gratio\croot_2		& \text{if } \gend = \coxE_{8}.
			\end{cases}
		\end{equation*}
		In particular, $|rT| = |Q_0^\gend|$, where $|rT|$ denotes the number of distinct (up to isomorphism) indecomposable direct summands of $rT$. Thus, $rT$ is a rigid object that contains a basic cluster tilting object as a direct summand. But then if $X_3$ is distinct from $X_1$ and $X_2$ (which since $\Gamma$ is $G_\gend$-minimal, implies it resides in a distinct column), we must then have direct summands $Z_1,Z_2 \subseteq r(T \oplus X_3)$ such that $\Ext_{\clus_\gend}(Z_1,Z_2)\neq 0$. Hence, $T$ must be maximal and hence basic $\chebsr{\gend}$-tilting with respect to $\Gamma$, as required.
		
		(a) $\Rightarrow$ (c): This is immediate from the last part of the proof of (b) $\Rightarrow$ (a).
	\end{proof}
	
	Recall from \cite{DTOdd} that we call an object $T$ in the cluster category $\clus$ an \emph{almost complete basic $R_+$-tilting object (with respect to $\Gamma$)} if $T$ is $R_+$-rigid and there exists $X \in \Gamma$ such that $T \oplus X$ is basic $R_+$-tilting. We call such an object $X$ a \emph{complement} to $T$. The following is a trivial consequence of the above theorem.
	
	\begin{cor} \label{cor:Complements}
		Let $T \in \clus_\gend$.
		\begin{enumerate}[label=(\alph*)]
			\item If $T$ is basic $\chebsr{\gend}$-tilting with respect to $\Gamma$, then $T$ has precisely $|Q_0^{\Iiin}|=2$ direct summands.
			\item If $T$ is almost complete basic $\chebsr{\gend}$-tilting with respect to $\Gamma$, then $T$ is indecomposable and has precisely two complements.
		\end{enumerate}
	\end{cor}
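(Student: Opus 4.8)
The plan is to obtain both parts as direct bookkeeping from the equivalence (a)$\Leftrightarrow$(b) of Theorem~\ref{thm:RPTilting}, so the argument should be short. For part (a), if $T$ is basic $R_+$-tilting with respect to $\Gamma$, then Theorem~\ref{thm:RPTilting} gives $T \cong X_1 \oplus X_2$ with $X_1, X_2 \in \Gamma$ lying in distinct columns of the Auslander--Reiten quiver of $\clus_\gend$; hence $T$ has exactly two indecomposable summands. Since the vertex set of $Q^{\Iiin}$ is $\{[0],[1]\}$, this number is $|Q^{\Iiin}_0| = 2$.

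For part (b), I would first note that if $T$ is almost complete basic $R_+$-tilting, then some complement $X \in \Gamma$ makes $T \oplus X$ basic $R_+$-tilting. By part (a) the object $T \oplus X$ has exactly two indecomposable summands; as $X \in \Gamma$ is one of them, $T$ is the other, so $T$ is indecomposable and lies in $\Gamma$. To count complements, I would invoke Theorem~\ref{thm:RPTilting} once more: a complement of $T$ is precisely an element of $\Gamma$ sitting in a column adjacent to the column $c$ containing $T$. The set $\Gamma$ is, up to the quotient functor $G\colon \bder_\gend \to \clus_\gend$, a union of two $\tau$-orbits — one projecting onto short root directions and one onto long root directions — so that (as in Corollary~\ref{cor:GenCorrespondence}) it meets each column in exactly one object. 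Consequently the number of complements equals the number of columns adjacent to $c$. Finally, Lemma~\ref{lem:tauRotation} shows that $\tau$ acts on $F$-projected vectors as a rotation by $\frac{\pi}{n}$, so the columns of $\clus_\gend$ carry a cyclic ordering by angle in which the Auslander--Reiten mesh joins each short-root column to its two neighbouring long-root columns and vice versa. Thus $c$ is adjacent to precisely two columns, and $T$ has precisely two complements.

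The only genuinely delicate point — and the step I would check most carefully — is the claim that $\Gamma$ picks out exactly one object in each projected column. In types $\coxD_{n+1}$ and $\coxE_{6}$ a single column may contain several mutually incomparable indecomposables of minimal length (the $M_{(n-1)^\pm,\alpha}$ phenomenon exploited in the proof of Theorem~\ref{thm:RPGenerators}), so one must appeal to the fixed $G$-minimal choice of $\Gamma$ to guarantee a unique representative; once this is in place, the cyclic rotational structure of Lemma~\ref{lem:tauRotation} forces the neighbour count to be exactly two in every type, including the exceptional foldings.

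The additional assertion announced in the introduction, that changing the complement corresponds to a single mutation, then follows by combining the above with the exchange property of cluster-tilting objects from \cite{BMRRT}, applied to the genuine tilting object $\wh{T} \cong rT$ produced at the end of the proof of Theorem~\ref{thm:RPTilting}: the two complements of $T$ are the $R_+$-generators of the two distinct columns completing $\gen_T$ to a cluster, and passing between them is exactly the mutation of $\wh{T}$ at the summand lying outside $\gen_T$.
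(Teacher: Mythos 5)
Your argument is correct and is exactly the route the paper intends: the paper states this corollary as an immediate consequence of Theorem~\ref{thm:RPTilting}, and your proposal simply spells out the bookkeeping — part (a) from the equivalence (a)$\Leftrightarrow$(b) there, and part (b) from the facts that $\Gamma$ meets each column of the Auslander–Reiten quiver of $\clus_\gend$ exactly once and that the columns form a cycle, so each has precisely two neighbours. The care you take over the $G$-minimal choice of $\Gamma$ in types $\coxD_{n+1}$ and $\coxE_6$ is a reasonable precaution but is already guaranteed by the standing assumption on $\Gamma$ before Theorem~\ref{thm:RPTilting}.
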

	
	Also recall from \cite{DTOdd} that given a basic $R_+$-tilting object $T \in \clus$, we call the algebra $\End_{\clus}(\wh T)\op$ the \emph{cluster-$R_+$-tilted algebra}, where $\wh T$ is the cluster-tilting object given in Theorem~\ref{thm:RPTilting}(c).
	
	\begin{rem}
		For any basic $\chebsr{\gend}$-tilting objects $T_1 = X \oplus X_1, T_2 = X \oplus X_2 \in \clus_\gend$, the respective cluster-$\chebsr{\gend}$-tilted algebras $A_{T_1}$ and $A_{T_2}$ are Morita equivalent to either $KQ^\gend$ or $(KQ^\gend)\op$. In particular, $A_{T_1} \simeq KQ^\gend$ if and only if $A_{T_2} \simeq (KQ^\gend)\op$, and thus changing complement corresponds to mutation of the folded quiver $Q^{\Iiin}$. Furthermore, the associated module, bounded derived and cluster categories of the cluster-$\chebsr{\gend}$-tilted algebras have the structure of $\chebsr{\gend}$-coefficient categories in the natural way.
	\end{rem}
	
	\section{The $\cv$- and $\gv$-vectors of exchange matrices of type $\Iiin$}\label{sec:Tropical}
	The $\cv$- and $\gv$-vectors of an integer exchange matrix were introduced in \cite{FominZelevinskyIV}. In \cite{DTOdd}, we defined $\cv$- and $\gv$-vectors for quivers of type $\coxH_4$, $\coxH_3$ and $\coxI_2(2n+1)$. We will do the same now for quivers of type $\Iiin$ using similar methods.
	
	\subsection{Representations of $\hchebr{\gend}$} \label{sec:Tropical-Reps}
	The first step in defining $\cv$- and $\gv$-vectors for $\Iiin$ is to realise the exchange matrix of the unfolded quiver $Q^\gend$ as a block matrix with blocks given (in some way) by representations of an associated ring. The situation is slightly more complicated in the $\Iiin$ case, as on the one hand we have a subring $\chebr{\gend} \subset \hchebr{\gend}$ whose role is to act on the larger ring, and on the other hand, the exchange matrix of $I_2(2n)$ has a rescaling due to the simple roots being of different size.
	
	It is helpful to bypass the problems caused by the embedding of rings and different root sizes by instead considering the doubled quiver $\dbar{Q}^\gend=Q^\gend \sqcup (Q^\gend)\op$ with vertex weights given as before. That is, $Q^\gend$ has the same $R$-valuation as given in the folding $F^\gend$, and $(Q^\gend)\op$ is a copy of this quiver with the same $R$-valuation, but with the orientation of the arrows reversed.
	
	Part of the idea here is that the exchange matrix of $\dbar{Q}^\gend$ has nicer properties (which we will later demonstrate), however it is also the most canonical setting when considering unfoldings of the rescaled/skew-symmetric form of $\Iiin$ (where roots are considered to be of the same length). That is, the doubled-quiver $\dbar{Q}^\gend$ allows us to consider a folding onto the \emph{`rescaled'} quiver $\vec{Q}^{\Iiin}$, which is the same as $Q^{\Iiin}$, except we now have an $R$-valuation of $\mw([0])=\mw([1])=\lambda$, where we define
	\begin{equation*}
		\lambda = \begin{cases}
			1	& \text{if }\gend \neq \coxD_{n+1}, \\
			2 	& \text{if }\gend = \coxD_{n+1}.
		\end{cases}
	\end{equation*}
	We thus have a corresponding weighted (double-)folding $\dbar{F}^\gend\colon \dbar{Q}^\gend \rightarrow \vec{Q}^{\Iiin}$ such that $\dbar{F}^\gend$ maps source vertices in $\dbar{Q}^\gend$ to the source vertex $[0]\in Q^{\Iiin}_0$, and likewise maps sink vertices to the sink vertex $[1] \in Q^{\Iiin}_0$.
	
	We will recall the definition of the regular representation of a ring $R$ from \cite{DTOdd}.
	\begin{defn}
		Suppose $R$ has a finite $\integer$-basis $\mathcal{B}$. Define a ring homomorphism
		\begin{equation*}
			\regrep{}\colon R \rightarrow \integer^{|\mathcal{B}| \times |\mathcal{B}|}: r \mapsto \regrep{}(r),
		\end{equation*}
		where the $j$-th column of $\regrep{}(r)$ (with $j \in \mathcal{B}$) is the vector $(a_{ij})_{i \in \mathcal{B}}$ given by $j r =\sum_{i \in \mathcal{B}} a_{ij} i$. We call the map $\regrep{}$ the \emph{regular representation of $R$ with respect to $\mathcal{B}$}.
	\end{defn}
		
	For $\gend \in \{\coxA_{2n-1}, \coxD_4, \coxE_6, \coxE_7,\coxE_8\}$, we will later see that the regular representation of $\hchebr{\gend}$ is sufficient to describe the blocks of the exchange matrix of $\dbar{Q}^\gend$. This motivates the following definition.
		
	\begin{defn}
		Let $\gend \in \{\coxA_{2n-1},\coxD_4, \coxE_6, \coxE_7,\coxE_8\}$ and consider the $\integer$-bases $\mathcal{B}^\gend$ from Section~\ref{sec:Prelim-Cheb} (and in particular, from Lemmata~\ref{lem:ABasis} and \ref{lem:EBasis} and Remark~\ref{lem:D4Basis}). We call the regular representation $\regrep{\gend}$ of $\hchebr{\gend}$ with respect to $\mathcal{B}^\gend$ the \emph{representation of $\hchebr{\gend}$ corresponding to $\dbar{F}^\gend$}.
	\end{defn}
	
	The situation for type $\coxD_{n+1}$ with $n > 3$ is slightly more complicated. In this case, Lemma~\ref{lem:DBasis} indicates that the ring $\hdchebr{n+1}$ has a positive $\integer$-basis
	\begin{equation*}
		{\mathcal{B}}^{\coxD_{n+1}} = \{\croot_0^\pm,\croot_1^\pm,\ldots,\croot_{n-1}^\pm\}
	\end{equation*}
	that is useful to us, but is too big to describe the blocks of the exchange matrix of $\dbar{Q}^{\coxD_{n+1}}$. Instead, we have the following.
	
	\begin{defn}
		Let $n>3$ and consider the ideal (and proper submodule) of $\hdchebr{n+1}$
		\begin{equation*}
			\langle \croot_0^+ + \croot_0^-, \croot_1^+ + \croot_1^-, \ldots, \croot_{n-2}^+ + \croot_{n-2}^-, \croot_{n-1}^+,\croot_{n-1}^-\rangle \subset \hdchebr{n+1},
		\end{equation*}
		whose integral basis $\wh{\mathcal{B}}^{\coxD_{n+1}}$ is given by its listed set of generators. From this, define a ring homomorphism 
		\begin{equation*}
			\Dregrep\colon \hdchebr{n+1} \rightarrow \integer^{n+1 \times n+1}: r \mapsto \Dregrep(r),
		\end{equation*}
		where the $j$-th column of $\Dregrep(r)$ ($j \in \wh{\mathcal{B}}^{\coxD_{n+1}}$) is the vector $(a_{ij})_{i \in \wh{\mathcal{B}}^{\coxD_{n+1}}}$ given by $j r =\sum_{i\in \wh{\mathcal{B}}^{\coxD_{n+1}}} a_{ij} i$. We call the map $\Dregrep$ the \emph{representation of $\hdchebr{n+1}$ corresponding to $\dbar{F}^{\coxD_{n+1}}$}.
	\end{defn}
	
	The bases for the above representations have been deliberately chosen such that one has a bijective map
	\begin{align*}
		\vartheta^\gend\colon \mathcal{B}^\gend &\rightarrow Q^\gend_0  & (\gend &\in \{\coxA_{2n-1},\coxD_4, \coxE_6, \coxE_7,\coxE_8\})\\
		\wh{\mathcal{B}}^\gend &\rightarrow Q^\gend_0 & (\gend &= \coxD_{n+1}, n>3) \\ 
		\croot_i &\mapsto i & (\gend &\in \{\coxA_{2n-1},\coxE_6,\coxE_7,\coxE_8\}) \\
		g &\mapsto 2^+ & (\gend &= \coxD_4) \\
		g^2 &\mapsto 2^- & (\gend &= \coxD_4) \\
		\croot^+_i + \croot^-_i &\mapsto i & (\gend &= \coxD_{n+1}, n>3) \\
		\croot^\pm_{n-1} &\mapsto (n-1)^\pm & (\gend &= \coxD_{n+1}, n>3) \\
		\croot^\pm_i &\mapsto i^\pm & (\gend &=\coxE_6) \\
		\gratio\croot_i &\mapsto\phi_i &(\gend &= \coxE_8, i\in\{0,1\}) \\
		\gratio\croot_{v_8} &\mapsto3 &(\gend &= \coxE_8) \\
		\gratio\croot_{2} &\mapsto4 &(\gend &= \coxE_8)
	\end{align*}
	which satisfies $\vw\vartheta^\gend(e) = \hrhom{\gend}(e)$ for each $e$.
	
	\begin{rem} \label{rem:BlockReps}
		Consider the element $s \in \hchebr{\gend}$ with
		\begin{equation*}
			s= \begin{cases}
				\croot_1		& \gend \in \{\coxA_{2n-1},\coxD_4,\coxE_7,\coxE_8\}, \\
				\croot_1^+	& \gend \in \{\coxD_{n+1},\coxE_6\}.
			\end{cases}
		\end{equation*}
		Then we have the following that result from the structure of $\hchebr{\gend}$.
		\begin{enumerate}[label=(\alph*)]
			\item For each $e \in \mathcal{B}^\gend$, the product $se = \sum_{e' \in \mathcal{B}^\gend} a_{e'} e'$ is such that each $a_e \in \{0,1\}$.
			\item There exists an arrow $\vartheta^\gend(e) \rightarrow \vartheta^\gend(e')$ or an arrow $\vartheta^\gend(e') \rightarrow \vartheta^\gend(e)$ in $Q^\gend$ if and only if $e'$ is a summand of the product $se$.
			\item Let $\ll$ be a total ordering of the set $\dbar{Q}^\gend_0$ satisfying $v_0 \ll v_1$ for any vertices $v_0,v_1 \in \dbar{Q}_0^\gend$ such that $\dbar{F}^\gend(v_0)=[0]$ and $\dbar{F}^\gend(v_1)=[1]$. Then the exchange matrix of $\dbar{Q}^\gend$ with rows and columns ordered with respect to $\ll$ is given by the block matrix
			\begin{equation*}
				\dbar{B}^\gend=
				\begin{pmatrix}
					0 & \regrep{\gend}(s) \\
					-\regrep{\gend}(s) & 0
				\end{pmatrix}.
			\end{equation*}
			\item Let $<$ be a total ordering of the set $Q^\gend_0$. Then the exchange matrix of $Q^\gend$ is
			\begin{equation*}
				B^\gend=\Lambda^\gend \dbar{B}^\gend \Theta^\gend,
			\end{equation*}
			where $\Theta^\gend$ is the matrix corresponding to the canonical inclusion $Q_0^\gend \rightarrow \dbar{Q}_0^\gend$ and $\Lambda^\gend$ is the matrix corresponding to the canonical surjection $\dbar{Q}_0^\gend \rightarrow Q_0^\gend$ of $\integer$-bases/index sets with respect to the order $<$.
			\item Since $\dbar{Q}_\gend = Q^\gend \sqcup (Q^\gend)\op$, the exchange matrix $B^\gend \oplus (B^\gend)^T$ is indexed by the set $\dbar{Q}^\gend_0$, where $\oplus$ is the direct sum of linear maps and $(B^\gend)^T$ is the matrix transpose of $B^\gend$. In particular,
			\begin{equation*}
				\dbar{B}^\gend = V(B^\gend \oplus (B^\gend)^T)V\inv
			\end{equation*}
			where $V$ is the orthogonal permutation matrix that orders the $\integer$-basis/index set for the rows of $B^\gend \oplus (B^\gend)^T$ with respect to $\ll$ from (c). It therefore follows that, the operation $\Lambda^\gend ( {-}' ) \Theta^\gend$ is left inverse to $V({-} \oplus {-}^T)V\inv$, where ${-}$ is a placeholder for a $|Q_0^\gend| \times |Q_0^\gend|$ integer matrix with respect to the ordering $<$, and ${-'}$ is a placeholder for a $|\dbar{Q}_0^\gend| \times |\dbar{Q}_0^\gend|$ integer matrix with respect to the ordering $\ll$.
		\end{enumerate}
	\end{rem}
	
	\begin{rem} \label{rem:RepSigns}
		Let $e,e' \in \mathcal{B}^\gend$.
		\begin{enumerate}[label=(\alph*)]
			\item If $\gend \neq \coxE_7$, then $ee'$ is a sum of positive multiples of elements of $\mathcal{B}^\gend$. Thus the entries of the matrix $\regrep{\gend}(e)$ are all positive.
			\item If $\gend = \coxE_7$ and $e \not\in \{\wt{\croot}_1,\croot_{v_7}\}$, then $ee'$ is also sum of positive multiples of elements of $\mathcal{B}^{\coxE_7}$ and hence the entries of the matrix $\regrep{\coxE_7}(b)$ are all positive. On the other hand, we have
			\begin{equation*}
				\regrep{\coxE_7}(\wt{\croot}_1) =
				\left(\begin{smallmatrix}
					0 	& 1 	& 0 	& 0 	& 0 	& 0 	& 0 \\
					1 	& 0 	& 1 	& 0 	& 0 	& 0 	& 0 \\
					0 	& 1 	& 0 	& 1 	& 0 	& 0 	& 0 \\
					0 	& 0 	& 1 	& 1 	& 0 	& 0 	& 0 \\
					0 	& 0 	& 0 	& 0 	& 0 	& 1 	& 0 \\
					0 	& 0 	& 0 	& 0 	& 1 	& 1 	& 1 \\
					0 	& 0 	& 0 	& 0 	& 0 	& 1 	& -1 \\
				\end{smallmatrix}\right)
				\qquad \text{and} \qquad
				\regrep{\coxE_7}(\croot_{v_7}) =
				\left(\begin{smallmatrix}
					0 	& 0 	& 0 	& 0 	& 0 	& 0 	& 1 \\
					0 	& 0 	& 0 	& 0 	& 0 	& 1 	& -1 \\
					0 	& 0 	& 0 	& 0 	& 1 	& 0 	& 1 \\
					0 	& 0 	& 0 	& 0 	& 0 	& 1 	& 0 \\
					0 	& 0 	& 1 	& 0 	& 0 	& 0 	& 0 \\
					0 	& 1 	& 0 	& 1 	& 0 	& 0 	& 0 \\
					1 	& -1 	& 1 	& 0 	& 0 	& 0 	& 0 \\
				\end{smallmatrix}\right)
			\end{equation*}
			with respect to the ordering $\croot_0 < \wt{\croot}_1 < \wt{\croot}_2 < \croot_2 < \croot_1 < \croot_3 < \croot_{v_7}$.
		\end{enumerate}
	\end{rem}
	
	\subsection{$C$-matrices and $\cv$-vectors} \label{sec:Tropical-C}
	We shall recall the following general definitions for $C$-matrices and $\cv$-vectors of $R$-quivers from \cite{DTOdd}.
	\begin{defn}
		Let $J$ be an index set and let $\tree_J$ be the $|J|$-regular tree, where any pair of distinct edges incident to a common vertex in $\tree_J$ are respectively labelled by distinct indices in $J$. Choose a distinguished vertex $t_0 \in \tree_J$ and consider an exchange matrix $B$ over a ring $R$ with rows and columns indexed by $J$. We define the \emph{extended exchange matrix of $B$ over $R$} to be the $2|J| \times |J|$ matrix
		\begin{equation*}
			\wt B_{t_0} = \begin{pmatrix} B_{t_0} \\ C_{t_0} \end{pmatrix},
		\end{equation*}
		where $B_{t_0}=B$ and $C_{t_0}$ is the identity matrix. For any edge $\xymatrix@1{t \ar@{-}[r]^-k & t'}$ in $\tree_J$, one then defines matrices
		\begin{equation*}
			\wt B_{t} = \begin{pmatrix} B_{t} \\ C_{t} \end{pmatrix} 
			\qquad \text{and} \qquad
			\wt B_{t'} = \begin{pmatrix} B_{t'} \\ C_{t'} \end{pmatrix} 
		\end{equation*}
		such that $\wt B_{t'}$ is obtained from $\wt B_{t}$ by mutation at $k \in J$. We thus define the mutation of $C_t$ at $k \in J$ as $\mu_k(C_t)=C_{t'}$.
		
		With this in mind, we define a \emph{tropical $y$-seed pattern} by assigning to each vertex $t \in \tree_J$ a \emph{tropical $y$-seed} $\{B_{t};\mathbf{c}_{t,j} : j \in J\}$, where each $\mathbf{c}_{t,j}$ is the $j$-th column of the matrix $C_t$. We call the tropical $y$-seed at the vertex $t_0$ the \emph{initial tropical $y$-seed}. We call the matrices $C_t$ (where $t \in \tree_J$) \emph{$C$-matrices}, and we call the vectors $\mathbf{c}_{t,j}$ \emph{$\cv$-vectors}. The $\cv$-vectors $\mathbf{c}_{t_0,j}$ in particular are called the \emph{initial $\cv$-vectors}.
	\end{defn}
	
	The folded quivers $Q^{\Iiin}$ arise from skew-symmetrisable exchange matrices $B^{\Iiin}=(b_{[i][j]})_{[i][j] \in Q_0^{\Iiin}}$ that have been rescaled to a skew-symmetric exchange matrix
	\begin{equation*}
		\vec{B}^{\Iiin} = P B^{\Iiin} P\inv=(\vec{b}_{[i][j]})_{[i][j] \in Q_0^{\Iiin}}.
	\end{equation*}
	The relationship between these exchange matrices, their respective $C$-matrices and mutation can be summarised by the following due to \cite{Reading}.
	
	\begin{lem} \label{lem:BackLeftSquare}
		Consider the initial extended exchange matrices
		\begin{equation*}
			\begin{pmatrix}
				B_{t_0}^{\Iiin} \\
				C_{t_0}^{\Iiin}
			\end{pmatrix}
			\qquad \text{and} \qquad
			\begin{pmatrix}
				\vec{B}_{t_0}^{\Iiin} \\
				\vec{C}_{t_0}^{\Iiin}
			\end{pmatrix}
		\end{equation*}
		of $B^{\Iiin}$ and $\vec{B}^{\Iiin}$, respectively. Mutation of $B^{\Iiin}$ and its $C$-matrices commutes with rescaling. In particular, for any vertex $t \in \tree_{Q_0^{\Iiin}}$ we have
		\begin{enumerate}[label=(\alph*)]
			\item $\vec{B}^{\Iiin}_t = P B^{\Iiin}_t P\inv$,
			\item $\vec{C}^{\Iiin}_t = P C^{\Iiin}_t P\inv$.
		\end{enumerate}
	\end{lem}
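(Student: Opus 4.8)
The plan is to prove both statements simultaneously by induction on the distance of $t$ from the root $t_0$ in the tree $\tree_{Q_0^{\Iiin}}$, reducing the whole lemma to the single observation that matrix mutation commutes with conjugation by a diagonal matrix having strictly positive entries (this is essentially the content of the cited result of Reading). For the base case at $t=t_0$, statement (a) is just the defining relation $\vec{B}^{\Iiin}=P B^{\Iiin}P\inv$, and statement (b) holds because $C_{t_0}^{\Iiin}=\vec{C}_{t_0}^{\Iiin}$ is the identity matrix and $P\,I\,P\inv=I$.

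For the inductive step I would package (a) and (b) into one relation on the extended exchange matrices. Writing the extended matrix as $\wt B_t=\begin{pmatrix} B_t^{\Iiin}\\ C_t^{\Iiin}\end{pmatrix}$ and its rescaled counterpart as $\begin{pmatrix}\vec B_t^{\Iiin}\\ \vec C_t^{\Iiin}\end{pmatrix}$, the inductive hypothesis says precisely that each entry $\vec b_{ij}$ of the rescaled extended matrix equals $\rho_i\, b_{ij}\, p_j\inv$, where $\rho_i=p_i$ for a row $i$ in the $B$-block and $\rho_i=p_{i'}$ for the row indexed by $i'$ in the $C$-block; equivalently row $i$ is scaled by the appropriate positive entry and column $j$ by $p_j\inv$. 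It then suffices to show that this row/column scaling relation is preserved under mutation at any index $k\in Q_0^{\Iiin}$. Recalling the mutation rule in the symmetric form
\begin{equation*}
	b'_{ij}=
	\begin{cases}
		-b_{ij} & \text{if } i=k \text{ or } j=k,\\
		b_{ij}+\tfrac12\bigl(|b_{ik}|b_{kj}+b_{ik}|b_{kj}|\bigr) & \text{otherwise,}
	\end{cases}
\end{equation*}
I would verify the claim entrywise. The negation case is immediate since scaling by a positive factor commutes with taking negatives. For the correction term, the mutation index $k$ always lies in the $B$-block, so $\rho_k=p_k$; substituting $\vec b_{ik}=\rho_i b_{ik}p_k\inv$ and $\vec b_{kj}=p_k b_{kj}p_j\inv$ the inner factors $p_k$ cancel, and because $\rho_i p_k\inv>0$ one has $|\vec b_{ik}|=\rho_i p_k\inv|b_{ik}|$ (and likewise for $|\vec b_{kj}|$). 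Hence the whole correction term scales by the same factor $\rho_i(\,\cdot\,)p_j\inv$ as $b_{ij}$ does, so $\vec b'_{ij}=\rho_i b'_{ij}p_j\inv$, closing the induction.

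The step I expect to be the only genuine subtlety is the treatment of the absolute values (equivalently the sign functions $\sgn$) appearing in the mutation formula: these are preserved under the rescaling exactly because $P$ is required to have strictly positive entries in the totally ordered ring, which is where the hypothesis that $P$ is a positive rescaling matrix is indispensable. The remaining care is bookkeeping, namely that the mutation is always performed at a vertex in the $B$-part so that the $p_k$ factors from $\vec b_{ik}$ and $\vec b_{kj}$ cancel cleanly, and that all arithmetic is carried out over $\Frac(R)$ where $P\inv$ makes sense. Everything else is routine and identical for the $B$- and $C$-blocks, which is why (a) and (b) follow together.
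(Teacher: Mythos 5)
Your proof is correct and follows essentially the same route as the paper: both arguments reduce to the observation that conjugation by the positive diagonal matrix $P$ scales each entry by a positive factor $p_{[i]}/p_{[j]}$, so the signs (equivalently the absolute values) in the mutation formula are unaffected and mutation commutes with rescaling, with the base case being trivial since the initial $C$-matrix is the identity. Your entrywise verification of the correction term, including the cancellation of the $p_{[k]}$ factors, merely spells out what the paper leaves as ``clear from the mutation formula.''
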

	\begin{proof}
		(a) The effect of rescaling by the diagonal matrix $P =(p_{[i]})_{[i] \in Q^{\Iiin}_0}$ is such that 
		\begin{equation*}
			\vec{b}_{[i][j]} = \frac{p_{[i]}}{p_{[j]}} b_{[i][j]}.
		\end{equation*}
		Since the rescaling matrix has positive entries, it is clear from the above and the mutation formula for exchange matrices that rescaling commutes with mutation. That is,
		\begin{equation*}
			\mu_{[k]}(\vec{B}^{\Iiin}) = P \mu_{[k]}(B^{\Iiin}) P\inv
		\end{equation*}
		for each $[k] \in Q_0^{\Iiin}$, and thus $\vec{B}^{\Iiin}_t = P B^{\Iiin}_t P\inv$ as required.
		
		(b) Since initial $C$-matrices are identity matrices, the extension of the initial rescaled exchange matrix of type $Q^{\Iiin}$ is such that 
			\begin{equation*}
				\begin{pmatrix}
				\vec{B}_{t_0}^{\Iiin} \\
				\vec{C}_{t_0}^{\Iiin}
			\end{pmatrix} =
			\begin{pmatrix}
				P B_{t_0}^{\Iiin} P\inv \\
				\vec{C}_{t_0}^{\Iiin}
			\end{pmatrix} =
			\begin{pmatrix}
				P B_{t_0}^{\Iiin} P\inv \\
				P C_{t_0}^{\Iiin}P\inv
			\end{pmatrix}.
		\end{equation*}
		Also recall the mutation formula for $C$-matrices corresponding to an edge $\xymatrix@1{t \ar@{-}[r]^-{[k]} & t'}$:
		\begin{equation*}
			c_{[i][j],t'} =
			\begin{cases}
				-c_{[i][k],t}														& \text{if } [j]=[k], \\
				c_{[i][j],t} + \sgn(c_{[i][k],t})[c_{[i][k],t}b_{[k][j],t}]_+	& \text{otherwise,}
			\end{cases}
		\end{equation*}
		where for any value $a$, $\sgn(a)$ denotes the sign of $a$ and $[a]_+ = \max(0,a)$. From this, it is clear that mutation of $C$-matrices also commutes with rescaling. Thus,
		\begin{equation*}
			\vec{C}^{\Iiin}_{t'} = P C^{\Iiin}_{t'}  P\inv,
		\end{equation*}
		for each vertex $t' \in \tree_{Q_0^{\Iiin}}$.
	\end{proof}
	
	Since we have weighted foldings $F^\gend\colon Q^\gend \rightarrow Q^{\Iiin}$ and $\dbar{F}^\gend\colon \dbar{Q}^\gend \rightarrow Q^{\Iiin}$, each vertex $t \in \tree_{Q^{\Iiin}_\gend}$ is associated to vertices $\wh{t} \in \tree_{Q^\gend_0}$ and $\dbar{t} \in \tree_{\dbar{Q}^\gend_0}$ obtained via the unfolding procedure. That is, if $t$ is obtained from $t_0$ by traversing edges $k_1,\ldots,k_m$ in $\tree_{Q^{\Iiin}_0}$, then $ \wh{t}$ is obtained from the initial vertex $\wh{t}_0 \in \tree_{Q^\gend_0}$ by traversing sequences of edges $\wh{k}_1,\ldots,\wh{k}_m$ in $\tree_{Q^{\gend}_0}$, where $\wh{k}_l$ corresponds to the composite mutation $\wh{\mu}_{[k_l]} = \prod_{F^\gend(i)=[k_l]} \mu_i$. Similarly, $\dbar{t}$ is obtained from the initial vertex $\dbar{t}_0 \in \tree_{\dbar{Q}^\gend_0}$ by traversing sequences of edges $\dbar{k}_1,\ldots,\dbar{k}_m$ in $\tree_{\dbar{Q}^{\gend}_0}$, where $\dbar{k}_l$ corresponds to the composite mutation $\dbar{\mu}_{[k_l]} = \prod_{\dbar{F}^\gend(i)=[k_l]} \mu_i$. The only vertices of $\tree_{Q_0^\gend}$ and $\tree_{\dbar{Q}_0^\gend}$ that we will consider in this paper are those associated to the vertices $t \in \tree_{Q_0^{\Iiin}}$ by unfolding. Thus for the purposes of readability, we will abuse notation and write $\wh{t}$ and $\dbar{t}$ as $t$. Likewise, if the context is clear, we will write $\wh{\mu}_{[k]}$ and $\dbar{\mu}_{[k]}$ as $\mu_{[k]}$.
	
	The relationship between the $C$-matrices of the double-unfolded quiver $\dbar{Q}^\gend$ and the $C$-matrices of the single-unfolded quiver $Q^\gend$ is similar to what happens with rescaling. In particular, we have the following commutativity properties with the operations $\Lambda^\gend ({-}) \Theta^\gend$ and $V({-}\oplus{-}^T)V\inv$ defined in Remark~\ref{rem:BlockReps}(d) and (e).

	\begin{lem} \label{lem:FrontLeftSquare}
		Consider the initial extended exchange matrices
		\begin{equation*}
			\begin{pmatrix}
				B_{t_0}^{\gend} \\
				C_{t_0}^{\gend}
			\end{pmatrix}
			\qquad \text{and} \qquad
			\begin{pmatrix}
				\dbar{B}_{t_0}^{\gend} \\
				\dbar{C}_{t_0}^{\gend}
			\end{pmatrix}
		\end{equation*}
		of $B^{\gend}$ and $\dbar{B}^{\gend}$, respectively. Composite mutation (with respect to unfolding) of $\dbar{B}^{\gend}$ and its $C$-matrices commutes with the operation $\Lambda^\gend ({-}) \Theta^\gend$. Similarly, composite mutation of $B^\gend$ and its $C$-matrices commutes with the operation $V({-}\oplus{-}^T)V\inv$. In particular, for any vertex $t \in \tree_{Q_0^{\Iiin}}$ we have
		\begin{enumerate}[label=(\alph*)]
			\item $B^{\gend}_{t} = \Lambda^\gend \dbar{B}^{\gend}_{t} \Theta^\gend$ and $\dbar{B}^{\gend}_{t} = V(B^{\gend}_{t} \oplus (B^{\gend}_{t})^T)V\inv$,
			\item $C^{\gend}_{t} = \Lambda^\gend \dbar{C}^{\gend}_{t} \Theta^\gend$  and $\dbar{C}^{\gend}_{t} = V(C^{\gend}_{t} \oplus (C^{\gend}_{t})^T)V\inv$.
		\end{enumerate}
	\end{lem}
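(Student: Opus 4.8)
The plan is to follow the template of Lemma~\ref{lem:BackLeftSquare}: rather than tracking seeds one at a time, I would prove that composite mutation commutes with each of the two linear-algebraic operations $\Lambda^\gend(\blank)\Theta^\gend$ and $V(\blank \oplus \blank^T)V\inv$, and then read off the four stated identities by induction on the distance of $t$ from $t_0$ in $\tree_{Q_0^{\Iiin}}$. The base case $t=t_0$ is exactly Remark~\ref{rem:BlockReps}: parts (c),(d),(e) give $B^\gend = \Lambda^\gend \dbar B^\gend \Theta^\gend$ and $\dbar B^\gend = V(B^\gend \oplus (B^\gend)^T)V\inv$, while the $C$-matrix identities hold trivially because all initial $C$-matrices are identity matrices and $\Lambda^\gend(\blank)\Theta^\gend$ is left inverse to $V(\blank\oplus\blank^T)V\inv$. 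Since both operations act separately on the $B$-part and the $C$-part of an extended exchange matrix, it suffices to verify, for a single edge $t \to t'$ labelled $[k]$, the single-step commutation relations
\[
\Lambda^\gend\,\dbar\mu_{[k]}(\dbar M)\,\Theta^\gend = \wh\mu_{[k]}\bigl(\Lambda^\gend \dbar M \Theta^\gend\bigr), \qquad
\dbar\mu_{[k]}\bigl(V(N\oplus N^T)V\inv\bigr) = V\bigl(\wh\mu_{[k]}(N)\oplus \wh\mu_{[k]}(N)^T\bigr)V\inv,
\]
for $M,N$ the $B$-part and the $C$-part at $t$ (bearing in mind that the mutation of the $C$-part is governed by the already-handled $B$-part). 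Composing these along the path from $t_0$ then yields (a) and (b) at once.

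The engine driving the single-step relations is the disjoint-union structure $\dbar Q^\gend = Q^\gend \sqcup (Q^\gend)\op$. Because mutations at vertices in different connected components commute and act independently, $\dbar\mu_{[k]} = \prod_{\dbar F^\gend(i)=[k]}\mu_i$ factors as a composite mutation on the $Q^\gend$-block together with one on the $(Q^\gend)\op$-block. On the $Q^\gend$-block $\dbar F^\gend$ agrees with $F^\gend$, so that factor is precisely $\wh\mu_{[k]}$; on the $(Q^\gend)\op$-block the source/sink roles are reversed, so the induced folding is the opposite colouring. The preliminary observation I would isolate is that, since every $Q^\gend$ is bipartite, a composite mutation at \emph{either} colour class acts on the underlying exchange matrix by reversing all arrows, so on the level of exchange matrices the two colours have identical effect. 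Combined with $\mu_k(B^T)=(\mu_k B)^T=-\mu_k(B)$ for skew-symmetric $B$, this shows the $(Q^\gend)\op$-block stays the transpose of the $Q^\gend$-block under every composite mutation, giving part~(a); the folding identity then follows because $\Lambda^\gend(\blank)\Theta^\gend$ simply discards the $(Q^\gend)\op$-block and records the $Q^\gend$-block, using the left-inverse relation of Remark~\ref{rem:BlockReps}(e) and $V\inv = V^T$.

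The hard part will be part~(b). Unlike the $B$-matrix, the $C$-matrix genuinely depends on the mutation sequence and on the colour of each step, so the "both colours agree" shortcut is unavailable and one must track the $C$-matrix mutation formula $c_{ij,t'} = c_{ij,t} + \sgn(c_{ik,t})[c_{ik,t}b_{kj,t}]_+$ block by block. Here I would argue directly from the block form $\dbar B^\gend = \left(\begin{smallmatrix} 0 & \regrep{\gend}(s) \\ -\regrep{\gend}(s) & 0 \end{smallmatrix}\right)$ of Remark~\ref{rem:BlockReps}, checking that the off-diagonal couplings produced by $\dbar\mu_{[k]}$ on $\dbar C_t^\gend$ split exactly into the couplings produced by $\wh\mu_{[k]}$ on $C_t^\gend$ and the matching contributions on the opposite block. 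The main subtlety to get right is the bookkeeping of orientations: the opposite folding on the $(Q^\gend)\op$-block must be reconciled with the transpose so that the two blocks recombine, after conjugation by the orthogonal permutation $V$, into the displayed doubled form. This matching of the reversed colouring with the transpose is where the argument is most delicate and where I expect the real work to lie; once the single-step relations are confirmed, the induction and the conclusion of the lemma are immediate.
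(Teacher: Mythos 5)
Your base case, your handling of the $\Lambda^\gend(\blank)\Theta^\gend$ direction, and your argument for the $B$-matrix identities are all sound and coincide with the paper's (very terse) argument: since $\dbar{Q}^\gend=Q^\gend\sqcup(Q^\gend)\op$, mutations in the two components act independently, so the $Q^\gend$-block of $\dbar{C}^\gend_t$ evolves exactly as $C^\gend_t$ under the composite mutations $\wh\mu_{[k]}$, giving $C^\gend_t=\Lambda^\gend\dbar{C}^\gend_t\Theta^\gend$; and for the principal parts your observation that a composite mutation at either colour class reverses all arrows of a bipartite quiver does yield $\dbar{B}^\gend_t=V(B^\gend_t\oplus(B^\gend_t)^T)V\inv$.

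The genuine gap is precisely at the step you defer to ``where the real work lies'', and I do not believe it can be closed: the single-step relation $\dbar\mu_{[k]}\bigl(V(N\oplus N^T)V\inv\bigr)=V\bigl(\wh\mu_{[k]}(N)\oplus\wh\mu_{[k]}(N)^T\bigr)V\inv$ fails for the $C$-part. The $(Q^\gend)\op$-block of $\dbar{C}^\gend_t$ is a genuine $C$-matrix of $(Q^\gend)\op$ (under the opposite colouring, since $\dbar{F}^\gend$ sends \emph{sources} of $(Q^\gend)\op$ to $[0]$, i.e.\ the vertices $j'$ with $F^\gend(j)=[1]$), so its columns are sign-coherent roots; but $(C^\gend_t)^T$ is in general not column-sign-coherent, so the two cannot be equal. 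Concretely, take $\gend=\coxA_3$ ($n=2$) with $Q^{\coxA_3}\colon 0\to1\leftarrow2$ and $t'=\mu_{[0]}(t_0)$. Then $\wh\mu_{[0]}=\mu_0\mu_2$ gives
\begin{equation*}
C^{\coxA_3}_{t'}=\begin{pmatrix}-1&1&0\\0&1&0\\0&1&-1\end{pmatrix},
\qquad
(C^{\coxA_3}_{t'})^T=\begin{pmatrix}-1&0&0\\1&1&1\\0&0&-1\end{pmatrix},
\end{equation*}
whose first column $(-1,1,0)^T$ is not sign-coherent; whereas $\dbar\mu_{[0]}$ acts on the $(Q^{\coxA_3})\op$-component by mutating at its unique source $1'$, producing the block $\left(\begin{smallmatrix}1&0&0\\1&-1&1\\0&0&1\end{smallmatrix}\right)$ with columns $\alpha_{0'}+\alpha_{1'},\,-\alpha_{1'},\,\alpha_{1'}+\alpha_{2'}$. (One checks that it is this block, not the transpose, that makes $\mathbf{d}_{\dbar{F}^\gend}(\dbar{C}^\gend_{t'})=\vec{C}^{\coxI_2(4)}_{t'}$ hold, so Corollary~\ref{cor:MiddleLeftSquare} is unaffected.) Two further remarks. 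First, your proposed verification ``from the block form $\left(\begin{smallmatrix}0&\regrep{\gend}(s)\\-\regrep{\gend}(s)&0\end{smallmatrix}\right)$, checking off-diagonal couplings'' conflates two different block decompositions: that form is with respect to the colour classes $E_{[0]},E_{[1]}$, whereas the $\oplus$ in the statement is with respect to the components $Q^\gend,(Q^\gend)\op$; in the component decomposition $\dbar{B}^\gend_t$ and $\dbar{C}^\gend_t$ are block-diagonal and there are no couplings to track, which is why the first halves of (a) and (b) are easy. Second, what you can (and should) prove are exactly those first halves, $B^\gend_t=\Lambda^\gend\dbar{B}^\gend_t\Theta^\gend$ and $C^\gend_t=\Lambda^\gend\dbar{C}^\gend_t\Theta^\gend$, which is all that the downstream results actually use; the correct description of the $(Q^\gend)\op$-block is as the $C$-matrix of the opposite quiver under the opposite folding, which is related to $C^\gend_t$ by the Nakanishi--Zelevinsky dualities rather than by a literal transpose.
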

	\begin{proof}
		Similar to the previous lemma, we have 
		\begin{equation*}
			\begin{pmatrix}
				B_{t_0}^{\gend} \\
				C_{t_0}^{\gend}
			\end{pmatrix} =
			\begin{pmatrix}
				\Lambda^\gend \dbar{B}_{t_0}^{\gend} \Theta^\gend \\
				\Lambda^\gend \dbar{C}_{t_0}^{\gend} \Theta^\gend
			\end{pmatrix}.
		\end{equation*}
		Since $\dbar{Q}^\gend$ is a disjoint union of $Q^\gend$ and $(Q^\gend)\op$, the result follows from the unfolding procedure and mutation formulae. The commutativity of $V({-}\oplus{-}^T)V\inv$ with mutation follows by the same argument.
	\end{proof}
	
	In what follows, we will utilise the block matrix structure of unfolded exchange matrices and $C$-matrices. In particular, for the double-folding $\dbar{F}^\gend$, we will denote by $\dbar{B}^\gend_{[i][j],t}$ and $\dbar{C}^\gend_{[i][j],t}$ the $([i],[j])$-th block of $\dbar{B}^\gend_{t}$ and $\dbar{C}^\gend_{t}$, respectively. Here, we note that $\dbar{B}_{[i][j],t}$ corresponds to the $([i],[j])$-th entry of $B_t$ under the unfolding procedure.
	
	\begin{prop} \label{prop:CBlocks}
		Let $t \in \tree_{Q_0^{\Iiin}}$ and let $\gend \in \{\coxA_{2n-1},\coxD_{n+1},\coxE_6,\coxE_7,\coxE_8\}$.
		\begin{enumerate}[label=(\alph*)]
			\item For any $[i],[j] \in Q_0^{\Iiin}$, there exists $r_{[i][j],t} \in \hchebr{\gend}$  such that
			\begin{equation*}
				\dbar{C}^\gend_{[i][j],t} = \regrep{\gend}(r_{[i][j],t}).
			\end{equation*}
			In particular, $r_{[i][j],t} = \sum_{e \in \mathcal{B}^\gend} a_e e$, where each $a_e \in \integer$ is such that $\sgn(a_e) = \sgn(a_{e'})$ for any $e, e' \in \mathcal{B}^\gend$.
			\item $\dbar{C}_{t}^\gend$ is block-sign-coherent. That is for any $[i],[j] \in Q_0^{\Iiin}$, the entries of $\dbar{C}^\gend_{[i][j],t}$ are either all positive or all negative.
			\item Let $\xymatrix@1{t \ar@{-}[r]^-{[k]} & t'}$ be an edge of $\tree_{Q_0^{\Iiin}}$. Then the matrix $\dbar{C}^\gend_{t'} = \mu_{[k]}(\dbar{C}^\gend_{t})$ is such that
			\begin{equation*}
				\dbar{C}^\gend_{[i][j],t'} =
				\begin{cases}
					-\dbar{C}^\gend_{[i][k],t}
					& \text{if }[j]=[k], \\
					\dbar{C}^\gend_{[i][j],t}+\sgn(\dbar{C}^\gend_{[i][k],t})[\dbar{C}^\gend_{[i][k],t}\dbar{B}^\gend_{[k][j],t}]_+
					& \text{otherwise,}
				\end{cases}
			\end{equation*}
			where $\sgn(\dbar{C}^\gend_{[i][k],t})$ is the sign of the $([i],[k])$-th block of $\dbar{C}^\gend_t$, and for any matrix $A=(a_{ij})$, we define $[A]_+$ as the matrix whose entries are given by $[a_{ij}]_+ = \max(0,a_{ij})$.
			\item The blocks of $\dbar{C}^\gend_t$ commute. That is, 
			\begin{equation*}
				\dbar{C}^\gend_{[i][j],t}\dbar{C}^\gend_{[k][l],t}=\dbar{C}^\gend_{[k][l],t}\dbar{C}^\gend_{[i][j],t}
			\end{equation*}
			for any $[i],[j],[k],[l] \in Q_0^{\Iiin}$.
		\end{enumerate}
	\end{prop}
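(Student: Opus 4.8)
The plan is to induct on the distance from the root $t_0$ in the tree $\tree_{Q_0^{\Iiin}}$. Since $|Q_0^{\Iiin}|=2$, this tree is a single infinite path, so every $t$ is reached from $t_0$ by an alternating sequence of composite mutations $\mu_{[0]},\mu_{[1]},\mu_{[0]},\dots$, and it suffices to treat one edge $\xymatrix@1{t \ar@{-}[r]^-{[k]} & t'}$ at a time. I would carry the inductive hypothesis that (a), (b), (d) hold at $t$ \emph{together with} their evident analogues for the exchange matrix $\dbar{B}^\gend_t$: every block $\dbar{B}^\gend_{[i][j],t}=\regrep{\gend}(b_{[i][j],t})$ with $b_{[i][j],t}\in\hchebr{\gend}$ having sign-coherent integer coefficients. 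The base case $t=t_0$ is immediate: $\dbar{C}^\gend_{t_0}$ is the identity, whose diagonal blocks are $\regrep{\gend}(1)$ and whose off-diagonal blocks are $\regrep{\gend}(0)$, while the form of $\dbar{B}^\gend_{t_0}$ is exactly Remark~\ref{rem:BlockReps}(c).

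Granting the hypothesis at $t$, I would first observe that (d) is essentially free: since each $\hchebr{\gend}$ is a \emph{commutative} ring and $\regrep{\gend}$ is a ring homomorphism, all blocks lie in the commutative subring $\regrep{\gend}(\hchebr{\gend})\subseteq\integer^{|\mathcal{B}^\gend|\times|\mathcal{B}^\gend|}$, and hence commute. The heart of the step is (c). Here the key structural input is that the fibre $(\dbar{F}^\gend)^{-1}([k])$ is an independent set of vertices of $\dbar{Q}^\gend$---equivalently $\dbar{B}^\gend_{[k][k],t}=0$, which is preserved along the tree by the origami condition defining the unfolding---so the composite mutation $\mu_{[k]}=\prod_{\dbar{F}^\gend(i)=[k]}\mu_i$ is a product of pairwise commuting single mutations. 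Unwinding the scalar $C$-matrix mutation formula over this fibre, and using block-sign-coherence at $t$ (part (b) of the hypothesis) so that the scalar sign $\sgn$ is constant as the column vertex ranges over the fibre and equals the block sign $\sgn(\dbar{C}^\gend_{[i][k],t})$, the individual updates assemble precisely into the stated block formula; the same computation applied to $\dbar{B}^\gend$ gives its block mutation rule.

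With (c) in hand I would propagate (a). The column-block case $[j]=[k]$ only negates a block, and negation preserves both the $\regrep{\gend}$-form and sign-coherence of coefficients. For $[j]\neq[k]$ the update reads
\[
\dbar{C}^\gend_{[i][j],t'}=\dbar{C}^\gend_{[i][j],t}+\sgn(\dbar{C}^\gend_{[i][k],t})\bigl[\dbar{C}^\gend_{[i][k],t}\,\dbar{B}^\gend_{[k][j],t}\bigr]_+ .
\]
Because $\regrep{\gend}$ is multiplicative, $\dbar{C}^\gend_{[i][k],t}\dbar{B}^\gend_{[k][j],t}=\regrep{\gend}(r_{[i][k],t}\,b_{[k][j],t})$. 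When $\gend\neq\coxE_7$, Remark~\ref{rem:RepSigns}(a) shows that $\regrep{\gend}$ of a coefficient-nonnegative (resp.\ nonpositive) element is an entrywise nonnegative (resp.\ nonpositive) matrix, so each block of the above form is sign-definite; consequently $[\,\cdot\,]_+$ acts as all-or-nothing, returning either the block $\regrep{\gend}(r_{[i][k],t}b_{[k][j],t})$ or the zero block $\regrep{\gend}(0)$. Multiplying by the scalar sign and adding $\dbar{C}^\gend_{[i][j],t}=\regrep{\gend}(r_{[i][j],t})$ then keeps the block equal to $\regrep{\gend}$ of a (sign-coherent) element, giving (a) at $t'$; and the same positivity statement upgrades coefficient sign-coherence to block sign-coherence, giving (b) at $t'$.

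The main obstacle I anticipate is precisely the type $\coxE_7$ case, where this clean reduction fails. By Remark~\ref{rem:RepSigns}(b) the matrices $\regrep{\coxE_7}(\wt{\croot}_1)$ and $\regrep{\coxE_7}(\croot_{v_7})$ have entries of mixed sign, and the ring $\hchebr{\coxE_7}$ does not have the positivity property of the other types, so ``sign-coherent coefficients'' no longer forces a sign-definite block and one cannot conclude a priori that $[\,\cdot\,]_+$ is all-or-nothing. To handle this I would exploit that $\dbar{Q}^{\coxE_7}$ is an honest Dynkin quiver of finite type: the classical column sign-coherence theorem then guarantees that every $c$-vector of $\dbar{C}^{\coxE_7}_t$ is sign-coherent, which already fixes the sign within each column of each block, and finiteness (periodicity) of the finite-type seed pattern reduces the remaining claim---that the sign is constant \emph{across} the columns of each block, i.e.\ genuine block sign-coherence, and that the $\regrep{\coxE_7}$-form is maintained---to a verification over the finitely many $C$-matrices $\dbar{C}^{\coxE_7}_t$ that occur. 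I expect that checking these finitely many cases (equivalently, that the mixed-sign generators $\wt{\croot}_1,\croot_{v_7}$ only ever enter the $r_{[i][j],t}$ in block-sign-coherent combinations) is the one step that resists a uniform structural argument and must be carried out by hand.
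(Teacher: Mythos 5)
Your overall architecture matches the paper's: induction on composite mutation along the tree, with (d) coming for free from commutativity of $\hchebr{\gend}$, (c) deduced from (a) and (b) at $t$ by unwinding the scalar mutation rule over the independent fibre of $[k]$, and the base case read off from the identity matrix. The genuine gap is in your propagation of (a) for $[j]\neq[k]$. You correctly argue that the increment $\pm\bigl[\regrep{\gend}(r_{[i][k],t})\regrep{\gend}(\pm s)\bigr]_+$ is all-or-nothing, so that $r_{[i][j],t'}=r_{[i][j],t}\pm[\pm s\,r_{[i][k],t}]_+$ lies in $\hchebr{\gend}$ and the block stays in the image of $\regrep{\gend}$. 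But the clause of (a) asserting that the coefficients $a_e$ of $r_{[i][j],t'}$ over $\mathcal{B}^\gend$ all share a sign does not follow: the two summands are each sign-coherent but may have \emph{opposite} signs (this already happens in rank-two mutation), so their sum could a priori have mixed coefficients. You assert ``(sign-coherent)'' parenthetically with no argument. The paper closes exactly this hole by noting that the column of $\dbar{C}^\gend_{[i][j],t'}=\regrep{\gend}(r_{[i][j],t'})$ indexed by $1\in\mathcal{B}^\gend$ is literally the coefficient vector $(a_e)$, and that this column is part of an honest $c$-vector of the finite-type quiver $\dbar{Q}^\gend$, hence sign-coherent by the classical column sign-coherence theorem. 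You invoke that theorem only in your $\coxE_7$ discussion, whereas it is the essential input for \emph{every} type.

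Relatedly, your $\coxE_7$ endgame is an unexecuted finite enumeration of the $C$-matrices, which you acknowledge you have not carried out. The paper needs no such check: once coefficient sign-coherence is established as above, it observes that a block whose expression involves $\wt\croot_1$ or $\croot_{v_7}$ with nonzero coefficient must still be column-sign-coherent, and this, combined with the common sign of all the $a_e$ and the explicit form of $\regrep{\coxE_7}(\wt\croot_1)$ and $\regrep{\coxE_7}(\croot_{v_7})$, forces every entry of the block to share one sign. So both the general step for (a) and the $\coxE_7$ case are resolved by the same structural use of classical sign-coherence, precisely where your proposal currently relies on assertion or deferred computation. The plan is repairable, but that theorem has to be inserted at the right place, not only in the exceptional case.
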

	\begin{proof}
		The proof of (a)-(d) is completed by induction on composite mutation. The initial $C$-matrix of $\dbar{Q}^\gend$ is
		\begin{equation*}
			\dbar{C}_{t_0}^\gend
			\begin{pmatrix}
				\regrep{\gend}(1) & \regrep{\gend}(0) \\
				\regrep{\gend}(0) & \regrep{\gend}(1)
			\end{pmatrix},
		\end{equation*}
		which clearly satisfies (a), (b) and (d). For the induction argument, we note that the blocks of any $C$-matrix $\dbar{C}^\gend_t$ that satisfies (a) are just representations of the commutative ring $\chebr{\gend}$. Thus, (a) clearly implies (d). Moreover, any $C$-matrix $\dbar{C}^\gend_t$ that satisfies both (a) and (b) also satisfies (c) --- the proof of this is identical to the proof of \cite[Proposition 8.7(b)]{DTOdd}. Thus, the induction argument is completed if we can show that given $\dbar{C}^\gend_t$ satisfying (a)-(d), any matrix $\dbar{C}^\gend_{t'}$ that arises from applying (c) satisfies both (a) and (b). The proof is very similar to that used in \cite[Proposition 8.7]{DTOdd}, except in this paper, we cannot take for granted that (a) implies (b) --- see Remark~\ref{rem:RepSigns}(b).
		
		So suppose $\dbar{C}^\gend_t$ satisfies (a)-(d) and let $\dbar{C}^\gend_{t'} = \mu_{[k]}(\dbar{C}^\gend_{t})$. Clearly we have
		\begin{equation*}
			\dbar{C}^\gend_{[i][k],t'} = -\regrep{\gend}(r_{[i][k],t}) = \regrep{\gend}(- r_{[i][k],t}).
		\end{equation*}
		This shows that the particular blocks $\dbar{C}^\gend_{[i][k],t'}$ are expressible as in (a). Moreover, the block-sign-coherence of $\dbar{C}^\gend_t$ then implies that each block $\dbar{C}^\gend_{[i][k],t'}$ is sign-coherent, so (b) is satisfied for these blocks too.
		
		For the blocks with $[j] \neq [k]$, we have
		\begin{align*}
			\dbar{C}^\gend_{[i][j],t'} &= \regrep{\gend}(r_{[i][j],t}) \pm [\regrep{\gend}(r_{[i][k],t})\regrep{\gend}(\pm s)]_+ \\
			&= \regrep{\gend}(r_{[i][j],t} \pm [\pm s r_{[i][k],t}]_+),
		\end{align*}
		where $s$ is as in Remark~\ref{rem:BlockReps}. To see that $r_{[i][j],t'} = r_{[i][j],t} \pm [\pm s r_{[i][k],t}]_+$ is as in (a), we note that since $\dbar{C}^\gend_{[i][j],t'}$ is the $C$-matrix of a direct product of Dynkin quivers, its columns are sign-coherent (c.f. \cite{FominZelevinskyIV,DWZ2,Nagao}). In particular, if $r_{[i][j],t'} = \sum_{e \in \mathcal{B}^\gend} a_e e$ with $\sgn(a_e) \neq \sgn(a_{e'})$ for some $e,e' \in \mathcal{B}^\gend$, then the column of $\dbar{C}^\gend_{[i][j],t'} = \regrep{\gend}(r_{[i][j],t'})$ that is indexed by $1 \in \hchebr{\gend}$ is not sign-coherent --- a contradiction. So we must have $\sgn(a_e) = \sgn(a_{e'})$ for all $e,e' \in \mathcal{B}^\gend$ instead.
		
		If $\gend \neq \coxE_7$, then Remark~\ref{rem:RepSigns} automatically implies that $\dbar{C}^\gend_{[i][j],t'}$ satisfies (b). Otherwise if $\gend = \coxE_7$, we note that (b) fails only if $a_{\wt{\croot}_1},a_{\croot_{v_7}} \neq 0$. But by Remark~\ref{rem:RepSigns}, the columns of $\regrep{\coxE_7}(\wt{\croot}_1)$ and $\regrep{\coxE_7}(\croot_{v_7})$ are not sign-coherent. In particular, the columns indexed by $\wt{\croot}_1$ and/or $\croot_{v_7}$ in the matrices $\regrep{\coxE_7}(\wt{\croot}_1)$ and $\regrep{\coxE_7}(\croot_{v_7})$ have both positive and negative entries. But since $\dbar{C}^\gend_{[i][j],t'}$ must be column-sign-coherent, this implies that there exist some $e_1,\ldots,e_m \in \mathcal{B}^{\coxE_7} \setminus \{\wt{\croot}_1,\croot_{v_7}\}$ such that
		\begin{equation*}
			\regrep{\coxE_7}\left(a_{\wt{\croot}_1}\wt{\croot}_1 + a_{\croot_{v_7}} \croot_{v_7} + \sum_{i=1}^m a_{e_i} e_i\right)
		\end{equation*}
		is column-sign-coherent (where each $a_e$ is as determined from (a) of the proposition). But since (a) is satisfied, all entries of such a matrix must have the same sign. So $\dbar{C}^\gend_{[i][j],t'}$ satisfies (b), as required.
		
		We have thus shown that any $C$-matrix produced by composite mutation from some other $C$-matrix satisfying (a)-(d) also satisfies (a)-(d). Since the initial $C$-matrix satisfies (a)-(d), the induction argument is complete.
	\end{proof}
	
	The connection between $C$-matrices of type $\gend$ and type $\Iiin$ is formalised by the following.
	
	\begin{defn} \label{defn:MProj}
		Let $R = \ZUi{2n}$ and $\Frac(R)$ be its field of fractions. Given a weighted folding $F\colon Q \rightarrow Q'$, where $Q'$ is a quiver of type $\Iiin$, let $d_{F}$ be the map from Definition~\ref{defn:ProjMap}. Suppose there exist $i_0,i_1 \in Q_0$ such that $\mw([0])=\vw(i_0)$ and $\mw([1])=\vw(i_1)$. Then define the \emph{matrix $F$-projection map} with respect to $(i_0,i_1)$ to be the map
		\begin{align*}
			\mathbf{d}_{F,i_0,i_1}\colon \integer^{|Q_0| \times |Q_0|} &\rightarrow (\Frac(R))^{2 \times 2} \\
			(\mathbf{u}_k)_{k \in Q_0} &\mapsto 
			\begin{pmatrix}
					d_{F}(\mathbf{u}_{i_0}) & d_{F}(\mathbf{u}_{i_1})
				\end{pmatrix}
		\end{align*}
		where each $\mathbf{u}_k$ is a column vector. Similarly, define the \emph{matrix transpose $F$-projection map} with respect to $(i_0,i_1)$ to be the map
		\begin{align*}
			\mathbf{d}^T_{F,i_0,i_1}\colon \integer^{|Q_0| \times |Q_0|} &\rightarrow (\Frac(R))^{2 \times 2} \\
			(\mathbf{u}_k)_{k \in Q_0} &\mapsto 
			\begin{pmatrix}
					d_{F}(\mathbf{u}_{i_0}) \\ d_{F}(\mathbf{u}_{i_1})
				\end{pmatrix}
		\end{align*}
		where each $\mathbf{u}_k$ is a row vector.
	\end{defn}
	
	We will henceforth adopt the following notation. For each vertex $i \in Q^\gend_0 \subset \dbar{Q}^\gend$, we will label the corresponding vertex of the opposite quiver by $i' \in Q^{\gend,\mathrm{op}}_0 \subset \dbar{Q}^\gend$. The matrix projection maps of primary importance are then the maps
	\begin{enumerate}[label=(\roman*)]
		\item $\mathbf{d}_{\dbar{F}^\gend,0,0'}$ and $\mathbf{d}_{F^\gend,0,1}$ if $\gend \neq \coxE_6$,
		\item $\mathbf{d}_{\dbar{F}^\gend,0^+,(0^+)'}$ and $\mathbf{d}_{F^\gend,0^+,1^+}$ if $\gend = \coxE_6$.
	\end{enumerate}
	The only reason to distinguish between the $\gend \neq \coxE_6$ and $\gend = \coxE_6$ cases here is because, technically, there are no vertices labelled by $0$, $0'$ or $1$ in $Q^{\coxE_6}$ or $\dbar{Q}^{\coxE_6}$. For readability purposes, we will thus write without any ambiguity $\mathbf{d}_{\dbar{F}^\gend}$ and $\mathbf{d}_{F^\gend}$ to represent these maps respectively.
	
	\begin{rem}
		It is not difficult to verify that since the vertices of $\vec{Q}^\Iiin$ have the same $R$-valuation, we have $\mathbf{d}_{\dbar{F}^\gend} = \mathbf{d}^T_{\dbar{F}^\gend}$. However, the same is not true for $\mathbf{d}_{{F}^\gend}$ and $\mathbf{d}^T_{{F}^\gend}$.
	\end{rem} 
	
	Our next set of results follow naturally from the previous proposition. In particular, we show an analogue of \cite[Corollaries 8.8, 8.10]{DTOdd}.
	
	\begin{cor} \label{cor:MiddleLeftSquare}
		Let $t \in \tree_{Q_0^{\Iiin}}$. Then we have the following.
		\begin{enumerate}[label=(\alph*)]
			\item $\mathbf{d}_{\dbar{F}^\gend}(\dbar{C}^\gend_t) = \vec{C}_t^{\Iiin}$.
			\item $\mathbf{d}_{F^\gend}(C^\gend_t) = C_t^{\Iiin}$
			\item The $\cv$-vectors of $\vec{B}^{\Iiin}$ and $B^{\Iiin}$ are sign-coherent.
			\item The $\cv$-vectors of $\vec{B}^{\Iiin}$ are rescaled roots of $\Iiin$ and the $\cv$-vectors of $B^{\Iiin}$ are roots of $\Iiin$.
		\end{enumerate}
	\end{cor}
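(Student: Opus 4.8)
The plan is to prove (a) by induction on composite mutation along $\tree_{Q_0^{\Iiin}}$, deduce (b) from (a) together with Lemmas~\ref{lem:BackLeftSquare} and~\ref{lem:FrontLeftSquare}, and then read off (c) and (d). First I would reduce (a) to a scalar identity in the ring $\hchebr{\gend}$. By Proposition~\ref{prop:CBlocks}(a) each block $\dbar{C}^\gend_{[i][j],t}$ is $\regrep{\gend}(r_{[i][j],t})$, and $\mathbf{d}_{\dbar{F}^\gend}$ reads off the column of this block indexed by the vertex $0$ (resp.\ $0'$) followed by a $\vw$-weighted row sum. Using $\vw\vartheta^\gend(e)=\hrhom{\gend}(e)$, this composite is an $\integer$-linear map $\phi^\gend\colon \hchebr{\gend}\rightarrow \Frac(\ZUi{2n})$, and a short check shows
\begin{equation*}
	\bigl(\mathbf{d}_{\dbar{F}^\gend}(\dbar{C}^\gend_t)\bigr)_{[i][j]} = \phi^\gend(r_{[i][j],t}),
\end{equation*}
where $\phi^\gend=\hrhom{\gend}$ for $\gend\neq\coxD_{n+1}$, and in type $\coxD$ it is $\hrhom{\gend}$ precomposed with projection onto the central idempotent $\tfrac12(\croot_0^+ + \croot_0^-)$; in all cases $\phi^\gend$ is a unital ring homomorphism. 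Thus (a) becomes the claim $\vec{c}_{[i][j],t}=\phi^\gend(r_{[i][j],t})$.

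The base case is immediate, since $\dbar{C}^\gend_{t_0}$ has diagonal blocks $\regrep{\gend}(1)$ and off-diagonal blocks $\regrep{\gend}(0)$, which project to the $2\times2$ identity $\vec{C}^{\Iiin}_{t_0}$. For the inductive step I would compare the block mutation rule of Proposition~\ref{prop:CBlocks}(c) with the ordinary $\cv$-vector mutation rule for $\vec{C}^{\Iiin}$ recalled in the proof of Lemma~\ref{lem:BackLeftSquare}. Because $\phi^\gend$ is a ring homomorphism it carries the block products $\regrep{\gend}(r_{[i][k],t})\dbar{B}^\gend_{[k][j],t}$ to the scalar products $\vec{c}_{[i][k],t}\vec{b}_{[k][j],t}$ appearing in the folded rule, provided one knows the companion $B$-square $\mathbf{d}_{\dbar{F}^\gend}(\dbar{B}^\gend_t)=\vec{B}^{\Iiin}_t$, which is exactly the mutation-compatibility (U2) of the weighted unfolding recorded in Remark~\ref{rem:BlockReps}. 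Block-sign-coherence (Proposition~\ref{prop:CBlocks}(b)) guarantees that $\phi^\gend$ commutes with $\sgn(\cdot)$ and $[\cdot]_+$, since a block whose entries all share a sign maps to a real number of that sign (the generators evaluate to the strictly positive reals $U_i(\cos\tfrac{\pi}{2n})$). The hard part will be bookkeeping the normalisations so that the three rescalings ($\lambda=2$ only in type $\coxD$) balance correctly, compounded in type $\coxE_7$ by the fact that $\regrep{\coxE_7}(\wt\croot_1)$ and $\regrep{\coxE_7}(\croot_{v_7})$ are not themselves sign-coherent (Remark~\ref{rem:RepSigns}(b)); here one must invoke the column-sign-coherence already secured in Proposition~\ref{prop:CBlocks} rather than reading signs off the representation directly. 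This mirrors the induction of \cite[Proposition~8.7]{DTOdd}.

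Part (b) I would obtain by transporting (a) through the two established squares. Lemma~\ref{lem:FrontLeftSquare}(b) gives $C^\gend_t=\Lambda^\gend\dbar{C}^\gend_t\Theta^\gend$ and Lemma~\ref{lem:BackLeftSquare}(b) gives $\vec{C}^{\Iiin}_t=PC^{\Iiin}_tP\inv$, so it suffices to verify the single compatibility identity
\begin{equation*}
	\mathbf{d}_{F^\gend}\bigl(\Lambda^\gend M\Theta^\gend\bigr)=P\inv\,\mathbf{d}_{\dbar{F}^\gend}(M)\,P,
\end{equation*}
which is a direct comparison of the weighted sums defining $d_{F^\gend}$ and $d_{\dbar{F}^\gend}$: the two differ precisely through the ratios $\mw([i])/\lambda$ of the standard and rescaled valuations together with the identification of $i$ with $i'$ across the doubling, i.e.\ through the diagonal $P$ and the surjection $\Lambda^\gend$. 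Applying this with $M=\dbar{C}^\gend_t$ and combining with (a) yields $\mathbf{d}_{F^\gend}(C^\gend_t)=P\inv\vec{C}^{\Iiin}_tP=C^{\Iiin}_t$.

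Finally, (c) and (d) follow from (a), (b) and the generalised Gabriel theorem of Section~\ref{sec:Projection}. For (c), a single column of the genuine Dynkin $C$-matrices $\dbar{C}^\gend_t$ and $C^\gend_t$ is sign-coherent by the classical finite-type results cited in Proposition~\ref{prop:CBlocks}, and each coordinate of the projected $\cv$-vector is a sum of those entries weighted by the positive scalars $\tfrac{\vw(v)}{\mw([i])}$, so the projected vector inherits a common sign. For (d), by (a) and (b) each $\cv$-vector is the $F$-projection of a $\cv$-vector of the Dynkin quiver, hence of $\pm$ the dimension vector of an indecomposable object; Theorem~\ref{thm:FoldingProjection} then identifies such a projection with an integer multiple of a (rescaled) root of $\Iiin$, and the length formula of Corollary~\ref{cor:RowWeights} applied at the weight-one data selected by $(i_0,i_1)$ forces the multiple to be $1$. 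The delicate point in (d) is precisely confirming that the selected columns land in weight-one rows, where sign-coherence from (c) and the rank-two finite-type dynamics are used to exclude higher multiples; this gives that the $\cv$-vectors of $B^{\Iiin}$ are exactly roots of $\Iiin$ and those of $\vec{B}^{\Iiin}$ exactly rescaled roots.
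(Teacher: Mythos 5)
Your proposal follows essentially the same route as the paper: part (a) is reduced via Proposition~\ref{prop:CBlocks}(a) and the homomorphism $\hrhom{\gend}$ to a scalar identity proved by induction on composite mutation (the paper defers this induction to \cite[Corollary~8.8]{DTOdd}, whereas you sketch it directly, correctly isolating the $\coxE_7$ sign issue from Remark~\ref{rem:RepSigns}(b)); part (b) rests on the same compatibility identity $\mathbf{d}_{F^\gend}(\Lambda^\gend M\Theta^\gend)=P\inv\mathbf{d}_{\dbar{F}^\gend}(M)P$ combined with Lemmas~\ref{lem:BackLeftSquare} and~\ref{lem:FrontLeftSquare}; and (c), (d) are read off from positivity of the weights and Theorem~\ref{thm:FoldingProjection}, just as in the paper. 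The one place you are slightly vaguer than you could be --- why the columns selected by $(i_0,i_1)$ correspond to modules in weight-one rows in (d) --- is treated with comparable brevity in the paper itself, so this is not a gap relative to its argument.
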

	\begin{proof}
		For clarity, we will denote the $\ZUi{2n}$-valuation of $\vec{Q}^\Iiin$ and $Q^\Iiin$ by the functions $\vec{\mw}$ and $\mw$ respectively. In particular, we note that $\mw([0]) = \vec{\mw}([0])=\lambda$ and $\mw([1]) = 2\vec{\mw}([1])\cos(\frac{\pi}{2n})=2\lambda\cos(\frac{\pi}{2n})$, where $\lambda = 2$ if $\gend = \coxD_{n+1}$ with $n>3$ and $\lambda = 1$ otherwise.
	
		(a) By Proposition~\ref{prop:CBlocks}(a), we have
		\begin{equation*}
			\dbar{C}^\gend_{t} =
			\begin{pmatrix}
				\dbar{C}^\gend_{[0][0],t} & \dbar{C}^\gend_{[0][1],t} \\ 
				\dbar{C}^\gend_{[1][0],t} & \dbar{C}^\gend_{[1][1],t}
			\end{pmatrix}
			=
			\begin{pmatrix}
				\regrep{\gend}(r_{[0][0],t}) & \regrep{\gend}(r_{[0][1],t}) \\ 
				\regrep{\gend}(r_{[1][0],t}) & \regrep{\gend}(r_{[1][1],t})
			\end{pmatrix},
		\end{equation*}
		where $r_{[i][j],t} = \sum_{e \in \mathcal{B}^\gend} a_{e,[i][j]} e$ with each $a_{e,[i][j]} \in \integer$. It then follows from the definitions that we have
		\begin{align*}
			\mathbf{d}_{\dbar{F}^\gend}(\dbar{C}^\gend_{t}) &= \frac{1}{\lambda}\sum_{e \in \mathcal{B}^\gend} 
			\begin{pmatrix}
				a_{e,[0][0]}\vw\vartheta^\gend(se) & a_{e,[0][1]}\vw\vartheta^\gend(se) \\ 
				a_{e,[1][0]}\vw\vartheta^\gend(se) & a_{e,[1][1]}\vw\vartheta^\gend(se)
			\end{pmatrix} 
			\\ &= \frac{1}{\lambda}
			\begin{pmatrix}
				\hrhom{\gend}(\lambda r_{[0][0],t}) & \hrhom{\gend}(\lambda r_{[0][1],t}) \\ 
				\hrhom{\gend}(\lambda r_{[1][0],t}) & \hrhom{\gend}(\lambda r_{[1][1],t})
			\end{pmatrix}
			\\ &=
			\begin{pmatrix}
				\hrhom{\gend}( r_{[0][0],t}) & \hrhom{\gend}( r_{[0][1],t}) \\ 
				\hrhom{\gend}( r_{[1][0],t}) & \hrhom{\gend}( r_{[1][1],t})
			\end{pmatrix},
		\end{align*}
		where $s= \croot_0^+ + \croot_0^-$ if $\gend = \coxD_{n+1}$ with $n>3$ and $s=1$ otherwise. In particular, for the initial $C$-matrix, we have
		\begin{equation*}
			\mathbf{d}_{\dbar{F}^\gend}(\dbar{C}^\gend_{t_0}) =
			\begin{pmatrix}
				\hrhom{\gend}(1) & \hrhom{\gend}(0) \\ 
				\hrhom{\gend}(0) & \hrhom{\gend}(1)
			\end{pmatrix} = \vec{C}_{t_0}^{\Iiin}
		\end{equation*}
		The proof of statement $\mathbf{d}_{\dbar{F}^\gend}(\dbar{C}^\gend_t) = \vec{C}_t^{\Iiin}$ for each $t \in \tree_{Q_0^\Iiin}$ is then given by an induction argument on composite mutation, which is identical to the proof of \cite[Corollary 8.8]{DTOdd}.
		
		(b) We will first show that 
		\begin{equation} \tag{$\ast$} \label{eq:LeftCubeTopSq}
			\mathbf{d}_{F^\gend}(\Lambda^\gend \dbar{C}^\gend_t \Theta^\gend) = P\inv\mathbf{d}_{\dbar{F}^\gend}(\dbar{C}^\gend_t)P,
		\end{equation}
		where the operations $\Lambda^\gend ({-}) \Theta^\gend$ and $P\inv({-})P$ are as in Remark~\ref{rem:BlockReps}(d) and Lemma~\ref{lem:BackLeftSquare}. We begin by noting that
		\begin{equation*}
			\Lambda^\gend \dbar{C}^\gend_t \Theta^\gend =
			\begin{pmatrix}
				\Lambda^\gend_{[0]}\regrep{\gend}(r_{[0][0],t}) \Theta^\gend_{[0]} &
				\Lambda^\gend_{[0]}\regrep{\gend}(r_{[0][1],t}) \Theta^\gend_{[1]} \\
				\Lambda^\gend_{[1]}\regrep{\gend}(r_{[1][0],t}) \Theta^\gend_{[0]} &
				\Lambda^\gend_{[1]}\regrep{\gend}(r_{[1][1],t}) \Theta^\gend_{[1]}
			\end{pmatrix},
		\end{equation*}
		where $\Theta^\gend_{[i]}$ is the matrix corresponding to the canonical inclusion $\{j \in Q_0^\gend : F^\gend(j)=[i]\}\rightarrow \dbar{Q}_0^\gend$ and $\Lambda^\gend_{[i]}$ is the matrix corresponding to the canonical surjection $\dbar{Q}_0^\gend \rightarrow \{j \in Q_0^\gend : F^\gend(j)=[i]\}$ of $\integer$-bases/index sets. That is, $\Lambda^\gend_{[i]}\regrep{\gend}(r_{[i][j],t}) \Theta^\gend_{[j]}$ is obtained from $\regrep{\gend}(r_{[i][j],t})$ by removing the rows indexed by $\{k \in \dbar{Q}_0^\gend : F^\gend(k)\neq[i]\}$ and removing the columns indexed by $\{k \in \dbar{Q}_0^\gend : F^\gend(k)\neq[j]\}$. Thus,
		\begin{align*}
			\mathbf{d}_{F^\gend}(\Lambda^\gend \dbar{C}^\gend_t \Theta^\gend) &= 
			\begin{pmatrix}
				\frac{1}{\mw([0])} \hrhom{\gend}(s_{[0]}r_{[0][0],t}) & 
				\frac{1}{\mw([0])} \hrhom{\gend}(s_{[1]}r_{[0][1],t}) \\ 
				\frac{1}{\mw([1])} \hrhom{\gend}(s_{[0]}r_{[1][0],t}) & 
				\frac{1}{\mw([1])} \hrhom{\gend}(s_{[1]}r_{[1][1],t})
			\end{pmatrix}
			\\ &=
			\begin{pmatrix}
				\frac{\hrhom{\gend}(s_{[0]})}{\mw([0])} \hrhom{\gend}(r_{[0][0],t}) & 
				\frac{\hrhom{\gend}(s_{[1]})}{\mw([0])} \hrhom{\gend}(r_{[0][1],t}) \\ 
				\frac{\hrhom{\gend}(s_{[0]})}{\mw([1])} \hrhom{\gend}(r_{[1][0],t}) & 
				\frac{\hrhom{\gend}(s_{[1]})}{\mw([1])} \hrhom{\gend}(r_{[1][1],t})
			\end{pmatrix}
			\\ &=
			\begin{pmatrix}
				\mw([0]) & 0 \\
				0 & \mw([1])
			\end{pmatrix} \inv
			\begin{pmatrix}
				\hrhom{\gend}(r_{[0][0],t}) & 
				\hrhom{\gend}(r_{[0][1],t}) \\ 
				\hrhom{\gend}(r_{[1][0],t}) & 
				\hrhom{\gend}(r_{[1][1],t})
			\end{pmatrix}
			\begin{pmatrix}
				\hrhom{\gend}(s_{[0]}) & 0 \\
				0 & \hrhom{\gend}(s_{[1]}) 
			\end{pmatrix}
		\end{align*}
		where
		\begin{align*}
			s_{[0]} &=
			\begin{cases}
				\croot_0^+ + \croot^-_0 	& \text{if } \gend = \coxD_{n+1} \text{ with } n>3, \\
				1								& \text{otherwise,}
			\end{cases}
			\\
			s_{[1]} &=
			\begin{cases}
				\croot_1^+ + \croot^-_1 	& \text{if } \gend = \coxD_{n+1} \text{ with } n>3, \\
				\croot^+_1					& \text{if } \gend = \coxE_{6}, \\
				\croot_1						& \text{otherwise.}
			\end{cases}
		\end{align*}
		But then $\hrhom{\gend}(s_{[i]}) = \mw([i])$ for both $i \in \{0,1\}$, and hence the rightmost matrix in the above product is precisely the rescaling matrix $P$. Thus, the required commutativity relation (\ref{eq:LeftCubeTopSq}) holds. The result then follows from (a) alongside Lemma~\ref{lem:BackLeftSquare}(b) and Lemma~\ref{lem:FrontLeftSquare}(b) and the fact that the operations $\Lambda^\gend({-})\Theta^\gend$ and $P\inv ({-}) P$ are invertible.

		(c) This follows from (a), (b), and Proposition~\ref{prop:CBlocks}(a), as all morphisms involved respect positivity/negativity.
		
		(d) We note that the module category of $K\dbar{Q}^\gend$ is equivalent to the category $\mod* KQ^\gend \times \mod* KQ^{\gend,\mathrm{op}}$. The category $\mod* KQ^{\gend,\mathrm{op}}$ also satisfies an analogue of Theorem~\ref{thm:FoldingProjection}, where the rows of the Auslander-Reiten quiver corresponding to sink vertices now correspond to short roots and vice versa. Since the $c$-vectors of $\dbar{Q}^\gend$ are $\pm 1$-multiples of dimension vectors of indecomposable $K\dbar{Q}^\gend$-modules, (a), (b) and the definition of $\mathbf{d}_{\dbar{F}^\gend}$ imply that for a $C$-matrix
		\begin{equation*}
			\vec{C}_t^{\Iiin} = 
			\begin{pmatrix}
				\vec{c}_{[0][0]} & \vec{c}_{[0][1]} \\ 
				\vec{c}_{[1][0]} & \vec{c}_{[1][1]}
			\end{pmatrix} =
			\begin{pmatrix}
				\vec{\mathbf{c}}_{[0]} & \vec{\mathbf{c}}_{[1]} \\ 
			\end{pmatrix}
		\end{equation*}
		we have $\vec{\mathbf{c}}_{[0]} = \dimproj_{F^\gend}^{\mod*KQ^\gend}(M)$ and $\vec{\mathbf{c}}_{[1]} = \dimproj_{F^{\gend,\mathrm{op}}}^{\mod*KQ^{\gend,\mathrm{op}}}(M')$ for some $M \in \mod* KQ^\gend$ and $M' \in \mod* KQ^{\gend,\mathrm{op}}$ that reside in rows of weight 1, where
		\begin{equation*}
			F^{\gend,\mathrm{op}}\colon Q^{\gend,\mathrm{op}} \rightarrow Q^{\Iiin}
		\end{equation*}
		is the folding of the opposite quiver defined in the natural way by mapping sources vertices to sources and sinks to sinks. It is then clear from Theorem~\ref{thm:FoldingProjection} that the columns of the $C$-matrix $C_t^{\Iiin}$ given by the inverse rescaling of $\vec{C}_t^{\Iiin}$ are roots of $\Iiin$, as described in Section~\ref{sec:Prelim-ExMats}.
	\end{proof}
	
	\subsection{$G$-matrices and $\gv$-vectors}
	Since $\cv$-vectors of exchange matrices of type $\Iiin$ are sign-coherent, we are permitted (by the results of \cite{NZ} on the tropical duality of $c$- and $g$-vectors) to use the following definition of $G$-matrices and $\gv$-vectors.
	
	\begin{defn}
		Let $J$ be an index set and let $B$ be a $|J| \times |J|$ exchange matrix over $R$ whose $\cv$-vectors are sign-coherent. For any $t \in \tree_J$, we define for each $C$-matrix $C_t$ the corresponding \emph{$G$-matrix}
		\begin{equation*}
			G_t = (C_t^T)\inv.
		\end{equation*}
		We call the collection $\{G_t : t \in \tree_J\}$ the \emph{$G$-matrices} of $B$ and call each column of each $G_t$ a $\gv$-vector of $B$. We define the mutation of a $G$-matrix $G_t$ at index $k$ to be the $G$-matrix $\mu_k (G_t) = G_{t'}$, where $\xymatrix@1{t \ar@{-}[r]^-{k} & t'}$ is an edge in $\tree_J$.
	\end{defn}

        \begin{rem}
The results of~\cite{NZ} show that the mutations of $G$-matrices defined above are given by the explicit formula provided in~\cite[(6.12)--(6.13)]{FominZelevinskyIV}.
\end{rem}

	The next theorem shows that all of the definitions of this section are compatible with each other.
	
	\begin{thm}\label{thm:Tesseract}
		Let $\xymatrix@1{t \ar@{-}[r]^-{[k]} & t'} \in \tree_{Q_0^{\Iiin}}$. Then the diagram of Figure~\ref{fig:Tesseract} commutes.
	\end{thm}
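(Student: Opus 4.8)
The plan is to prove commutativity of the tesseract by checking that every two-dimensional face commutes, since a hypercube of maps commutes as soon as all its square faces do. The sixteen vertices are the matrices obtained at each of the two tree vertices $t,t'$ by choosing a $C$-matrix or its associated $G$-matrix, in one of the four flavours $\dbar{C}^\gend$, $C^\gend$, $\vec{C}^{\Iiin}$, $C^{\Iiin}$ (and likewise for $G$); the edges are the structure maps $\Lambda^\gend({-})\Theta^\gend$, $\mathbf{d}_{\dbar{F}^\gend}$, $\mathbf{d}_{F^\gend}$, $P^{-1}({-})P$, the transpose-inverse ${-}\mapsto(({-})^T)^{-1}$, and the composite mutation $\mu_{[k]}$. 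First I would dispose of the entire $C$-matrix cube, which is the ``left'' half of the tesseract. Its four non-mutation faces are exactly the earlier results: the rescaling face is Lemma~\ref{lem:BackLeftSquare}(b), the unfolding face is Lemma~\ref{lem:FrontLeftSquare}(b), and the two projection faces together with the relation $(\ast)$ in the proof of Corollary~\ref{cor:MiddleLeftSquare}(b) give $\mathbf{d}_{F^\gend}(C^\gend_t)=C^{\Iiin}_t$ and $\mathbf{d}_{\dbar{F}^\gend}(\dbar{C}^\gend_t)=\vec{C}^{\Iiin}_t$. Thus the $C$-matrix cube commutes at both $t$ and $t'$.

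Next I would handle the mutation faces, namely the edges of the tesseract running from the inner cube at $t$ to the outer cube at $t'$. For each edge of the $C$-matrix cube one must check that mutation at $[k]$ intertwines the two operations being composed. This is precisely the statement that composite mutation commutes with rescaling (Lemma~\ref{lem:BackLeftSquare}), with the unfolding operations $\Lambda^\gend({-})\Theta^\gend$ and $V({-}\oplus{-}^T)V^{-1}$ (Lemma~\ref{lem:FrontLeftSquare}), and with the block projection. The last of these follows from the blockwise mutation formula of Proposition~\ref{prop:CBlocks}(c) combined with the fact that $\regrep{\gend}$ is a ring homomorphism, so that applying $\hrhom{\gend}$ to each block carries the block mutation rule to the ordinary $C$-matrix mutation rule for $\Iiin$. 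Hence every mutation face of the $C$-cube commutes.

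The essential new work is the $G$-matrix half of the tesseract and the faces linking $C$ to $G$. Here I would exploit that, by Proposition~\ref{prop:CBlocks}(a) and (d), the matrix $\dbar{C}^\gend_t$ is a $2\times2$ array of blocks $\regrep{\gend}(r_{[i][j],t})$ lying in the commutative image of $\regrep{\gend}$, so inversion and transposition of this $2\times2$ array can be carried out entirely inside the ring $\hchebr{\gend}$ and then pushed through the ring homomorphism $\hrhom{\gend}$. Consequently the transpose-inverse commutes with $\mathbf{d}_{\dbar{F}^\gend}$ up to the (symmetric) doubled valuation, yielding $\mathbf{d}_{\dbar{F}^\gend}(\dbar{G}^\gend_t)=\vec{G}^{\Iiin}_t$. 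Passing to the single-folded side, the $G$-matrices should project via the \emph{transpose} projection $\mathbf{d}^T_{F^\gend}$ rather than $\mathbf{d}_{F^\gend}$, with the rescaling on $G$-matrices given by $(P^{-1})^T({-})P^T$, so that the asymmetry flagged in the remark after Definition~\ref{defn:MProj} is exactly absorbed by transposition. The transpose-inverse faces then reduce to the defining identity $G_t=((C_t)^T)^{-1}$ together with the already-established $C$-faces, and the $G$-matrix mutation faces follow from the remark identifying $G$-mutation with the Fomin--Zelevinsky formula, which is compatible with all structure maps by the same induction.

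I expect the main obstacle to be precisely the bookkeeping in the last paragraph: correctly matching the non-symmetric single-folding projection $\mathbf{d}_{F^\gend}$ used on the $C$-side with its transpose $\mathbf{d}^T_{F^\gend}$ on the $G$-side, and verifying that the rescaling matrix $P$ transposes consistently, so that $G^{\Iiin}_t=((C^{\Iiin}_t)^T)^{-1}$ is reproduced on the nose rather than up to an unwanted conjugation. Once the transpose-inverse is shown to commute with each structure map---which rests on the commutativity of the blocks in Proposition~\ref{prop:CBlocks}(d), allowing the $2\times2$ block algebra to be treated as matrices over a commutative ring---the remaining faces are formal, and the whole tesseract commutes.
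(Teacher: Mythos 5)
Your proposal follows essentially the same route as the paper's proof: the $C$-matrix half is dispatched by Lemmas~\ref{lem:BackLeftSquare} and~\ref{lem:FrontLeftSquare} and Corollary~\ref{cor:MiddleLeftSquare}, the transpose-inverse faces on the doubled side are handled by computing $((\dbar{C}^\gend_t)^T)\inv$ as a $2\times 2$ adjugate over the commuting blocks (Proposition~\ref{prop:CBlocks}(d) together with $\det C_t=\pm 1$ from sign-coherence), and the single-folded $G$-side is projected by $\mathbf{d}^T_{F^\gend}$ with the rescaling absorbed via $\hrhom{\gend}(s_{[i]})=\mw([i])$. The one slip is your formula $(P\inv)^T({-})P^T$ for the rescaling of $G$-matrices, which should be $P({-})P\inv$ (you omitted the outer inverses after transposing); since $P$ is diagonal this is precisely the bookkeeping you flagged and does not affect the strategy.
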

	
	\begin{figure}[b]
		\centering
		\def\il{2}
\def\ol{1.7}
\begin{tikzpicture}[scale=1.5]
	\coordinate (dc1) at (0,0,0);
	\coordinate (dc2) at ($(dc1)+\il*(0,-1,0)$);
	\coordinate (dg1) at ($(dc1)+\il*(1,0,0)$);
	\coordinate (dg2) at ($(dc2)+\il*(1,0,0)$);
	
	\coordinate (skew) at (-0.4,-0.1,-0.1);
	
	\coordinate (vc1) at ($(dc1)+\il*(0,0,-1)+(skew)$);
	\coordinate (vc2) at ($(dc2)+\il*(0,0,-1)+(skew)$);
	\coordinate (vg1) at ($(dg1)+\il*(0,0,-1)+(skew)$);
	\coordinate (vg2) at ($(dg2)+\il*(0,0,-1)+(skew)$);
	
	\coordinate(offset) at (0.17,0.1,0);
	
	\coordinate (c1) at ($(dc1)+\ol*(-1,1,1)+(offset)$);
	\coordinate (c2) at ($(dc2)+\ol*(-1,-1,1)+(offset)$);
	\coordinate (g1) at ($(dg1)+\ol*(1,1,1)+(offset)$);
	\coordinate (g2) at ($(dg2)+\ol*(1,-1,1)+(offset)$);
	
	\coordinate (ic1) at ($(vc1)+\ol*(-1,1,-1)+(skew)+(offset)$);
	\coordinate (ic2) at ($(vc2)+\ol*(-1,-1,-1)+(skew)+(offset)$);
	\coordinate (ig1) at ($(vg1)+\ol*(1,1,-1)+(skew)+(offset)$);
	\coordinate (ig2) at ($(vg2)+\ol*(1,-1,-1)+(skew)+(offset)$);

	\draw (c1) node {$C^\gend_{t}$};
	\draw (g1) node {$G^\gend_{t}$};
	\draw (c2) node {$C^\gend_{t'}$};
	\draw (g2) node {$G^\gend_{t'}$};
	
	\draw (dc1) node {$\bar{\bar{C}}^\gend_{t}$};
	\draw (dg1) node {$\bar{\bar{G}}^\gend_{t}$};
	\draw (dc2) node {$\bar{\bar{C}}^\gend_{t'}$};
	\draw (dg2) node {$\bar{\bar{G}}^\gend_{t'}$};
	
	\draw (vc1) node {$\vec{C}^{\Iiin}_{t}$};
	\draw (vg1) node {$\vec{G}^{\Iiin}_{t}$};
	\draw (vc2) node {$\vec{C}^{\Iiin}_{t'}$};
	\draw (vg2) node {$\vec{G}^{\Iiin}_{t'}$};
	
	\draw (ic1) node {$ C^{\Iiin}_{t}$};
	\draw (ig1) node {$ G^{\Iiin}_{t}$};
	\draw (ic2) node {$ C^{\Iiin}_{t'}$};
	\draw (ig2) node {$ G^{\Iiin}_{t'}$};
	
	\draw[|->, shorten <= 4.4ex, shorten >= 4.4ex,blue] (ic1) -- (ig1);
	\draw[|->, shorten <= 2.3ex, shorten >= 2.3ex, red] (ic1) -- (ic2);
	\draw[|->, shorten <= 2.3ex, shorten >= 2.3ex, red] (ig1) -- (ig2);
	\draw[|->, shorten <= 3.8ex, shorten >= 4.4ex,blue] (ic2) -- (ig2);
	
	\draw [white,fill=white] ($(ic1)+0.38*(ic2)-0.38*(ic1)+(-0.1,0,0)$) rectangle ($(ic1)+0.35*(ic2)-0.35*(ic1)+(0.1,0,0)$);
	\draw [white,fill=white] ($(ic1)+0.505*(ic2)-0.505*(ic1)+(-0.1,0,0)$) rectangle ($(ic1)+0.48*(ic2)-0.48*(ic1)+(0.1,0,0)$);
	\draw [white,fill=white] ($(ic1)+0.585*(ic2)-0.585*(ic1)+(-0.1,0,0)$) rectangle ($(ic1)+0.53*(ic2)-0.53*(ic1)+(0.1,0,0)$);
	\draw [white,fill=white] ($(ic1)+0.84*(ic2)-0.84*(ic1)+(-0.1,0,0)$) rectangle ($(ic1)+0.89*(ic2)-0.89*(ic1)+(0.1,0,0)$);
	\draw [white,fill=white] ($(ic2)+0.09*(ig2)-0.09*(ic2)+(0,-0.1,0)$) rectangle ($(ic2)+0.12*(ig2)-0.12*(ic2)+(0,0.1,0)$);
	\draw [white,fill=white] ($(ic2)+0.54*(ig2)-0.54*(ic2)+(0,-0.1,0)$) rectangle ($(ic2)+0.57*(ig2)-0.57*(ic2)+(0,0.1,0)$);
	\draw [white,fill=white] ($(ic2)+0.735*(ig2)-0.735*(ic2)+(0,-0.1,0)$) rectangle ($(ic2)+0.765*(ig2)-0.765*(ic2)+(0,0.1,0)$);
	
	\draw[|->, shorten <= 3ex, shorten >= 3ex, cyan!80!black!100] (vc1) -- (ic1);
	\draw[|->, shorten <= 3ex, shorten >= 3ex, cyan!80!black!100] (vg1) -- (ig1);
	\draw[|->, shorten <= 3ex, shorten >= 4ex, cyan!80!black!100] (vc2) -- (ic2);
	\draw[|->, shorten <= 3ex, shorten >= 4ex, cyan!80!black!100] (vg2) -- (ig2);
	
	\draw [white,fill=white] ($(vc1)+0.22*(ic1)-0.22*(vc1)$) rectangle ($(vc1)+0.15*(ic1)-0.15*(vc1)$);
	\draw [white,fill=white] ($(vg1)+0.22*(ig1)-0.22*(vg1)$) rectangle ($(vg1)+0.15*(ig1)-0.15*(vg1)$);
	\draw [white,fill=white] ($(vc2)+0.4*(ic2)-0.4*(vc2)$) rectangle ($(vc2)+0.25*(ic2)-0.25*(vc2)$);
	\draw [white,fill=white] ($(vg2)+0.42*(ig2)-0.42*(vg2)$) rectangle ($(vg2)+0.34*(ig2)-0.34*(vg2)$);
	
	\draw[|->, shorten <= 4.4ex, shorten >= 4.4ex,blue] (vc1) -- (vg1);
	\draw[|->, shorten <= 2ex, shorten >= 2ex, red] (vc1) -- (vc2);
	\draw[|->, shorten <= 1.9ex, shorten >= 2ex, red] (vg1) -- (vg2);
	\draw[|->, shorten <= 4.4ex, shorten >= 4.4ex,blue] (vc2) -- (vg2);
	
	\draw [white,fill=white] ($(vc1)+0.3*(vc2)-0.3*(vc1)+(-0.1,0,0)$) rectangle ($(vc1)+0.4*(vc2)-0.4*(vc1)+(0.1,0,0)$);
	\draw [white,fill=white] ($(vg1)+0.13*(vg2)-0.13*(vg1)+(-0.1,0,0)$) rectangle ($(vg1)+0.19*(vg2)-0.19*(vg1)+(0.1,0,0)$);
	
	\draw[|->, shorten <= 2ex, shorten >= 3ex, green!40!black!100!] (dc1) -- (vc1);
	\draw[|->, shorten <= 2ex, shorten >= 3ex, green!40!black!100!] (dg1) -- (vg1);
	\draw[|->, shorten <= 2ex, shorten >= 3ex, green!40!black!100!] (dc2) -- (vc2);
	\draw[|->, shorten <= 2ex, shorten >= 3ex, green!40!black!100!] (dg2) -- (vg2);
	
	\draw[|->, shorten <= 2ex, shorten >= 2ex,blue] (dc1) -- (dg1);
	\draw[|->, shorten <= 2ex, shorten >= 2ex, red] (dc1) -- (dc2);
	\draw[|->, shorten <= 2ex, shorten >= 2ex, red] (dg1) -- (dg2);
	\draw[|->, shorten <= 2ex, shorten >= 2ex,blue] (dc2) -- (dg2);
	
	\draw[<-|, shorten <= 3ex, shorten >= 2ex, cyan!80!black!100] (c1) -- (dc1);
	\draw[<-|, shorten <= 2ex, shorten >= 2.5ex, cyan!80!black!100] (g1) -- (dg1);
	\draw[<-|, shorten <= 2.5ex, shorten >= 2ex, cyan!80!black!100] (c2) -- (dc2);
	\draw[<-|, shorten <= 2ex, shorten >= 2ex, cyan!80!black!100] (g2) -- (dg2);
	
	\draw[|->, shorten <= 3ex, shorten >= 2ex,blue] (c1) -- (g1);
	\draw[|->, shorten <= 2ex, shorten >= 2ex, red] (c1) -- (c2);
	\draw[|->, shorten <= 2ex, shorten >= 2ex, red] (g1) -- (g2);
	\draw[|->, shorten <= 2ex, shorten >= 2ex,blue] (c2) -- (g2);
	
	\draw[|->, shorten <= 2.5ex, shorten >= 3ex, green!40!black!100!] (c1) -- (ic1);
	\draw[|->, shorten <= 2.5ex, shorten >= 4ex, green!40!black!100!] (g1) -- (ig1);
	\draw[|->, shorten <= 4ex, shorten >= 3ex, green!40!black!100!] (c2) -- (ic2);
	\draw[|->, shorten <= 2.5ex, shorten >= 3ex, green!40!black!100!] (g2) -- (ig2);
	
	\draw [white,fill=white] ($(g1)+0.12*(g2)-0.12*(g1) +(-.1,0)$) rectangle ($(g1)+0.17*(g2)-0.17*(g1)+(.1,0)$);
	
	\draw[blue] ($(c1)+0.5*(g1)-0.5*(c1) +(0.7,0.2,0)$) node {\footnotesize$({-}^T)^{-1}$};
	\draw[blue] ($(dc1)+0.5*(dg1)-0.5*(dc1) +(0,0.2,0)$) node {\footnotesize$({-}^T)^{-1}$};
	\draw[blue] ($(vc1)+0.5*(vg1)-0.5*(vc1) +(0,0.2,0)$) node {\footnotesize$({-}^T)^{-1}$};
	\draw[blue] ($(ic1)+0.5*(ig1)-0.5*(ic1) +(0,0.2,0)$) node {\footnotesize$({-}^T)^{-1}$};
	\draw[blue] ($(c2)+0.5*(g2)-0.5*(c2) +(0,0.2,0)$) node {\footnotesize$({-}^T)^{-1}$};
	\draw[blue] ($(dc2)+0.5*(dg2)-0.5*(dc2) +(0.25,0.2,0)$) node {\footnotesize$({-}^T)^{-1}$};
	\draw[blue] ($(vc2)+0.5*(vg2)-0.5*(vc2) +(0,0.2,0)$) node {\footnotesize$({-}^T)^{-1}$};
	\draw[blue] ($(ic2)+0.5*(ig2)-0.5*(ic2) +(-0.7,-0.2,0)$) node {\footnotesize$({-}^T)^{-1}$};
	
	\draw[red] ($(c1)+0.5*(c2)-0.5*(c1) +(-0.2,0,0)$) node {\footnotesize$\mu_{[k]}$};
	\draw[red] ($(dc1)+0.5*(dc2)-0.5*(dc1) +(-0.2,0,0)$) node {\footnotesize$\mu_{[k]}$};
	\draw[red] ($(vc1)+0.5*(vc2)-0.5*(vc1) +(-0.2,-0.2,0)$) node {\footnotesize$\mu_{[k]}$};
	\draw[red] ($(ic1)+0.5*(ic2)-0.5*(ic1) +(-0.2,-1,0)$) node {\footnotesize$\mu_{[k]}$};
	\draw[red] ($(g1)+0.5*(g2)-0.5*(g1) +(0.25,1,0)$) node {\footnotesize$\mu_{[k]}$};
	\draw[red] ($(dg1)+0.5*(dg2)-0.5*(dg1) +(0.2,0.2,0)$) node {\footnotesize$\mu_{[k]}$};
	\draw[red] ($(vg1)+0.5*(vg2)-0.5*(vg1) +(0.25,0,0)$) node {\footnotesize$\mu_{[k]}$};
	\draw[red] ($(ig1)+0.5*(ig2)-0.5*(ig1) +(0.25,0,0)$) node {\footnotesize$\mu_{[k]}$};
	
	\draw [cyan!80!black!100]($(c1)+0.5*(dc1)-0.5*(c1) +(0.2,-0.5,0)$) node {\footnotesize$\Lambda^\Delta ({-}) \Theta^\Delta$};
	\draw [cyan!80!black!100]($(g1)+0.5*(dg1)-0.5*(g1) +(0.4,-0.2,0)$) node {\footnotesize$\Lambda^\Delta ({-}) \Theta^\Delta$};
	\draw [cyan!80!black!100]($(c2)+0.5*(dc2)-0.5*(c2) +(0.5,-0.2,0)$) node {\footnotesize$\Lambda^\Delta ({-}) \Theta^\Delta$};
	\draw [cyan!80!black!100]($(g2)+0.5*(dg2)-0.5*(g2) +(-0.5,-0.2,0)$) node {\footnotesize$\Lambda^\Delta ({-}) \Theta^\Delta$};
	
	\draw [green!40!black!100!]($(dc1)+0.5*(vc1)-0.5*(dc1) +(-0.3,0.1,0)$) node {\footnotesize$\mathbf{d}_{\bar{\bar{F}}^\gend}$};
	\draw [green!40!black!100!]($(dg1)+0.5*(vg1)-0.5*(dg1) +(-0.35,0.1,0)$) node {\footnotesize$\mathbf{d}_{\bar{\bar{F}}^\gend}$};
	\draw [green!40!black!100!]($(dc2)+0.5*(vc2)-0.5*(dc2) +(0.35,-0.1,0)$) node {\footnotesize$\mathbf{d}_{\bar{\bar{F}}^\gend}$};
	\draw [green!40!black!100!]($(dg2)+0.5*(vg2)-0.5*(dg2) +(0.35,-0.1,0)$) node {\footnotesize$\mathbf{d}_{\bar{\bar{F}}^\gend}$};
	
	\draw [green!40!black!100!]($(c1)+0.5*(ic1)-0.5*(c1) +(-0.3,0.1,0)$) node {\footnotesize$\mathbf{d}_{F^\gend}$};
	\draw [green!40!black!100!]($(g1)+0.5*(ig1)-0.5*(g1) +(0.2,-0.3,0)$) node {\footnotesize$\mathbf{d}^T_{F^\gend}$};
	\draw [green!40!black!100!]($(c2)+0.5*(ic2)-0.5*(c2) +(-0.15,0.3,0)$) node {\footnotesize$\mathbf{d}_{F^\gend}$};
	\draw [green!40!black!100!]($(g2)+0.5*(ig2)-0.5*(g2) +(0.3,-0.1,0)$) node {\footnotesize$\mathbf{d}^T_{F^\gend}$};
	
	\draw [cyan!80!black!100]($(ic1)+0.5*(vc1)-0.5*(ic1) +(0.5,0.1,0)$) node {\footnotesize$P^{-1}({-})P$};
	\draw [cyan!80!black!100]($(ig1)+0.5*(vg1)-0.5*(ig1) +(-0.5,0.1,0)$) node {\footnotesize$P({-})P^{-1}$};
	\draw [cyan!80!black!100]($(ic2)+0.5*(vc2)-0.5*(ic2) +(-0.35,0.2,0)$) node {\footnotesize$P^{-1} ({-})P$};
	\draw [cyan!80!black!100]($(ig2)+0.5*(vg2)-0.5*(ig2) +(0.25,0.2,0)$) node {\footnotesize$P ({-}) P^{-1}$};
\end{tikzpicture}
		\caption{The relationship between the different notions of $C$- and $G$-matrices for foldings onto $\Iiin$ can be described by the above commutative diagram, which forms a tesseract. Arrows have been given different colours for clarity, where two arrows (and their corresponding operations) pointing in the same `dimension' have the same colour. Notation is derived from Remark~\ref{rem:BlockReps}(e), Section~\ref{sec:Tropical-C} and Definition~\ref{defn:MProj}.} \label{fig:Tesseract}
	\end{figure}
	
	\begin{proof}
		We begin by considering the front and back cubes of Figure~\ref{fig:Tesseract}:
		\begin{center}
			\begin{tikzpicture}[scale=1.5]
	\coordinate (flu) at (0,0,0);
	\coordinate (fld) at (0,-2,0);
	\coordinate (fru) at (2,0,0);
	\coordinate (frd) at (2,-2,0);
	\coordinate (blu) at (0,0,-2);
	\coordinate (bld) at (0,-2,-2);
	\coordinate (bru) at (2,0,-2);
	\coordinate (brd) at (2,-2,-2);
	
	\coordinate (o) at (-2.5,0,0);

	\draw ($(o)+(blu)$) node {$\bar{\bar{C}}^\gend_{t}$};
	\draw ($(o)+(bru)$) node {$\bar{\bar{G}}^\gend_{t}$};
	\draw ($(o)+(bld)$) node {$\bar{\bar{C}}^\gend_{t'}$};
	\draw ($(o)+(brd)$) node {$\bar{\bar{G}}^\gend_{t'}$};
	
	\draw ($(o)+(flu)$) node {${C}^{\gend}_{t}$};
	\draw ($(o)+(fru)$) node {${G}^{\gend}_{t}$};
	\draw ($(o)+(fld)$) node {${C}^{\gend}_{t'}$};
	\draw ($(o)+(frd)$) node {${G}^{\gend}_{t'}$};
	
	\draw[blue][|->, shorten <= 3ex, shorten >= 3ex] ($(o)+(blu)$) -- ($(o)+(bru)$);
	\draw[red][|->, shorten <= 2.3ex, shorten >= 2.3ex] ($(o)+(blu)$) -- ($(o)+(bld)$);
	\draw[red][|->, shorten <= 2.3ex, shorten >= 2.3ex] ($(o)+(bru)$) -- ($(o)+(brd)$);
	\draw[blue][|->, shorten <= 3ex, shorten >= 3ex] ($(o)+(bld)$) -- ($(o)+(brd)$);
	
	\draw [white,fill=white] ($(o)+0.6*(blu)+0.4*(bld)+(-0.1,0.1)$) rectangle ($(o)+0.6*(blu)+0.4*(bld)+(0.1,-0.05)$);
	\draw [white,fill=white]($(o)+0.4*(bld)+0.6*(brd)+(-0.05,0.1)$) rectangle ($(o)+0.4*(bld)+0.6*(brd)+(0.1,-0.05)$);
	
	\draw[cyan!80!black!100][<-|, shorten <= 3ex, shorten >= 3ex] ($(o)+(flu)$) -- ($(o)+(blu)$);
	\draw[cyan!80!black!100][<-|, shorten <= 3ex, shorten >= 3ex] ($(o)+(fld)$) -- ($(o)+(bld)$);
	\draw[cyan!80!black!100][<-|, shorten <= 3ex, shorten >= 3ex] ($(o)+(fru)$) -- ($(o)+(bru)$);
	\draw[cyan!80!black!100][<-|, shorten <= 3ex, shorten >= 3ex] ($(o)+(frd)$) -- ($(o)+(brd)$);
	
	\draw[blue][|->, shorten <= 3ex, shorten >= 3ex] ($(o)+(flu)$) -- ($(o)+(fru)$);
	\draw[red][|->, shorten <= 2.3ex, shorten >= 2.3ex] ($(o)+(flu)$) -- ($(o)+(fld)$);
	\draw[red][|->, shorten <= 2.3ex, shorten >= 2.3ex] ($(o)+(fru)$) -- ($(o)+(frd)$);
	\draw[blue][|->, shorten <= 3ex, shorten >= 3ex] ($(o)+(fld)$) -- ($(o)+(frd)$);

	\draw[blue] ($(o)+0.5*(flu)+0.5*(fru)+(0.2,0.2)$) node {\footnotesize$({-}^T)^{-1}$};
	\draw[blue] ($(o)+0.5*(blu)+0.5*(bru)+(0,0.2)$) node {\footnotesize$({-}^T)^{-1}$};
	\draw[blue] ($(o)+0.5*(fld)+0.5*(frd)+(0,-0.2)$) node {\footnotesize$({-}^T)^{-1}$};
	\draw[blue] ($(o)+0.5*(bld)+0.5*(brd)+(-0.2,0.2)$) node {\footnotesize$({-}^T)^{-1}$};
	
	\draw[red] ($(o)+0.5*(flu)+0.5*(fld)+(-0.2,0)$) node {\footnotesize$\mu_{[k]}$};
	\draw[red] ($(o)+0.5*(blu)+0.5*(bld)+(-0.2,-0.2)$) node {\footnotesize$\mu_{[k]}$};
	\draw[red] ($(o)+0.5*(fru)+0.5*(frd)+(0.25,0.3)$) node {\footnotesize$\mu_{[k]}$};
	\draw[red] ($(o)+0.5*(bru)+0.5*(brd)+(0.25,0)$) node {\footnotesize$\mu_{[k]}$};
	
	\draw[cyan!80!black!100] ($(o)+0.5*(flu)+0.5*(blu)+(-0.5,0.1)$) node {\footnotesize$\Lambda^\gend({-})\Theta^\gend$};
	\draw[cyan!80!black!100] ($(o)+0.5*(fld)+0.5*(bld)+(0.5,-0.1)$) node {\footnotesize$\Lambda^\gend({-})\Theta^\gend$};
	\draw[cyan!80!black!100] ($(o)+0.5*(fru)+0.5*(bru)+(-0.5,0.1)$) node {\footnotesize$\Lambda^\gend({-})\Theta^\gend$};
	\draw[cyan!80!black!100] ($(o)+0.5*(frd)+0.5*(brd)+(0.5,-0.1)$) node {\footnotesize$\Lambda^\gend({-})\Theta^\gend$};
	
	\coordinate (o) at (2.5,0,0);

	\draw ($(o)+(blu)$) node {$C^{\Iiin}_{t}$};
	\draw ($(o)+(bru)$) node {$G^{\Iiin}_{t}$};
	\draw ($(o)+(bld)$) node {$C^{\Iiin}_{t'}$};
	\draw ($(o)+(brd)$) node {$G^{\Iiin}_{t'}$};
	
	\draw ($(o)+(flu)$) node {$\vec{C}^{\Iiin}_{t}$};
	\draw ($(o)+(fru)$) node {$\vec{G}^{\Iiin}_{t}$};
	\draw ($(o)+(fld)$) node {$\vec{C}^{\Iiin}_{t'}$};
	\draw ($(o)+(frd)$) node {$\vec{G}^{\Iiin}_{t'}$};
	
	\draw[blue][|->, shorten <= 4.4ex, shorten >= 4.4ex] ($(o)+(blu)$) -- ($(o)+(bru)$);
	\draw[red][|->, shorten <= 2.3ex, shorten >= 2.3ex] ($(o)+(blu)$) -- ($(o)+(bld)$);
	\draw[red][|->, shorten <= 2.3ex, shorten >= 2.3ex] ($(o)+(bru)$) -- ($(o)+(brd)$);
	\draw[blue][|->, shorten <= 4.4ex, shorten >= 4.4ex] ($(o)+(bld)$) -- ($(o)+(brd)$);
	
	\draw [white,fill=white] ($(o)+0.6*(blu)+0.4*(bld)+(-0.1,0.1)$) rectangle ($(o)+0.6*(blu)+0.4*(bld)+(0.1,-0.05)$);
	\draw [white,fill=white]($(o)+0.4*(bld)+0.6*(brd)+(-0.05,0.1)$) rectangle ($(o)+0.4*(bld)+0.6*(brd)+(0.1,-0.05)$);
	
	\draw[cyan!80!black!100][|->, shorten <= 3.5ex, shorten >= 3.5ex] ($(o)+(flu)$) -- ($(o)+(blu)$);
	\draw[cyan!80!black!100][|->, shorten <= 3.5ex, shorten >= 3.5ex] ($(o)+(fld)$) -- ($(o)+(bld)$);
	\draw[cyan!80!black!100][|->, shorten <= 3.5ex, shorten >= 3.5ex] ($(o)+(fru)$) -- ($(o)+(bru)$);
	\draw[cyan!80!black!100][|->, shorten <= 3.5ex, shorten >= 3.5ex] ($(o)+(frd)$) -- ($(o)+(brd)$);
	
	\draw[blue][|->, shorten <= 4.4ex, shorten >= 4.4ex] ($(o)+(flu)$) -- ($(o)+(fru)$);
	\draw[red][|->, shorten <= 2.3ex, shorten >= 2.3ex] ($(o)+(flu)$) -- ($(o)+(fld)$);
	\draw[red][|->, shorten <= 2.3ex, shorten >= 2.3ex] ($(o)+(fru)$) -- ($(o)+(frd)$);
	\draw[blue][|->, shorten <= 4.4ex, shorten >= 4.4ex] ($(o)+(fld)$) -- ($(o)+(frd)$);

	\draw[blue] ($(o)+0.5*(flu)+0.5*(fru)+(0.2,0.2)$) node {\footnotesize$({-}^T)^{-1}$};
	\draw[blue] ($(o)+0.5*(blu)+0.5*(bru)+(0,0.2)$) node {\footnotesize$({-}^T)^{-1}$};
	\draw[blue] ($(o)+0.5*(fld)+0.5*(frd)+(0,-0.2)$) node {\footnotesize$({-}^T)^{-1}$};
	\draw[blue] ($(o)+0.5*(bld)+0.5*(brd)+(-0.2,0.2)$) node {\footnotesize$({-}^T)^{-1}$};
	
	\draw[red] ($(o)+0.5*(flu)+0.5*(fld)+(-0.2,0)$) node {\footnotesize$\mu_{[k]}$};
	\draw[red] ($(o)+0.5*(blu)+0.5*(bld)+(-0.2,-0.2)$) node {\footnotesize$\mu_{[k]}$};
	\draw[red] ($(o)+0.5*(fru)+0.5*(frd)+(0.25,0.3)$) node {\footnotesize$\mu_{[k]}$};
	\draw[red] ($(o)+0.5*(bru)+0.5*(brd)+(0.25,0)$) node {\footnotesize$\mu_{[k]}$};
	
	\draw[cyan!80!black!100] ($(o)+0.5*(flu)+0.5*(blu)+(-0.5,0.1)$) node {\footnotesize$P^{-1}({-})P$};
	\draw[cyan!80!black!100] ($(o)+0.5*(fld)+0.5*(bld)+(0.5,-0.1)$) node {\footnotesize$P^{-1}({-})P$};
	\draw[cyan!80!black!100] ($(o)+0.5*(fru)+0.5*(bru)+(-0.5,0.1)$) node {\footnotesize$P({-})P^{-1}$};
	\draw[cyan!80!black!100] ($(o)+0.5*(frd)+0.5*(brd)+(0.5,-0.1)$) node {\footnotesize$P({-})P^{-1}$};
\end{tikzpicture}
		\end{center}
		Firstly, the left faces commute as a consequence of Lemma~\ref{lem:FrontLeftSquare} and Lemma~\ref{lem:BackLeftSquare}. Secondly, the commutativity of the top/bottom squares of both cubes are a trivial exercise in linear algebra --- for the front cube it is perhaps easier to see with the inverse map $V({-}\oplus{-}^T)V\inv$. Thirdly, the front and back faces of both cubes follow by definition, and this is compatible with the classical notion of $G$-matrix mutation due to the fact that all $c$-vectors in this paper are sign-coherent (c.f. \cite{NZ,FominZelevinskyIV}). Finally, the fact that $({-}^T)\inv$ is (self-)invertible and all other squares of both cubes commute imply that the rightmost squares of both cubes commute. Thus the front and back cubes commute, as required.
		
		We now turn our attention to the central cube of Figure~\ref{fig:Tesseract}. Namely, the following.
		\begin{center}
			\begin{tikzpicture}[scale=1.5]
	\coordinate (flu) at (0,0,0);
	\coordinate (fld) at (0,-2,0);
	\coordinate (fru) at (2,0,0);
	\coordinate (frd) at (2,-2,0);
	\coordinate (blu) at (0,0,-2);
	\coordinate (bld) at (0,-2,-2);
	\coordinate (bru) at (2,0,-2);
	\coordinate (brd) at (2,-2,-2);
	
	\coordinate (o) at (0,0,0);

	\draw ($(o)+(flu)$) node {$\bar{\bar{C}}^\gend_{t}$};
	\draw ($(o)+(fru)$) node {$\bar{\bar{G}}^\gend_{t}$};
	\draw ($(o)+(fld)$) node {$\bar{\bar{C}}^\gend_{t'}$};
	\draw ($(o)+(frd)$) node {$\bar{\bar{G}}^\gend_{t'}$};
	
	\draw ($(o)+(blu)$) node {$\vec{C}^{\Iiin}_{t}$};
	\draw ($(o)+(bru)$) node {$\vec{G}^{\Iiin}_{t}$};
	\draw ($(o)+(bld)$) node {$\vec{C}^{\Iiin}_{t'}$};
	\draw ($(o)+(brd)$) node {$\vec{G}^{\Iiin}_{t'}$};
	
	\draw[blue][|->, shorten <= 4.4ex, shorten >= 4.4ex] ($(o)+(blu)$) -- ($(o)+(bru)$);
	\draw[red][|->, shorten <= 2.3ex, shorten >= 2.3ex] ($(o)+(blu)$) -- ($(o)+(bld)$);
	\draw[red][|->, shorten <= 2.3ex, shorten >= 2.3ex] ($(o)+(bru)$) -- ($(o)+(brd)$);
	\draw[blue][|->, shorten <= 4.4ex, shorten >= 4.4ex] ($(o)+(bld)$) -- ($(o)+(brd)$);
	
	\draw [white,fill=white] ($(o)+0.6*(blu)+0.4*(bld)+(-0.1,0.1)$) rectangle ($(o)+0.6*(blu)+0.4*(bld)+(0.1,-0.05)$);
	\draw [white,fill=white]($(o)+0.4*(bld)+0.6*(brd)+(-0.05,0.1)$) rectangle ($(o)+0.4*(bld)+0.6*(brd)+(0.1,-0.05)$);
	
	\draw[green!40!black!100!][|->, shorten <= 3ex, shorten >= 3.5ex] ($(o)+(flu)$) -- ($(o)+(blu)$);
	\draw[green!40!black!100!][|->, shorten <= 3ex, shorten >= 3.5ex] ($(o)+(fld)$) -- ($(o)+(bld)$);
	\draw[green!40!black!100!][|->, shorten <= 3ex, shorten >= 3.5ex] ($(o)+(fru)$) -- ($(o)+(bru)$);
	\draw[green!40!black!100!][|->, shorten <= 3ex, shorten >= 3.5ex] ($(o)+(frd)$) -- ($(o)+(brd)$);
	
	\draw[blue][|->, shorten <= 3ex, shorten >= 3ex] ($(o)+(flu)$) -- ($(o)+(fru)$);
	\draw[red][|->, shorten <= 2.3ex, shorten >= 2.3ex] ($(o)+(flu)$) -- ($(o)+(fld)$);
	\draw[red][|->, shorten <= 2.3ex, shorten >= 2.3ex] ($(o)+(fru)$) -- ($(o)+(frd)$);
	\draw[blue][|->, shorten <= 3ex, shorten >= 3ex] ($(o)+(fld)$) -- ($(o)+(frd)$);

	\draw[blue] ($(o)+0.5*(flu)+0.5*(fru)+(0.2,0.2)$) node {\footnotesize$({-}^T)^{-1}$};
	\draw[blue] ($(o)+0.5*(blu)+0.5*(bru)+(0,0.2)$) node {\footnotesize$({-}^T)^{-1}$};
	\draw[blue] ($(o)+0.5*(fld)+0.5*(frd)+(0,0.2)$) node {\footnotesize$({-}^T)^{-1}$};
	\draw[blue] ($(o)+0.5*(bld)+0.5*(brd)+(-0.2,0.2)$) node {\footnotesize$({-}^T)^{-1}$};
	
	\draw[red] ($(o)+0.5*(flu)+0.5*(fld)+(-0.2,0)$) node {\footnotesize$\mu_{[k]}$};
	\draw[red] ($(o)+0.5*(blu)+0.5*(bld)+(-0.2,-0.2)$) node {\footnotesize$\mu_{[k]}$};
	\draw[red] ($(o)+0.5*(fru)+0.5*(frd)+(0.25,0.3)$) node {\footnotesize$\mu_{[k]}$};
	\draw[red] ($(o)+0.5*(bru)+0.5*(brd)+(0.25,0)$) node {\footnotesize$\mu_{[k]}$};
	
	\draw[green!40!black!100!] ($(o)+0.5*(flu)+0.5*(blu)+(-0.2,0.1)$) node {\footnotesize$\mathbf{d}_{\bar{\bar{F}}}$};
	\draw[green!40!black!100!] ($(o)+0.5*(fld)+0.5*(bld)+(-0.2,0.1)$) node {\footnotesize$\mathbf{d}_{\bar{\bar{F}}}$};
	\draw[green!40!black!100!] ($(o)+0.5*(fru)+0.5*(bru)+(-0.2,0.1)$) node {\footnotesize$\mathbf{d}_{\bar{\bar{F}}}$};
	\draw[green!40!black!100!] ($(o)+0.5*(frd)+0.5*(brd)+(-0.2,0.1)$) node {\footnotesize$\mathbf{d}_{\bar{\bar{F}}}$};
\end{tikzpicture}
		\end{center}
		The left square commutes by Corollary~\ref{cor:MiddleLeftSquare}. The front/back squares again follow by definition. If in addition to this, the top and bottom squares commute, then the right square commutes due to the fact that $({-}^T)\inv$ is self-inverse. Thus this is the next step of the proof.
		
		Note that for an arbitrary exchange matrix over a ring $R$ (and indexed over a set $J$) whose $C$-matrices are sign-coherent, it follows that for each $t \in \tree_{J}$, the determinant of $C_t$ is $|C_t|=\pm 1$. This follows from the fact that $C_{t_0}$ is the identity matrix, and that $C$-matrix mutation involves changing the sign of a column and/or (when sign-coherence is satisfied) adding a multiple of one column to another --- an operation that preserves determinant up to sign. Now write
		\begin{equation*}
			\vec{C}_t^{\coxI_2(2n)} =
			\begin{pmatrix}
				\vec{c}_{[0][0]} & \vec{c}_{[0][1]} \\
				\vec{c}_{[1][0]} & \vec{c}_{[1][1]}
			\end{pmatrix}
			\qquad \text{and} \qquad
			\dbar{C}_t^{\gend} =
			\begin{pmatrix}
				\dbar{C}_{[0][0]} & \dbar{C}_{[0][1]} \\
				\dbar{C}_{[1][0]} & \dbar{C}_{[1][1]}
			\end{pmatrix} =
			\begin{pmatrix}
				\rho^\gend(r_{[0][0]}) & \rho^\gend(r_{[0][1]}) \\
				\rho^\gend(r_{[1][0]}) & \rho^\gend(r_{[1][1]})
			\end{pmatrix}.
		\end{equation*}
		Since the blocks of $\dbar{C}_t^{\gend}$ commute and the determinant of all $C$-matrices is $\pm1$, a simple computation shows that we have
		\begin{equation*}
			\vec{G}_t^{\coxI_2(2n)} = \pm
			\begin{pmatrix}
				\vec{c}_{[1][1]} & -\vec{c}_{[1][0]} \\
				-\vec{c}_{[0][1]} & \vec{c}_{[0][0]}
			\end{pmatrix}
			\qquad \text{and} \qquad
			\dbar{G}_t^{\gend} = \pm
			\begin{pmatrix}
				\rho^\gend(r_{[1][1]}) & \rho^\gend(-r_{[1][0]}) \\
				\rho^\gend(-r_{[0][1]}) & \rho^\gend(r_{[0][0]}).
			\end{pmatrix}
		\end{equation*}
		From this, we can see that $\mathbf{d}_{\dbar{F}^{\gend}}(\dbar{G}_t^{\gend}) = \vec{G}_t^{\gend}$, as required. Thus, the top and bottom squares commute, and hence, the entire central cube commutes.
		
		Next comes the left and right cubes of Figure~\ref{fig:Tesseract}:
		\begin{center}
			\begin{tikzpicture}[scale=1.5]
	\coordinate (flu) at (0,0,0);
	\coordinate (fld) at (0,-2,0);
	\coordinate (fru) at (2,0,0);
	\coordinate (frd) at (2,-2,0);
	\coordinate (blu) at (0,0,-2);
	\coordinate (bld) at (0,-2,-2);
	\coordinate (bru) at (2,0,-2);
	\coordinate (brd) at (2,-2,-2);
	
	\coordinate (o) at (-2.5,0,0);

	\draw ($(o)+(blu)$) node {$C^{\Iiin}_{t}$};
	\draw ($(o)+(bru)$) node {$\vec{C}^{\Iiin}_{t}$};
	\draw ($(o)+(bld)$) node {$C^{\Iiin}_{t'}$};
	\draw ($(o)+(brd)$) node {$\vec{C}^{\Iiin}_{t'}$};
	
	\draw ($(o)+(flu)$) node {${C}^{\gend}_{t}$};
	\draw ($(o)+(fru)$) node {$\bar{\bar{C}}^\gend_{t}$};
	\draw ($(o)+(fld)$) node {${C}^{\gend}_{t'}$};
	\draw ($(o)+(frd)$) node {$\bar{\bar{C}}^\gend_{t'}$};
	
	\draw[cyan!80!black!100][<-|, shorten <= 4.4ex, shorten >= 4.4ex] ($(o)+(blu)$) -- ($(o)+(bru)$);
	\draw[red][|->, shorten <= 2.3ex, shorten >= 2.3ex] ($(o)+(blu)$) -- ($(o)+(bld)$);
	\draw[red][|->, shorten <= 2.3ex, shorten >= 2.3ex] ($(o)+(bru)$) -- ($(o)+(brd)$);
	\draw[cyan!80!black!100][<-|, shorten <= 4.4ex, shorten >= 4.4ex] ($(o)+(bld)$) -- ($(o)+(brd)$);
	
	\draw [white,fill=white] ($(o)+0.6*(blu)+0.4*(bld)+(-0.1,0.1)$) rectangle ($(o)+0.6*(blu)+0.4*(bld)+(0.1,-0.05)$);
	\draw [white,fill=white]($(o)+0.4*(bld)+0.6*(brd)+(-0.05,0.1)$) rectangle ($(o)+0.4*(bld)+0.6*(brd)+(0.1,-0.05)$);
	
	\draw[green!40!black!100!][|->, shorten <= 3ex, shorten >= 3ex] ($(o)+(flu)$) -- ($(o)+(blu)$);
	\draw[green!40!black!100!][|->, shorten <= 3ex, shorten >= 3ex] ($(o)+(fld)$) -- ($(o)+(bld)$);
	\draw[green!40!black!100!][|->, shorten <= 3ex, shorten >= 3ex] ($(o)+(fru)$) -- ($(o)+(bru)$);
	\draw[green!40!black!100!][|->, shorten <= 3ex, shorten >= 3ex] ($(o)+(frd)$) -- ($(o)+(brd)$);
	
	\draw[cyan!80!black!100][<-|, shorten <= 3ex, shorten >= 3ex] ($(o)+(flu)$) -- ($(o)+(fru)$);
	\draw[red][|->, shorten <= 2.3ex, shorten >= 2.3ex] ($(o)+(flu)$) -- ($(o)+(fld)$);
	\draw[red][|->, shorten <= 2.3ex, shorten >= 2.3ex] ($(o)+(fru)$) -- ($(o)+(frd)$);
	\draw[cyan!80!black!100][<-|, shorten <= 3ex, shorten >= 3ex] ($(o)+(fld)$) -- ($(o)+(frd)$);

	\draw[cyan!80!black!100] ($(o)+0.5*(flu)+0.5*(fru)+(0.3,0.2)$) node {\footnotesize$\Lambda^\gend({-})\Theta^\gend$};
	\draw[cyan!80!black!100] ($(o)+0.5*(blu)+0.5*(bru)+(0,0.2)$) node {\footnotesize$P^{-1}({-})P$};
	\draw[cyan!80!black!100] ($(o)+0.5*(fld)+0.5*(frd)+(0,-0.2)$) node {\footnotesize$\Lambda^\gend({-})\Theta^\gend$};
	\draw[cyan!80!black!100] ($(o)+0.5*(bld)+0.5*(brd)+(-0.2,0.2)$) node {\tiny$P^{-1}({-})P$};
	
	\draw[red] ($(o)+0.5*(flu)+0.5*(fld)+(-0.2,0)$) node {\footnotesize$\mu_{[k]}$};
	\draw[red] ($(o)+0.5*(blu)+0.5*(bld)+(-0.2,-0.2)$) node {\footnotesize$\mu_{[k]}$};
	\draw[red] ($(o)+0.5*(fru)+0.5*(frd)+(0.25,0.3)$) node {\footnotesize$\mu_{[k]}$};
	\draw[red] ($(o)+0.5*(bru)+0.5*(brd)+(0.25,0)$) node {\footnotesize$\mu_{[k]}$};
	
	\draw[green!40!black!100!] ($(o)+0.5*(flu)+0.5*(blu)+(-0.3,0.1)$) node {\footnotesize$\mathbf{d}_{F^\gend}$};
	\draw[green!40!black!100!] ($(o)+0.5*(fld)+0.5*(bld)+(0.3,-0.1)$) node {\footnotesize$\mathbf{d}_{F^\gend}$};
	\draw[green!40!black!100!] ($(o)+0.5*(fru)+0.5*(bru)+(-0.3,0.1)$) node {\footnotesize$\mathbf{d}_{\bar{\bar{F}}^\gend}$};
	\draw[green!40!black!100!] ($(o)+0.5*(frd)+0.5*(brd)+(0.3,-0.1)$) node {\footnotesize$\mathbf{d}_{\bar{\bar{F}}^\gend}$};
	
	\coordinate (o) at (2.5,0,0);

	\draw ($(o)+(bru)$) node {$G^{\Iiin}_{t}$};
	\draw ($(o)+(blu)$) node {$\vec{G}^{\Iiin}_{t}$};
	\draw ($(o)+(brd)$) node {$G^{\Iiin}_{t'}$};
	\draw ($(o)+(bld)$) node {$\vec{G}^{\Iiin}_{t'}$};
	
	\draw ($(o)+(fru)$) node {${G}^{\gend}_{t}$};
	\draw ($(o)+(flu)$) node {$\bar{\bar{G}}^\gend_{t}$};
	\draw ($(o)+(frd)$) node {${G}^{\gend}_{t'}$};
	\draw ($(o)+(fld)$) node {$\bar{\bar{G}}^\gend_{t'}$};
	
	\draw[cyan!80!black!100][|->, shorten <= 4.4ex, shorten >= 4.4ex] ($(o)+(blu)$) -- ($(o)+(bru)$);
	\draw[red][|->, shorten <= 2.3ex, shorten >= 2.3ex] ($(o)+(blu)$) -- ($(o)+(bld)$);
	\draw[red][|->, shorten <= 2.3ex, shorten >= 2.3ex] ($(o)+(bru)$) -- ($(o)+(brd)$);
	\draw[cyan!80!black!100][|->, shorten <= 4.4ex, shorten >= 4.4ex] ($(o)+(bld)$) -- ($(o)+(brd)$);
	
	\draw [white,fill=white] ($(o)+0.6*(blu)+0.4*(bld)+(-0.1,0.1)$) rectangle ($(o)+0.6*(blu)+0.4*(bld)+(0.1,-0.05)$);
	\draw [white,fill=white]($(o)+0.4*(bld)+0.6*(brd)+(-0.05,0.1)$) rectangle ($(o)+0.4*(bld)+0.6*(brd)+(0.1,-0.05)$);
	
	\draw[green!40!black!100!][|->, shorten <= 3.5ex, shorten >= 3.5ex] ($(o)+(flu)$) -- ($(o)+(blu)$);
	\draw[green!40!black!100!][|->, shorten <= 3.5ex, shorten >= 3.5ex] ($(o)+(fld)$) -- ($(o)+(bld)$);
	\draw[green!40!black!100!][|->, shorten <= 3.5ex, shorten >= 3.5ex] ($(o)+(fru)$) -- ($(o)+(bru)$);
	\draw[green!40!black!100!][|->, shorten <= 3.5ex, shorten >= 3.5ex] ($(o)+(frd)$) -- ($(o)+(brd)$);
	
	\draw[cyan!80!black!100][|->, shorten <= 3ex, shorten >= 3ex] ($(o)+(flu)$) -- ($(o)+(fru)$);
	\draw[red][|->, shorten <= 2.3ex, shorten >= 2.3ex] ($(o)+(flu)$) -- ($(o)+(fld)$);
	\draw[red][|->, shorten <= 2.3ex, shorten >= 2.3ex] ($(o)+(fru)$) -- ($(o)+(frd)$);
	\draw[cyan!80!black!100][|->, shorten <= 3ex, shorten >= 3ex] ($(o)+(fld)$) -- ($(o)+(frd)$);

	\draw[cyan!80!black!100] ($(o)+0.5*(flu)+0.5*(fru)+(0.3,0.2)$) node {\footnotesize$\Lambda^\gend({-})\Theta^\gend$};
	\draw[cyan!80!black!100] ($(o)+0.5*(blu)+0.5*(bru)+(0,0.2)$) node {\footnotesize$P({-})P^{-1}$};
	\draw[cyan!80!black!100] ($(o)+0.5*(fld)+0.5*(frd)+(0,-0.2)$) node {\footnotesize$\Lambda^\gend({-})\Theta^\gend$};
	\draw[cyan!80!black!100] ($(o)+0.5*(bld)+0.5*(brd)+(-0.2,0.2)$) node {\tiny$P({-})P^{-1}$};
	
	\draw[red] ($(o)+0.5*(flu)+0.5*(fld)+(-0.2,0)$) node {\footnotesize$\mu_{[k]}$};
	\draw[red] ($(o)+0.5*(blu)+0.5*(bld)+(-0.2,-0.2)$) node {\footnotesize$\mu_{[k]}$};
	\draw[red] ($(o)+0.5*(fru)+0.5*(frd)+(0.25,0.3)$) node {\footnotesize$\mu_{[k]}$};
	\draw[red] ($(o)+0.5*(bru)+0.5*(brd)+(0.25,0)$) node {\footnotesize$\mu_{[k]}$};
	
	\draw[green!40!black!100!] ($(o)+0.5*(flu)+0.5*(blu)+(-0.3,0.1)$) node {\footnotesize$\mathbf{d}_{\bar{\bar{F}}^\gend}$};
	\draw[green!40!black!100!] ($(o)+0.5*(fld)+0.5*(bld)+(0.3,-0.1)$) node {\footnotesize$\mathbf{d}_{\bar{\bar{F}}^\gend}$};
	\draw[green!40!black!100!] ($(o)+0.5*(fru)+0.5*(bru)+(-0.3,0.1)$) node {\footnotesize$\mathbf{d}^T_{F^\gend}$};
	\draw[green!40!black!100!] ($(o)+0.5*(frd)+0.5*(brd)+(0.3,-0.1)$) node {\footnotesize$\mathbf{d}^T_{F^\gend}$};
\end{tikzpicture}
		\end{center}
		The commutativity of the left cube is a consequence of Corollary~\ref{cor:MiddleLeftSquare} and the relation (\ref{eq:LeftCubeTopSq}) in its proof. For the right cube, we have already proven that the left, front and back faces commute. The proof of the commutativity of the top and bottom faces of the right cube follows a similar argument to the proof of Corollary~\ref{cor:MiddleLeftSquare}(b). Namely, we note that
		\begin{align*}
			\mathbf{d}^T_{F^\gend}(\Lambda^\gend \dbar{G}^\gend_t \Theta^\gend) &= \pm
			\begin{pmatrix}
				\frac{1}{\mw([0])} \hrhom{\gend}(s_{[0]}r_{[1][1]}) & 
				-\frac{1}{\mw([1])} \hrhom{\gend}(s_{[0]}r_{[1][0]}) \\ 
				-\frac{1}{\mw([0])} \hrhom{\gend}(s_{[1]}r_{[0][1]}) & 
				\frac{1}{\mw([1])} \hrhom{\gend}(s_{[1]}r_{[0][0]})
			\end{pmatrix}
			\\ &= \pm
			\begin{pmatrix}
				\frac{\hrhom{\gend}(s_{[0]})}{\mw([0])} \hrhom{\gend}(r_{[1][1]}) & 
				\frac{-\hrhom{\gend}(s_{[0]})}{\mw([1])} \hrhom{\gend}(r_{[1][0]}) \\ 
				\frac{-\hrhom{\gend}(s_{[1]})}{\mw([0])} \hrhom{\gend}(r_{[0][1]}) & 
				\frac{\hrhom{\gend}(s_{[1]})}{\mw([1])} \hrhom{\gend}(r_{[0][0]})
			\end{pmatrix}
			\\ &= \pm
			\begin{pmatrix}
				\hrhom{\gend}(s_{[0]}) & 0 \\
				0 & \hrhom{\gend}(s_{[1]}) 
			\end{pmatrix}
			\begin{pmatrix}
				\vec{c}_{[1][1]} & -\vec{c}_{[1][0]} \\
				-\vec{c}_{[0][1]} & \vec{c}_{[0][0]}
			\end{pmatrix}
			\begin{pmatrix}
				\mw([0]) & 0 \\
				0 & \mw([1])
			\end{pmatrix}\inv
			\\ &= 
			\begin{pmatrix}
				\hrhom{\gend}(s_{[0]}) & 0 \\
				0 & \hrhom{\gend}(s_{[1]}) 
			\end{pmatrix}
			\mathbf{d}_{\dbar{F}^{\gend}}(\dbar{G}_t^{\gend})
			P\inv
		\end{align*}
		where each $s_{[i]}$ is as in the proof of Corollary~\ref{cor:MiddleLeftSquare}(b). But then $\hrhom{\gend}(s_{[i]}) = \mw([i])$ for both $i \in \{0,1\}$, and hence the leftmost matrix in the above product is precisely the rescaling matrix $P$. Thus, the top and bottom squares commute. Since $\Lambda^\gend({-})\Theta^\gend$ is invertible, the right face of the right cube is also commutative, as required. Thus, we have just shown that both the left and right cubes commute.
		
		The last thing to check are the top and bottom cubes. But for both of these remaining cubes, we have proved the commutativity of all but the topmost and bottommost faces respectively. Since all maps have an inverse, these squares are also commutative. Hence the entire tesseract of Figure~\ref{fig:Tesseract} is commutative.
	\end{proof}
        
	Using Theorem~\ref{thm:Tesseract} and the results of \cite{DehyKeller,Plamondon}, one can obtain $g$-vectors of $\Iiin$ directly from the semiring action on the cluster category $\clus_\gend$ of $Q^\gend$. Let $\mathcal{P} \subset \clus_\gend$ be the full subcategory whose objects are the classes that correspond to the complexes of projective objects of $\mod* KQ^\gend$ concentrated in degree 0. Then for any object $X \in \clus_\gend$, we know from \cite{KellerReiten} that there exists a triangle
	\begin{equation*}
		P' \rightarrow P \rightarrow X \rightarrow \sus P'
	\end{equation*}
	with $P,P' \in \mathcal{P}$. In our setting, we are particularly interested in objects of $\clus_\gend$ that reside in rows of the Auslander-Reiten quiver with weight $1$. That is, rows that contain an object corresponding to an indecomposable projective $P(i)$ with $\vw(i)=\mw(F^\gend(i))$. Given a choice of indecomposable objects $P_{0}\cong P(i_0)$ and $P_{1} \cong P(i_1)$ in $\mathcal{P}$ with $F^\gend(i_j)=[j]$ and $\vw(i_j) = \mw([j])$, and given an indecomposable object $X \in \clus_\gend$ in a row of the Auslander-Reiten quiver with weight $1$, we have a triangle
	\begin{equation*}
		r'_{0} P_{0} \oplus r'_{1} P_{1} \rightarrow r_{0} P_{0} \oplus r_{1} P_{1} \rightarrow X \rightarrow \sus (r'_{0} P_{0} \oplus r'_{1} P_{1})
	\end{equation*}
	for some $r_{0},r'_{0},r_{1},r'_{1} \in \chebsr{\gend}$.
	
	\begin{defn} \label{defn:CatGVect}
		Let $X \in \clus_\gend$ be an indecomposable object in a row of the Auslander-Reiten quiver with weight $1$. We call the vector
		\begin{equation*}
			g^X = (\rhom{\gend}(r_{1} - r'_{1}), \rhom{\gend}(r_{0} - r'_{0}))
		\end{equation*}
		the \emph{folded $g$-vector} of $X$ with respect to $P_{0}, P_{1} \in \mathcal{P}$, where $P_{0}$, $P_{1}$, $r_{0}$, $r'_{0}$, $r_{1}$ and $r'_{1}$ are as above.
	\end{defn}
	
	An immediate consequence of Theorem~\ref{thm:Tesseract} and known results on the categorification of $g$-vectors (c.f. \cite{DehyKeller,Plamondon}) is the following.
	
	\begin{cor} \label{cor:GVectors}
		Let $\mathbf{X}$ be the collection of all indecomposable objects of $\clus_\gend$ that reside in rows of the Auslander-Reiten quiver with weight $1$.
		\begin{enumerate}[label=(\alph*)]
			\item The folded $g$-vectors of the objects in $\mathbf{X}$ are precisely the $g$-vectors of $Q^{\Iiin}$.
			\item Basic $\chebsr{\gend}$-tilting objects (with respect to a basic, $G$-minimal set of $\chebsr{\gend}$-generators $\Gamma$) correspond to $G$-matrices of $Q^{\Iiin}$. In particular, let $T= Y_0 \oplus Y_1$ be basic $\chebsr{\gend}$-tilting with respect to $\Gamma$ such that $Y_i$ resides in a row that corresponds to $[i] \in Q_0^{\Iiin}$. Then we have the following.
			\begin{enumerate}[label=(\roman*)]
				\item If $X_0, X_1 \in\mathbf{X}$ are respectively $\chebsr{\gend}$-generated by  $Y_0$ and $Y_1$, then
				\begin{equation*}
					G^{X_1,X_0}=(g^{X_1},g^{X_0})
				\end{equation*}
				is a $G$-matrix of $Q^{\Iiin}$.
				\item Let $T\ps{1} = Y_0 \oplus Y'_1$ be given by changing complement and let $X'_1 \in \mathbf{X}$ be an object $\chebsr{\gend}$-generated by $Y'_1$. Then the $G$-matrix
				\begin{equation*}
					G^{X'_1,X_0}=(g^{X'_1},g^{X_0})
				\end{equation*}
				is obtained by mutating $G^{X_1,X_0}$ with respect to the index $[0] \in Q^\Iiin$.
				\item Let $T\ps{0} = Y'_0 \oplus Y_1$ be given by changing complement and let $X'_0 \in \mathbf{X}$ be an object $\chebsr{\gend}$-generated by $Y'_0$. Then the $G$-matrix
				\begin{equation*}
					G^{X_1,X'_0}=(g^{X_1},g^{X'_0})
				\end{equation*}
				is obtained by mutating $G^{X_1,X_0}$ with respect to the index $[1] \in Q^\Iiin$.
			\end{enumerate}
		\end{enumerate}
	\end{cor}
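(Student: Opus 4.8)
The plan is to read off both statements from the commutativity of the tesseract in Theorem~\ref{thm:Tesseract}, once the folded $g$-vector of Definition~\ref{defn:CatGVect} has been identified with the matrix-transpose $F$-projection of an honest $g$-vector of the unfolded quiver $Q^\gend$. The categorification of $g$-vectors for the hereditary cluster category $\clus_\gend$ in \cite{DehyKeller,Plamondon} supplies the unfolded input: for the canonical cluster-tilting object $\bigoplus_{i \in Q_0^\gend} P(i)$, the index of an object $X$ computed from a triangle $P'\to P\to X\to\sus P'$ with $P,P'\in\mathcal{P}$ is a $g$-vector of $Q^\gend$, and as $t$ ranges over $\tree_{Q_0^\gend}$ these indices are exactly the columns of the $G$-matrices $G^\gend_t$.

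For part (a) the crux is the identification $g^X=\mathbf{d}^T_{F^\gend}\big([P]-[P']\big)$. Writing the projective presentation of $X$ as the triangle of Definition~\ref{defn:CatGVect}, the terms $r_0P_0\oplus r_1P_1$ and $r'_0P_0\oplus r'_1P_1$ record the same projective data as $P$ and $P'$, after grouping the indecomposable projectives according to the fibres of $F^\gend$. Applying $\rhom{\gend}$ and using both $\rhom{\gend}(\croot^\bullet_k)=U_k(\cos\tfrac{\pi}{2n})$, the weight identity $\vw\,\vartheta^\gend=\hrhom{\gend}$, and the weight-one normalisation $\vw(i_j)=\mw([j])$, one checks that $\rhom{\gend}(r_j-r'_j)=\sum_{F^\gend(k)=[j]}\tfrac{\vw(k)}{\mw([j])}\,\big([P]-[P']\big)_k$, which is the $[j]$-component of $d_{F^\gend}$ applied to the unfolded $g$-vector. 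Comparing with Definition~\ref{defn:MProj} then identifies $g^X$ with a column of $\mathbf{d}^T_{F^\gend}(G^\gend_t)$, the coordinate swap built into Definition~\ref{defn:CatGVect} accounting precisely for the transpose in $\mathbf{d}^T_{F^\gend}$. By Theorem~\ref{thm:Tesseract} this column lies in $G^{\Iiin}_t$, so it is a $g$-vector of $Q^{\Iiin}$; that every $g$-vector of $Q^{\Iiin}$ arises in this way follows from the invertibility of the maps in the tesseract together with Corollary~\ref{cor:GenCorrespondence}.

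For part (b) I would invoke the tilting dictionary already in place. By Theorem~\ref{thm:RPTilting}, a basic $\chebsr{\gend}$-tilting object $T=Y_0\oplus Y_1$ corresponds to a basic cluster-tilting object $\wh T\in\clus_\gend$, hence to a seed and so to a vertex $t\in\tree_{Q_0^{\Iiin}}$ under the unfolding of tree vertices. By part (a) the two folded $g$-vectors $g^{X_0},g^{X_1}$ attached to objects $\chebsr{\gend}$-generated by $Y_0,Y_1$ are the columns of $G^{\Iiin}_t$, so $G^{X_1,X_0}=(g^{X_1},g^{X_0})$ is the $G$-matrix at $t$; the column order reproduces the rank-two shadow of the tropical duality $G_t=(C_t^T)\inv$ of \cite{NZ}. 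For (ii) and (iii), Corollary~\ref{cor:Complements} guarantees that each almost-complete $\chebsr{\gend}$-tilting object has exactly two complements and that exchanging the complement is a single mutation of $Q^{\Iiin}$; transporting this mutation through the dictionary and the explicit $G$-matrix mutation rule of \cite[(6.12)--(6.13)]{FominZelevinskyIV} yields the stated formulae, with exchange of the $[1]$-complement mutating $G^{X_1,X_0}$ at index $[0]$ and conversely, exactly as forced by $G_t=(C_t^T)\inv$.

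The main obstacle will be the middle identification in part (a): turning the formal $R_+$-generation triangle into an honest projective presentation lying in $\mathcal{P}$, and verifying the weighted-sum formula for $\rhom{\gend}(r_j-r'_j)$ uniformly over the five families of $\gend$. This is where the normalisation $\vw(i_j)=\mw([j])$ and the homomorphism $\rhom{\gend}$ must be made to cooperate, and where the sign subtleties of Remark~\ref{rem:RepSigns}(b) in type $\coxE_7$ must be kept in mind; once this identification is in hand, both parts are formal consequences of the already-established commuting tesseract.
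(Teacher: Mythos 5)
Your proposal is correct and follows essentially the same route as the paper, which states this corollary without a written proof, declaring it an immediate consequence of Theorem~\ref{thm:Tesseract} together with the categorification of $g$-vectors in \cite{DehyKeller,Plamondon}; your argument simply makes explicit the identification of the folded $g$-vector with $\mathbf{d}^T_{F^\gend}$ applied to the unfolded index, and the transport of mutation through Theorem~\ref{thm:RPTilting} and Corollary~\ref{cor:Complements}, which is exactly the intended reading.
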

	
	\appendix
	\section{Worked examples} \label{sec:Examples}
	Many of the constructions in this paper are combinatorial and seemingly technical in nature. However, it is not too difficult to construct examples of how the theory works in practice. We will show two examples: One example highlights the theory for foldings of type $\coxA$ and the other will highlight the theory for foldings of type $\coxD$. The three exceptional foldings of type $\coxE$ work along a similar principle to the other two examples.
	
	\subsection{The folding $F^{\coxA_7}$} \label{sec:AExample}
	Consider the example given by $F^{\coxA_7}\colon Q^{\coxA_7} \rightarrow Q^{\coxI_2(8)}$. We already have an example of the projection maps $\dimproj_{F^{\coxA_7}}^{\acat}$ with $\acat = \mod*KQ^{\coxA_7}$ or $\acat = \bder(\mod*KQ^{\coxA_7})$ with Figure~\ref{fig:A7-I8}. From the figure, it is straightforward to see the implications of Theorem~\ref{thm:FoldingProjection} and Corollaries~\ref{cor:RowBijection} and \ref{cor:RowWeights}. In particular, we have $\dimproj_{F^{\coxA_7}}^{\acat}(0^\pm) = (1,0)$ and $\dimproj_{F^{\coxA_7}}^{\acat}(1^\pm)=(0,1)$, where the vectors are given with respect to the standard root system of $\Iiin$. We will thus focus on the impact of Sections~\ref{sec:Action}-\ref{sec:Tropical} on this example.
	
	The category $\acat$ has an action of $\achebsr{7} = \nnint[\croot_2,\croot_4,\croot_6]$. Here, the element $\croot_6$ represents the $\integer_2$-symmetry on the quiver, and thus, $\croot_4 = \croot_2\croot_6$. The isomorphism condition (\ref{eq:AIso}) in Section~\ref{sec:Action-isoclasses} shows that the Auslander-Reiten sequence/triangle
	\begin{equation*}
		\begin{smallmatrix} 2^+ \\ 1^+  \end{smallmatrix}
		\rightarrow 
		\begin{smallmatrix} 0^+ \ 2^+ \\ 1^+ \end{smallmatrix} 
		\rightarrow 0^+ 
		\rightarrow
	\end{equation*}
	maps to sequences/triangles that are isomorphic to the following Auslander-Reiten sequences/triangles under the following actions.
	\begin{align*}
		&\croot_2\colon &
		&\begin{smallmatrix} 0^+ \  2^+ \  2^- \\ 1^+ \  3 \ \end{smallmatrix}
		\rightarrow 
		\begin{smallmatrix} 0^+ \ 2^+ \\ 1^+ \end{smallmatrix} \oplus \begin{smallmatrix} 2^+ \ 2^- \\ 3 \ \end{smallmatrix}
		\rightarrow 2^+ 
		\rightarrow
		\\
		&\croot_4\colon &
		&\begin{smallmatrix} 0^- \  2^- \  2^+ \\ 1^- \  3 \ \end{smallmatrix}
		\rightarrow 
		\begin{smallmatrix} 0^- \ 2^- \\ 1^- \end{smallmatrix} \oplus \begin{smallmatrix} 2^- \ 2^+ \\ 3 \ \end{smallmatrix}
		\rightarrow 2^-
		\rightarrow
		\\
		&\croot_6\colon &
		&\begin{smallmatrix} 2^- \\ 1^-  \end{smallmatrix}
		\rightarrow 
		\begin{smallmatrix} 0^- \ 2^- \\ 1^- \end{smallmatrix} 
		\rightarrow 0^- 
		\rightarrow
	\end{align*}
	The middle term in the first sequence reflects the relation $\croot_2\croot_1 = \croot_1 + \croot_3 \in \hachebsr{7}$ and the middle term in the second sequence reflects the relation $\croot_4\croot_1 = \croot_3 + \croot_5 \in \hachebsr{7}$. One also notes from this that the original Auslander-Reiten sequence ends in an object whose $F^{\coxA_7}$-projected dimension vector is of length $1$, and that under the action of $\croot_i$, this maps to an Auslander-Reiten sequence ending in an object whose $F^{\coxA_7}$-projected dimension vector is of length $\hrhom{\coxA_7}(\croot_i)$. Similarly, the Auslander-Reiten sequence/triangle
	\begin{equation*}
		\begin{smallmatrix} \ \ \; 2^+ \ 2^- \\ 1^+ \ 3 \ \ \; \end{smallmatrix}
		\rightarrow 
		\begin{smallmatrix} 2^+ \\ 1^+  \end{smallmatrix} \oplus \begin{smallmatrix} 0^+ \  2^+ \  2^- \\ 1^+ \  3 \ \end{smallmatrix}
		\rightarrow \begin{smallmatrix} 0^+ \ 2^+ \\ 1^+ \end{smallmatrix} 
		\rightarrow
	\end{equation*}
	maps to sequences/triangles that are isomorphic to the following direct sums of Auslander-Reiten sequences/triangles under the following actions.
	\begin{align*} \footnotesize
		&\croot_2\colon  
		\begin{smallmatrix} \ \ \; 2^+ \ 2^- \\ 1^+ \ 3 \ \ \; \end{smallmatrix} \oplus \begin{smallmatrix} 0^+ \ 2^+ \ 2^- \ 0^- \\ 1^+ \ 3 \ \ 1^- \end{smallmatrix}
		\rightarrow 
		\left(\begin{smallmatrix} 2^+ \\ 1^+  \end{smallmatrix} \oplus \begin{smallmatrix} 0^+ \  2^+ \  2^- \\ 1^+ \  3 \ \end{smallmatrix}\right) \oplus \left(\begin{smallmatrix} 0^+ \  2^+ \  2^- \\ 1^+ \  3 \ \end{smallmatrix} \oplus \begin{smallmatrix} 0^- \  2^- \  2^+ \\ 1^- \  3 \ \end{smallmatrix}\right)
		\rightarrow \begin{smallmatrix} 0^+ \ 2^+ \\ 1^+ \end{smallmatrix} \oplus \begin{smallmatrix} 2^+ \ 2^- \\ 3 \ \end{smallmatrix}
		\\
		&\croot_4\colon 
		\begin{smallmatrix} \ \ \; 2^- \ 2^+ \\ 1^- \ 3 \ \ \; \end{smallmatrix} \oplus \begin{smallmatrix} 0^- \ 2^- \ 2^+ \ 0^+ \\ 1^- \ 3 \ \ 1^+ \end{smallmatrix}
		\rightarrow 
		\left(\begin{smallmatrix} 2^- \\ 1^-  \end{smallmatrix} \oplus \begin{smallmatrix} 0^- \  2^- \  2^+ \\ 1^- \  3 \ \end{smallmatrix}\right) \oplus \left(\begin{smallmatrix} 0^- \  2^- \  2^+ \\ 1^- \  3 \ \end{smallmatrix} \oplus \begin{smallmatrix} 0^+ \  2^+ \  2^- \\ 1^+ \  3 \ \end{smallmatrix}\right)
		\rightarrow \begin{smallmatrix} 0^- \ 2^- \\ 1^- \end{smallmatrix} \oplus \begin{smallmatrix} 2^- \ 2^+ \\ 3 \ \end{smallmatrix}
		\\
		&\croot_6\colon 
		\begin{smallmatrix} \ \ \; 2^- \ 2^+ \\ 1^- \ 3 \ \ \; \end{smallmatrix}
		\rightarrow 
		\begin{smallmatrix} 2^- \\ 1^-  \end{smallmatrix} \oplus \begin{smallmatrix} 0^- \  2^+ \  2^+ \\ 1^- \  3 \ \end{smallmatrix}
		\rightarrow \begin{smallmatrix} 0^- \ 2^- \\ 1^- \end{smallmatrix} 
		\rightarrow
	\end{align*}
	This highlights multiple relations at play simultaneously. For example, we have the relations $\croot_2(1 + \croot_2) = 1+ 2\croot_2 + \croot_4$ and $\croot_4(1 + \croot_2) = \croot_2 + 2\croot_4 + \croot_6$ reflecting the action on the objects of the middle terms of the first two sequences. Moreover, we have the relations $\croot_2\croot_1 = \croot_1 + \croot_3$ and $\croot_4\croot_1 = \croot_3 + \croot_5$ reflecting the action on the starting/ending terms of the first two sequences.
	
	The action on the Auslander-Reiten translate of the above sequences is given in the natural way by the fact that the action of $r \in \achebsr{7}$ commutes with $\tau$. A key upshot of this is that every Auslander-Reiten sequence in $\acat$ can be labelled by an element $\croot_i \in \hachebsr{7}$ determined by the starting or ending term (which in turn, can be determined by the row in the Auslander-Reiten quiver that the starting/ending object resides in). If $\croot_j\croot_i = \sum_{k\in V_{ji}} \croot_k$ for some index set  $V_{ji}$, then $\croot_j$ maps an Auslander-Reiten sequence labelled by $\croot_i$ to a sequence that is isomorphic to a direct sum of Auslander-Reiten sequences labelled by the summands $\croot_k$ (whose ending terms all reside in the same column).
	
	To showcase the theory of $\achebsr{7}$-generators on categories, we will focus on $\mod*KQ^{\coxA_7}$. Consider the sets
	\begin{equation*}
		\Gamma_{0^\pm} = \{
		0^\pm,
		\begin{smallmatrix} 2^\pm \\ 1^\pm  \end{smallmatrix},
		\begin{smallmatrix} 2^\mp \\ 3  \end{smallmatrix},
		\begin{smallmatrix} 0^\mp \\  1^\mp \end{smallmatrix}
		\}
		\qquad
		\text{and}
		\qquad
		\Gamma_{1^\pm} = \{
		\begin{smallmatrix} 0^\pm \ 2^\pm \\ 1^\pm \end{smallmatrix},
		\begin{smallmatrix} \ \ \; 2^\pm \ 2^\mp \\ 1^\pm \ 3 \ \ \;  \end{smallmatrix},
		\begin{smallmatrix} \ \ 2^\mp \ 0^\mp \\ 3 \ 1^\mp \ \ \end{smallmatrix},
		1^\mp
		\}
	\end{equation*}
	By Theorem~\ref{thm:RPGenerators}(a), any set $\Gamma_{0^s,1^{s'}}=\Gamma_{0^s} \cup \Gamma_{1^{s'}}$ is a basic, $\tau$-closed, $\integer_2$-minimal set of $\achebsr{7}$-generators of $\mod*KQ^{\coxA_7}$, where $s,s' \in \{+,-\}$. Note that the elements of $\Gamma_{0^s,1^{s'}}$ bijectively correspond to the columns of the Auslander-Reiten quiver, which in turn, bijectively correspond to the positive roots of $\coxI_2(8)$ (this is Corollary~\ref{cor:GenCorrespondence}). The object $\begin{smallmatrix} 2^+ \ 2^- \\ 3 \ \end{smallmatrix}$, for example, is $\achebsr{7}$-generated by the object $\begin{smallmatrix} 0^+ \ 2^+ \\ 1^+ \end{smallmatrix}$ with $\achebr{7}$-index $\croot_2-1$, or equivalently, with the $\achebsr{7}$-generating pair $(\croot_2,1)$ (this is Definition~\ref{defn:RPGeneration}). That is, we have an isomorphism of split exact sequences
	\begin{equation*}
		\xymatrix{
			0
				\ar[r]
			& {\begin{smallmatrix} 0^+ \ 2^+ \\ 1^+ \end{smallmatrix}}
				\ar[r]^-f
				\ar[d]^-\sim
			& {\croot_2\left(\begin{smallmatrix} 0^+ \ 2^+ \\ 1^+ \end{smallmatrix}\right)}
				\ar[r]
				\ar[d]^-\sim
			& \Coker f
				\ar[r]
				\ar[d]^-\sim
			& 0
			\\
			0
				\ar[r]
			& {\begin{smallmatrix} 0^+ \ 2^+ \\ 1^+ \end{smallmatrix}}
				\ar[r]
			&  {\begin{smallmatrix} 0^+ \ 2^+ \\ 1^+ \end{smallmatrix} \oplus \begin{smallmatrix} 2^+ \ 2^- \\ 3 \ \end{smallmatrix}}
 				\ar[r]
			& {\begin{smallmatrix} 2^+ \ 2^- \\ 3 \ \end{smallmatrix}}
				\ar[r]
			& 0
		}
	\end{equation*}
	By Remark~\ref{rem:GrothendieckModule}, the Grothendieck group $K_0(\mod*KQ^{\coxA_7})$ has the structure of a $\achebr{7}$-module.
	
	By Section~\ref{sec:ClusterTilting}, the action of $\achebsr{7}$ naturally extends to the cluster category $\clus_{\coxA_7}$. We will choose our basic, $\integer_2$ set of $\achebsr{7}$-generators of $\clus_{\coxA_7}$ to be
	\begin{equation*}
		\Gamma = \{
		\sus \begin{smallmatrix} 0^- \\  1^- \end{smallmatrix},
		0^+,
		\begin{smallmatrix} 2^+ \\ 1^+  \end{smallmatrix},
		\begin{smallmatrix} 2^- \\ 3  \end{smallmatrix},
		\begin{smallmatrix} 0^- \\  1^- \end{smallmatrix}
		\} \cup \{
		\sus 1^-,
		\begin{smallmatrix} 0^+ \ 2^+ \\ 1^+ \end{smallmatrix},
		\begin{smallmatrix} \ \ \; 2^+ \ 2^- \\ 1^+ \ 3 \ \ \;  \end{smallmatrix},
		\begin{smallmatrix} \ \ 2^- \ 0^- \\ 3 \ 1^- \ \ \end{smallmatrix},
		1^-
		\}
	\end{equation*}
	With respect to $\Gamma$, the object $\begin{smallmatrix} 0^- \\  1^- \end{smallmatrix}$ is almost complete $\achebsr{7}$-tilting with complements $\begin{smallmatrix} \ \ 2^- \ 0^- \\ 3 \ 1^- \ \ \end{smallmatrix}$ and $1^-$. Note that these complements reside in adjacent columns of the Auslander-Reiten quiver to $\begin{smallmatrix} 0^- \\  1^- \end{smallmatrix}$, as per Theorem~\ref{thm:RPTilting}. Thus for example, the object $T= \begin{smallmatrix} 0^- \\  1^- \end{smallmatrix} \oplus 1^-$ is basic $\achebsr{7}$-tilting with respect to $\Gamma$, and the act of changing complement corresponds to mutating the folded quiver $\coxI_2(8)$ at a vertex. Note also that by Theorem~\ref{thm:RPTilting}, $T$ corresponds to a cluster-tilting object
	\begin{equation*}
		\wh{T} = \left(\begin{smallmatrix} 0^- \\  1^- \end{smallmatrix} \oplus \begin{smallmatrix} 2^- \\  1^- \ 3 \end{smallmatrix} \oplus \begin{smallmatrix} 2^+ \\  1^+ \ 3 \end{smallmatrix} \oplus \begin{smallmatrix} 0^+ \\  1^+ \end{smallmatrix}\right) \oplus \left(1^- \oplus 3 \oplus 1^+\right)
	\end{equation*}
	and mutation/changing complement in $T$ corresponds to composite mutation/iteratively changing complement for each object in a bracket in $\wh{T}$.
	
	Now we shall compute the folded $g$-vectors in $\clus_{\coxA_7}$, as per Definition~\ref{defn:CatGVect}. The set $\Gamma$ consists of all objects in rows of weight 1 (up to the group action of $\croot_6$), so we will focus on computing the folded $g$-vectors of these objects --- it is not difficult to check that the folded $g$-vectors $g^X$ and $g^{\croot_6 X}$ are in fact the same. We will compute the $g$-vectors with respect to the projectives $\begin{smallmatrix} 0^- \\  1^- \end{smallmatrix}$ and $1^-$. We thus have the following triangles and corresponding folded $g$-vectors, where for readability purposes, we have denoted the zero object by $\mathbf{0}$ and the identity element $1 \in \achebsr{7}$ by $\croot_0$ (or omitted it completely if the action on an object is trivial).
	\begin{align*}
		\mathbf{0} \rightarrow 1^- \rightarrow 1^- \rightarrow \mathbf{0}& & &\rightsquigarrow &
		g^{1^-} &= (1,0), \\
		\mathbf{0} \rightarrow \begin{smallmatrix} 0^- \\  1^- \end{smallmatrix} \rightarrow \begin{smallmatrix} 0^- \\  1^- \end{smallmatrix} \rightarrow \mathbf{0}& &
		&\rightsquigarrow & 
		g^{\begin{smallmatrix} 0^- \\  1^- \end{smallmatrix}} &= (0,1), \\
		1^- \rightarrow (\croot_0+\croot_2) \begin{smallmatrix} 0^- \\  1^- \end{smallmatrix} \rightarrow \begin{smallmatrix} \ \ 2^- \ 0^- \\ 3 \ 1^- \ \ \end{smallmatrix} \rightarrow \sus 1^-& &
		&\rightsquigarrow & 
		g^{\begin{smallmatrix} \ \ 2^- \ 0^- \\ 3 \ 1^- \ \ \end{smallmatrix}} &= (-1,2+\sqrt{2}), \\
		1^- \rightarrow \croot_2 \begin{smallmatrix} 0^- \\  1^- \end{smallmatrix} \rightarrow \begin{smallmatrix} 2^- \\ 3  \end{smallmatrix} \rightarrow \sus 1^-&
		& &\rightsquigarrow & 
		g^{\begin{smallmatrix} 2^- \\ 3  \end{smallmatrix}} &= (-1,1+\sqrt{2}), \\
		\croot_2 1^- \rightarrow (\croot_2 + \croot_4) \begin{smallmatrix} 0^- \\  1^- \end{smallmatrix} \rightarrow \begin{smallmatrix} \ \ \; 2^+ \ 2^- \\ 1^+ \ 3 \ \ \;  \end{smallmatrix} \rightarrow \sus \croot_2 1^-&
		& &\rightsquigarrow & 
		g^{\begin{smallmatrix} \ \ \; 2^+ \ 2^- \\ 1^+ \ 3 \ \ \;  \end{smallmatrix}} &= (-1-\sqrt{2},2+2\sqrt{2}), \\
		\croot_2 1^- \rightarrow 1^- \oplus \croot_4 \begin{smallmatrix} 0^- \\  1^- \end{smallmatrix} \rightarrow \begin{smallmatrix} 2^+ \\ 1^+  \end{smallmatrix} \rightarrow \sus \croot_2 1^-&
		& &\rightsquigarrow & 
		g^{\begin{smallmatrix} 2^+ \\ 1^+  \end{smallmatrix}} &= (-\sqrt{2},1+\sqrt{2}), \\
		\croot_4 1^- \rightarrow (\croot_4+\croot_6) \begin{smallmatrix} 0^- \\  1^- \end{smallmatrix} \rightarrow \begin{smallmatrix} 0^+ \ 2^+ \\ 1^+ \end{smallmatrix} \rightarrow \sus\croot_4 1^-&
		& &\rightsquigarrow & 
		g^{\begin{smallmatrix} 0^+ \ 2^+ \\ 1^+  \end{smallmatrix}} &= (-1-\sqrt{2},2+\sqrt{2}), \\
		\croot_6 1^- \rightarrow \croot_6 \begin{smallmatrix} 0^- \\  1^- \end{smallmatrix} \rightarrow 0^+ \rightarrow \sus \croot_6 1^-&
		& &\rightsquigarrow & 
		g^{0^+} &= (-1,1), \\
		1^- \rightarrow \mathbf{0} \rightarrow \sus 1^- \rightarrow \sus 1^-&
		& &\rightsquigarrow & 
		g^{\sus 1^-} &= (-1,0), \\
		\begin{smallmatrix} 0^- \\  1^- \end{smallmatrix} \rightarrow \mathbf{0} \rightarrow \sus \begin{smallmatrix} 0^- \\  1^- \end{smallmatrix} \rightarrow \sus \begin{smallmatrix} 0^- \\  1^- \end{smallmatrix}&
		& &\rightsquigarrow & 
		g^{\sus \begin{smallmatrix} 0^- \\  1^- \end{smallmatrix}} &= (0,-1).
	\end{align*}
	In the above, we have used the fact that $\rhom{\coxA_7}(\croot_2)=\rhom{\coxA_7}(\croot_4)=U_2(\cos\frac{\pi}{8})=1 + \sqrt{2}$ and $\rhom{\coxA_7}(\croot_0)=\rhom{\coxA_7}(\croot_6)=1$. As per Corollary~\ref{cor:GVectors}, these are $g$-vectors of the standard root system of type $\coxI_2(8)$, and for each basic $\achebsr{7}$-tilting object $T = X_0 \oplus X_1$, we obtain a $G$-matrix of $\coxI_2(8)$ by $G^{X_1,X_0}=(g^{X_1},g^{X_0})$. For example, the $\achebsr{7}$-tilting object $\begin{smallmatrix} 0^- \\  1^- \end{smallmatrix} \oplus 1^-$ corresponds to the initial $G$-matrix. Using Theorem~\ref{thm:Tesseract}, we can mutate the initial $G$-matrix $G^{1^-,\begin{smallmatrix} 0^- \\  1^- \end{smallmatrix}}$ at index $[0]$ to obtain
	\begin{equation*}
		\xymatrix{ 
		{\begin{pmatrix}
			1 & 0 \\ 0 & 1
		\end{pmatrix}}
		\ar@{<-|}[r]^-{({-}^T)\inv}
		\ar@{|->}[d]^-{\mu_{[0]}} &
		{\begin{pmatrix}
			1 & 0 \\ 0 & 1
		\end{pmatrix}}
		\ar@{=}[r] 
		\ar@{|-->}[d]^-{\mu_{[0]}} &
		G^{1^-,\begin{smallmatrix} 0^- \\  1^- \end{smallmatrix}}
		\ar@{|-->}[d]^-{\mu_{[0]}}
		 \\
		{\begin{pmatrix}
			-1 & 2+ \sqrt{2} \\ 0 & 1
		\end{pmatrix}}
		\ar@{|->}[r]^-{({-}^T)\inv} &
		{\begin{pmatrix}
			-1 & 0 \\ 2+ \sqrt{2} & 1
		\end{pmatrix}}
		\ar@{=}[r] &
		G^{\begin{smallmatrix} \ \ 2^- \ 0^- \\ 3 \ 1^- \ \ \end{smallmatrix}, \begin{smallmatrix} 0^- \\  1^- \end{smallmatrix}}
		}
	\end{equation*}
	which corresponds to changing complement, as required.
	
	\subsection{The folding $F^{\coxD_5}$} \label{sec:DExample}
	Inevitably, combinatorics in the setting of foldings of type $\coxD$ is more complicated than in the setting of type $\coxA$. Since $F^{\coxD_5}$ is also a folding onto $Q^{\coxI_2(8)}$, we will focus on the aspects of the theory that differ from the type $\coxA_7$ setting. To further simplify things, most of this example will concern the category $\acat=\mod*KQ^{\coxD_5}$, as the theory for $\bder(\mod*KQ^{\coxD_5})$ is a straightforward extension of this. The non-crystallographic projection of $\acat$ is illustrated in Figure~\ref{fig:D5I8}. We remind the reader that whilst the positive simple roots are of length $2$ and $4 \cos\frac{\pi}{8}$ respectively (under the standard basis of $\real^2$), the positive simple roots are written as $(1,0)$ and $(0,1)$ with respect to the standard root system of $\coxI_2(8)$.
	
	\begin{figure}[t]
		\centering
		\def\y{0.35cm}
\begin{tikzpicture}[scale=2]
	\foreach \x in {0,...,3} {
		\foreach \z in {0,2} {
			\coordinate (t\x_I\z) at ($(1.75-\x,{\z/2 + 3.25})$);
		}
	}
	\foreach \x in {0,...,3} {
		\foreach \z in {1,3,4} {
			\coordinate (t\x_I\z) at ($(1.25-\x,{((\z - 1)/2 + 3.75)})$);
		}
	}
	
	\draw[<-, shorten <= \y*1.5,shorten >= \y*1.5,gray] (t0_I0) -- (t0_I1);
	\draw[<-, shorten <= \y*1.5,shorten >= \y*1.5,gray] (t0_I2) -- (t0_I1);
	\draw[<-, shorten <= \y*1.5,shorten >= \y*1.5,gray] (t0_I2) -- (t0_I3);
	\draw[<-, shorten <= \y*1.5,shorten >= \y*1.5,gray] (t0_I2) -- (t0_I4);
	
	\draw[->, shorten <= \y*1.5,shorten >= \y*1.5,gray] (t1_I0) -- (t0_I1);
	\draw[->, shorten <= \y*1.5,shorten >= \y*1.5,gray] (t1_I2) -- (t0_I1);
	\draw[->, shorten <= \y*1.5,shorten >= \y*1.5,gray] (t1_I2) -- (t0_I3);
	\draw[->, shorten <= \y*1.5,shorten >= \y*1.5,gray] (t1_I2) -- (t0_I4);
	
	\draw[<-, shorten <= \y*1.5,shorten >= \y*1.5,gray] (t1_I0) -- (t1_I1);
	\draw[<-, shorten <= \y*1.5,shorten >= \y*1.5,gray] (t1_I2) -- (t1_I1);
	\draw[<-, shorten <= \y*1.5,shorten >= \y*1.5,gray] (t1_I2) -- (t1_I3);
	\draw[<-, shorten <= \y*1.5,shorten >= \y*1.5,gray] (t1_I2) -- (t1_I4);
	
	\draw[->, shorten <= \y*1.5,shorten >= \y*1.5,gray] (t2_I0) -- (t1_I1);
	\draw[->, shorten <= \y*1.5,shorten >= \y*1.5,gray] (t2_I2) -- (t1_I1);
	\draw[->, shorten <= \y*1.5,shorten >= \y*1.5,gray] (t2_I2) -- (t1_I3);
	\draw[->, shorten <= \y*1.5,shorten >= \y*1.5,gray] (t2_I2) -- (t1_I4);
	
	\draw[<-, shorten <= \y*1.5,shorten >= \y*1.5,gray] (t2_I0) -- (t2_I1);
	\draw[<-, shorten <= \y*1.5,shorten >= \y*1.5,gray] (t2_I2) -- (t2_I1);
	\draw[<-, shorten <= \y*1.5,shorten >= \y*1.5,gray] (t2_I2) -- (t2_I3);
	\draw[<-, shorten <= \y*1.5,shorten >= \y*1.5,gray] (t2_I2) -- (t2_I4);
	
	\draw[->, shorten <= \y*1.5,shorten >= \y*1.5,gray] (t3_I0) -- (t2_I1);
	\draw[->, shorten <= \y*1.5,shorten >= \y*1.5,gray] (t3_I2) -- (t2_I1);
	\draw[->, shorten <= \y*1.5,shorten >= \y*1.5,gray] (t3_I2) -- (t2_I3);
	\draw[->, shorten <= \y*1.5,shorten >= \y*1.5,gray] (t3_I2) -- (t2_I4);
	
	\draw[<-, shorten <= \y*1.5,shorten >= \y*1.5,gray] (t3_I0) -- (t3_I1);
	\draw[<-, shorten <= \y*1.5,shorten >= \y*1.5,gray] (t3_I2) -- (t3_I1);
	\draw[<-, shorten <= \y*1.5,shorten >= \y*1.5,gray] (t3_I2) -- (t3_I3);
	\draw[<-, shorten <= \y*1.5,shorten >= \y*1.5,gray] (t3_I2) -- (t3_I4);
	
	\draw (t0_I0) node {\footnotesize$0$};
	\draw (t1_I0) node {\footnotesize$\begin{smallmatrix} 2 \\ 1  \end{smallmatrix}$};
	\draw (t2_I0) node {\footnotesize$\begin{smallmatrix} 2 \ \\ 3^+ \ 3^-  \end{smallmatrix}$};
	\draw (t3_I0) node {\footnotesize$\begin{smallmatrix} 0 \\ 1  \end{smallmatrix}$};
	
	\draw (t0_I1) node {\footnotesize$\begin{smallmatrix} 0 \ 2 \\ 1 \end{smallmatrix}$};
	\draw (t1_I1) node {\footnotesize$\begin{smallmatrix} 2 \ \ 2 \ \ \\ 1 \ 3^+ \ 3^- \end{smallmatrix}$};
	\draw (t2_I1) node {\footnotesize$\begin{smallmatrix} 0 \ \; 2 \quad \\ \quad 1 \, 3^+  3^- \end{smallmatrix}$};
	\draw (t3_I1) node {\footnotesize$1$};
	
	\draw (t0_I2) node {\footnotesize$2$};
	\draw (t1_I2) node {\footnotesize$\begin{smallmatrix} 0 \ \; 2 \ \ 2 \quad \\ \ \ 1 \ 3^+ \ 3^-  \end{smallmatrix}$};
	\draw (t2_I2) node {\footnotesize$\begin{smallmatrix} 0 \quad 2 \quad \, 2 \ \; \\ \ \; 1 \, 3^+  3^- \ 1 \end{smallmatrix}$};
	\draw (t3_I2) node {\footnotesize$\begin{smallmatrix} 2 \; \\ \; 1 \, 3^+ 3^- \end{smallmatrix}$};
	
	\draw[] (t0_I3) node {\footnotesize$\begin{smallmatrix} 2 \; \\ \; 3^+ \ \end{smallmatrix}$};
	\draw[] (t1_I3) node {\footnotesize$\begin{smallmatrix} 0 \ 2 \ \ \\ \ \ 1 \ 3^- \end{smallmatrix}$};
	\draw[] (t2_I3) node {\footnotesize$\begin{smallmatrix} 2 \ \\ \ 1 \ 3^+ \end{smallmatrix}$};
	\draw[] (t3_I3) node {\footnotesize$3^-$};
	
	\draw (t0_I4) node {\footnotesize$\begin{smallmatrix} 2 \; \\ \; 3^- \ \end{smallmatrix}$};
	\draw (t1_I4) node {\footnotesize$\begin{smallmatrix} 0 \ 2 \ \ \\ \ \ 1 \ 3^+ \end{smallmatrix}$};
	\draw (t2_I4) node {\footnotesize$\begin{smallmatrix} 2 \ \\ \ 1 \ 3^- \end{smallmatrix}$};
	\draw (t3_I4) node {\footnotesize$3^+$};
	
	\foreach \x in {1,...,8} {
		\coordinate (m\x) at ($({cos(22.5*\x-22.5)}, {sin(22.5*\x-22.5)})$);
		\coordinate (p1m\x) at ($({2*cos(22.5)*cos(22.5*\x-22.5)}, {2*cos(22.5)*sin(22.5*\x-22.5)})$);
		\coordinate (p2m\x) at ($({(4*(cos(22.5))^2-1)*cos(22.5*\x-22.5)}, {(4*(cos(22.5))^2-1)*sin(22.5*\x-22.5)})$);
		\coordinate (p3m\x) at ($({(4*(cos(22.5))^3-2*cos(22.5))*cos(22.5*\x-22.5)}, {(4*(cos(22.5))^3-2*cos(22.5))*sin(22.5*\x-22.5)})$);
		
		\coordinate (n\x) at ($-1*(m\x)$);
		\coordinate (p1n\x) at ($-1*(p1m\x)$);
		\coordinate (p2n\x) at ($-1*(p2m\x)$);
		\coordinate (p3n\x) at ($-1*(p3m\x)$);
	}
	\draw[red,opacity=0.5] (m1) -- (m3) -- (m5) -- (m7) -- ($0.5*(n1)+0.5*(m7)$);
	\draw[red,opacity=0.4] ($0.5*(p1n8)+0.5*(p1m2)$) -- (p1m2) -- (p1m4) -- (p1m6) -- (p1m8);
	\draw[red,opacity=0.3] (p2m1) -- (p2m3) -- (p2m5) -- (p2m7) -- ($0.5*(p2n1)+0.5*(p2m7)$);
	\draw[red,opacity=0.2] ($0.5*(p3n8)+0.5*(p3m2)$) -- (p3m2) -- (p3m4) -- (p3m6) -- (p3m8);

	\draw (m1) node {\footnotesize$0$};
	\draw (m3) node {\footnotesize$\begin{smallmatrix} 2 \\ 1  \end{smallmatrix}$};
	\draw (m5) node {\footnotesize$\begin{smallmatrix} 2 \ \\ 3^+ \ 3^-  \end{smallmatrix}$};
	\draw (m7) node {\footnotesize$\begin{smallmatrix} 0 \\ 1  \end{smallmatrix}$};
	
	\draw (p1m2) node {\footnotesize$\begin{smallmatrix} 0 \ 2 \\ 1 \end{smallmatrix}$};
	\draw (p1m4) node {\footnotesize$\begin{smallmatrix} 2 \ \ 2 \ \ \\ 1 \ 3^+ \ 3^- \end{smallmatrix}$};
	\draw (p1m6) node {\footnotesize$\begin{smallmatrix} 0 \quad 2 \quad \\ \quad 1 \, 3^+  3^- \end{smallmatrix}$};
	\draw (p1m8) node {\footnotesize$1$};
	
	\draw (p2m1) node {\footnotesize$2$};
	\draw (p2m3) node {\footnotesize$\begin{smallmatrix} 0 \ \; 2 \ \ 2 \quad \\ \ \ 1 \ 3^+ \ 3^-  \end{smallmatrix}$};
	\draw (p2m5) node {\footnotesize$\begin{smallmatrix} 0 \quad 2 \quad \, 2 \ \; \\ \ \; 1 \, 3^+  3^- \ 1 \end{smallmatrix}$};
	\draw (p2m7) node {\footnotesize$\begin{smallmatrix} 2 \; \\ \; 1 \, 3^+ 3^- \end{smallmatrix}$};
	
	\draw (p3m2) node {\footnotesize$\begin{smallmatrix} 2 \; \\ \; 3^\pm \ \end{smallmatrix}$};
	\draw (p3m4) node {\footnotesize$\begin{smallmatrix} 0 \ 2 \ \ \\ \ \ 1 \ 3^\mp \end{smallmatrix}$};
	\draw (p3m6) node {\footnotesize$\begin{smallmatrix} 2 \ \\ \ 1 \ 3^\pm \end{smallmatrix}$};
	\draw (p3m8) node {\footnotesize$3^\mp$};
	
	\draw[<-, shorten <= \y,shorten >= \y,gray] (m1) -- (p1m2);
	\draw[<-, shorten <= \y,shorten >= \y,gray] (p2m1) -- (p1m2);
	\draw[<-, shorten <= \y,shorten >= \y,gray] (p2m1) -- (p3m2);
	
	\draw[->, shorten <= \y,shorten >= \y,gray] (m3) -- (p1m2);
	\draw[->, shorten <= \y,shorten >= \y,gray] (p2m3) -- (p1m2);
	\draw[->, shorten <= \y,shorten >= \y,gray] (p2m3) -- (p3m2);
	
	\draw[<-, shorten <= \y,shorten >= \y,gray] (m3) -- (p1m4);
	\draw[<-, shorten <= \y*2,shorten >= \y*2,gray] (p2m3) -- (p1m4);
	\draw[<-, shorten <= \y*1.5,shorten >= \y,gray] (p2m3) -- (p3m4);
	
	\draw[->, shorten <= \y*1,shorten >= \y*1.25,gray] (m5) -- (p1m4);
	\draw[->, shorten <= \y*1.5,shorten >= \y,gray] (p2m5) -- (p1m4);
	\draw[->, shorten <= \y*1.5,shorten >= \y*1,gray] (p2m5) -- (p3m4);
	
	\draw[<-, shorten <= \y*1,shorten >= \y*1.25,gray] (m5) -- (p1m6);
	\draw[<-, shorten <= \y*1.5,shorten >= \y*1.25,gray] (p2m5) -- (p1m6);
	\draw[<-, shorten <= \y*1.5,shorten >= \y*1,gray] (p2m5) -- (p3m6);
	
	\draw[->, shorten <= \y,shorten >= \y*1,gray] (m7) -- (p1m6);
	\draw[->, shorten <= \y*1.5,shorten >= \y*1.5,gray] (p2m7) -- (p1m6);
	\draw[->, shorten <= \y*1.5,shorten >= \y*1,gray] (p2m7) -- (p3m6);
	
	\draw[<-, shorten <= \y*1,shorten >= \y*1,gray] (m7) -- (p1m8);
	\draw[<-, shorten <= \y*1.25,shorten >= \y*1,gray] (p2m7) -- (p1m8);
	\draw[<-, shorten <= \y*1.5,shorten >= \y*1,gray] (p2m7) -- (p3m8);
\end{tikzpicture}
		\caption{The folding $F^{\coxD_5}\colon Q^{\coxD_5} \rightarrow Q^{\coxI_2(8)}$. Top: The Auslander-Reiten quiver of $\mod*KQ^{\coxD_5}$. Bottom: The non-crystallographic projection of the Auslander-Reiten quiver under the map $\dimproj^{\mod*KQ^{\coxD_5}}_{F^{\coxD_5}}$, with irreducible morphisms superimposed. The radius of each half-octagon (from the centre outwards) is $2$, $U_3(\cos\frac{\pi}{8})$, $2U_1(\cos\frac{\pi}{8})$ and $2U_2(\cos\frac{\pi}{8})$.} \label{fig:D5I8}
	\end{figure}
	
	The category $\acat$ has an action of $\dchebsr{5} = \nnint[\croot^-_0,\croot_2^+,\croot_2^-]$, where we will also write $1=\croot_0^+ \in \dchebsr{5}$ for clarity. Here, the element $\croot_0^-$ represents the $\integer_2$-symmetry on the quiver. Similar to the theory for type $\coxA$, we can label the Auslander-Reiten sequences by elements of $\hdchebsr{5}$, and this label is determined by the row that the starting/ending term resides in. In particular, we can assign an Auslander-Reiten sequence a label of $\croot_i^+ + \croot_i^-$ (for $i \in \{0,1,2\}$) if the starting/ending term resides in the row $\mathcal{I}^\acat_i$, and a label of $\croot_3^\pm$ if the starting/ending term resides in the row $\mathcal{I}^\acat_{3^\pm}$. The action of an element $r \in \dchebsr{5}$ on an Auslander-Reiten sequence labelled by $s \in \dchebsr{5} $ is then such that the sequence is mapped to an exact sequence isomorphic to a direct sum of Auslander-Reiten sequences whose labels are determined from the product $rs$. That is, we can write $rs = \sum_{i=0}^2 a_i (\croot_i^++\croot_i^-) + a_{3^+} \croot_3^+ + a_{3^-} \croot_3^-$, and thus the resulting sequence is isomorphic to a direct sum of $a_i$ Auslander-Reiten sequences labelled by $\croot_i^+ +\croot_i^-$ and $a_{3^\pm}$ Auslander-Reiten sequences labelled by $\croot_3^\pm$ (all in the same column). Likewise, we can also label individual indecomposable objects by elements of $\hdchebsr{5}$, which is determined by the row the object resides in (as before). The action of $\dchebsr{5}$ on an object will also always respect this labelling (column-wise).
	
	To demonstrate this, consider the Auslander-Reiten sequences
	\begin{gather} \tag{S0} \label{eq:ars1}
		\mathbf{0} \rightarrow
		\begin{smallmatrix} 2 \\ 1  \end{smallmatrix}
		\rightarrow 
		\begin{smallmatrix} 0 \ 2 \\ 1 \end{smallmatrix} 
		\rightarrow 
		0
		\rightarrow \mathbf{0} 
		\\ \tag{S1} \label{eq:ars2}
		\mathbf{0} \rightarrow
		\begin{smallmatrix} 2 \ \ 2 \ \ \\ 1 \ 3^+ \ 3^- \end{smallmatrix}
		\rightarrow 
		\begin{smallmatrix} 2 \\ 1  \end{smallmatrix} \oplus \begin{smallmatrix} 0 \ \; 2 \ \ 2 \quad \\ \ \ 1 \ 3^+ \ 3^-  \end{smallmatrix}
		\rightarrow
		\begin{smallmatrix} 0 \ 2 \\ 1 \end{smallmatrix} 
		\rightarrow \mathbf{0} 
		\\ \tag{S2} \label{eq:ars3}
		\mathbf{0} \rightarrow
		\begin{smallmatrix} 0 \ \; 2 \ \ 2 \quad \\ \ \ 1 \ 3^+ \ 3^-  \end{smallmatrix}
		\rightarrow
		\begin{smallmatrix} 0 \ 2 \\ 1 \end{smallmatrix}  \oplus \begin{smallmatrix} 2 \; \\ \; 3^+ \ \end{smallmatrix} \oplus \begin{smallmatrix} 2 \; \\ \; 3^- \ \end{smallmatrix}
		\rightarrow
		2
		\rightarrow \mathbf{0}
		\\ \tag{S3p} \label{eq:ars4}
		\mathbf{0} \rightarrow
		\begin{smallmatrix} 0 \ 2 \ \ \\ \ \ 1 \ 3^- \end{smallmatrix}
		\rightarrow 
		\begin{smallmatrix} 0 \ \; 2 \ \ 2 \quad \\ \ \ 1 \ 3^+ \ 3^-  \end{smallmatrix}
		\rightarrow 
		\begin{smallmatrix} 2 \; \\ \; 3^+ \ \end{smallmatrix}
		\rightarrow \mathbf{0}
		\\ \tag{S3m} \label{eq:ars5}
		\mathbf{0} \rightarrow
		\begin{smallmatrix} 0 \ 2 \ \ \\ \ \ 1 \ 3^+ \end{smallmatrix}
		\rightarrow 
		\begin{smallmatrix} 0 \ \; 2 \ \ 2 \quad \\ \ \ 1 \ 3^+ \ 3^-  \end{smallmatrix}
		\rightarrow 
		\begin{smallmatrix} 2 \; \\ \; 3^- \ \end{smallmatrix}
		\rightarrow \mathbf{0}
	\end{gather}
	where for clarity, we have denoted the zero object by $\mathbf{0}$ to distinguish it from the simple module $0$. The above Auslander-Reiten sequences (and the starting and ending terms) may be labelled by $\croot_0^+ + \croot_0^-$, $\croot_1^+ + \croot_1^-$, $\croot_2^+ + \croot_2^-$, $\croot_3^+$ and $\croot_3^-$, respectively. The objects in the middle terms of each sequence therefore have labels $\croot_1^+ + \croot_1^-$, $\croot_0^+ + \croot_0^- + \croot_2 + \croot_2^-$, $\croot_1^+ + \croot_1^- + \croot_3^+ + \croot_3^-$, $\croot_2^+ + \croot_2^-$ and $\croot_2^+ + \croot_2^-$, respectively.
	
	The action of $\croot_2^\pm$ on the Auslander-Reiten sequence (\ref{eq:ars1}) is a map to (a sequence isomorphic to) the Auslander-Reiten sequence (\ref{eq:ars3}). This reflects the product $\croot_2^\pm(\croot_0^+ + \croot_0^-)=\croot_2^+ + \croot_2^-$ for the sequence itself, and simultaneously, the product $\croot_2^\pm(\croot_1^+ + \croot_1^-)=\croot_1^+ + \croot_1^- + \croot_3^+ + \croot_3^-$ for the middle term. The action of $\croot_2^\pm$ on (\ref{eq:ars3}), or equivalently the action of $(\croot_2^\pm)^2 = \croot_0^+ + \croot_2^+ + \croot_2^-$ on (\ref{eq:ars1}), is such that we obtain a sequence isomorphic to (\ref{eq:ars1}) $\oplus$ (\ref{eq:ars3}) $\oplus$ (\ref{eq:ars3}). Both of these examples showcase isomorphism condition (\ref{eq:DIso1}).
	
	Another example is the action of $\croot_0^-$, which representing $\integer_2$-symmetry, maps (\ref{eq:ars4}) to (\ref{eq:ars5}) and vice versa. On the other hand the action of $\croot_0^-$ on any other sequence in the list is trivial (up to isomorphism).
	
	Finally, one needs to be careful of complicated sign changes with some of the products in $\hdchebsr{5}$. The effect of this is highlighted in isomorphism condition (\ref{eq:DIso2}). For example, $\croot_2^+\croot_3^+ = \croot_1^+ + \croot_1^- +\croot_3^-$. Thus, the action of $\croot_2^+$ on (\ref{eq:ars4}) is such that we obtain (\ref{eq:ars2}) $\oplus$ (\ref{eq:ars5}).
	
	Now we define the following three sets.
	\begin{equation*}
		\Gamma_0 = \{
		0,
		\begin{smallmatrix} 2 \\ 1  \end{smallmatrix},
		\begin{smallmatrix} 2 \ \\ 3^+ \ 3^-  \end{smallmatrix},
		\begin{smallmatrix} 0 \\ 1  \end{smallmatrix}
		\}
		\qquad \text{and} \qquad
		\Gamma_\pm = \{
		\begin{smallmatrix} 2 \; \\ \; 3^\mp \ \end{smallmatrix},
		\begin{smallmatrix} 0 \ 2 \ \ \\ \ \ 1 \ 3^\pm \end{smallmatrix},
		\begin{smallmatrix} 2 \ \\ \ 1 \ 3^\mp \end{smallmatrix},
		3^\pm
		\}
	\end{equation*}
	Theorem~\ref{thm:RPGenerators}(c) then states that the sets $\Gamma_0\cup\Gamma_\pm$ are pairwise equivalent, basic, $\tau$-closed, $\integer_2$-minimal sets of $\dchebsr{5}$-generators of $\acat$. Extending this to the cluster category $\clus_{\coxD_5}$, we obtain similar sets of basic, $\integer_2$-minimal $\dchebsr{5}$-generators
	\begin{equation*}
		\Gamma^\pm = \{
		\sus \begin{smallmatrix} 0 \\ 1  \end{smallmatrix},
		0,
		\begin{smallmatrix} 2 \\ 1  \end{smallmatrix},
		\begin{smallmatrix} 2 \ \\ 3^+ \ 3^-  \end{smallmatrix},
		\begin{smallmatrix} 0 \\ 1  \end{smallmatrix}
		\}
		\cup \{
		\sus 3^\pm,
		\begin{smallmatrix} 2 \; \\ \; 3^\mp \ \end{smallmatrix},
		\begin{smallmatrix} 0 \ 2 \ \ \\ \ \ 1 \ 3^\pm \end{smallmatrix},
		\begin{smallmatrix} 2 \ \\ \ 1 \ 3^\mp \end{smallmatrix},
		3^\pm
		\}.
	\end{equation*}
	An example of a basic $\dchebsr{5}$-tilting object with respect to $\Gamma^+$ is therefore $\begin{smallmatrix} 2 \\ 1  \end{smallmatrix} \oplus \begin{smallmatrix} 0 \ 2 \ \ \ \\ \ 1 \ 3^+ \end{smallmatrix}$ and the corresponding basic $\dchebsr{5}$-tilting object with respect to $\Gamma^-$ is $\begin{smallmatrix} 2 \\ 1  \end{smallmatrix} \oplus \begin{smallmatrix} 0 \ 2 \ \ \ \\ \ 1 \ 3^- \end{smallmatrix}$. The theory does not depend on which set we choose, as long as we are consistent with our choice.
	
	Similar to the situation in type $\coxA$, we can compute the $g$-vectors of $\coxI_2(8)$ from the category $\clus_{\coxD_5}$. However this time, our choice of objects in the resolution do not always belong to a set of $\dchebsr{5}$-generators of $\clus_{\coxD_5}$. For example, the row containing $3^\pm$ does not have the correct weight, so we must instead choose the object $1$. We therefore have the following triangles and corresponding folded $g$-vectors.
	\begin{align*}
		\mathbf{0} \rightarrow 1 \rightarrow 1 \rightarrow \mathbf{0}& & &\rightsquigarrow &
		g^{1} &= (1,0), \\
		\mathbf{0} \rightarrow \begin{smallmatrix} 0 \\  1 \end{smallmatrix} \rightarrow \begin{smallmatrix} 0 \\  1 \end{smallmatrix} \rightarrow \mathbf{0}& &
		&\rightsquigarrow & 
		g^{\begin{smallmatrix} 0 \\  1 \end{smallmatrix}} &= (0,1), \\
		1 \rightarrow (\croot_0^++\croot_2^+) \begin{smallmatrix} 0 \\  1 \end{smallmatrix} \rightarrow \begin{smallmatrix} 0 \ \; 2 \quad \\ \ \ 1 \, 3^+  3^- \end{smallmatrix} \rightarrow \sus 1& &
		&\rightsquigarrow & 
		g^{\begin{smallmatrix} 0 \ \; 2 \quad \\ \ \ 1 \, 3^+  3^- \end{smallmatrix}} &= (-1,2+\sqrt{2}), \\
		1 \rightarrow \croot_2^+ \begin{smallmatrix} 0 \\  1 \end{smallmatrix} \rightarrow \begin{smallmatrix} 2 \ \\ 3^+ \ 3^-  \end{smallmatrix} \rightarrow \sus 1&
		& &\rightsquigarrow & 
		g^{\begin{smallmatrix} 2 \ \\ 3^+ \ 3^-  \end{smallmatrix}} &= (-1,1+\sqrt{2}), \\
		\croot_2 1 \rightarrow (\croot_2^+ + \croot_2^-) \begin{smallmatrix} 0 \\  1 \end{smallmatrix} \rightarrow \begin{smallmatrix} 2 \ \ 2 \ \ \\ 1 \ 3^+ \ 3^- \end{smallmatrix} \rightarrow \sus \croot_2^+ 1&
		& &\rightsquigarrow & 
		g^{\begin{smallmatrix} 2 \ \ 2 \ \ \\ 1 \ 3^+ \ 3^- \end{smallmatrix}} &= (-1-\sqrt{2},2+2\sqrt{2}), \\
		\croot_2^+ 1 \rightarrow 1 \oplus \croot_2^+ \begin{smallmatrix} 0 \\  1 \end{smallmatrix} \rightarrow \begin{smallmatrix} 2 \\ 1  \end{smallmatrix} \rightarrow \sus \croot_2^+ 1&
		& &\rightsquigarrow & 
		g^{\begin{smallmatrix} 2 \\ 1  \end{smallmatrix}} &= (-\sqrt{2},1+\sqrt{2}), \\
		\croot_2^+ 1 \rightarrow (\croot_0^++\croot_2^+) \begin{smallmatrix} 0 \\  1 \end{smallmatrix} \rightarrow \begin{smallmatrix} 0 \ 2 \\ 1 \end{smallmatrix} \rightarrow \sus\croot_2^+ 1&
		& &\rightsquigarrow & 
		g^{\begin{smallmatrix} 0 \ 2 \\ 1  \end{smallmatrix}} &= (-1-\sqrt{2},2+\sqrt{2}), \\
		1 \rightarrow \begin{smallmatrix} 0 \\  1 \end{smallmatrix} \rightarrow 0 \rightarrow \sus  1&
		& &\rightsquigarrow & 
		g^{0} &= (-1,1), \\
		1 \rightarrow \mathbf{0} \rightarrow \sus 1 \rightarrow \sus 1&
		& &\rightsquigarrow & 
		g^{\sus 1} &= (-1,0), \\
		\begin{smallmatrix} 0 \\  1 \end{smallmatrix} \rightarrow \mathbf{0} \rightarrow \sus \begin{smallmatrix} 0 \\  1 \end{smallmatrix} \rightarrow \sus \begin{smallmatrix} 0 \\  1 \end{smallmatrix}&
		& &\rightsquigarrow & 
		g^{\sus \begin{smallmatrix} 0 \\  1 \end{smallmatrix}} &= (0,-1).
	\end{align*}
	As we can see, these are identical to the folded $g$-vectors computed before.
	
	\bibliography{Bibliography}{}
	\bibliographystyle{habbrv}
\end{document}